\let\bbordermatrix\bordermatrix
\patchcmd{\bbordermatrix}{8.75}{4.75}{}{}
\patchcmd{\bbordermatrix}{\left(}{\left[}{}{}
\patchcmd{\bbordermatrix}{\right)}{\right]}{}{}
\newtheoremstyle{theoremsansserif} 
    {\topsep}                    
    {\topsep}                    
    {\itshape}                   
    {}                           
    {\sffamily\bfseries }        
    {.}                          
    {.5em}                       
    {}  
\theoremstyle{theoremsansserif}
\newtheorem{thrm}{Theorem}[section]
\newtheorem{thm}{Theorem}[section]
\newtheorem{lemma}[thrm]{Lemma}
\newtheorem{lem}[thrm]{Lemma}
\newtheorem{prop}[thrm]{Proposition}
\newtheorem{coro}[thrm]{Corollary}
\theoremstyle{definition}
\newtheorem{exmp}{\sffamily\bfseries Example}[section]
\newtheorem{rmk}{\sffamily\bfseries Remark}[section]
\newtheorem{assumption}{\sffamily\bfseries Assumption}[section]
\newcommand{\E}{{\mathbb{E}}}
\renewcommand{\P}{{\mathbb{P}}}
\newcommand{\RR}{\mathbb{R}}
\newcommand{\II}{\mathbb{I}}
\newcommand{\ba}{\mathbf{a}} 
\newcommand{\bb}{\mathbf{b}} 
\newcommand{\bc}{\mathbf{c}}
\newcommand{\bd}{\mathbf{d}}
\newcommand{\bp}{\mathbf{p}}
\newcommand{\bq}{\mathbf{q}}
\newcommand{\bv}{\mathbf{v}}
\newcommand{\bx}{\mathbf{x}}
\newcommand{\bz}{\mathbf{z}}
\newcommand{\by}{\mathbf{y}}
\newcommand{\blambda}{\boldsymbol{\lambda}}
\newcommand{\bbbeta}{\boldsymbol{\eta}}
\newcommand{\bxi}{\boldsymbol{\xi}}
\newcommand{\cA}{\mathcal{A}}
\newcommand{\cB}{\mathcal{B}}
\newcommand{\cC}{\mathcal{C}}
\newcommand{\cD}{\mathcal{D}}
\newcommand{\cE}{\mathcal{E}}
\newcommand{\cI}{\mathcal{I}}
\newcommand{\cH}{\mathcal{H}}
\newcommand{\cG}{\mathcal{G}}
\newcommand{\cL}{\mathcal{L}}
\newcommand{\cP}{\mathcal{P}}
\newcommand{\cQ}{\mathcal{Q}}
\newcommand{\cS}{\mathcal{S}}
\newcommand{\cX}{\mathcal{X}}
\newcommand{\beqn}{\begin{eqnarray*}}
\newcommand{\eeqn}{\end{eqnarray*}}
\newcommand{\beq}{\begin{eqnarray}}
\newcommand{\eeq}{\end{eqnarray}}
\newcommand{\Al}{\underline{A}}
\newcommand{\Ab}{\bar{A}}
\newcommand{\Au}{{\bar{A}}}
\newcommand{\PP}{{\mathbb{P}}}
\newcommand{\EE}{{\mathbb{E}}}
\definecolor{color1}{rgb}{0,0.5,0}
\DeclareSymbolFont{extraup}{U}{zavm}{m}{n}
\DeclareMathSymbol{\varheart}{\mathalpha}{extraup}{86}
\DeclareMathSymbol{\vardiamond}{\mathalpha}{extraup}{87}
\definecolor{darkblue}{rgb}{0.05,0.25,0.60}
\definecolor{darkgreen}{rgb}{0,0.6,0}
\definecolor{darkred}{rgb}{0.75,0,0}
\definecolor{dbbpurple}{rgb}{0.5,0,0.9}
\def\NNN{\nonumber}%
\def\argmax{\mathop{\rm arg\,max}}%
\def\argmin{\mathop{\rm arg\,min}}%
\newcommand{\ft}{{\tilde{\blambda}_t}}
\newcommand{\dt}{{\blambda^{\star}_t}}
\newcommand{\dtt}{{\bar{\blambda}^{\star}_t}}
\newcommand{\lama}{{\blambda_1}}
\newcommand{\lamb}{{\blambda_2}}
\pgfplotsset{compat=1.17}
\begin{document}
\setstretch{1.3}
\doublespace


\newcommand{\yc}[1]{{\color{purple}[YC: #1]}}
\newcommand{\ww}[1]{{\color{violet}[WW: #1]}}
\newcommand{\ycedit}[1]{{\color{teal}#1}}

\title{\sf{\fontsize{16.0pt}{18pt}\textbf{Beyond Non-Degeneracy: Revisiting Certainty Equivalent Heuristic for Online Linear Programming}}\\
}
\author[$\dag$]{\normalsize Yilun Chen}
\author[$\ddag$]{\normalsize Wenjia Wang}

\affil[$\dag$]{\sf School of Data Science, the Chinese University of Hong Kong, Shenzhen (CUHK Shenzhen)}
\affil[$\ddag$]{\sf Data Science and Analytics Thrust, Information Hub, the Hong Kong University of Science and Technology (Guangzhou)}
\affil[ ]{\texttt{chenyilun@cuhk.edu.cn, wenjiawang@hkust-gz.edu.cn}}

\date{}

{\singlespace \maketitle}

\begin{abstract}\singlespace
The {\sf Certainty Equivalent} ({\sf CE}) heuristic is a widely-used algorithm for various dynamic resource allocation problems in OR and OM. Despite its popularity, existing theoretical guarantees of {\sf CE} are limited to settings satisfying restrictive fluid regularity conditions, particularly, the non-degeneracy conditions, under the widely held belief that the violation of such conditions leads to performance deterioration and necessitates algorithmic innovation beyond {\sf CE}. 

In this work, we conduct a refined performance analysis of {\sf CE} within the general framework of online linear programming. We show that {\sf CE} achieves uniformly near-optimal regret (up to a polylogarithmic factor in $T$) under only mild assumptions on the underlying distribution, without relying on any fluid regularity conditions. Our result implies that, contrary to prior belief, {\sf CE} effectively beats the curse of degeneracy for a wide range of problem instances with continuous conditional reward distributions, highlighting the distinction of the problem's structure between discrete and non-discrete settings. Our explicit regret bound interpolates between the mild $(\log T)^2$ regime and the worst-case $\sqrt{T}$ regime with a parameter $\beta$ quantifying the minimal rate of probability accumulation of the conditional reward distributions, generalizing prior findings in the multisecretary setting. 

To achieve these results, we develop novel algorithmic analytical techniques. Drawing tools from the empirical processes theory, we establish strong concentration analysis of the solutions to random linear programs, leading to improved regret analysis under significantly relaxed assumptions. These techniques may find potential applications in broader online and offline stochastic decision-making contexts.

\vspace{10mm}
\noindent\it{Key Words}: \rm  Certainty Equivalent Heuristic, Regret Analysis, Degeneracy, Online Linear Programming, Network Revenue Management, Dynamic Resource Allocation, Multisecretary
\end{abstract}

\thispagestyle{empty}
\pagebreak

\setcounter{page}{1}

\setstretch{1.46}

\pagebreak




\section{Introduction}\label{sec:intro}
A variety of important OR and OM problems involve allocating finite, non-replenishable resources to sequentially arriving, random requests, in order to maximize cumulative rewards. Typical examples include the multisecretary problem \citep{kleinberg2005multiple, arlotto2019uniformly,  besbes2024dynamic}, network revenue management (NRM) \citep{gallego1994optimal, talluri2006theory, jasin2012re}, dynamic bidding in repeated auctions \citep{balseiro2015repeated} and order fulfillment \citep{jasin2015lp}, among others. Despite their diverse application background, these problems share common characteristics and can be treated through a unified modeling framework, typically termed \textit{online linear programming} (OLP) as is noticed and explored in a recent line of research \citep{vera2021bayesian,li2022online, bray2024logarithmic}. 

The workhorse algorithm for OLP is the \textsf{certainty equivalent heuristic (CE)}. Initially proposed in the broader context of stochastic control (cf. \cite{bertsekas2012dynamic}), \textsf{CE} relies on the idea of replacing all random variables with their average values and repeatedly solving the resulting static and  deterministic problem (usually referred to as the fluid problem) at each decision epoch to facilitate its online decision-making. This heuristic algorithm comes with computational tractability, offering an advantage over the optimal, yet computationally burdensome dynamic programming (DP) approach. The standard \textsf{CE} heuristic has been extensively researched in various contexts of OLP (or more generally, dynamic optimization with resource constraints), sometimes under alternative names such as the {\sf frequent re-solving heuristic} or the {\sf online greedy} \citep{lueker1998average,jasin2012re,li2022online, bray2024logarithmic, balseiro2023survey}. Additionally, many recently proposed algorithms are variants of the standard {\sf CE}, including the {\sf Bayes Selector} \citep{vera2021bayesian}, the {\sf IRT} \citep{bumpensanti2020re}, the {\sf CwG} \citep{besbes2024dynamic} and the {\sf boundary attracted algorithm} \citep{jiang2022degeneracy}, among others. A central focus in this body of work is to understand the algorithm's performance, typically measured by the regret, which captures the asymptotic loss in reward relative to the optimal DP (or other performance benchmarks such as the hindsight optimum or the fluid optimum), as the system scales.

It is well-known that the standard {\sf CE} achieves impressive low regret guarantees—either independent of $T$ or growing logarithmically in $T$ (where $T$ is the length of the time horizon)—when the OLP instance satisfies certain regularity conditions, in particular, the \textit{non-degeneracy} conditions and the \textit{second-order growth} conditions (see literature review). Roughly speaking, an OLP instance is non-degenerate if the optimal solution to the fluid problem exhibits stability, i.e. the set of binding constraints at optimum remains unchanged under perturbation in the initial inventory of resources  (cf. Assumption \ref{def: dual stability} and Assumption \ref{def: dual non-degeneracy}). And it satisfies second-order growth if the (dual) fluid objective exhibits local curvature near the optimum point, i.e. bounded from below (locally) by a positive quadratic function (cf. Assumption \ref{def: 2-order-growth}). 

Despite their prevalence, these conditions are unsatisfying for several reasons. First, except for highly structured settings e.g. the multisecretary problem, these conditions are typically hard to verify. In fact, they are not imposed directly on the problem's primitives, i.e. the distribution of the requests, but on the solution to the fluid problem. The fluid problem of an OLP instance with multiple resource constraints is a multi-dimensional linear program or even an infinite-dimensional convex program. Verifying non-degeneracy and second-order growth requires solving the fluid problem and check the stability and the local curvature property near its optimal solution, which is highly non-trivial. Second, these conditions are fairly restrictive. In fact, degeneracy can arise in practice where the initial inventory of resources often follow the square-root inventory law \citep{bumpensanti2020re, jiang2022degeneracy} and is on the verge of being binding/non-binding. In such cases, small perturbation of the initial inventory of resources will affect the set of binding constraints at the optimum in the fluid problem, violating non-degeneracy conditions and thus limiting the applicability of performance guarantees that depend on them.

These unsatisfying aspects naturally motivate the following question:
\begin{enumerate}
    \item[\textit{(i)}] \textit{Can {\sf CE} still perform well without imposing the non-degeneracy conditions and the second-order growth conditions?} 
\end{enumerate}
Existing evidence is generally negative. In fact, in the special \textit{discrete} setting (where the underlying requests only have finitely many types), 
both theoretical analysis and numerical evidence indicate that {\sf CE} suffers from performance deterioration for degenerate instances (cf. \citep{bumpensanti2020re}). These negative results have motivated a number of algorithmic innovations—mostly variants of the standard {\sf CE}—aiming for uniform regret guarantee regardless of degeneracy (see literature review).  

In this work, we revisit {\sf CE} in a richer instance space that incorporates non-discrete request distributions and, rather surprisingly, provide a (conditionally) affirmative answer to question \textit{(i)} in a fairly broad sense. As an added bonus, for a number of interesting OLP instances, our analysis yields the \textit{first} known uniform low-regret guarantee—no additional algorithmic innovation is needed, the standard {\sf CE} suffices. Furthermore, through an in-depth discussion, we clarify how the notion of non-degeneracy differs between discrete and non-discrete settings, thereby resolving the apparent paradox between our findings and the previous established literature. Next, we detail our main contributions.

\paragraph{Algorithmic contribution.}
This work makes two-fold algorithmic contributions to the literature. First, we prove that {\sf CE} achieves $\mathcal{O}\left((\log T)^2\right)$ hindsight regret under significantly relaxed conditions (cf. Theorem \ref{thm: regret of CE} and Proposition \ref{thm: fundamental lower bound}), requiring only that the underlying request distribution belongs to one of two broad distribution classes (Assumptions \ref{assum: starting-from-zero} and \ref{assum: small-probability-starting-from-zero}). To the best of our knowledge, this is the first generic low-regret guarantee of {\sf CE} that does not rely on imposing either non-degeneracy or second-order growth conditions. Our result advances the understanding of {\sf CE}'s effectiveness for a rich array of OLP instances, including both well-explored ones in the literature such as the multisecretary problem (cf. Example \ref{exmp:multisec}) and less understood but practically relevant ones such as the generalized linear models (cf. Example \ref{exmp: linear}), among others. The distributional conditions that we impose are both natural and to some extent necessary. 
Intuitively, they can be viewed as the multi-dimensional generalization of the class of ``gap-free'' reward distributions, i.e. distributions supported on intervals, a necessary structure for {\sf CE} to achieve low ($o(\sqrt{T})$) regret for the multisecretary problem (cf. \citep{besbes2024dynamic}), which is a specific subset of OLP with a single resource constraint and constant (unit) resource consumption per request. See Table \ref{tab: CE condition comparison} for a comparison between our results and the existing analysis of {\sf CE} in terms of achievable regret bounds and the fluid regularity conditions required.

We also contribute to the expanding literature on \textit{uniform}-regret algorithm design (beyond {\sf CE}) for OLP.  The distribution classes that we identify contain natural request distributions that are beyond the scope of algorithms and/or algorithmic analysis in  prior works. For OLP instances with these request distributions, we provide the first uniform regret guarantee that overcomes degeneracy. For example, consider a simple request distribution with $2$-dimensional resource consumption vectors uniformly supported on the $\frac{1}{N}$-mesh of $[1,2]^2$, precisely $\left\{(1 + a_i, 1 + a_j), 0\leq i, j \leq N\right\}$ where $a_i = \frac{i}{N}$ for all $i$. The rewards are independent ${\sf Unif}[0, 1]$ conditional on each grid point. This stylized example reflects practical scenarios where greater data availability allows more fine-grained decision contexts, here represented by a larger $N$. However, this setting lies beyond the modeling and analytical capabilities of earlier works \citep{bumpensanti2020re, vera2021bayesian, vera2021online, besbes2024dynamic, jiang2022degeneracy}, as their algorithmic guarantee all crucially rely on the discreteness of the underlying resource consumption distribution, with either the size of its support or the inverse minimum probability mass entering the final regret bounds. By contrast, our first distribution class (cf. Assumption \ref{assum: starting-from-zero}) covers the distribution in this example as $N$ scales. In fact, Assumption \ref{assum: starting-from-zero} allows for \textit{arbitrary} bounded resource consumption distributions—whether discrete, continuous or of a mixed type, and only the upper and lower bounds of the distribution's range enter the final regret bound. With the additional regularity assumption that the resource consumption distribution is ``uniform-like'' continuous, Assumption \ref{assum: small-probability-starting-from-zero} further permits more general conditional reward distributions—supported on intervals not necessarily starting from zero, which captures more sophisticated request structures such as the generalized linear models (cf. Example \ref{exmp: linear}). Our regret guarantees for OLP instances with these request distributions are achieved by the standard {\sf CE} without the need for any algorithmic adjustments, showing the effectiveness of {\sf CE} for a broader range of instances than previously understood. See a precise comparison of the classes of distributions of different uniform-regret algorithms in Table \ref{tab:uniform-regret algorithm comparison}.

Our analysis extends beyond instances with the common $\log T$ regret. Similar to \cite{besbes2024dynamic} in the multisecretary setting, we obtain a spectrum of regret guarantees, with the explicitly scaling of $\mathcal{O}\left(T^{\frac{1}{2} - \frac{1}{2(1 + \beta)}} (\log T)^{\frac{2 + \beta}{2 + 2 \beta} + \mathbbm{1}\{\beta = 0\}} \right)$, that interpolates between the mild $(\log T)^2$ regime and the worst-case $\sqrt{T}$ regime with a parameter $\beta \in [0, \infty)$ depending on the underlying request distribution. In particular, this parameter quantifies the minimal probability mass accumulation of the conditional reward CDFs. Specifically, $\beta = 0$ corresponds to uniform-like conditional reward distributions, yielding the mild regret bound of $\mathcal{O}\left((\log T)^2\right)$. As $\beta \to \infty$, the conditional reward CDFs converge towards distributions with gaps on their supports, and the achievable regret scaling worsens, approaching $\sqrt{T}$. This achievable regret scaling provides a near-optimal guarantee of {\sf CE}: it matches (up to a polylogarithmic factor in $T$) the previously established lower bound $\Omega\left(T^{\frac{1}{2} - \frac{1}{2(1 + \beta)}}\left(\log T\right)^{\mathbbm{1}\{\beta = 0\}}\right)$ on the hindsight regret for the multisecretary problem (cf. \cite{besbes2024dynamic, bray2024logarithmic}),  yet holds in a far more general OLP setting.

\begin{center}
    \begin{table}[h]
    \centering
    \caption{Comparison of fluid regularity conditions and achievable regret for {\sf CE}. 
    Here \cite{li2022online, bray2024logarithmic} consider the scenario where $F$ is unknown and needs to be learned. To achieve fair comparison, we only demonstrate regret bounds under the condition that the (conditional) reward distribution is continuous with bounded density.}
    \begin{tabular}{cccc}
    \toprule
        &  Non-degeneracy & $2^{\rm nd}$-order growth & Regret \\
       \midrule
       \multicolumn{1}{c}{\cite{li2022online}}  & Yes & Yes & $\mathcal{O}(\log T\log\log T)$ \\
       \multicolumn{1}{c}{\cite{bray2024logarithmic}} & Yes & Yes & $\mathcal{O}(\log T)$ \\ 
       \multicolumn{1}{c}{\cite{balseiro2023survey}} & Yes & Yes & $\mathcal{O}(\log T)$ \\
       \multicolumn{1}{c}{\cite{jiang2022degeneracy}} & No & Yes & $\mathcal{O}(\log T)$ \\
       \multicolumn{1}{c}{\textcolor{blue}{This work}} & \textcolor{blue}{No} &  \textcolor{blue}{No} & \textcolor{blue}{$\mathcal{O}\left((\log T)^2\right)$} \\
       \bottomrule
    \end{tabular}
    \label{tab: CE condition comparison}
    \end{table}
\end{center}

\begin{center}
    \begin{table}[h]
    \centering
    \caption{Comparison of request distribution types for different uniform-regret algorithms. Resource consumption $\equiv 1$ indicates a multisecretary  problem.  {\sf RAMS} achieves the performance of any row above (up to simulation errors) by taking the corresponding algorithm as a reference algorithm.}
    \begin{tabular}{ccccc}
    \toprule
        &  Resource & \multirow{2}{*}{Reward}  & \multirow{2}{*}{Lower Bound} & \multirow{2}{*}{Regret} \\
        & consumption & & & \\
        \midrule
       {\sf Budget-Ratio} & \multirow{2}{*}{$\equiv 1$} & \multirow{2}{*}{Discrete} & \multirow{2}{*}{$\Omega(1)$} & \multirow{2}{*}{$\mathcal{O}(1)$} \\
       \cite{arlotto2019uniformly}  & & & & \\
       \hline
       {\sf Bayes Selector}  & \multirow{2}{*}{Discrete} & \multirow{2}{*}{Discrete} & \multirow{2}{*}{$\Omega(1)$} & \multirow{2}{*}{$\mathcal{O}(1)$} \\
       \citep{vera2021bayesian}  & & & & \\
       \hline
       \multicolumn{1}{c}{{\sf IRT}} & \multirow{2}{*}{Discrete} & \multirow{2}{*}{Discrete} & \multirow{2}{*}{$\Omega(1)$} & \multirow{2}{*}{$\mathcal{O}(1)$} \\
       \citep{bumpensanti2020re}  & & & & \\
       \hline
       \multicolumn{1}{c}{{\sf CwG}} & \multirow{2}{*}{$\equiv 1$} & $\beta$-H\"older cont. & \multirow{2}{*}{$\tilde\Omega(T^{\frac{1}{2} - \frac{1}{2(1 + \beta)}})$} & \multirow{2}{*}{${\tilde{\mathcal{O}}}(T^{\frac{1}{2} - \frac{1}{2(1 + \beta)}})$} \\
       \citep{besbes2024dynamic}  & & with gaps & & \\
       \hline
       \multicolumn{1}{c}{{\sf Boundary-attracted}} & \multirow{2}{*}{Discrete} & \multirow{2}{*}{Continuous} & \multirow{2}{*}{$\Omega(\log T)$} & \multirow{2}{*}{$\mathcal{O}\left((\log T)^2)\right)$} \\
       \cite{jiang2022degeneracy}  & & & & \\
       \hline
       \multicolumn{1}{c}{{\sf RAMS}} & \multicolumn{4}{c}{\multirow{2}{*}{Any line above}}\\
       \citep{besbes2024dynamic}  & & & & \\
       \hline
       \multicolumn{1}{c}{\textcolor{blue}{\textsf{CE}}} & \multirow{2}{*}{\textcolor{blue}{Arbitrary}} & \textcolor{blue}{\multirow{2}{*}{$\beta$-H\"older cont.}}  & \multirow{2}{*}{\textcolor{blue}{$\tilde\Omega(T^{\frac{1}{2} - \frac{1}{2(1 + \beta)}})$}} & \multirow{2}{*}{\textcolor{blue}{${\tilde{\mathcal{O}}}(T^{\frac{1}{2} - \frac{1}{2(1 + \beta)}})$}} \\
       \textcolor{blue}{(Ours with Assump. \ref{assum: starting-from-zero})}  & &  & & \\
       \hline
       \multicolumn{1}{c}{\textcolor{blue}{\textsf{CE}}} & \multirow{2}{*}{\textcolor{blue}{Continuous}} & \textcolor{blue}{\multirow{2}{*}{$\beta$-H\"older cont.}} & \multirow{2}{*}{\textcolor{blue}{$\tilde\Omega(T^{\frac{1}{2} - \frac{1}{2(1 + \beta)}})$}} & \multirow{2}{*}{\textcolor{blue}{${\tilde{\mathcal{O}}}(T^{\frac{1}{2} - \frac{1}{2(1 + \beta)}})$}}\\
       \textcolor{blue}{(Ours with Assump. \ref{assum: small-probability-starting-from-zero})}  & & & & \\
       \hline
       \bottomrule
    \end{tabular}
    \label{tab:uniform-regret algorithm comparison}
    \end{table}
\end{center}

\paragraph{Insights on degeneracy.\ }
Compared to existing results, our findings reveal notably different patterns in the performance of {\sf CE}. In particular, {\sf CE} effectively ``beats'' the curse of degeneracy for a broad class of request distributions, achieving uniformly low and near-optimal regret even when standard non-degeneracy conditions are violated (cf. Corollary \ref{coro: non essential non degeneracy}).  This contrasts sharply with earlier observations, e.g. \cite{bumpensanti2020re} in the discrete setting, where degeneracy provably results in $\Theta(\sqrt{T})$ regret for {\sf CE}, regardless of the underlying distribution. 

Motivated by this paradoxical phenomenon, we investigate the notion of degeneracy in different OLP settings, shedding light on how it impacts algorithmic performance (see Section~\ref{sec:implications}). We find that the necessity of the standard non-degeneracy notion for {\sf CE} to achieve low regret is a special property only valid in discrete settings. Concretely, when the fluid problem is an LP (as in discrete settings), standard non-degeneracy conditions are \textit{equivalent} to several other properties (cf. Lemma \ref{lem: non-degeneracy discrete}), including \textit{dual uniqueness}. Any violation of these conditions implies extreme sensitivity of the optimal solution to small perturbations in the inventory of resources, which leads to large regret. However, such a non-degeneracy-dual-uniqueness equivalence breaks down in general. In fact, when the underlying request distribution belongs to the classes that we identify, the corresponding dual fluid problem becomes a non-linear convex program with a smooth objective function. For such a problem, dual uniqueness always holds regardless of the inventory of resources, yet the non-degeneracy conditions may fail (cf. Lemma \ref{lem: nondiscrete non kink}). The failure of non-degeneracy in such settings only indicates instability in the set of binding constraints, but the optimal solution itself always remains stable, a consequence of dual uniqueness, leading to mild regret. Overall, our theoretical results strongly suggest that dual uniqueness, rather than non-degeneracy, is the critical determinant of whether {\sf CE} suffers from performance deterioration (cf. Proposition \ref{prop: necessity of dual-unique in multisec}) in both discrete and non-discrete settings.

We illustrate these insights using simple OLP instances. Let resource consumption $a \sim 0.5 + {\sf Bern}(\frac{1}{2})$ and the initial resource inventory is $b T$. Standard non-degeneracy conditions are violated under any reward $r$ when $b = 1$, since $\EE[a] = 1 = b$, i.e. the resource consumption just barely binds at the optimal solution to the fluid problem. When $r \equiv 1$, this instance becomes a stochastic knapsack problem with finitely many types, and is discrete. At the degeneracy point $b = 1$, the dual uniqueness is also violated, with $[0, \frac{1}{1.5}]$ being the set of dual optimal solutions. Such a violation of non-degeneracy is costly—{\sf CE} will frequently decide to accept requests of $a = 1.5$ due to perturbations in the remaining inventory of resources, and end up mistakenly accepting $\Omega(\sqrt{T})$ more requests of $a = 1.5$ comparing to the hindsight optimal decision, leading to a regret of $\Omega(\sqrt{T})$. One must resort to e.g. the {\sf Bayes Selector} (cf. \cite{vera2021bayesian}) to handle this situation to get a uniform $\mathcal{O}(1)$ hindsight regret. Alternatively, when $r \sim {\sf Unif}[0, 1]$ for both $a = 0.5$ and $a = 1.5$, the violation of non-degeneracy at $b = 1$ does not ``hurt''. In this case, the dual objective function becomes smooth. Consequently, the dual optimal solution is unique (equal to $0$), and {\sf CE} achieves mild $(\log T)^2$ regret by Theorem~\ref{thm: regret of CE} \footnote{One might conjecture that the hindsight regret can be tightened to $\mathcal{O}(\log T)$. Interestingly, we found no existing result in the literature that guarantees this, though it seems straightforward to derive. For our purposes, we leave it at that.}. 

Note that these insights can be revealed only in general OLP settings beyond multisecretary. In particular, the special structure of deterministic resource consumption ($a \equiv 1$) in multisecretary renders the violation of non-degeneracy at $b=1$ trivial. Indeed, if $b=1$, any reasonable algorithm will not incur any regret under any request distribution, be it discrete or non-discrete—simply by accepting all requests. Consequently, the multisecretary problem is inherently immune to the standard notion of ``degeneracy'' (e.g. Assumption 2.2 in \cite{balseiro2023survey}), i.e. \textit{the instability (binding/unbinding) of a resource constraint} at the fluid optimal solution, and the failure of {\sf CE} for multisecretary is solely driven by the request distribution's irregularity. (See Section \ref{sec:implications} for more discussion)

\paragraph{Novel analytical methodology.\ } 
We develop novel regret analysis techniques that bypass the need for non-degeneracy and second-order growth conditions. In prior literature, non-degeneracy conditions are often imposed to ensure the set of binding resource constraints remains identical throughout the algorithm's execution. This further implies that the process of resource inventories induced by the algorithm (after proper transformation) is a martingale, enabling subsequent concentration analysis. By contrast, we decompose the hindsight regret of \textsf{CE} (Lemma \ref{lem: ce-regret-decomp}) and perform a \textit{generic} concentration analysis of the solution to the per-step sample average approximation (SAA) of the fluid dual problem at each time $t$ (Lemma \ref{lem: concentration results of dual optimum}), without the need for martingales.

The concentration analysis is the key technical innovation that allows us to establish improved regret guarantees. Using the peeling device from empirical processes theory, we partition the solution space and apply union bounds with carefully controlled entropy numbers to establish robust worst-case concentration bounds. These uniform concentration bounds hold even when the set of binding constraints changes over time, effectively overcoming the key technical challenge that necessitates either the non-degeneracy conditions or the uniform version of the second order growth conditions in prior work. Furthermore, the parameter $\beta$ of the underlying request distribution directly determines the rate of concentration in our bounds, which in turn determines the scaling of the regret, without the need for imposing additional second-order growth conditions. Together, these techniques enable us to attain regret guarantee under significantly relaxed assumptions made solely on the request distribution. The worst-case analysis leaves us with an unavoidable additional $\log T$ in the regret bounds, which is a mild price to pay to get rid of the fluid regularity conditions.  We believe the concentration analyses are of independent interest and the techniques developed may have broader applicability, particularly since such concentration properties are relevant to many of the online and offline stochastic optimization tasks.

\subsection{Related Literature}

\paragraph{Analysis of {\sf CE}. } In the revenue management literature, the (quantity-based) NRM problem has been formulated as an discrete OLP instance and received considerable attention \citep{talluri2006theory}. Earlier work \citep{gallego1994optimal} established $\mathcal{O}\left(\sqrt{T}\right)$ regret guarantee under static heuristic algorithms. \cite{reiman2008asymptotically} obtained an improved $o(\sqrt{T})$ regret bound by introducing re-solving in their algorithm design. \cite{jasin2012re} analyzed the {\sf frequent re-solving heuristic} (a primal version of {\sf CE}) and established an $\mathcal{O}(1)$ regret under the non-degeneracy condition of the fluid LP.  \cite{bumpensanti2020re}, among others, further confirmed that non-degeneracy is necessary for {\sf CE} to achieve this optimal (fluid) regret. In non-discrete settings, where requests are modeled with continuous distributions, \cite{lueker1998average} initiated the study of OLP with a single resource constraint and proved an $\mathcal{O}(\log T)$ regret bound for {\sf CE} (referred to as the {\sf online greedy}). This bound was shown to be tight in the multisecretary problem by \cite{bray2024logarithmic}. Recent works, including \cite{li2022online}, \cite{bray2024logarithmic}, and \cite{balseiro2023survey} extended the $\mathcal{O}(\log T)$ regret guarantee of {\sf CE} to general multi-resource settings, but their analyses critically rely on restrictive non-degeneracy and second-order growth conditions, which we aim to remove in this work. More generally, similar algorithmic design problems have been studied in the price-based NRM setting \cite{jasin2014reoptimization, wang2022constant}, where the performance of {\sf CE} seems to also depend on the non-degeneracy conditions. We leave the investigation of this setting for future research.

\paragraph{Uniform Loss in OLP. }
To handle the challenges posed by degeneracy on {\sf CE}, a recent stream of papers have proposed new algorithms with $\mathcal{O}(1)$ hindsight regret uniformly across degenerate and non-degenerate instances \citep{arlotto2019uniformly, bumpensanti2020re, vera2021bayesian, vera2021online}. These works are all in the discrete setting, with the request distribution supported on a finite number of different request types. The design and analysis of uniform-regret algorithms in the non-discrete settings, however, is substantially more challenging. The two existing papers that are mostly relevant to our work are \cite{besbes2024dynamic} and \cite{jiang2022degeneracy}. 

\cite{besbes2024dynamic} first focused on the multisecretary problem. They introduced {\sf CwG}—an algorithmic adjustment to the standard {\sf CE}, that achieves near optimal hindsight regret guarantees. Our work differs from theirs in three ways: (i) \textsf{CwG} is both designed and analyzed for the multisecretary setting, whereas we focus on the broader OLP framework; (ii) in \cite{besbes2024dynamic}, the reward distributions may be irregular (with gaps in their supports), which is beyond our regular distribution class (Assumptions \ref{assum: starting-from-zero} or \ref{assum: small-probability-starting-from-zero}); and (iii) we only analyze the standard {\sf CE}, whereas {\sf CE} provably incurs $\Omega(\sqrt{T})$ regret in the setting of \cite{besbes2024dynamic}. Compared to the multisecretary problem, violating standard non-degeneracy conditions in general OLP settings poses significant algorithmic analytical challenges—even when the underlying request distributions are regular (see Section \ref{sec:implications}). A key contribution of our work is to demonstrate that {\sf CE} remains effective in these broader OLP problems despite such violations. Investigating irregular request distributions in this broader setting, where the design of new algorithm is necessary, lies beyond our current scope and is a natural direction for future work. As a second contribution, in the general OLP setting, \cite{besbes2024dynamic} introduced a generic simulation-based algorithm called {\sf RAMS}. The performance analysis of {\sf RAMS} does not directly yield precise regret bounds; instead, it provides a guarantee expressed as the sum of (i) the regret of a reference algorithm and (ii) a simulation error term. In contrast, our analysis provides the first known uniform low-regret guarantee for {\sf CE} in certain OLP instances, a guarantee not recoverable by {\sf RAMS} under its current reliance on existing algorithmic results. However, with our new regret bounds, we expect {\sf RAMS} to match this performance (up to simulation error) by taking {\sf CE} as its reference algorithm. 

\cite{jiang2022degeneracy} studied a ``semi-discrete'' setting with finitely many resource consumption types and uniform-like conditional reward distributions supported on intervals for each resource consumption type. They first proposed the {\sf boundary-attracted algorithm}, an adjusted {\sf CE} heuristic that extends the {\sf CwG} idea, which achieves a uniform $\mathcal{O}\left((\log T)^2\right)$ regret guarantee regardless of degeneracy. Compared to \cite{jiang2022degeneracy}, the setting of our work is more general, allowing fully non-discrete request distributions, with arbitrary resource consumption distributions. Meanwhile, \cite{jiang2022degeneracy} allows for distributions which are effectively irregular and not contained in our distribution classes (see Example \ref{exmp:violating unique-dual}), which fail {\sf CE} and necessitate their algorithmic design. As their second contribution, \cite{jiang2022degeneracy} also conducted a performance analysis of the standard {\sf CE} in their setting. This result not only relies on the semi-discrete structure of the OLP instance, but also requires a rather strong uniform second-order growth condition, for which we provide further discussion in Appendix \ref{appendix: second-order}. By contrast, our regret guarantee of {\sf CE} does not rely on either the the semi-discrete structure or any form of the second-order growth condition. 

Overall, all of these prior results critically rely on the fluid problem being linear, necessitating a (semi-)discrete request distribution. In contrast, we introduce new technical tools that facilitate algorithmic analysis when the fluid problem is non-linear. This leads to the surprising discovery that standard {\sf CE} alone achieves uniform near-optimal regret—effectively overcoming the curse of degeneracy—for a broad class of regular, ``gap-free'' distributions.
 

\paragraph{Degeneracy in Stochastic Control. } In the broader context of stochastic dynamic control, the impact of fluid degeneracy/instability on algorithm performance has been widely noted, for example, in stochastic network optimization \citep{huang2009delay}, centralized dynamic matching \citep{kerimov2024dynamic, gupta2024greedy, wei2023constant} and network revenue management with reusable resources \citep{xie2024dynamic, balseiro2023dynamic}. While these settings differ from those considered in this work, they bear notable similarities, especially in the role of the fluid relaxation in algorithm design and performance analysis. Particularly, \cite{gupta2024greedy, xie2024dynamic, chen2024incentivizing} highlight the critical role of dual uniqueness in guaranteeing good algorithm performance in their respective settings. These results are primarily in the discrete setting, similar to \cite{bumpensanti2020re}. The perspective provided in this work complements and extends this understanding of the significance of dual uniqueness by offering non-discrete evidence.

\paragraph{Online Stochastic Knapsack. } There is also a rich relevant literature in theoretical computer science studying the problem of online stochastic knapsack. \cite{kleinberg2005multiple} considers a multisecretary problem in the random input model and derive an (asymptotic) competitive ratio of $ 1- \mathcal{O}\left(\sqrt{\frac{1}{k}}\right)$, where $k$ is the fixed budget (analogous to $\bb$ in this paper). Subsequent work \cite{devanur2009adwords, feldman2010online, molinaro2014geometry,agrawal2014dynamic} extend \cite{kleinberg2005multiple} to the general setting of online knapsack/online packing and achieve progressively improving competitive ratios. \cite{hajiaghayi2007automated, alaei2014bayesian,chawla2023static, jiang2022tight} study the $k$-unit prophet problem where the inputs are independent (but not identical) random variables. Competitive ratios of $1 - \tilde{\Theta}(\sqrt{\frac{1}{k}})$ for large $k$ as well as concrete constants for small $k$ were shown. The setting of this work is different from the aforementioned line of works: (i) the performance metric we use in this paper is the additive regret rather than the competitive ratio, and (ii) the focus is on understanding the performance of {\sf CE} in the \emph{i.i.d.} input model for regular distributions, rather than competing against the worst case. The main result of this work can be essentially translated to an $\left(1 - \mathcal{O}(\frac{(\log T)^2}{T})\right)$-competitive ratio for distributions with nice structures and the initial inventory of resources scaling linearly in $T$, for a general online stochastic knapsack problem with \emph{i.i.d.} inputs.    

\subsection{Organization}
The paper is organized as follows. We formally set up the problem in \Cref{sec:setup}. The main results are stated in \Cref{sec:results}. We then discuss the implication of our results on degeneracy in \Cref{sec:implications}. The proof sketch of our main results is provided in \Cref{sec:sketch}. The conclusions follow in
\Cref{sec:conclusion}.

\section{Problem Formulation}\label{sec:setup}
\subsection{Model}\label{sec: formulation}
There are $m$ resources with an initial capacity of $\bb = (b_1, b_2, \dots, b_m)^{\top} \in \RR^m_{\geq 0}$. At each time period $t = 1, \dots, T$, a demand request $(\ba_t, r_t)$ arrives, assumed to be drawn \emph{i.i.d.} from a distribution with joint CDF $F(\cdot),$ where $F, \bb$ and $T$ are known \textit{a priori}. Upon the arrival of the request and the revelation of $(\ba_t, r_t)$, a decision maker (DM) needs to immediately and irrevocably decide whether or not to accept it. The \textsc{accept} decision results in the consumption of $\ba_t = (a_{1 t}, \dots, a_{m t})^{\top}$ of each resource and an earned reward of $r_t$. The \textsc{reject} decision imposes no change on the resources and garners zero reward. The \textsc{accept} decision is feasible if and only if the remaining capacity of resource $i$ is at least $a_i$,  for $i = 1, \dots, m$. The goal of the DM is to maximize the expected total reward collected through the $T$ periods subject to the resource capacity constraint.

A dynamic allocation policy $\pi$ specifies, for each demand sequence realization $\cI \triangleq \{(\ba_t ,r_t)\}_{j = 1}^T$, a (possibly random) sequence of binary decisions $\{x^{\pi}_t\}_{t = 1}^T$ with $x^{\pi}_t = 1 (0)$ corresponding to the accept (reject) decision. $\pi$ is \textit{non-anticipatory} if each $x^{\pi}_t$ is independent of the future request realization $\cI_t \triangleq \{(\ba_j, r_j)\}_{j = t + 1}^T$ and depends only on the current history $\cH_t \triangleq \{(\ba_j, r_j)\}_{j = 1}^t$. $\pi$ is \textit{feasible} if $w.p.1$ the capacity constraint is satisfied throughout, i.e. $\sum_{t = 1}^T a_{i t} x^{\pi}_{t} \leq b_i, i = 1, \dots, m.$ A policy is admissible if it is both non-anticipatory and feasible. Let $\Pi$ denote the set of admissible policies. The DM seeks to maximize $\E\left[\sum_{t = 1}^T r_t x^{\pi}_{t} \right]$, the expected total reward collected under an admissible policy $\pi \in \Pi$.\\
\textbf{Performance Metric.\ } The performance of an admissible policy $\pi$ is typically measured by its revenue loss, namely $\sup_{\tau \in \Pi} \E\left[\sum_{t = 1}^T r_t x^{\tau}_{t} \right] - \E\left[\sum_{t = 1}^T r_t x^{\pi}_{t} \right]$, where $\sup_{\tau \in \Pi} \E\left[\sum_{t = 1}^T r_t x^{\tau}_{t} \right]$ is the optimal dynamic programming (DP) value. However, the so-defined revenue loss is often not a viable performance metric due to the lacking of tractability of the optimal DP value. A common practice is to instead consider upper bounds of the revenue loss, derived by replacing the DP value by the optimal values of certain tractable relaxations. Two most popular such relaxations are the hindsight relaxation and the fluid relaxation. 

\textbf{The hindsight relaxation.\ } Under the hindsight relaxation, the non-anticipatory constraint is removed, and the resource allocation problem becomes a multi-knapsack Linear Program (LP) (for a given demand sequence realization $\cI$):
\begin{align}\label{program: offline primal}
    \max &\quad  \sum_{t = 1}^T r_t x_t\\
    \mbox{s.t.} & \quad \sum_{t = 1}^T a_{i t} x_t \leq b_{i},\quad i = 1, \dots, m, \nonumber
    \\& x_t \in [0, 1], \quad t = 1,\dots, T. \nonumber
\end{align}
Taking expectation of (\ref{program: offline primal}) (over $\cI$) yields the optimal hindsight value, denoted by $V^{\textrm{hind}}_{\bb, T}$.

\textbf{The fluid relaxation.\ } The fluid relaxation further assumes that $T$ is prohibitively large, to the point that all randomness in (\ref{program: offline primal}) is averaged out (with a proper scaling on the order of $\frac{1}{T}$), which yields
\begin{align}\label{program: fluid primal}
    \sup_{x} &\quad  \E_{(\ba, r) \sim F}\left[r x(\ba, r)\right]\\
    \mbox{s.t.} & \quad \E_{(\ba, r) \sim F}[\ba x(\ba, r) ] \leq \bd,\ \ x(\cdot, \cdot) \in [0, 1], \nonumber
\end{align}
where $\bd \triangleq \frac{\bb}{T}$ is the normalized inventory of resources. We denote the optimal value of the above fluid program by $V^{{\rm fluid}}_{\bd}.$ 

It is fairly straightforward to see that $V^{\textrm{fluid}}_{\bd} T \geq V^{\textrm{hind}}_{\bb, T}$ both are upper bounds on the optimal DP value, for $T \geq 1$, for which we omit the proof. We denote by $\textsc{Reg}^{\textrm{hind}}_{\bb, T}(\pi) \triangleq V^{\textrm{hind}}_{\bb, T} - \E\left[\sum_{t = 1}^T r_t x^{\pi}_t\right]$ the \textit{hindsight regret} incurred by policy $\pi$, and $\textsc{Reg}^{\textrm{fluid}}_{\bb, T}(\pi) \triangleq V^{\textrm{fluid}}_{\bd} T - \E\left[\sum_{t = 1}^T r_t x^{\pi}_t\right]$ the \textit{fluid regret} incurred by policy $\pi$. Throughout this paper, the hindsight regret\footnote{The notion ``regret'' is typically used in a learning scenario. We slightly deviate from this convention as in our setting, the underlying distribution $F$ is known. Instead, our DM regrets for not knowing the future demand sequence in advance.} will be our main performance metric. We thereby denote $\textsc{Reg}_{\bb, T}(\pi)  \triangleq \textsc{Reg}^{{\rm hind}}_{\bb, T}(\pi)$. We are interested in characterizing the scaling of $\textsc{Reg}_{\bb, T}(\pi)$ as $T$ grows.

\subsection{The {\sf Certainty Equivalent} Heuristic}\label{sec:algo}

The \textsf{CE} heuristic is a special threshold-based policy that leverages the fluid relaxation (\ref{program: fluid primal}) to facilitate its dynamic decision-making. Following \cite{li2022online, bray2024logarithmic, jiang2022degeneracy}, we consider a dual-based {\sf CE}. The dual of (\ref{program: fluid primal}) has a compact form
\begin{align}\label{eq: dual-formulation}
    \min_{\blambda \in \RR^{m}_{\geq 0}} f_{\bd}(\blambda) \triangleq {\bd}^{\top} \blambda  + \E_{(\ba, r) \sim F}[(r - \ba^{\top} \blambda)^+],
\end{align}
where $f_{\bd}(\cdot)$ is convex for any value that $\bd$ takes. Strong duality holds, namely we have $V^{{\rm fluid}}_{\bd} = \min_{\blambda \in \RR^{m}_{\geq 0}} f_{\bd}(\blambda).$ We refer the reader to \cite{li2022online, balseiro2023survey} for the derivation of \eqref{eq: dual-formulation} and further discussions.

The {\sf CE} heuristic solves a perturbed version of (\ref{eq: dual-formulation}) at each period $t = 1, \dots, T$ to facilitate decision making. More precisely, let $\bb^{\sf CE}_t$ denote the sequence of remaining inventory of resources under the {\sf CE} policy, with $\bb^{\sf CE}_0 = \bb$, and $\bd^{\sf CE}_t \triangleq \frac{\bb^{\sf CE}_{t-1}}{T - t}$ be the corresponding normalized remaining inventory of resources. The {\sf CE} heuristic solves for 
\begin{equation}\label{program: to go fluid dual}
    \tilde \blambda_t \in \argmin_{\blambda \in \RR^{m}_{\geq 0}} \left(\bd^{\sf CE}_t\right)^{\top}\blambda + \E[(r - \ba^{\top} \blambda)^+]
    \end{equation}
at each period $t = 1, \dots, T$. Then the decision follows by setting threshold at $\ba^{\top} \tilde \blambda_t $:
\[ x^{\sf CE}_t = 1 \textrm{\ if and only if\ } r_t \geq \ba^{\top}_t {\tilde \blambda_t} \ \textrm{\ and \ } \ba_t \leq \bb^{\sf CE}_{t-1},\ \ \ \  t = 1, \dots, T, \]
where $ \bb^{\sf CE}_t = \bb^{\sf CE}_{t - 1} - x^{\sf CE}_t \ba_t$, namely, the induced sequence of remaining inventory of resources. The inequality in $\ba_t \leq \bb^{\sf CE}_{t-1}$ is element-wise. A formal description is provided in Algorithm \ref{alg:ce}. 

\begin{algorithm}[ht]
\caption{\textup{\textsf{Certainty Equivalent (CE) Heuristic} }}
\label{alg:ce}
\begin{algorithmic}[1]
\REQUIRE Problem instance $(F, \bb, T).$
        \FOR{$t=1,\ldots, T$}
            \STATE{Observe instance $(\ba_t,r_t)$.}
            \STATE{Solve \eqref{program: to go fluid dual} and obtain $\ft \in \argmin_{\blambda \in \RR^{m}_{\geq 0}}f_{\bd^{\sf CE}_t}(\blambda)$.}
            \STATE{Set \[ x^{\sf CE}_t = 1 \textrm{\ if and only if\ } r_t \geq \ba^{\top}_t {\tilde \blambda_t} \ \textrm{\ and \ } \ba_t \leq \bb^{\sf CE}_{t-1},\] and $x^{\sf CE}_t = 0$ otherwise, where $\bb^{\sf CE}_0 = \bb, \bb^{\sf CE}_t = \bb^{\sf CE}_{t - 1} - x^{\sf CE}_t \ba_t$ is the induced sequence of remaining resources. }
        \ENDFOR
\end{algorithmic}
\end{algorithm}
We take the dual-based definition of {\sf CE} in our setting because dual fluid relaxation (\ref{eq: dual-formulation}), in a cleaner form, both benefits the theoretical analysis and yields a more practical algorithm. In fact, the primal problem (\ref{program: fluid primal}) is a possibly infinite-dimensional optimization problem since we allow $F$ to be a general continuous distribution, while the dual fluid problem (\ref{eq: dual-formulation}) is a convex program with a simple feasible region (the $m$-dimensional positive orthant).

\subsection{Assumptions}\label{sec: assumption}
Our main results are presented under two sets of distributional assumptions on $F$. \textbf{Either} set of assumptions suffices to guarantee the desired performance of {\sf CE} (cf. Theorem \ref{thm: regret of CE}). We first introduce some additional notation. 
\paragraph{Additional notation. } Let $F^{\ba}(\cdot)$ denote the marginal CDF of $\ba$. Further let $F^r_{\ba'}(\cdot)$ denote the conditional CDF of the reward $r$ given $\ba = \ba'$. Let ${\rm supp}(F), {\rm supp}(F^{\ba})$ and ${\rm supp}(F^r_{\ba})$ denote the support of the corresponding distributions, respectively.


\begin{assumption}\label{assum: starting-from-zero}
The joint distribution $F$ satisfies the following conditions:
\begin{enumerate}[label=$(\roman*)$]
    \item (boundedness) There exist constants $0 < \Al \leq  \Au$ and $ \bar r > 0$, s.t. ${\rm supp}(F) \in [\Al, \Ab]^m \times [0, \bar r].$
    \item For any $\ba \in \textrm{supp}(F^{\ba}), \textrm{supp}(F^r_\ba) = [0,r_{\ba}]$ for some $r_{\ba} > 0.$
    \item ((reverse) H\"older condition) There exist non-negative constant $\beta$ and positive constants $\nu, c_\nu, c_\beta$, such that for any $ 0 \leq z_1 < z_2 \leq r_{\ba}$, $
   c_{\beta}(z_2 - z_1)^{1 + \beta} \leq F^r_{\ba}(z_2) - F^r_{\ba}(z_1) \leq c_{\nu}(z_2 - z_1)^{\nu}.$
\end{enumerate}

\end{assumption}

\begin{assumption}\label{assum: small-probability-starting-from-zero}
The joint distribution $F$ satisfies the following conditions:
\begin{enumerate}[label=$(\roman*)$]
\item (boundedness) There exist constants $0 < \Al \leq  \Au$ and $ \bar r > 0$, s.t. ${\rm supp}(F) \in [\Al, \Ab]^m \times [0, \bar r].$

\item ($F$ regularity) $\textrm{supp}(F^{\ba})$ is convex and compact, and the probability density associated to $F^{\ba}$ is bounded from below by a constant $l_f > 0.$ $\textrm{supp}(F^r_\ba) = [l(\ba),r(\ba)]$, where $l(\ba)$ and $r(\ba)$ are Lipschitz continuous as functions of $\ba$ with Lipschitz constant $c_L$.

\item There exist $\ba_o \in {\rm supp}(F^{\ba})$ and $r_0 > 0$ s.t. $l(\ba) = 0$ for all $\ba \in \cB(\ba_o, r_0) \cap \textrm{supp}(F^{\ba}).$

\item ((reverse) H\"older condition) There exist non-negative constant $\beta$ and positive constants $\nu, c_\nu, c_\beta$, such that for any $ l(\ba) \leq z_1 < z_2 \leq r(\ba)$, $
   c_{\beta}(z_2 - z_1)^{1 + \beta} \leq F^r_{\ba}(z_2) - F^r_{\ba}(z_1) \leq c_{\nu}(z_2 - z_1)^\nu.$
\end{enumerate}
\end{assumption}
Assumption \ref{assum: starting-from-zero} \textit{(i)} enforces boundedness on $\ba$ and $r$. In \textit{(ii)}, we require the conditional reward distribution given any $\ba$ to be supported on an interval starting from zero. This is a necessary restriction for {\sf CE} to achieve $o(\sqrt{T})$ regret guarantee, when $\ba$ is allowed to be arbitrarily distributed (cf. Example \ref{exmp:violating unique-dual} for bad examples violating \textit{(ii)}). \textit{(iii)} further excludes point masses on the conditional reward distributions ($\nu$ bounded away from zero), and restricts the minimal rate of probability accumulation ($\beta$ bounded away from $\infty$). In particular, $\beta = 0$ and $\nu = 1$ corresponds to the case where the conditional distribution of the reward has lower and upper bounded density on the support. We remark on the asymmetric role of parameters $\beta$ and $\nu$: $\beta$ critically affects the best achievable regret scaling, while $\nu$ only appears in the constant term and does not affect the regret scaling (cf. Theorem \ref{thm: regret of CE}, Proposition \ref{thm: fundamental lower bound}). We refer the readers to Appendix \ref{appendix: second-order} for more discussion on parameter $\beta$, in connection with the second-order growth conditions often made in the literature. 

Assumption \ref{assum: small-probability-starting-from-zero} relaxes the global requirement of Assumption \ref{assum: starting-from-zero} \textit{(ii)} to only a local condition \textit{(iii)}, to hold only in the neighborhood of an arbitrary point in the support. To permit this relaxation, Assumption \ref{assum: small-probability-starting-from-zero} \textit{(ii)} enforces natural regularity of $F$. Intuitively, it requires the conditional reward distributions to change ``continuously'' in $\ba$, thus excluding the possible ``holes'' in the support of the joint distribution. We still require a weak(local) condition in \textit{(iii)} to avoid hard corner-case instances, for which we provide further explanation and discussion in Appendix \ref{appendix: second-order} (cf. Example \ref{exmp:violating 2-order}).

In the sequel, we illustrate Assumption \ref{assum: starting-from-zero} and Assumption \ref{assum: small-probability-starting-from-zero} with several examples. 

\begin{exmp}[Multisecretary problem]\label{exmp:multisec}
    The multisecretary problem has $m = 1$ and $F^{\ba}(\cdot) = \delta_1(\cdot)$. Suppose the reward distribution is (i) without point mass, and (ii) supported on an interval starting from zero, namely, without gaps, then Assumption \ref{assum: starting-from-zero} holds. A notable example is a reward distribution specified by \emph{p.d.f.} $h_r(x) = (1 + \beta)|1 - 2 x|^{\beta},$ for $ 0 \leq x \leq 1$ with $\beta \geq 0$.
\end{exmp}

\begin{exmp}[Hyper-cube models]\label{exmp: hyper-cube}
Suppose ${\rm supp}\left(F\right) = [1, 2]^m\times [0, 1]$. The joint density function of $F$ is bounded from above and below by a pair of positive constants on the support. Then both Assumptions \ref{assum: starting-from-zero} and \ref{assum: small-probability-starting-from-zero} are satisfied with $\beta = 0$ and $\nu = 1$.
\end{exmp}

\begin{exmp}[Generalized linear models]\label{exmp: linear}
Let $r = g\left(\ba^{\top} \bz \right)+ \epsilon$ for a non-negative, Lipschitz continuous function $g$, a fixed vector $\bz \in \RR^m_{\geq 0}$ and a noise random variable $\epsilon$, with $F^{\ba}$ satisfying Assumption \ref{assum: small-probability-starting-from-zero} \textit{(i)} and \textit{(ii)}. $\epsilon$ is supported on an interval $[-\cL, \cL]$ with \emph{p.d.f.} bounded from above and below by a pair of positive constants. Suppose there exists $\ba'$ such that $g\left((\ba')^\top \bz\right) < L - \eta$ for a positive constant $\eta$. Then Assumption \ref{assum: small-probability-starting-from-zero} effectively holds with $\beta = 0$ and $\nu = 1$, modulo the possible negative rewards which can be dealt with in a straightforward manner. 
\end{exmp}

\begin{rmk}
The simple structure of multisecretary allows for a further relaxation of Assumption \ref{assum: starting-from-zero} \textit{(ii)}, such that ${\rm supp}\left(F^r_{\ba}\right)$ is an interval not necessarily starting from zero, under which our main results (cf. Theorem \ref{thm: regret of CE}) remain valid.
\end{rmk}

\begin{rmk}
We defer a formal analysis of Example \ref{exmp: linear} to Appendix \ref{appendix: example and assumption}. Note that negative rewards, as appeared in Example \ref{exmp: linear}, do not incur regret:  the optimal decision is always to reject them. They will not affect our analysis. 
\end{rmk}

Later in \Cref{sec:results}, we state the regret guarantee of \textsf{CE} for each of the above examples under no other assumptions (cf. Corollary \ref{coro: regret of examples}). To obtain similar regret guarantees,  prior work typically imposes additional fluid regularity conditions (cf. Section \ref{sec:results} for details). In later sections we provide a systematic review—and a comparison with our own assumptions—of these fluid regularity conditions, with a detailed investigation of non-degeneracy conditions in Section \ref{sec:implications}, and discussion on the second-order growth conditions in Appendix \ref{appendix: second-order}.
\section{Main Results}\label{sec:results}
\subsection{Achievable regret}
\begin{thm}[Achievable regret of {\sf CE}]\label{thm: regret of CE}
    Under either Assumption \ref{assum: starting-from-zero} or Assumption \ref{assum: small-probability-starting-from-zero}, the {\sf CE} heuristic achieves a hindsight regret  
    \begin{align*}
    \textsc{Reg}_{\bb, T}(\pi^{\sf CE}) \ \leq \begin{cases}
        \cC (\log T)^2 \quad\quad &\beta = 0,\\
        \tilde \cC T^{\frac{1}{2} - \frac{1}{2(1 + \beta)}}  (\log T)^{\frac{2 + \beta}{2 + 2 \beta}} \quad\quad &\beta > 0,
    \end{cases}
\end{align*}
for arbitrary $\bb$ and $T>3$, where $\cC$ and $\tilde \cC$ are constants independent of $T$ and $\bb$ and depend only on model primitives regarding $F$ through the two assumptions, respectively. 
\end{thm}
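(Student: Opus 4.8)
The plan is to reduce the theorem, via the decomposition of Lemma~\ref{lem: ce-regret-decomp} together with the uniform concentration estimate of Lemma~\ref{lem: concentration results of dual optimum}, to a deterministic summation over the periods $t=1,\dots,T$ of a ``local regret'' quantity that depends only on how far the normalized remaining inventory $\bd^{\sf CE}_t$ has drifted from the initial target $\bd=\bb/T$, and then to evaluate that sum, handling the periods near the horizon separately.

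\textbf{Step 1: from regret to a sum of inventory-drift terms.} Applying Lemma~\ref{lem: ce-regret-decomp}, I would bound $\textsc{Reg}_{\bb,T}(\pi^{\sf CE})$ by the expectation of $\sum_{t=1}^T \ell_t$ plus a boundary term accounting for resources left unused or prematurely exhausted, where $\ell_t$ is a per-period ``local fluid regret'' incurred because {\sf CE} re-solves \eqref{program: to go fluid dual} at the drifted inventory $\bd^{\sf CE}_t$ rather than at $\bd$. Either Assumption~\ref{assum: starting-from-zero} or Assumption~\ref{assum: small-probability-starting-from-zero} guarantees (cf.\ the smoothness discussion around Lemma~\ref{lem: nondiscrete non kink}) that the dual objective $f_{\bd}$ in \eqref{eq: dual-formulation} has a unique minimizer and that convex duality can be run quantitatively: the reverse-H\"older lower bound with exponent $1+\beta$ forces the minimizer map $\bd'\mapsto\blambda^{\star}_{\bd'}$ to have modulus $\|\blambda^{\star}_{\bd'}-\blambda^{\star}_{\bd}\|\lesssim\|\bd'-\bd\|^{1/(1+\beta)}$ locally, and Legendre duality then makes $\bd'\mapsto V^{{\rm fluid}}_{\bd'}$ a $\cC^{1,1/(1+\beta)}$ function near $\bd$, so that $\ell_t \lesssim \|\bd^{\sf CE}_t - \bd\|^{(2+\beta)/(1+\beta)}$; the other H\"older exponent $\nu$ enters only the multiplicative constants here, which is why it does not affect the rate. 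Thus everything reduces to controlling $\|\bd^{\sf CE}_t-\bd\|$, uniformly in $t$.

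\textbf{Step 2: uniform concentration of the remaining inventory (the crux).} Here I would invoke Lemma~\ref{lem: concentration results of dual optimum}. The key structural fact is that re-solving makes $\bd^{\sf CE}_t$ a martingale along the binding coordinates --- the conditional mean of the resource consumed at period $t$ equals $\E[\ba\,\mathbbm 1\{r\ge\ba^\top\tilde\blambda_t\}]$, which by optimality of $\tilde\blambda_t$ for $f_{\bd^{\sf CE}_t}$ matches $\bd^{\sf CE}_t$ on those coordinates --- with conditional increment variance of order $(T-t)^{-2}$, so a Freedman-type inequality yields $\|\bd^{\sf CE}_t-\bd\|\lesssim\sqrt{\log T/(T-t)}$ simultaneously for all $t\le T-c\log T$ on an event of probability $1-T^{-\Omega(1)}$. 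The subtle point, and precisely where the classical ``fixed binding set'' martingale argument breaks down once fluid regularity is dropped, is that the binding-constraint set of $f_{\bd^{\sf CE}_t}$ (hence the relevant coordinates) may change over time, so the modulus estimate of Step~1 and the martingale control must be made uniform over inventory configurations and the associated sample-average/perturbed dual problems; to do this I would use the peeling device from empirical process theory, partitioning the dual feasible region into dyadic shells around $\blambda^{\star}_{\bd}$, controlling the fluctuation of the relevant empirical averages on each shell by a covering/entropy bound, and taking a union bound --- this is the step that costs the unavoidable extra $\log T$. One must also check that the boundary term from Lemma~\ref{lem: ce-regret-decomp} is of the same or smaller order, which again follows from $\|\bd^{\sf CE}_t-\bd\|$ being small together with the H\"older structure.

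\textbf{Step 3: summation, endgame, and the two assumption sets.} On the good event Steps~1--2 give $\ell_t\lesssim(\log T/(T-t))^{\alpha}$ with $\alpha=\tfrac{2+\beta}{2(1+\beta)}\in(\tfrac12,1]$ for $t\le T-c\log T$, while for the final $O(\log T)$ periods and on the complementary (polynomially small) event one uses the crude bound $\ell_t=O(1)$, whose total contribution is $O(\log T)$. For $\beta>0$ we have $\alpha<1$, so $\sum_{s\ge 1}(\log T/s)^{\alpha}\lesssim(\log T)^{\alpha}T^{1-\alpha}$, and since $1-\alpha=\tfrac{\beta}{2(1+\beta)}=\tfrac12-\tfrac1{2(1+\beta)}$ and $\alpha=\tfrac{2+\beta}{2+2\beta}$ this is exactly the claimed $\tilde{\mathcal O}\!\big(T^{\frac12-\frac1{2(1+\beta)}}(\log T)^{\frac{2+\beta}{2+2\beta}}\big)$. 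For $\beta=0$ we have $\alpha=1$, the series degenerates to a harmonic sum contributing an extra $\log T$, giving $\mathcal O((\log T)^2)$; equivalently the constant $\tfrac1{1-\alpha}$ that blows up as $\beta\downarrow 0$ is what the separate $\beta=0$ case records. Finally, the two assumption sets feed the argument identically once Step~2 holds: Assumption~\ref{assum: starting-from-zero} provides the needed structure globally, whereas under Assumption~\ref{assum: small-probability-starting-from-zero} one first uses the $F$-regularity (part (ii)) and the local gap-freeness (part (iii)) to localize to a neighborhood of $\blambda^{\star}_{\bd}$, after which the identical estimates apply. The hard part is Step~2: proving the $\sqrt{\log T/(T-t)}$ concentration of $\bd^{\sf CE}_t$ (equivalently of $\tilde\blambda_t$) \emph{without} any non-degeneracy assumption is self-referential --- the inventory stays near $\bd$ only while the thresholds stay near $\blambda^{\star}_{\bd}$, which holds only while the inventory stays near $\bd$ --- so it must be closed by a bootstrap over $t$, and because the binding set can genuinely change there is no single linear functional of the inventory that is a clean martingale, which is what forces the empirical-process peeling rather than a one-line Azuma bound; a secondary nuisance is the endgame, where during the last $O(\log T)$ periods the re-solved inventory really does fluctuate at the $O(1)$ scale and one must verify their aggregate price does not exceed the target rate.
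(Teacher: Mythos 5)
There is a genuine gap, and it stems from a misreading of what Lemma \ref{lem: ce-regret-decomp} and Lemma \ref{lem: concentration results of dual optimum} actually compare. Your Step 1 reduces the regret to a sum of terms $\ell_t \lesssim \|\bd^{\sf CE}_t - \bd\|^{(2+\beta)/(1+\beta)}$, i.e.\ to the drift of the current normalized inventory from the \emph{initial} fluid point $\bd = \bb/T$, and your Step 2 then tries to prove $\|\bd^{\sf CE}_t-\bd\|\lesssim\sqrt{\log T/(T-t)}$. Neither step is what the cited lemmas provide, and the route cannot work in general. The per-period terms in Lemma \ref{lem: ce-regret-decomp} compare the {\sf CE} threshold $\ba_t^{\top}\ft$ (the \emph{fluid} dual at the current inventory $\bb_{t-1}$) with the \emph{hindsight} duals $\dt,\dtt$ (the empirical/SAA duals at the \emph{same} current inventory and the realized future $\cI_t$). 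Lemma \ref{lem: concentration results of dual optimum} is precisely an SAA concentration bound between these two objects at a common inventory --- it holds uniformly over $\bb$ (this is why the helper Lemmas \ref{lemma: concentration_gradient} and \ref{lemma:concentration_int} are stated ``for any $\bb\in\RR^m_{\ge 0}$'') --- so no control of the inventory trajectory is needed at all: one simply plugs the concentration bound into the decomposition and sums $(\log(T-t)/(T-t))^{(2+\beta)/(2+2\beta)}$ over $t$. By contrast, measuring the per-period loss against the fluid solution at the \emph{initial} $\bd$ is effectively bounding the fluid regret, which in degenerate instances exceeds the hindsight regret by $\Theta(\sqrt T)$; a route through the initial fluid point therefore cannot yield the claimed $(\log T)^2$ bound for $\beta=0$ without additional cancellation you do not supply.

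Moreover, your Step 2 is not actually established even on its own terms. You correctly observe that the martingale property of $\bd^{\sf CE}_t$ holds only on the binding coordinates, that the binding set can change over time, that non-binding coordinates can drift, and that the argument is ``self-referential'' and must be ``closed by a bootstrap over $t$'' --- but you then only gesture at a peeling/covering argument over dyadic shells without carrying it out. This is exactly the obstruction that forces prior work to assume non-degeneracy, and the paper's contribution is to sidestep it entirely rather than to repair the inventory-martingale argument: the peeling device is applied not to the inventory trajectory but to the function classes indexing the SAA error of the dual problem (the classes $\cG$ in Appendix \ref{appendix:concentration-helper}), which is why the resulting bound is uniform in $\bb$ and requires no bootstrap. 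Your Step 3 arithmetic (the integral $\int_1^T(\log t/t)^{\alpha}\,{\rm d}t$ with $\alpha=\tfrac{2+\beta}{2+2\beta}$, the harmonic degeneration at $\beta=0$, and the $O(\log T)$ endgame/low-probability contributions) matches the paper's final computation, but it rests on the two unestablished earlier steps.
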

\begin{rmk}
    Precise forms of $\cC$ and $\tilde \cC$ are provided in Appendix \ref{appendix: proof of ce regret thm}.
\end{rmk}
Theorem \ref{thm: regret of CE} establishes theoretical guarantees for {\sf CE} for a wide range of OLP instances, requiring only that $F$ belongs to specific distribution classes that are both natural and easy to verify. This significantly relaxes the conditions typically needed to obtain $o(\sqrt{T})$ regret, in particular, the non-degeneracy conditions and/or (uniform) second-order growth conditions. In Section \ref{sec:implications} and Appendix \ref{appendix: second-order}, we systematically examine the relationship between these fluid regularity conditions and our conditions (Assumptions \ref{assum: starting-from-zero} and \ref{assum: small-probability-starting-from-zero}), demonstrating that the former are not only technically unnecessary for algorithmic analysis but are often overly restrictive. In that sense, Theorem \ref{thm: regret of CE} extends the state-of-the-art understanding of {\sf CE}’s range of effectiveness. We provide a proof sketch of Theorem \ref{thm: regret of CE} in Section \ref{sec:sketch}. The detailed proof can be found in Appendix \ref{appendix: proof of ce regret thm}. With Theorem \ref{thm: regret of CE}, the following regret scaling of {\sf CE} on concrete examples is an immediate corollary.
\begin{coro}\label{coro: regret of examples}
   The regret guarantee in Theorem \ref{thm: regret of CE} holds for multisecretary instances of Example \ref{exmp:multisec} with each choice of $\beta$. For instances of Example \ref{exmp: hyper-cube} and Example \ref{exmp: linear}, \textsf{CE} achieves $\mathcal{O}\left((\log T)^2\right)$ regret. 
 \end{coro}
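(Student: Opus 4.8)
The plan is essentially a verification exercise: for each example I will identify which of Assumption \ref{assum: starting-from-zero} or Assumption \ref{assum: small-probability-starting-from-zero} it satisfies, and with which value of $\beta$, and then quote Theorem \ref{thm: regret of CE}. All the analytic work is already absorbed into that theorem, so the corollary follows once the distributional hypotheses have been checked.

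For the multisecretary instances of Example \ref{exmp:multisec} I would verify Assumption \ref{assum: starting-from-zero}. Boundedness (item $(i)$) and the requirement that $\mathrm{supp}(F^r_\ba)$ be an interval starting at $0$ (item $(ii)$) are exactly the stated hypotheses on the reward law, with the single resource consumption being the constant $1$ so that $\Al = \Au = 1$. The only point requiring a short computation is the (reverse) H\"older condition $(iii)$ for the density $h_r(x) = (1+\beta)|1 - 2x|^\beta$ on $[0,1]$: integrating gives $F^r_\ba(\tfrac12 + s) - F^r_\ba(\tfrac12) = 2^{\beta} s^{1+\beta}$ for $s \in [0,\tfrac12]$, so the slowest accumulation of probability occurs at the midpoint and is of exact order $(z_2 - z_1)^{1+\beta}$. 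The global lower bound $c_\beta(z_2 - z_1)^{1+\beta}$ then follows from superadditivity of $t \mapsto t^{1+\beta}$ on $[0,\infty)$, treating separately the cases where $z_1, z_2$ lie on the same side of $\tfrac12$ and where they straddle it; the upper bound with $\nu = 1$ is immediate from $h_r \le 1 + \beta$. Theorem \ref{thm: regret of CE} applied with this $\beta$ then gives the stated scaling for every $\beta \ge 0$.

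For the hyper-cube model of Example \ref{exmp: hyper-cube} I would check Assumption \ref{assum: starting-from-zero} with $\beta = 0$, $\nu = 1$: boundedness holds with $\Al = 1$, $\Au = 2$, $\bar r = 1$, and since the joint density is bounded above and below by positive constants on $[1,2]^m \times [0,1]$, so is the $\ba$-marginal density and hence every conditional density of $F^r_\ba$ on $[0,1]$, which simultaneously forces $\mathrm{supp}(F^r_\ba) = [0,1]$ and the two-sided increment bound placing us in the $\beta = 0$ regime. The generalized linear model of Example \ref{exmp: linear} is handled analogously via Assumption \ref{assum: small-probability-starting-from-zero}: conditionally on $\ba$, the reward $g(\ba^\top \bz) + \epsilon$ is supported on $[l(\ba), r(\ba)]$ with $l(\ba) = g(\ba^\top \bz) - \cL$ and $r(\ba) = g(\ba^\top \bz) + \cL$, both Lipschitz in $\ba$ because $g$ is Lipschitz and $\ba \mapsto \ba^\top \bz$ is linear; the density bounds on $\epsilon$ give the H\"older condition with $\beta = 0$, $\nu = 1$; and item $(ii)$ is inherited from $F^\ba$. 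In both cases Theorem \ref{thm: regret of CE} yields the $\mathcal{O}\left((\log T)^2\right)$ bound.

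The one genuinely non-mechanical step concerns item $(iii)$ of Assumption \ref{assum: small-probability-starting-from-zero} in Example \ref{exmp: linear}: the hypothesis $g((\ba')^\top \bz) < \cL - \eta$ only forces $l(\cdot) < 0$, not $l(\cdot) = 0$, on a neighborhood of $\ba'$. I would close this gap using the observation (recorded in the remark following Example \ref{exmp: linear}) that negative rewards incur no regret: the {\sf CE} acceptance threshold $\ba^\top \tilde\blambda_t$ is nonnegative since $\tilde\blambda_t \ge 0$ and $\ba \ge \Al > 0$, and the hindsight LP optimum sets $x_t = 0$ whenever $r_t < 0$, so thinning the request stream to the requests with $r \ge 0$ changes neither the {\sf CE} trajectory nor the hindsight value. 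On the thinned instance, the conditional reward given such $\ba$ near $\ba'$ is supported on $[0,\, g(\ba^\top \bz) + \cL]$ with a density bounded above and below (a renormalized restriction of a bounded density), so Assumption \ref{assum: small-probability-starting-from-zero} holds there with $\beta = 0$ and Theorem \ref{thm: regret of CE} applies. The remaining bookkeeping — that thinning turns $T$ into a random $\tilde T$ concentrating around a constant fraction of $T$, and that the thinned $\ba$-marginal still has a density bounded below — is routine, and I would relegate it to Appendix \ref{appendix: example and assumption}.
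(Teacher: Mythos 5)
Your proposal is correct and follows essentially the same route as the paper: the corollary is obtained by verifying Assumption \ref{assum: starting-from-zero} (multisecretary, hyper-cube) or Assumption \ref{assum: small-probability-starting-from-zero} (generalized linear) for each example and invoking Theorem \ref{thm: regret of CE}, with the only non-mechanical step being the negative-reward issue in Example \ref{exmp: linear}, which the paper handles exactly as you do — via Lemma \ref{lem: linear-example-satisfy-our-assumption} showing a relaxed version of item \textit{(iii)} with $l(\ba)\le 0$ and then restricting to the nonnegative-reward part of the distribution in Appendix \ref{appendix: example and assumption}. Your explicit superadditivity check of the reverse H\"older condition for $h_r(x)=(1+\beta)|1-2x|^{\beta}$ is a detail the paper leaves implicit, but it is the intended verification.
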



\subsection{Fundamental regret lower bound}

The achievable regret of \textsf{CE} stated in Theorem \ref{thm: regret of CE} is near optimal, as we formalize through a fundamental regret lower bound. 
\begin{prop}[Fundamental Regret Lower Bound]\label{thm: fundamental lower bound}
There exists OLP instance $(F, \bb, T)$ satisfying Assumption \ref{assum: starting-from-zero} and Assumption \ref{assum: small-probability-starting-from-zero}, such that 
\begin{align*}
    \inf_{\pi}\textsc{reg}_{\bb,T}(\pi) \geq \frac{c}{1 + \beta} T^{\frac{1}{2} - \frac{1}{2(1 + \beta)}} \mathbbm{1}(\beta > 0) + c \log T \mathbbm{1}(\beta = 0),
\end{align*}
where $c$ is a constant independent of $T, \bb$ and $\beta$. 
\end{prop}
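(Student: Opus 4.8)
The plan is to establish the bound on a single worst-case instance and reduce the hindsight regret of an \emph{arbitrary} admissible policy to an irreducible ``threshold-tracking'' loss that is localized to time windows. For the instance I would use a multisecretary-type construction: $m=1$, horizon $T$, resource consumption $\ba_t\sim\mathrm{Unif}[\Al,\Au]$, and, for a parameter $\gamma>0$, conditional reward law $r_t\mid\ba_t$ with density proportional to $|\gamma\ba_t-x|^{\beta}$ on $[0,2\gamma\ba_t]$ (symmetric about $\gamma\ba_t$); choosing $\bb=\lceil\tfrac12 T\,\E[\ba]\rceil$ makes the fluid dual optimum $\blambda^{\star}=\gamma$, so the fluid acceptance boundary $\{r=\ba\blambda^{\star}\}$ runs exactly through the points $\{(\ba,\gamma\ba)\}$ where the conditional reward c.d.f.s are order-$\beta$ flat. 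When $\ba_t\equiv 1$ and $\gamma=\tfrac12$ this is Example~\ref{exmp:multisec}; the (benign) uniform consumption is included only so that the \emph{same} $F$ also satisfies Assumption~\ref{assum: small-probability-starting-from-zero} in addition to Assumption~\ref{assum: starting-from-zero}. Checking both assumptions (with this $\beta$, $\nu=1$, $l(\ba)\equiv 0$, $r(\ba)=2\gamma\ba$) is routine by integrating the density and using superadditivity of $u\mapsto u^{1+\beta}$. The point of the construction is precisely that the fluid threshold sits where the conditional reward density vanishes to order $\beta$, which is what forces the regret.

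For the reduction, fix a suffix block $W_\ell=\{T-\ell+1,\dots,T\}$ and condition on $\cH_{T-\ell}$, so the remaining inventory $\bb^{\pi}_{T-\ell}$ is a fixed number. The restriction of the hindsight LP~\eqref{program: offline primal} to $W_\ell$ is governed by a block shadow price $\Lambda^{\star}_\ell$ (accept $s\in W_\ell$ when $r_s>\ba_s\Lambda^{\star}_\ell$), and by the reverse-H\"older lower bound (part~(iii) of Assumption~\ref{assum: starting-from-zero}, resp.\ part~(iv) of Assumption~\ref{assum: small-probability-starting-from-zero}) together with a Berry--Esseen / local-CLT estimate for the \emph{exogenous} centered partial sum $\sum_{s\in W_\ell}(\mathbbm{1}\{r_s>\ba_s\gamma\}-\tfrac12)$, one obtains $|\Lambda^{\star}_\ell-\gamma|\asymp\rho_\ell:=\ell^{-1/(2(1+\beta))}$ with probability at least a positive constant and with $\mathrm{sign}(\Lambda^{\star}_\ell-\gamma)$ independent of $\cH_{T-\ell}$. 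I would then show pathwise that, on this event, any admissible $\pi$ must over- or under-commit its block budget by $\gtrsim\sqrt\ell$ requests relative to the hindsight-optimal amount (its entering budget $\bb^{\pi}_{T-\ell}$ is fixed and cannot anticipate the block's fluctuation), and that rebalancing such a discrepancy forces $\gtrsim\sqrt\ell$ ``marginal'' accept/reject decisions on requests lying within reward-distance $\asymp\rho_\ell$ of the fluid boundary, each costing $\gtrsim\rho_\ell$ relative to the hindsight choice. This yields a within-block regret $\gtrsim\sqrt\ell\cdot\rho_\ell\asymp\ell^{\,1/2-1/(2(1+\beta))}$.

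It remains to aggregate over scales. For $\beta>0$, a single block of length $\ell=\lceil T/2\rceil$ already gives $\textsc{reg}_{\bb,T}(\pi)\gtrsim T^{1/2-1/(2(1+\beta))}$; tracking the reverse-H\"older ``price'' integral produces a prefactor of order $1/(1+\beta)$, matching the $\tfrac{c}{1+\beta}$ in the statement. For $\beta=0$ the per-block bound degenerates to $\Omega(1)$, so I would instead use the $\Theta(\log T)$ dyadic ring blocks $\ell\in\{T,T/2,T/4,\dots,\Theta(1)\}$: on disjoint blocks the exogenous increments are independent, each contributes $\Omega(1)$ by the same argument (now $\rho_\ell\asymp\ell^{-1/2}$, so $\sqrt\ell\cdot\rho_\ell\asymp 1$), and summing gives the claimed $\Omega(\log T)$.

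The step I expect to be the main obstacle is the pathwise reduction in the second paragraph: ruling out that a cleverly adaptive non-anticipating policy --- one that continually re-solves and rebalances its threshold, as \textsf{CE} itself does --- escapes the $\sqrt\ell\cdot\rho_\ell$ loss by front- or back-loading budget across blocks. Making this precise requires a decomposition of the within-block hindsight value into a ``threshold term'' plus genuinely lower-order remainders, together with an anti-concentration statement carrying explicit $\beta$-dependent constants certifying that the policy's pre-block commitment cannot straddle both signs of $\Lambda^{\star}_\ell-\gamma$. This is the continuous, $\beta$-graded analogue of the multisecretary lower bounds of \cite{arlotto2019uniformly,bray2024logarithmic,besbes2024dynamic}, whose coupling/anti-concentration template I would follow; the genuinely new ingredient is the reverse-H\"older-based ``price per marginal request'' that yields the interpolation exponent $\tfrac12-\tfrac1{2(1+\beta)}$.
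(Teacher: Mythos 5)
Your instance and mechanism are essentially the paper's: the paper also takes the multisecretary distribution with density $(1+\beta)\lvert 1-2x\rvert^{\beta}$ and budget $\lfloor T/2\rfloor$, so that the fluid threshold sits exactly at the $\beta$-flat point of the reward c.d.f. The difference is in how the hard step is discharged. The paper's proof is a two-line reduction: it observes that this instance is the $g=0$ case of the gapped family analyzed in \cite{besbes2024dynamic}, checks that the proof of their Theorem~1 goes through verbatim with $g=0$, and invokes that result (together with \cite{bray2024logarithmic} for $\beta=0$). You instead propose to re-derive the lower bound from scratch via block decomposition, anti-concentration of the exogenous partial sums, and a $\sqrt{\ell}\cdot\rho_\ell$ per-block loss --- which is precisely the template of the cited proofs --- and you correctly flag that the pathwise reduction ruling out adaptive rebalancing is the crux. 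As written, your proposal leaves exactly that crux open, i.e., the one step the paper outsources by citation; so your sketch is not yet a proof, though it is the right proof to sketch.

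Two further remarks. First, your modification to $\ba_t\sim\mathrm{Unif}[\Al,\Au]$ is well motivated (a point mass $\delta_1$ only degenerately satisfies the density lower bound in Assumption~\ref{assum: small-probability-starting-from-zero}\textit{(ii)}), but it is not free: with heterogeneous consumption the hindsight problem is a fractional knapsack sorted by $r_t/\ba_t$ rather than a top-$b$ selection, so you can no longer cite the multisecretary lower bound directly and must carry the block/anti-concentration argument through in consumption units. If your goal is the shortest correct proof, keep $F^{\ba}=\delta_1$ as the paper does and cite \cite{besbes2024dynamic} with $g=0$; if you insist on the uniform consumption, you owe the knapsack version of the argument. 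Second, your claimed prefactor $c/(1+\beta)$ needs to come out of the explicit constants in the anti-concentration and reverse-H\"older steps; asserting that ``tracking the price integral produces it'' is not yet a derivation, whereas the citation route inherits the constant from \cite{besbes2024dynamic} directly.
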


\begin{rmk}
    The fundamental regret lower bound is inspired by earlier works on the multisecretary problem (\cite{bray2024logarithmic} for $\beta = 0$ and \cite{besbes2024dynamic} for $\beta > 0$). While \cite{besbes2024dynamic} initially considers reward distributions with a gap in their support, we observe that these gaps can be removed so that the resulting instances satisfy our Assumptions, yet their analysis still works (formally, by setting $g = 0$ in the proof of Theorem 1 in \cite{besbes2024dynamic}) and the regret lower bound still holds. In particular, the lower bound is attained in the multisecretary instance of Example~\ref{exmp:multisec}. We refer the interested reader to \cite{besbes2024dynamic} (cf. Theorem 1) for detailed proof.
\end{rmk}

  Combining Theorem \ref{thm: regret of CE} with Proposition \ref{thm: fundamental lower bound}, we find that under our assumptions, the \textsf{CE} heursitic can achieve a regret that matches the fundamental lower bound up to a polylogarithmic factor in $T$, thus demonstrating its near-optimality. 



\section{Does Degeneracy Cause {\sf CE} to Fail?}\label{sec:implications}



Fluid degeneracy is generally believed to cause the failure of {\sf CE} in stochastic control. In this section, we challenge—and refine—this perspective within the OLP framework. Building on Theorem \ref{thm: regret of CE}, we demonstrate a large class of instances that, despite violating standard non-degeneracy conditions in the literature, still achieve low regret. By closely examining the concept of degeneracy, we attribute this deviation from conventional understanding to a key distinction between how degeneracy manifests in discrete versus non-discrete contexts. This analysis enables us to systematically review existing notions of non-degeneracy and shed light on the critical geometric structures of the fluid problem that fundamentally drive the regret accumulation of {\sf CE}.


\subsection{Existing non-degeneracy conditions}
Non-degeneracy conditions are imposed on the associated fluid problem of an OLP instance. More precisely, we specify these conditions with respect to a fluid instance $(F, \bd)$, where we recall that $\bd = \frac{\bb}{T}$ is the normalized (initial) resource capacity.

\begin{assumption}[Primal stability condition]\label{def: dual stability}
There exists an optimal $\bx^{\star}_{\bd}$ to primal problem (\ref{program: fluid primal}), and a neighborhood $\mathcal{N}(\bd)$ of $\bd$, such that for any $\bd' \in \mathcal{N}(\bd)$, there exists an optimal $\bx^{\star}_{\bd'}$ to the perturbed problem (\ref{program: fluid primal}) with RHS constraint $\bd'$, for which the set of binding resource constraints remains unchanged comparing to that of $\bx^{\star}_{\bd}$.
\end{assumption}

\begin{assumption}[Strict complementary slackness condition]\label{def: dual non-degeneracy} 
    $\blambda^{\star}$ is the unique solution to the dual problem (\ref{eq: dual-formulation}) $\min_{\blambda \in \RR^m_{\geq 0}} f_{\bd}(\blambda)$. Furthermore, there exists an optimal $\bx^{\star}$ such that strict complementary slackness is satisfied. In particular, when $F$ has no point mass, the condition is typically stated without involving $\bx^{\star}$: for any $i = 1, \dots, m$, $\blambda^{\star}_i = 0$ if and only if $ d_i - \E_{(\ba, r) \sim F}\left[a_i \mathbbm{1}\left(r > \ba^{\top}\blambda^{\star}\right)\right] > 0.$
\end{assumption}
Assumptions \ref{def: dual stability} and \ref{def: dual non-degeneracy} are two commonly imposed ``non-degeneracy'' conditions in prior work (cf. Assumption 2 and SC 8 of \cite{balseiro2023survey}, Assumption 2(c) of \cite{li2022online}, Assumption 5 of \cite{bray2024logarithmic}), where it is known that Assumption \ref{def: dual non-degeneracy}, together with certain smoothness conditions on $F$ implies Assumption \ref{def: dual stability} (cf. Lemma 3 of \cite{balseiro2023survey}). We note that the definition of Assumption \ref{def: dual stability} allows for potentially multiple primal optimal solutions\footnote{Here we say two solutions $\bx$ and $\bx'$ are different in a probability sense. Namely, $\PP\left(\bx(\ba, r) \neq \bx'(\ba, r)\right) > 0$}. Assumption \ref{def: dual non-degeneracy} usually appears in its simplified form that only contains $\blambda^{\star}$.

Assumptions \ref{def: dual stability} and \ref{def: dual non-degeneracy} are critical to the low-regret analysis in the aforementioned prior work. More precisely, these non-degeneracy conditions ensure that the binding resource constraints in the fluid problems solved in each step of Algorithm \ref{alg:ce} remain unchanged during the algorithm's execution, thus permitting the martingale-based argument used in these prior work that ultimately leads to $\mathcal{O}(\log T)$ regret. The only known performance guarantee of {\sf CE} that does not rely on these non-degeneracy conditions is provided by \cite{jiang2022degeneracy}, where they instead have to impose a strong, uniform version of second-order growth conditions, for which we defer more discussion to Appendix \ref{appendix: second-order}.

In addition to Assumptions \ref{def: dual stability} and \ref{def: dual non-degeneracy}, the following condition is often assumed implicitly.
\begin{assumption}[Dual uniqueness condition]\label{def: unique-dual} 
     $\blambda^{\star}$ is the unique solution to the dual problem (\ref{eq: dual-formulation}).
\end{assumption}
Assumption~\ref{def: unique-dual} and Assumptions~\ref{def: dual stability}, \ref{def: dual non-degeneracy} do not imply each other in general. 
\begin{exmp}\label{exmp:dual unique but non-degenerate}
    $\P(\ba = 1) = \P(\ba = 2) = \frac{1}{2}$. $ r \equiv {\sf Unif}[0, 1]$ for both values of $\ba$. The initial inventory for the resource is $\bb = \bd T = 1.5 T$. Then Assumption~\ref{def: dual stability} is violated yet Assumption~\ref{def: unique-dual} holds.  
\end{exmp}
\begin{exmp}\label{exmp: non-degenerate but multiple dual}
    $\ba \equiv 1$. $ r = {\sf Unif}\left([0, \frac{1}{3}]\cup [\frac{2}{3}, 1]\right)$. The initial inventory for the resource is $\bb = \bd T = 0.5 T$. Then Assumption~\ref{def: dual stability} and \ref{def: dual non-degeneracy} both hold yet Assumption~\ref{def: unique-dual} is violated.  
\end{exmp}
\begin{rmk}
    Example~\ref{exmp: non-degenerate but multiple dual} is the multisecretary problem with gap on its reward distribution.
\end{rmk} 
\begin{rmk}
    Some authors define degeneracy as the violation of dual uniqueness Assumption \ref{def: unique-dual}. However, as shown in Examples~\ref{exmp:dual unique but non-degenerate} and \ref{exmp: non-degenerate but multiple dual}, dual uniqueness and the commonly adopted notion of non-degeneracy are in general not equivalent. In fact, the dual uniqueness is typically implied instead by the second-order growth conditions (see e.g. Assumption 2(b) of \cite{li2022online} and Assumption 2.1 of \cite{balseiro2023dynamic}). In this work, we follow the convention \citep{li2022online, bray2024logarithmic, balseiro2023survey, jiang2022degeneracy} where ``degeneracy'' refers to the violation Assumptions \ref{def: dual stability} and \ref{def: dual non-degeneracy}.
\end{rmk}


\subsection{The (near)optimality of {\sf CE} beyond non-degeneracy}

All aforementioned non-degeneracy conditions impose restrictions on both the underlying distribution $F$ and the normalized resource capacity, $\bd$, whereas Assumptions \ref{assum: starting-from-zero} and \ref{assum: small-probability-starting-from-zero} specify distribution classes which purely rely on the properties of $F$. The following lemma explicitly characterizes the distinction between Assumptions \ref{assum: starting-from-zero} and \ref{assum: small-probability-starting-from-zero} and the non-degeneracy conditions in the literature. 

\begin{lemma}\label{lem: non essential non degeneracy}
For any distribution $F$ satisfying Assumption \ref{assum: starting-from-zero} or Assumption \ref{assum: small-probability-starting-from-zero}, there exists $\bd \in \RR^m_{\geq 0}$ such that $(F, \bd)$ violates the non-degeneracy conditions (Assumptions \ref{def: dual stability} and \ref{def: dual non-degeneracy}).
\end{lemma}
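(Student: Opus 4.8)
The plan is to exhibit one ``critical'' capacity that violates non-degeneracy for \emph{every} admissible $F$: take $\bd^\star := \E_{(\ba,r)\sim F}[\ba]$, the normalized capacity at which the decision maker can just barely afford to accept every request. Since ${\rm supp}(F)\subseteq[\Al,\Ab]^m\times[0,\bar r]$ with $\Al>0$ we have $\bd^\star\in\RR^m_{\geq 0}$, and since the (reverse) H\"older condition renders each conditional reward CDF atomless, $\P(r>0)=1$, so also $\bd^\star=\E[\ba\,\mathbbm{1}(r>0)]$. I will show $(F,\bd^\star)$ violates \emph{both} Assumption \ref{def: dual non-degeneracy} and Assumption \ref{def: dual stability} --- violating either one already proves the lemma.

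\emph{Dual side.} The first claim is that $\blambda^\star=\mathbf 0$ is the unique minimizer of $f_{\bd^\star}$ over $\RR^m_{\geq 0}$. Writing $(\bd^\star)^\top\blambda=\E[\ba^\top\blambda]$ and using $(r-t)^+-r=-\min(r,t)$ for $t\geq 0$, one gets the one-line identity
\[
f_{\bd^\star}(\blambda)-f_{\bd^\star}(\mathbf 0)=\E\big[(\ba^\top\blambda-r)^+\big]\ \geq\ 0,\qquad \blambda\in\RR^m_{\geq 0},
\]
so $\mathbf 0$ is optimal. For uniqueness, if $\blambda\neq\mathbf 0$ then $\ba^\top\blambda\geq\Al\|\blambda\|_1=:\delta>0$ a.s.; since for a positive-probability set of $\ba$ the conditional support of $r$ is an interval with left endpoint $0$ (all $\ba$ under Assumption \ref{assum: starting-from-zero}; under Assumption \ref{assum: small-probability-starting-from-zero}, those $\ba\in\cB(\ba_o,r_0)\cap{\rm supp}(F^{\ba})$, a set of positive $F^{\ba}$-mass because the density is bounded below on the convex body ${\rm supp}(F^{\ba})$), the lower H\"older bound gives $\P(r<\delta)>0$, hence $\P(\ba^\top\blambda>r)>0$ and $\E[(\ba^\top\blambda-r)^+]>0$. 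So $\blambda^\star=\mathbf 0$ is the \emph{unique} dual optimum --- dual uniqueness (Assumption \ref{def: unique-dual}) in fact holds --- yet strict complementary slackness fails: for every $i$ we have both $\blambda^\star_i=0$ and $d^\star_i-\E[a_i\mathbbm{1}(r>\ba^\top\blambda^\star)]=\E[a_i]-\E[a_i\mathbbm{1}(r>0)]=0$, which is not strictly positive. Hence Assumption \ref{def: dual non-degeneracy} is violated at $(F,\bd^\star)$.

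\emph{Primal side.} In the fluid primal \eqref{program: fluid primal} at $\bd^\star$, since $r\geq 0$ and $\P(r>0)=1$ the objective $\E[r\,x(\ba,r)]$ is maximized precisely at $x\equiv 1$, which is feasible because $\E[\ba\cdot 1]=\E[\ba]=\bd^\star$; this is the unique optimum (a.s.), and it makes all $m$ resource constraints binding. Perturbing to $\bd'=\bd^\star+\epsilon\,\boe_1$ with $\epsilon>0$, the policy $x\equiv 1$ stays feasible and hence still uniquely optimal, but the first resource constraint is now slack ($\E[a_1]=d^\star_1<d'_1$), so the set of binding constraints has changed. Since $\epsilon$ can be made arbitrarily small, no neighborhood of $\bd^\star$ has the stability property, so Assumption \ref{def: dual stability} fails at $(F,\bd^\star)$ as well.

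The lemma is essentially a construction, so there is no deep obstacle; the one point needing care is that $\blambda^\star=\mathbf 0$ is not merely \emph{a} dual optimum but the \emph{unique} one. This is exactly where the ``support starting at $0$'' hypothesis (global under Assumption \ref{assum: starting-from-zero}, local under Assumption \ref{assum: small-probability-starting-from-zero}) enters --- through the lower H\"older bound it forces any nonzero $\blambda$ to strictly increase the dual objective. The remaining ingredients (atomlessness of $F^r_\ba$, feasibility and positivity of $\bd^\star$, the positive $F^{\ba}$-mass of $\cB(\ba_o,r_0)$, and the primal perturbation) are routine given the assumptions.
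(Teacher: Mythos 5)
Your proof is correct and is essentially the paper's own construction: the paper picks any $\blambda^{o}\in\RR^m_{\geq 0}$ with a zero coordinate and sets $\bd=\E_{(\ba,r)\sim F}[\ba\,\mathbbm{1}(r>\ba^{\top}\blambda^{o})]$ so that the corresponding resource is exactly binding at a zero dual price, then perturbs $d_1$ in both directions; your argument is the special case $\blambda^{o}=\mathbf 0$, $\bd^\star=\E[\ba]$, worked out in more detail (including the dual uniqueness verification, which the paper delegates to its smoothness lemma).
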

Together with Theorem \ref{thm: regret of CE}, we immediately conclude that non-degeneracy is irrelevant to the performance of {\sf CE}.
\begin{coro}\label{coro: non essential non degeneracy}
Assumptions \ref{def: dual stability} and \ref{def: dual non-degeneracy} are \textbf{not} necessary for {\sf CE} to achieve $o(\sqrt{T})$ regret.
\end{coro}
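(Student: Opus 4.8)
The plan is to derive the corollary directly by pairing Lemma~\ref{lem: non essential non degeneracy} with Theorem~\ref{thm: regret of CE}. The crucial observation is that the hypotheses of Theorem~\ref{thm: regret of CE} are conditions on the request distribution $F$ alone and hold for \emph{arbitrary} initial inventory $\bb$, whereas the non-degeneracy conditions (Assumptions~\ref{def: dual stability} and~\ref{def: dual non-degeneracy}) are joint conditions on the pair $(F,\bd)$. So to refute necessity it suffices to exhibit a single family of OLP instances, indexed by $T$, whose common fluid instance violates Assumptions~\ref{def: dual stability} and~\ref{def: dual non-degeneracy} but on which {\sf CE} nonetheless attains $o(\sqrt T)$ regret.

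Concretely, I would: (i) fix any $F$ satisfying Assumption~\ref{assum: starting-from-zero} or~\ref{assum: small-probability-starting-from-zero} --- for definiteness, a hyper-cube model as in Example~\ref{exmp: hyper-cube}, for which $\beta=0$; (ii) invoke Lemma~\ref{lem: non essential non degeneracy} to obtain a normalized capacity $\bd^{\star}\in\RR^m_{\geq 0}$ such that the fluid instance $(F,\bd^{\star})$ violates the non-degeneracy conditions; (iii) consider the instance family $\{(F,\,\bd^{\star}T,\,T)\}_{T>3}$. Since the model permits real-valued capacities, no rounding is needed and the normalized inventory $\bd^{\star}T/T=\bd^{\star}$ --- hence the associated fluid instance --- is exactly $(F,\bd^{\star})$ for every $T$, i.e. degenerate for all $T$. (iv) Apply Theorem~\ref{thm: regret of CE} to this family: because $F$ satisfies the stated assumptions and the bound there holds for arbitrary $\bb$, we obtain $\textsc{Reg}_{\bd^{\star}T,\,T}(\pi^{\sf CE}) \le \cC(\log T)^2$ (the $\beta=0$ branch), which is $o(\sqrt T)$. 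More generally, for an $F$ in the class with $\beta>0$ the same argument yields $\textsc{Reg}_{\bd^{\star}T,\,T}(\pi^{\sf CE})\le \tilde\cC\,T^{\frac{1}{2}-\frac{1}{2(1+\beta)}}(\log T)^{\frac{2+\beta}{2+2\beta}}=o(\sqrt T)$, since $\frac{1}{2}-\frac{1}{2(1+\beta)}<\frac{1}{2}$.

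Combining the two facts, {\sf CE} achieves $o(\sqrt T)$ regret along a scaling whose fluid instance is degenerate for every $T$; therefore Assumptions~\ref{def: dual stability} and~\ref{def: dual non-degeneracy} cannot be necessary for this guarantee, which is the claim. There is essentially no analytical obstacle at this stage --- all the substance resides in Lemma~\ref{lem: non essential non degeneracy} (constructing a degenerate fluid instance inside our distribution classes) and Theorem~\ref{thm: regret of CE} (the degeneracy-free regret bound). The only point requiring a line of care is matching the scaling regimes: one must phrase the $o(\sqrt T)$ statement along the sequence $\bb=\bd^{\star}T$ so that the fluid instance against which ``degeneracy'' is defined remains fixed at $(F,\bd^{\star})$ throughout, which is immediate because capacities need not be integral and Theorem~\ref{thm: regret of CE} imposes no restriction on $\bb$.
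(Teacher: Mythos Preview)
Your proposal is correct and takes essentially the same approach as the paper: the corollary is stated immediately after Lemma~\ref{lem: non essential non degeneracy} with the remark that ``together with Theorem~\ref{thm: regret of CE}, we immediately conclude that non-degeneracy is irrelevant to the performance of {\sf CE},'' and your writeup simply unpacks this in detail.
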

We note that the type of degeneracy characterized in Lemma \ref{lem: non essential non degeneracy} is hardly a theoretical artifact; it often arises in practical applications. In scenarios where the inventory of resources is endogenously determined, the vector $\bd$ frequently becomes asymptotically close to $\EE_{(\ba, r) \sim F}\left[\ba \mathbbm{1}\left(r > \ba^{\top}\blambda^{\star}\right)\right]$ to avoid waste, typically within $\mathcal{O}\left(\frac{1}{\sqrt{T}}\right)$, as dictated by the square-root inventory law \citep{jiang2022degeneracy, bumpensanti2020re}. Now, any redundant resource with zero dual price in the fluid limit would result in degeneracy. We defer a detailed proof to Appendix \ref{appendix: degeneracy}.

Corollary \ref{coro: non essential non degeneracy} contradicts the general view and prior evidence that degeneracy causes {\sf CE} to fail (cf. Figure 4 in \cite{bumpensanti2020re}), and motivates a deeper investigation into the role that degeneracy plays in the regret accumulation of {\sf CE}. In the next two sections, we present two complementary perspectives, one rooted in the discrete contexts and the other in the non-discrete contexts. Together, they establish a more comprehensive understanding of whether and how degeneracy impacts the performance of {\sf CE}, resolving the paradox.



\subsection{Fluid degeneracy: the discrete perspective}
Consider a discrete $F$ supported on $n$ request types ${(\ba^1, r^1), \dots, (\ba^n, r^n)}$, with corresponding probabilities $p_1, \dots, p_n$.  This is a setting that receives dominant attention in the NRM literature. Here, the primal fluid problem (\ref{program: fluid primal}) becomes an LP (often referred to as the Deterministic Linear Program (DLP)). Concretely,
\begin{align}\label{program: fluid primal-DLP}
    \max_{x} &\quad \sum_{j = 1}^n p_j r^j x_j\\
    \mbox{s.t.} & \quad \sum_{j = 1}^n \ba^j p_j x_j \leq \bd, \ \ 0 \leq \bx \leq 1. \nonumber
\end{align}
In the NRM literature, ``non-degeneracy'' traditionally refers to the (LP) non-degeneracy of the DLP \citep{jasin2012re}. More precisely, 
\begin{assumption}[DLP non-degeneracy]\label{def: DLP non-degeneracy}
The DLP (\ref{program: fluid primal-DLP}) has a non-degenerate optimal solution $\bx^{\star}$,
    \begin{equation}\label{eq: DLP non degeneracy}
   \left|j \in [n]: x^*_j = 0\ \textrm{or} \ x^*_j = 1\right| + |i \in [m]: \sum_{j = 1}^n p_j a^j_i x^*_j = d_i | = n.
\end{equation}
\end{assumption}
\cite{bumpensanti2020re} demonstrates both numerically and theoretically that violating Assumption \ref{def: DLP non-degeneracy} indeed results in performance deterioration of {\sf CE} (cf. Figure 4, Propositions 2\&3 in \cite{bumpensanti2020re}), establishing the necessity of DLP non-degeneracy for guaranteeing {\sf CE}'s performance in the discrete setting. Now one may naturally wonder about the connection between Assumption \ref{def: DLP non-degeneracy} and the aforementioned conditions. Let's first introduce a stronger version of Assumption~\ref{def: dual stability} in the discrete setting.
\begin{assumption}[Primal stability in the discrete setting]\label{def: dual stability LP}
    There exists an optimal $\bx^{\star}_{\bd}$ to DLP (\ref{program: fluid primal-DLP}), and a neighborhood $\mathcal{N}(\bd)$ of $\bd$, such that for any $\bd' \in \mathcal{N}(\bd)$, there exists an optimal $\bx^{\star}_{\bd'}$ to the perturbed problem (\ref{program: fluid primal}) with RHS constraint $\bd'$, for which the set of binding constraints in \eqref{program: fluid primal-DLP} remains unchanged comparing to that of $\bx^{\star}_{\bd}$.
\end{assumption}
Assumption~\ref{def: dual stability LP} is nearly identical to Assumption~\ref{def: dual stability}, only that the former requires additionally that the $0 \leq \bx \leq 1$ constraints are also stable. Therefore Assumption~\ref{def: dual stability LP} implies Assumption~\ref{def: dual stability}. In the discrete setting, we establish a connection between the various assumptions.
\begin{lemma}\label{lem: non-degeneracy discrete}
    Suppose $F$ is a discrete distribution. Premised that the optimal solution $\bx^{\star}$ to DLP (\ref{program: fluid primal-DLP}) is unique, Assumptions \ref{def: dual stability LP}, \ref{def: dual non-degeneracy}, \ref{def: unique-dual} and \ref{def: DLP non-degeneracy} are equivalent.
\end{lemma}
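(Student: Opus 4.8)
The plan is to recast all four conditions as statements about the geometry of the optimal face of the DLP \eqref{program: fluid primal-DLP} and its linear-programming dual, exploiting that, because $F$ is discrete, this dual is a finite-dimensional LP (equivalently, $f_{\bd}$ is piecewise linear). Write \eqref{program: fluid primal-DLP} as $\max\{\bc^{\top}\bx:\ \tilde{\mathbf A}\bx\le\tilde{\bb}\}$, where the rows of $\tilde{\mathbf A}$ collect the $m$ resource inequalities and the $2n$ box inequalities $0\le x_j\le 1$, $\tilde{\bb}$ the corresponding right-hand sides, and $c_j=p_jr^j$; its LP dual is $\min\{\tilde{\bb}^{\top}\by:\ \tilde{\mathbf A}^{\top}\by=\bc,\ \by\ge 0\}$, and partitioning $\by=(\blambda,\bmu,\boldsymbol{\nu})$ into resource/upper-box/lower-box multipliers one checks that $\mu_j,\nu_j$ are forced by $\blambda$ at optimality, so the LP-dual optimal set is in affine bijection with $\argmin_{\blambda\ge 0}f_{\bd}(\blambda)$ — hence ``the dual optimum is unique'' is unambiguous across \eqref{eq: dual-formulation} and the LP dual. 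Under the premise, let $\bx^{\star}$ be the unique DLP optimum; boundedness of the feasible region together with uniqueness makes $\bx^{\star}$ a vertex, so its active-constraint submatrix $\tilde{\mathbf A}_{\cA^{\star}}$ ($\cA^{\star}$ being the set of active rows) has rank $n$. By complementary slackness the LP-dual optimal set equals $\Lambda:=\{\by\ge 0:\ \tilde{\mathbf A}^{\top}\by=\bc,\ \operatorname{supp}(\by)\subseteq\cA^{\star}\}$, whose affine hull lies in the solution set of $\tilde{\mathbf A}_{\cA^{\star}}^{\top}\by_{\cA^{\star}}=\bc$ and therefore has dimension at most $|\cA^{\star}|-n$, with equality whenever some member of $\Lambda$ is strictly positive on all of $\cA^{\star}$; also note that $|\cA^{\star}|$ is exactly the count on the left-hand side of \eqref{eq: DLP non degeneracy}, so Assumption \ref{def: DLP non-degeneracy} says precisely $|\cA^{\star}|=n$.

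Granting this, the equivalence of Assumptions \ref{def: dual non-degeneracy}, \ref{def: unique-dual} and \ref{def: DLP non-degeneracy} runs as a short loop. Assumption \ref{def: DLP non-degeneracy} $\Rightarrow$ Assumption \ref{def: unique-dual}: $|\cA^{\star}|=n$ forces $\operatorname{aff}\Lambda$ to be a single point, so $\Lambda$ is a singleton. Assumption \ref{def: unique-dual} $\Rightarrow$ Assumption \ref{def: dual non-degeneracy}: by the Goldman--Tucker theorem the DLP and its dual admit a strictly complementary optimal pair $(\hat\bx,\hat\by)$; its dual component is a dual optimum, hence equals the unique one, and its primal component is an optimal solution at which strict complementary slackness holds — exactly Assumption \ref{def: dual non-degeneracy}. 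Assumption \ref{def: dual non-degeneracy} $\Rightarrow$ Assumption \ref{def: DLP non-degeneracy}: strict complementarity makes the Goldman--Tucker dual optimum strictly positive on all of $\cA^{\star}$, so it lies in $\operatorname{ri}\Lambda$ and $\dim\Lambda=|\cA^{\star}|-n$; since Assumption \ref{def: dual non-degeneracy} includes uniqueness of the dual optimum, $\dim\Lambda=0$, i.e. $|\cA^{\star}|=n$, which is condition \eqref{eq: DLP non degeneracy}.

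It remains to bring in Assumption \ref{def: dual stability}. The implication Assumption \ref{def: DLP non-degeneracy} $\Rightarrow$ Assumption \ref{def: dual stability} is the textbook right-hand-side sensitivity argument: $\cA^{\star}$ is the unique optimal basis, so for $\bd'$ near $\bd$ the basic point with active set $\cA^{\star}$ stays primal feasible (non-active constraints remain strictly slack by continuity) and optimal (its reduced costs depend only on $\bc$ and $\cA^{\star}$), and by construction its binding resource set equals that of $\bx^{\star}$; this is Assumption \ref{def: dual stability}. For the converse I would argue the contrapositive: if $\bx^{\star}$ is degenerate, $|\cA^{\star}|>n$, then $\Lambda$ is not a singleton — equivalently the fluid dual optimum is non-unique and $V^{\mathrm{fluid}}_{\bd'}$ fails to be differentiable at $\bd$ (its superdifferential is the set of dual optima) — and $\bx^{\star}$ is the common vertex of at least two distinct optimal bases; one then exhibits a perturbation direction $\bd'$ along which no optimum has binding resource set equal to that of $\bx^{\star}$, so Assumption \ref{def: dual stability} fails. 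This closes the loop and yields the equivalence of all four conditions.

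The delicate step is this last one, Assumption \ref{def: dual stability} $\Rightarrow$ Assumption \ref{def: DLP non-degeneracy}: Assumption \ref{def: dual stability} only requires \emph{some} optimal solution with the prescribed binding resource set, which is strictly weaker than persistence of the entire optimal basis, so one must rule out every ``compensating'' optimum that a degenerate vertex can offer — and this is exactly where the piecewise linearity of $f_{\bd}$ (finitely many request types) enters essentially, e.g. through the kink of $V^{\mathrm{fluid}}$ at $\bd$ and the distinct perturbation cones attached to the different optimal bases at $\bx^{\star}$. It is also the place where the discrete and non-discrete regimes part ways: in Lemma \ref{lem: nondiscrete non kink} the dual objective is smooth, forcing the dual optimum to be unique automatically, while the non-degeneracy conditions (Assumptions \ref{def: dual stability} and \ref{def: dual non-degeneracy}) may still fail.
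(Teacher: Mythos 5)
Your overall route is the same as the paper's: the cycle among Assumptions \ref{def: dual non-degeneracy}, \ref{def: unique-dual} and \ref{def: DLP non-degeneracy} via strict complementarity (your Goldman--Tucker invocation is the paper's citation of Exercise 4.20 in \cite{bertsimas1997introduction}) and via the duality between primal degeneracy and dual multiplicity (the paper's citation of Theorem 4.5 in \cite{sierksma2001linear}, which you re-derive through the dimension count $\dim \Lambda = |\cA^{\star}| - n$), followed by the textbook sensitivity argument for Assumption \ref{def: DLP non-degeneracy} $\Rightarrow$ Assumption \ref{def: dual stability}. Those four implications are correct and adequately justified; your observation that $|\cA^{\star}|$ is exactly the count in \eqref{eq: DLP non degeneracy} and that the box multipliers are eliminable so that the LP-dual optimal set is in bijection with $\argmin_{\blambda \ge 0} f_{\bd}(\blambda)$ is a clean way to make the bookkeeping precise.

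The problem is the fifth implication, Assumption \ref{def: dual stability} $\Rightarrow$ Assumption \ref{def: DLP non-degeneracy} (equivalently, its contrapositive). This is the only arrow pointing \emph{out} of Assumption \ref{def: dual stability}, so without it you have only shown that primal stability is implied by the other three conditions, not that it is equivalent to them. At the decisive moment you write that ``one then exhibits a perturbation direction $\bd'$ along which no optimum has binding resource set equal to that of $\bx^{\star}$,'' and your closing paragraph candidly identifies this as the delicate step without supplying it. That is precisely the content of the implication, and naming the difficulty (a degenerate vertex can offer compensating optima, and Assumption \ref{def: dual stability} only asks for \emph{some} optimum with the prescribed binding resource set) is not a substitute for resolving it. The paper's own argument here is: degeneracy of the unique optimal BFS $\bx^{\star}$ means strictly more than $n$ active constraints with $n$ of them linearly independent, so the overdetermined system ``all constraints in $\cA^{\star}$ active'' becomes inconsistent for suitable $\bd'$ arbitrarily close to $\bd$; one then argues that every optimum of the perturbed DLP must change its binding resource set. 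To complete your proof you would need to (i) actually construct such a $\bd'$ (note that only the $m$ resource rows of the right-hand side are being perturbed, so you must check the inconsistency can be forced through those rows alone), and (ii) rule out an optimum for $\bd'$ that keeps the same binding resource constraints while changing only which box constraints are active --- for instance by a limiting argument as $\bd' \to \bd$ that would produce a second primal optimum or a second dual optimum at $\bd$, contradicting the premise or the already-established dual non-uniqueness. As written, the proposal stops one step short of closing the equivalence.
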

\begin{rmk}
The premise of primal $\bx^{\star}$ uniqueness is not the weakest under which the equivalence in Lemma \ref{lem: non-degeneracy discrete} holds, but exploring it further is beyond the current scope. 
\end{rmk}
We defer the proof to Appendix \ref{appendix: degeneracy} and instead provide an intuitive explanation here. Lemma \ref{lem: non-degeneracy discrete} demonstrates that in the discrete setting, there is essentially only one type of degeneracy: either all non-degeneracy conditions hold simultaneously and {\sf CE} performs well, or the problem exhibits degeneracy, resulting in {\sf CE}’s failure. Geometrically, this dichotomy arises from the rigid, piecewise-linear structure of the fluid problem in discrete settings, as is illustrated in Figure \ref{Fig: discrete-dege}.

\begin{figure}[h!]
    \centering
    \begin{subfigure}[b]{0.3\textwidth}
        \centering
        \begin{tikzpicture}[scale=1]
            \draw[->] (0,0) -- (3.5,0) node[right] {$d$};
            \draw[->] (0,0) -- (0,3.5) node[above] {$V^{\rm fluid}_d$};
            
            \draw[blue, thick] (0,0) -- (1,1);
            \draw[red, thick] (1,1) -- (2.5,2.5);
            \draw[red] (2.5,2.5) -- (3,2.5);

            \filldraw[black] (1,1) circle (2pt);
            \filldraw[black] (2.5,2.5) circle (2pt);

            \draw[dotted] (1,0) -- (1,1);
            \draw[dotted] (2.5,0) -- (2.5,2.5);

            \node[below] at (1,0) {$d^2$};
            \node[below] at (2.5,0) {$d^1$};
        \end{tikzpicture}
        \caption{Function $V^{\rm fluid}_d$ at degenerate and non-degenerate points}
        \label{Fig: discrete-dege-V}
    \end{subfigure}
    \hfill
    \begin{subfigure}[b]{0.3\textwidth}
        \centering
        \begin{tikzpicture}[scale=1]
            \draw[->] (0,0) -- (3.5,0) node[right] {$\lambda$};
            \draw[->] (0,0) -- (0,3.5) node[above] {$f_{d^1}(\lambda)$};

            \draw[thick] (0,1) -- (1,1) -- (2,1.5) -- (3, 3);
            \draw[thick, dotted]  (0,1) -- (1,1.2) -- (2,1.9) -- (3, 3.6);
            \draw[thick, dotted]  (0,1) -- (1,0.8) -- (2,1.1) -- (3, 2.4);

            \draw[red, thick] (0,1) -- (1,1);

            \filldraw[black] (1,1) circle (2pt);
            \filldraw[black] (2,1.5) circle (2pt);
            \filldraw[black] (1,1.2) circle (2pt);
            \filldraw[black] (2,1.1) circle (2pt);
            \filldraw[black] (2,1.9) circle (2pt);
             \filldraw[blue] (0,1) circle (2pt);
            \filldraw[blue] (1,0.8) circle (2pt);
        \end{tikzpicture}
        \caption{Dual function at degenerate point. Multiple optimal dual solutions and instability.}
        \label{Fig: discrete-dege-f}
    \end{subfigure}
    \hfill
    \begin{subfigure}[b]{0.3\textwidth}
        \centering
        \begin{tikzpicture}[scale=1]
            \draw[->] (0,0) -- (3.5,0) node[right] {$\lambda$};
            \draw[->] (0,0) -- (0,3.5) node[above] {$f_{d^2}(\lambda)$};

            \draw[thick] (0.,1.5) -- (1,1) -- (2,1.5) -- (2.5,2.5);
            \draw[thick, dotted] (0.,1.5) -- (1,1.2) -- (2,1.9) -- (2.5,3);
            \draw[thick, dotted] (0.,1.5) -- (1,0.8) -- (2,1.1) -- (2.5,2);
            \filldraw[blue] (1,1) circle (2pt);
            \filldraw[blue] (1,1.2) circle (2pt);
            \filldraw[blue] (1,0.8) circle (2pt);   
            \filldraw[black] (2,1.5) circle (2pt);
            \filldraw[black] (2,1.9) circle (2pt);
            \filldraw[black] (2,1.1) circle (2pt);
        \end{tikzpicture}
        \caption{Dual function at non-degenerate point. Unique, stable optimal dual solution.}
        \label{Fig: discrete-nondege-f}
    \end{subfigure}
    \caption{Function $V^{\rm fluid}$ and dual function $f_{d}$ in degenerate and non-degenerate settings.}
    \label{Fig: discrete-dege}
\end{figure}
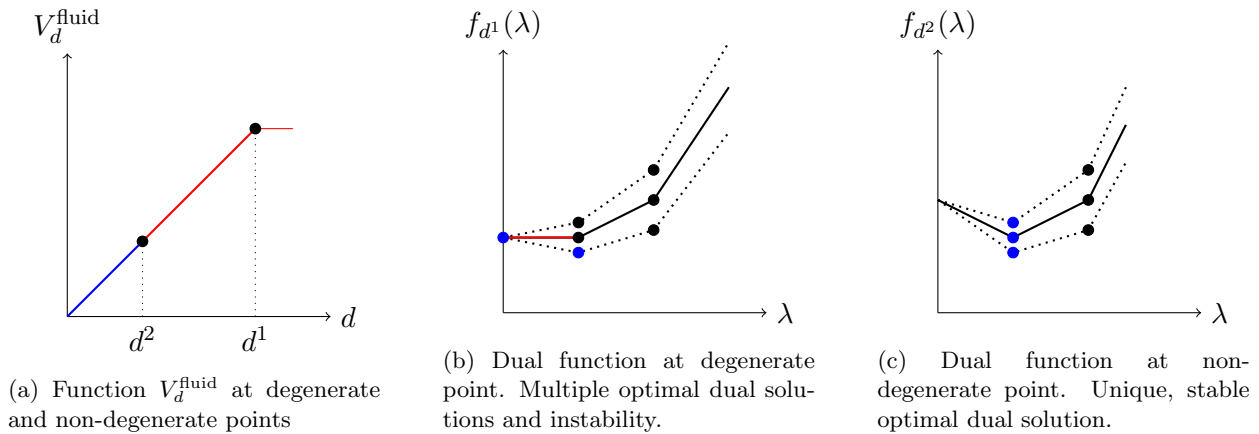

We plot $V^{\rm fluid}_{\bd} := \min_{\blambda \in \RR^m_{\geq 0}} f_{\bd}(\blambda)$ as a function of $\bd$ in Figure \ref{Fig: discrete-dege-V}, and $f_{\bd}(\blambda)$ as a function of $\blambda$ with $\bd = d^1$ (degenerate case), and $\bd = d^2$ (non-degenerate case) in Figure \ref{Fig: discrete-dege-f} and Figure \ref{Fig: discrete-nondege-f}, respectively. We also plot how perturbing $\bd$ around $d^1$ or $d^2$ affects functions $f_{\bd}(\blambda)$. As shown in Figure \ref{Fig: discrete-dege}, fluid degeneracy manifests in several equivalent forms. In Figure \ref{Fig: discrete-dege-V}, the degenerate point $d^1$ corresponds to a kink in the function $V^{\rm fluid}_{\bd}$. By standard LP theory (cf. \cite{bertsimas1997introduction}), the supergradient of $V^{\rm fluid}_{\bd}$ at $d^1$ are the optimal dual solutions to $\min_{\blambda \in \RR^m_{\geq 0}} f_{d^1}(\blambda)$. The existence of multiple supergradients at $d^1$ thereby corresponds to multiple dual solutions, as illustrated in Figure \ref{Fig: discrete-dege-f}. Furthermore, we observe in Figure \ref{Fig: discrete-dege-f} that perturbing $\bd$ around $d^1$ results in drastic changes of the dual solution, demonstrating instability. In contrast, the non-degenerate case avoids all these issues. In Figure \ref{Fig: discrete-dege-V}, the function $V^{\rm fluid}_{\bd}$ is smooth at $d^2$, with a unique gradient. In Figure \ref{Fig: discrete-nondege-f},  $f_{d^2}(\blambda)$ has a unique minimizer, and perturbing $\bd$ around $d^2$ does not change the minimizer, demonstrating strong stability. 

With Lemma \ref{lem: non-degeneracy discrete}, the necessity of DLP non-degeneracy (Assumption \ref{def: DLP non-degeneracy}) for {\sf CE} to achieve low regret simultaneously extends to all notions of non-degeneracy. Thus it is with minimal loss of accuracy to assert that ``degeneracy causes {\sf CE} to fail.''


\subsection{Fluid degeneracy: the non-discrete perspective}
The clear and unified picture of degeneracy in discrete settings does not extend to non-discrete settings. The following result demonstrates how the equivalence between different notions of degeneracy breaks down.
\begin{lemma}\label{lem: nondiscrete non kink}
Suppose non-discrete distribution $F$ is such that function $V^{\rm fluid}_{\bd}$ is smooth in $\bd$, then Assumption \ref{def: unique-dual} always holds regardless of $\bd$. In contrast, whether Assumptions \ref{def: dual stability} and \ref{def: dual non-degeneracy} hold or not depend on $\bd$.
\end{lemma}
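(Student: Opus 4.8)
The plan is to prove the two assertions separately. For the first, I would use the fact that the optimal solutions of the dual fluid problem \eqref{eq: dual-formulation} are exactly the supergradients of the concave map $\bd \mapsto V^{\rm fluid}_{\bd}$, so that differentiability of this map forces the dual optimum to be unique. For the second, I would exhibit, for a single $F$ meeting the hypothesis, a family of normalized capacities $\bd$ along which Assumptions \ref{def: dual stability} and \ref{def: dual non-degeneracy} switch between holding and failing, while dual uniqueness (Assumption \ref{def: unique-dual}) persists throughout --- precisely the breakdown of the discrete equivalence established in Lemma \ref{lem: non-degeneracy discrete}.

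For the first assertion, write $f_{\bd}(\blambda) = \bd^{\top}\blambda + h(\blambda)$ with $h(\blambda) \triangleq \E_{(\ba, r) \sim F}[(r - \ba^{\top}\blambda)^+]$, so that $V^{\rm fluid}_{\bd} = \min_{\blambda \in \RR^m_{\geq 0}} f_{\bd}(\blambda)$ is a pointwise minimum over $\blambda$ of functions that are affine in $\bd$ with slope $\blambda$; hence $\bd \mapsto V^{\rm fluid}_{\bd}$ is concave. If $\blambda^{\star} \in \argmin_{\blambda \in \RR^m_{\geq 0}} f_{\bd}(\blambda)$, then $f_{\bd}(\blambda^{\star}) = V^{\rm fluid}_{\bd}$ by strong duality and, for every $\bd'$,
\[
V^{\rm fluid}_{\bd'} \leq f_{\bd'}(\blambda^{\star}) = f_{\bd}(\blambda^{\star}) + (\bd' - \bd)^{\top}\blambda^{\star} = V^{\rm fluid}_{\bd} + (\bd' - \bd)^{\top}\blambda^{\star},
\]
so $\blambda^{\star}$ is a supergradient of $\bd \mapsto V^{\rm fluid}_{\bd}$ at $\bd$. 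When this map is differentiable at $\bd$, its only supergradient is $\nabla_{\bd} V^{\rm fluid}_{\bd}$; hence every dual optimizer equals this gradient and the dual optimum is unique. Since $V^{\rm fluid}_{\bd}$ is smooth everywhere by assumption, Assumption \ref{def: unique-dual} holds at every $\bd$.

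For the contrast, I would examine the abundant-capacity regime; set $\bmu \triangleq \E_{(\ba, r) \sim F}[\ba]$. Since the conditional reward distributions are atomless (as in the non-discrete settings relevant here, cf. Assumptions \ref{assum: starting-from-zero} and \ref{assum: small-probability-starting-from-zero}), $\P(r > 0) = 1$, and the $i$-th partial derivative of $f_{\bd}$ at $\blambda = \mathbf{0}$ equals $d_i - \E_{(\ba, r) \sim F}[a_i \mathbbm{1}(r > 0)] = d_i - \mu_i$. Consequently $\mathbf{0}$ is a dual optimizer iff $\bd \geq \bmu$ componentwise, in which case it is the unique one (by the first assertion) and the associated primal optimum is $x^{\star} \equiv 1$ with consumption $\bmu$, so the set of binding fluid constraints equals $\{i : d_i = \mu_i\}$ for every primal optimum. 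Two regimes emerge. If $\bd > \bmu$ strictly, the binding set is empty and remains so under small perturbations of $\bd$, and $\lambda^{\star}_i = 0$ matches the strict slackness $d_i - \mu_i > 0$ for all $i$; both Assumptions \ref{def: dual stability} and \ref{def: dual non-degeneracy} hold. If instead $d_i = \mu_i$ for some coordinate $i$ while $\bd \geq \bmu$, then the dual optimum is still the unique $\mathbf{0}$ (so Assumption \ref{def: unique-dual} holds), yet $\lambda^{\star}_i = 0$ while constraint $i$ is exactly binding, violating strict complementary slackness; and any perturbation raising $d_i$ above $\mu_i$ drops $i$ from the binding set, violating primal stability. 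Hence, for a fixed $F$, whether Assumptions \ref{def: dual stability} and \ref{def: dual non-degeneracy} hold genuinely depends on $\bd$, whereas Assumption \ref{def: unique-dual} does not.

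The step I expect to require the most care is checking that the examples in the second assertion are genuinely compatible with the smoothness hypothesis --- in particular, that $\bd \mapsto V^{\rm fluid}_{\bd}$ stays differentiable across the boundary $\{d_i = \mu_i\}$, which reduces to continuity of the dual-solution map there and ultimately to the conditional reward distributions placing mass immediately above zero (the reverse-H\"older condition $F^r_{\ba}(z) \geq c_\beta z^{1 + \beta}$). This is the same structural feature that makes the fluid dual objective curved rather than piecewise linear, hence $V^{\rm fluid}_{\bd}$ smooth rather than kinked --- the very phenomenon that distinguishes the non-discrete picture here from the discrete one in Lemma \ref{lem: non-degeneracy discrete}. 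Beyond that, the argument is routine convex duality together with elementary complementary-slackness bookkeeping.
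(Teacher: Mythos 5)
Your argument is correct and matches the route the paper itself indicates: the paper omits a formal proof of this lemma, but its discussion around Figure \ref{Fig: nondiscrete-dege} rests on exactly your first step (dual optima are supergradients of the concave map $\bd \mapsto V^{\rm fluid}_{\bd}$, so differentiability forces uniqueness), and your boundary construction $d_i = \mu_i$ with $\blambda^{\star} = \mathbf{0}$ is the same degenerate instance the paper builds in the proof of Lemma \ref{lem: non essential non degeneracy} via $\bd = \EE_{(\ba,r)\sim F}[\ba\,\mathbbm{1}(r > \ba^{\top}\blambda^{o})]$. The only cosmetic caveat is that your appeal to $\P(r>0)=1$ imports Assumptions \ref{assum: starting-from-zero}/\ref{assum: small-probability-starting-from-zero} rather than the lemma's bare smoothness hypothesis, but the construction generalizes by replacing $\bmu$ with $\EE[\ba\,\mathbbm{1}(r>0)]$, so nothing essential is at stake.
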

\begin{figure}[h!]
    \centering
    \begin{subfigure}[b]{0.3\textwidth}
        \centering
        \begin{tikzpicture}[scale=1]
            \draw[->] (0,0) -- (3.5,0) node[right] {$d$};
            \draw[->] (0,0) -- (0,3.5) node[above] {$V^{\rm fluid}_d$};
            
            \draw[blue, thick] plot[domain=0.1:2,samples=100] (\x,{-(\x - 2)^2/2 + 2});
            \draw[blue, thick] plot[domain=2:2.5,samples=100] (\x,{ 2});

            \filldraw[black] (1,1.5) circle (2pt);
            \filldraw[black] (2,2) circle (2pt);  
            
            \draw[dotted] (1,0) -- (1,1.5);

            \draw[dotted] (2,0) -- (2,2);

            \draw[densely dashed] (0,0.5) -- (2,2.5);  

            \node[below] at (1,0) {$d^2$};
            \node[below] at (2,0) {$d^1$};
        \end{tikzpicture}
        \caption{Function $V^{\rm fluid}_d$ and uniqueness of gradient}
        \label{Fig: nondiscrete-dege-V}
    \end{subfigure}
    \hfill
    \begin{subfigure}[b]{0.3\textwidth}
        \centering
        \begin{tikzpicture}[scale=1]
            \draw[->] (0,0) -- (3.5,0) node[right] {$\lambda$};
            \draw[->] (0,0) -- (0,3.5) node[above] {$f_{d^1}(\lambda)$};

            \draw[thick, domain=0:2,samples=100] plot (\x,{(\x)^2/2 + 1.5});
            \draw[thick, dotted, domain=-0.5:2,samples=100] plot (\x,{(\x)^2/2 + 1.5 + 0.4 * \x});
            \draw[thick, dotted, domain=0:2,samples=100] plot (\x,{(\x)^2/2 + 1.5 - 0.5 * \x});
            \filldraw[blue] (0,1.5) circle (2pt);  
            \filldraw[blue] (0.5,0.125 + 1.5 - 0.25) circle (2pt);

        \end{tikzpicture}
        \caption{Dual function at $d^1$. Unique degenerate optimal dual solution and binding set instability.}
        \label{Fig: nondiscrete-dege-f}
    \end{subfigure}
    \hfill
    \begin{subfigure}[b]{0.3\textwidth}
        \centering
        \begin{tikzpicture}[scale=1]
            \draw[->] (0,0) -- (3.5,0) node[right] {$\lambda$};
            \draw[->] (0,0) -- (0,3.5) node[above] {$f_{d^2}(\lambda)$};

            \draw[thick, domain=0.0:2.5,samples=100] plot (\x,{(\x)^2/2 + 1.5 - \x});
            \draw[thick, dotted, domain=-0.5:2,samples=100] plot (\x,{(\x)^2/2 + 1.5 + 0.4 * \x - \x});
            \draw[thick, dotted, domain=0:2,samples=100] plot (\x,{(\x)^2/2 + 1.5 - 0.5 * \x - \x});

            \filldraw[blue] (1,1) circle (2pt);  
            \filldraw[blue] (0.6,0.36/2 + 1.5 - 0.36) circle (2pt);  
            \filldraw[blue] (1.5 ,1.5^2/2 + 1.5 - 1.5^2) circle (2pt);  
        \end{tikzpicture}
        \caption{Dual function at $d^2$. Unique non-degenerate optimal dual solution.}
        \label{Fig: nondiscrete-nondege-f}
    \end{subfigure}
    \caption{Function $V^{\rm fluid}$ and dual function $f_{d}$ for two different parameters in a non-discrete setting.}
    \label{Fig: nondiscrete-dege}
\end{figure}
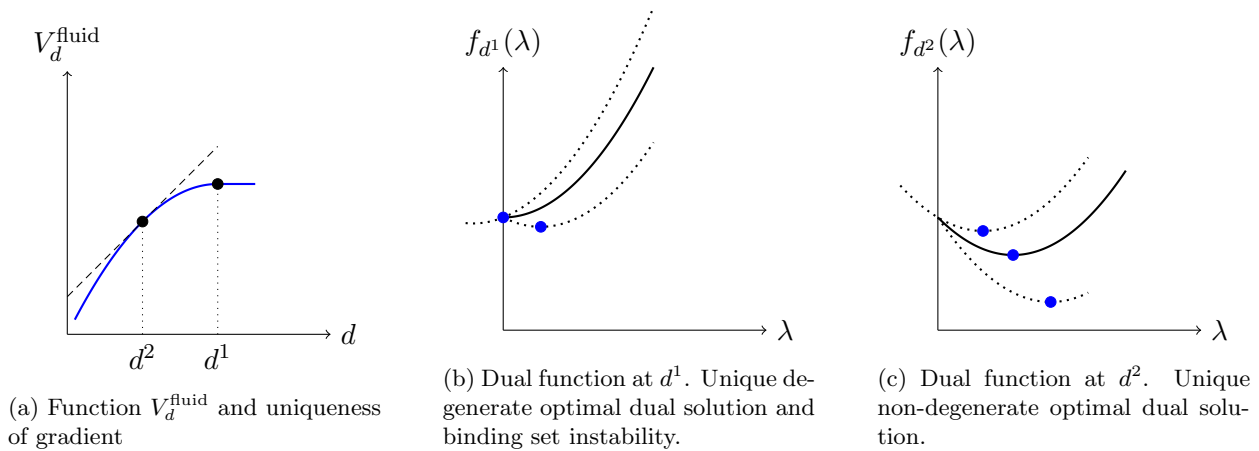
 Figure \ref{Fig: nondiscrete-dege} visually illustrates the intuition behind Lemma \ref{lem: nondiscrete non kink}, the proof of which we omit. In Figure \ref{Fig: nondiscrete-dege-V}, we plot the function $V^{\rm fluid}_{\bd}$. Unlike the discrete case shown in Figure \ref{Fig: discrete-dege-V}, $V^{\rm fluid}_{\bd}$ here is smooth, with no kinks, and its gradient exists and is unique for any $\bd$. As established in standard convex optimization theory (cf. \cite{boyd2004convex}), the gradient of $V^{\rm fluid}_{\bd}$ corresponds to the dual optimal solution. Consequently, Assumption \ref{def: unique-dual} is satisfied for all $\bd$, as illustrated in Figures \ref{Fig: nondiscrete-dege-f} and \ref{Fig: nondiscrete-nondege-f}.
 
 On the other hand, in Figure \ref{Fig: nondiscrete-dege-f} we plot the dual function, projected onto a dimension in the dual space, and illustrate the degenerate scenario where both Assumptions \ref{def: dual stability} and \ref{def: dual non-degeneracy} fail. At $d^1$, $\lambda = 0$ is the unique dual price. However, $\lambda = 0$ happens to be the global minimizer of the dual function $f_{d^1}(\lambda)$, indicating that the corresponding resource happens to be binding, thus violating the strict complementary slackness condition. Small perturbations around $d^1$ lead the dual price to oscillate between zero and a positive value, causing the resource’s status to alternate between binding and non-binding, violating the primal stability condition. Hence, the non-degenerate conditions fail at $d^1$. Consequently, Assumptions \ref{def: dual stability} and \ref{def: dual non-degeneracy} fundamentally rely on  $\bd$ to avoid such degenerate cases. Overall, Lemma \ref{lem: nondiscrete non kink} essentially implies that Assumption \ref{def: unique-dual} and Assumptions \ref{def: dual stability} and \ref{def: dual non-degeneracy} can not be equivalent in non-discrete settings with smooth $V^{\rm fluid}_{\bd}$.

A key feature of the non-discrete setting is that, the premise in Lemma \ref{lem: nondiscrete non kink}, namely that $V^{\rm fluid}_{\bd}$ is smooth, is fairly common and satisfied by a wide class of distributions $F$, in particular, those defined by Assumptions \ref{assum: starting-from-zero} or \ref{assum: small-probability-starting-from-zero}.
\begin{lemma}\label{lem: smoothness of dual objective under our assumption}
    For distribution $F$ satisfying either Assumption \ref{assum: starting-from-zero} or Assumption \ref{assum: small-probability-starting-from-zero}, the corresponding dual objective $f_\bc(\blambda)$ is smooth for any fixed $\bc$. In particular, $\nabla f_{\bc}(\blambda)$ exists. Furthermore, $V^{\rm fluid}_{\bc}$ is smooth, with unique gradient $\nabla V^{\rm fluid}$ at any $\bc$.
\end{lemma}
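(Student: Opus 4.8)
The plan is to establish the two assertions in sequence. Write $f_{\bc}(\blambda) = \bc^{\top}\blambda + g(\blambda)$ with $g(\blambda) \triangleq \E_{(\ba,r)\sim F}[(r - \ba^{\top}\blambda)^{+}]$; the linear term is trivially smooth, so everything reduces to $g$. The function $g$ is convex and, by the boundedness of $\ba$ in Assumption \ref{assum: starting-from-zero}$(i)$/\ref{assum: small-probability-starting-from-zero}$(i)$, globally Lipschitz, and a convex function is differentiable at a point exactly when its subdifferential there is a singleton. Interchanging subdifferentiation and expectation, $\partial g(\blambda)$ is the set of vectors $-\E[\ba\, S(\ba,r)]$ over measurable selections $S\in[0,1]$ that equal $\mathbbm{1}(r > \ba^{\top}\blambda)$ off the ``kink set'' $\{(\ba,r) : r = \ba^{\top}\blambda\}$, so $g$ is differentiable at $\blambda$ whenever that set is $F$-null.

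First I would show the kink set is $F$-null for every $\blambda$. Conditioning on $\ba$, the H\"older \emph{upper} bound $F^r_{\ba}(z_2) - F^r_{\ba}(z_1) \leq c_{\nu}(z_2-z_1)^{\nu}$ with $\nu > 0$ (Assumption \ref{assum: starting-from-zero}$(iii)$/\ref{assum: small-probability-starting-from-zero}$(iv)$) makes each conditional CDF $F^r_{\ba}$ continuous, hence atomless, so $\PP(r = \ba^{\top}\blambda\mid\ba)=0$ for every $\ba$; integrating out $\ba$ gives $\PP(r = \ba^{\top}\blambda)=0$. Hence $g$ is differentiable everywhere with $\nabla g(\blambda) = -\E[\ba\,\mathbbm{1}(r > \ba^{\top}\blambda)] = -\E_{\ba}[\ba\,(1 - F^r_{\ba}(\ba^{\top}\blambda))]$, so $\nabla f_{\bc}(\blambda) = \bc - \E_{\ba}[\ba\,(1 - F^r_{\ba}(\ba^{\top}\blambda))]$. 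Continuity of this map in $\blambda$ (so $f_{\bc}\in C^1$) follows from dominated convergence: $\ba$ is bounded and, for $\blambda_n\to\blambda$, $\mathbbm{1}(r > \ba^{\top}\blambda_n)\to\mathbbm{1}(r > \ba^{\top}\blambda)$ for $F$-a.e.\ $(\ba,r)$ by the same atomlessness.

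For $V^{\rm fluid}_{\bc} = \min_{\blambda\in\RR^{m}_{\geq 0}} f_{\bc}(\blambda)$, note $\bc\mapsto f_{\bc}(\blambda)$ is affine, so $V^{\rm fluid}_{\bc}$ is a pointwise infimum of affine functions, hence concave, and $f_{\bc}$ is coercive when $\bc$ has strictly positive entries, so a minimizer exists. A standard envelope/sensitivity argument identifies $\partial V^{\rm fluid}_{\bc}$ with the set of dual optima $\argmin_{\blambda\geq 0} f_{\bc}(\blambda)$, so $V^{\rm fluid}$ is differentiable at $\bc$ --- and then $C^1$, being concave --- precisely when this set is a singleton, i.e.\ when dual uniqueness (Assumption \ref{def: unique-dual}) holds. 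To prove uniqueness, suppose $\blambda^{\star}_1\neq\blambda^{\star}_2$ both minimize and set $\bv \triangleq \blambda^{\star}_2 - \blambda^{\star}_1\neq 0$. Then $f_{\bc}$, and hence $g$, is affine on the segment $[\blambda^{\star}_1,\blambda^{\star}_2]$, so the directional derivatives of $g$ along $\bv$ coincide at the two endpoints; substituting the formula for $\nabla g$ this reads $\E_{\ba}[(\ba^{\top}\bv)(F^r_{\ba}(\ba^{\top}\blambda^{\star}_2) - F^r_{\ba}(\ba^{\top}\blambda^{\star}_1))] = 0$. Since $\ba^{\top}\blambda^{\star}_2 - \ba^{\top}\blambda^{\star}_1 = \ba^{\top}\bv$ and $F^r_{\ba}$ is nondecreasing, the integrand is pointwise nonnegative, so it vanishes $F^{\ba}$-a.s.; the reverse H\"older \emph{lower} bound $F^r_{\ba}(z_2) - F^r_{\ba}(z_1)\geq c_{\beta}(z_2-z_1)^{1+\beta} > 0$ then forces $\ba^{\top}\bv = 0$ for $F^{\ba}$-a.e.\ $\ba$. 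Under Assumption \ref{assum: small-probability-starting-from-zero} the density of $F^{\ba}$ is bounded below on a convex body, so $\{\ba:\ba^{\top}\bv=0\}$ is Lebesgue-null and $\bv = 0$; under Assumption \ref{assum: starting-from-zero}, provided $\mathrm{supp}(F^{\ba})$ linearly spans $\RR^{m}$ (automatic in the multisecretary setting, where $\ba$ is a positive constant), again $\bv = 0$ --- a contradiction either way. This also supplies the hypothesis of Lemma \ref{lem: nondiscrete non kink}.

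The differentiability of $f_{\bc}$ is routine once atomlessness of the conditional rewards is extracted from the H\"older upper bound; I expect the uniqueness of the dual minimizer to be the real obstacle. The sign-definiteness observation reduces it cleanly to the reverse H\"older lower bound, but two points need care: (i) checking that the relevant thresholds $\ba^{\top}\blambda^{\star}_i$ actually lie where the lower bound bites (i.e.\ not entirely beyond the top of the reward support), which is exactly where the ``interval starting from zero''/local-at-a-point structure of Assumptions \ref{assum: starting-from-zero}--\ref{assum: small-probability-starting-from-zero} enters, and (ii) ruling out a nonzero $\bv$ that is $F^{\ba}$-a.s.\ orthogonal to $\ba$, which uses the regularity/spanning properties of the marginal $F^{\ba}$.
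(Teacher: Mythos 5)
For the differentiability of $f_{\bc}$ in $\blambda$ your argument is essentially the paper's: the paper writes $f_{\bc}(\blambda) = \bc^{\top}\blambda - \E[\ba^{\top}\blambda] + \E\!\int_0^{\ba^{\top}\blambda}F^r_{\ba}(v)\,{\rm d}v$ and invokes the Leibniz rule, which is your subdifferential computation in integrated form; both reduce to atomlessness of $F^r_{\ba}$ (from the H\"older upper bound) and yield the same gradient formula.

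Where you genuinely diverge is on the $V^{\rm fluid}$ claim, and there your route is the more substantive one. The paper's proof declares this part ``obvious'' from the continuity of $F^r_{\ba}$, but continuity of $F^r_{\ba}$ (equivalently, differentiability of $f_{\bc}$ in $\blambda$) does not by itself give differentiability of $V^{\rm fluid}$ in $\bc$ --- the paper's own Proposition \ref{prop: necessity of dual-unique in multisec} is an instance with a continuous reward CDF and a kink in $V^{\rm fluid}_d$. You correctly identify that the claim is equivalent (via Danskin) to dual uniqueness and that the reverse H\"older \emph{lower} bound is the ingredient that delivers it; your affine-on-the-segment argument with the sign-definite integrand is the right mechanism and supplies what the paper omits. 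Your two caveats are both real. Caveat (i) can be closed under Assumptions \ref{assum: starting-from-zero}--\ref{assum: small-probability-starting-from-zero} using the support-from-zero structure and first-order optimality, though it takes a little work. Caveat (ii), however, exposes an actual gap in the lemma as stated under Assumption \ref{assum: starting-from-zero} alone: take $m=2$, $\ba\equiv(1,1)$, $r\sim{\sf Unif}[0,1]$, which satisfies Assumption \ref{assum: starting-from-zero} with $\beta=0,\nu=1$; then for $\bc=(c,c)$ with $c\in(0,1)$ every $\blambda\ge 0$ with $\lambda_1+\lambda_2=1-c$ is dual optimal, and $V^{\rm fluid}_{(c_1,c_2)}=\min(c_1,c_2)-\tfrac{1}{2}\min(c_1,c_2)^2$ has a kink across $c_1=c_2$. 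So the spanning proviso you insert (or the lower-bounded-density condition of Assumption \ref{assum: small-probability-starting-from-zero}, under which your argument closes) is genuinely needed, and your proof makes visible a hypothesis that the paper's one-line argument silently assumes.
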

The proof is in Appendix \ref{appendix: degeneracy}. Combining Lemma \ref{lem: smoothness of dual objective under our assumption} and Lemma \ref{lem: nondiscrete non kink}, Assumption \ref{def: unique-dual} always holds under Assumption \ref{assum: starting-from-zero} or Assumption \ref{assum: small-probability-starting-from-zero}, yet the non-degeneracy conditions (Assumptions \ref{def: dual stability} and \ref{def: dual non-degeneracy}) may fail (cf. Lemma \ref{lem: non essential non degeneracy}). Our main contribution essentially lies in proving that in such cases, {\sf CE} achieves uniformly near-optimal regret even if these non-degeneracy conditions fail (cf. Theorem \ref{thm: regret of CE}). Along the line of this discussion, we may provide some geometric intuition of our performance guarantee. Observe in Figure \ref{Fig: nondiscrete-dege-f} that the smoothness of the function $f_{\bd}(\blambda)$ ensures the dual price does not oscillate drastically when $\bd$ is perturbed around the degenerate point $d^1$. This inherent stability, even as the resource constraint alternates between binding and non-binding, allows {\sf CE} to maintain good performance. This stands in sharp contrast to the discrete setting where across all distribution $F$, function $V^{\rm fluid}_{\bd}$ is non-smooth, and degeneracy ($\bd$ taking certain corner values) always comes with drastic instability phenomenon, leading to performance deterioration of {\sf CE}. 

Meanwhile, not all non-discrete distributions yield smooth dual objectives as in Lemma \ref{lem: smoothness of dual objective under our assumption}. We provide two such examples that have appeared in prior work. \cite{besbes2024dynamic} considered multisecretary instances where the reward distributions have gaps in their supports (cf. Example 3 in \cite{besbes2024dynamic}). \cite{jiang2022degeneracy} studied a semi-discrete NRM instances with finitely many request types, where conditional on any fixed type, the reward distribution is supported on an arbitrary interval with lower bounded density (cf. Assumption 1 \cite{jiang2022degeneracy}). Both examples have non-smooth $V^{\rm fluid}_{\bd}$. At those $\bd$ that corresponds to a kink on $V^{\rm fluid}_{\bd}$, Assumption \ref{def: unique-dual} no longer holds. {\sf CE} provably fails in these cases:
\begin{prop}\label{prop: necessity of dual-unique in multisec}
Consider a multisecretary instance with $F^r_{\ba} = F^r \triangleq {\sf Unif}[0, 1]\cup[2,3]$ and $\bb = \frac{1}{2} T$. Then $\textsc{Reg}_{\bb, T}({\sf CE}) \geq c_0 \sqrt{T}$ for some absolute constant $c_0 > 0$.
\end{prop}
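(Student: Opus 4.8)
The plan is to exploit the fact that, for this instance, the fluid dual objective $f_{d}(\lambda)={d}\lambda+\E_{r\sim F^r}[(r-\lambda)^+]$ at the normalized budget $d=\bb/T=\tfrac12$ is \emph{flat} on the entire gap interval $[1,2]$: a one–line integration gives $f_{1/2}(\lambda)=\tfrac54$ for all $\lambda\in[1,2]$ and $f_{1/2}(\lambda)>\tfrac54$ otherwise, so the fluid dual optimum is the whole interval $[1,2]\subseteq(1,2)$ and $(F,\bd)$ sits exactly at a kink of $V^{\rm fluid}_{\bd}$, violating Assumption~\ref{def: unique-dual}. Thus this is a multisecretary problem with a gapped reward distribution (gap $(1,2)$ of size $g=1$) with the budget placed so that the fluid threshold lies in the gap — precisely the regime of the $\Omega(\sqrt T)$ lower bound of \cite{besbes2024dynamic}. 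The shortest route is therefore to invoke that lower bound (their Theorem~1 with $g=1$), which already asserts that \emph{every} admissible policy, in particular ${\sf CE}$, incurs hindsight regret at least $c_0\sqrt T$; equivalently one re-runs the construction there, which is the $\beta\to\infty$ counterpart of the proof of Proposition~\ref{thm: fundamental lower bound}. I would present this reduction as the actual proof.

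For self-containedness (and to display why ${\sf CE}$ itself fails), I would instead argue in three steps. \textbf{(i) Threshold dynamics.} Minimizing $f_{d^{\sf CE}_t}$ gives the ${\sf CE}$ threshold $\tilde\lambda_t=3-2d^{\sf CE}_t\in(2,3)$ when $d^{\sf CE}_t<\tfrac12$ (``picky'': accept only the top $2d^{\sf CE}_t$ fraction of high rewards), any $\tilde\lambda_t\in[1,2]$ when $d^{\sf CE}_t=\tfrac12$, and $\tilde\lambda_t=\max\{2-2d^{\sf CE}_t,0\}<1$ when $d^{\sf CE}_t>\tfrac12$ (``generous'': accept all high rewards plus low rewards above $2-2d^{\sf CE}_t$). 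A direct computation shows the conditional acceptance probability equals $d^{\sf CE}_t$ whenever $d^{\sf CE}_t\in[0,1]$, so $d^{\sf CE}_t$ is a martingale started at $\tfrac{T}{2(T-1)}\approx\tfrac12$ with per-step increment variance $\asymp(T-t)^{-2}$; hence it stays close to $\tfrac12$ (e.g.\ within $[\tfrac18,\tfrac78]$ up to time $T-c\log T$ with probability $1-o(1)$). \textbf{(ii) A regret identity.} Writing $A_{\rm lo}$ for the number of low rewards ($r_t<2$) accepted by ${\sf CE}$ and $L\ge0$ for its leftover budget, splitting $\sum_t r_t x^{\sf CE}_t$ into high- and low-reward parts and using $r_t\ge 2$ on accepted high rewards, $r_t\le 1$ on accepted low rewards, budget conservation $\sum_t x^{\sf CE}_t=\tfrac T2-L$, and $\E\big[\sum_t (r_t-2)\mathbbm 1(r_t\ge 2)\big]=\tfrac T4$, one gets
\[
\E\!\left[\sum_{t=1}^T r_t x^{\sf CE}_t\right]\ \le\ \frac{5T}{4}-\E[A_{\rm lo}]-2\,\E[L].
\]
\textbf{(iii) The two $\sqrt T$ terms.} From step (i), $\E[A_{\rm lo}]=\sum_t \E[(d^{\sf CE}_t-\tfrac12)^+]+O(\log T)$, and a martingale central-limit (or Paley--Zygmund / Burkholder--Davis--Gundy) estimate for $d^{\sf CE}_t$ shows this is $\ge c_1\sqrt T$: in generous mode ${\sf CE}$ is forced to absorb near-value-$1$ requests at rate $\asymp(d^{\sf CE}_t-\tfrac12)^+$, and the $\asymp(T-t)^{-1/2}$ fluctuations of $d^{\sf CE}_t$ about $\tfrac12$ accumulate to order $\sqrt T$ over the horizon. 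Separately, a standard order-statistics computation gives $V^{\rm hind}_{\bb,T}\ge \tfrac{5T}{4}-C\sqrt T-O(1)$ with a constant $C<c_1$ (the offline LP takes the top $\tfrac T2$ rewards; its value exceeds $\tfrac{5T}4$ only by an $O(1)$ ``excess-mass'' term but falls short by $\asymp\sqrt T$ through the bottom high rewards, which sit near $2$, whenever there are $\tfrac T2+\Theta(\sqrt T)$ high arrivals). Combining, $\textsc{Reg}_{\bb,T}({\sf CE})=\big(V^{\rm hind}-\tfrac{5T}4\big)+\big(\tfrac{5T}4-\E[{\sf CE}\text{ reward}]\big)\ge(c_1-C)\sqrt T-O(1)\ge c_0\sqrt T$.

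The hard part of the self-contained route is step (iii): one must pin down $c_1$ in $\sum_t\E[(d^{\sf CE}_t-\tfrac12)^+]\ge c_1\sqrt T$ and show it strictly exceeds the hindsight-gap constant $C$. This needs (a) keeping the martingale $d^{\sf CE}_t$ away from $\{0,1\}$, where the acceptance-probability identity of step (i) — and hence the fluctuation estimate — degrade (a stopping-time truncation plus the tail bound from step (i) handles this and only costs $o(\sqrt T)$), and (b) a quantitatively sharp lower bound on $\E|d^{\sf CE}_t-\tfrac12|$ in terms of its conditional variance; tracking these constants (one finds $c_1\approx\tfrac{\sqrt{2\pi}}{8}>\tfrac1{2\sqrt{2\pi}}\approx C$) is where the bookkeeping is. Since the reduction to \cite{besbes2024dynamic} sidesteps (a)–(b) entirely while yielding exactly the claimed bound, I would use it as the proof and keep the decomposition above as a remark explaining the failure mechanism.
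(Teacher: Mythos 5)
The paper does not give a self-contained argument here: its ``proof'' is precisely the pointer to Proposition 1 of \cite{besbes2024dynamic} (or Proposition 2 of \cite{bumpensanti2020re}), both of which are \emph{algorithm-specific} results showing that the fluid/re-solving heuristic itself incurs $\Omega(\sqrt{T})$ regret on such instances. Your chosen primary route — invoking Theorem 1 of \cite{besbes2024dynamic} with $g=1$ as a lower bound against \emph{every} admissible policy — is the wrong reduction and does not go through. For this instance the density adjacent to the gap is uniform (i.e.\ $\beta=0$ in their parameterization), so the minimax lower bound of their Theorem 1 is only polylogarithmic, and indeed their {\sf CwG} algorithm provably achieves $o(\sqrt{T})$ regret on exactly such gapped instances — that is the entire point of their paper, and of the present paper's surrounding discussion. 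If every policy incurred $c_0\sqrt{T}$ here, Proposition \ref{prop: necessity of dual-unique in multisec} would say nothing about {\sf CE} in particular and the ``curse of dual non-uniqueness'' narrative would collapse. So the statement you would actually cite is false; the correct citation is the {\sf CE}-specific Proposition 1 of \cite{besbes2024dynamic}, which is what the paper uses.

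Your backup self-contained sketch, by contrast, is the right mechanism and is essentially the argument underlying those cited propositions: the dual optimum at $d=\tfrac12$ is the whole flat interval $[1,2]$ (your computation $f_{1/2}(\lambda)=\tfrac54$ there is correct), the acceptance probability equals $d^{\sf CE}_t$ so the normalized inventory is a martingale whose $\Theta((T-t)^{-1/2})$ fluctuations force the threshold to oscillate between the two branches, and the decomposition $\E[\sum_t r_t x_t^{\sf CE}]\le \tfrac{5T}{4}-\E[A_{\rm lo}]-2\E[L]$ checks out. The genuinely incomplete steps are the ones you flag: a quantitative anti-concentration bound giving $\sum_t\E[(d^{\sf CE}_t-\tfrac12)^+]\ge c_1\sqrt{T}$ (with control near the boundary of $[0,1]$ where the martingale identity degrades), and the comparison of $c_1$ against the hindsight constant $C$ in $V^{\rm hind}_{\bb,T}=\tfrac{5T}{4}-C\sqrt{T}+o(\sqrt{T})$. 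A cleaner way to avoid the delicate constant race is to condition on the number of high arrivals and count {\sf CE}'s mistakes pathwise relative to the hindsight solution (the route taken in Proposition 2 of \cite{bumpensanti2020re}). As submitted, though, the proof you say you would present rests on a universal lower bound that does not hold for this instance, so it has a genuine gap.
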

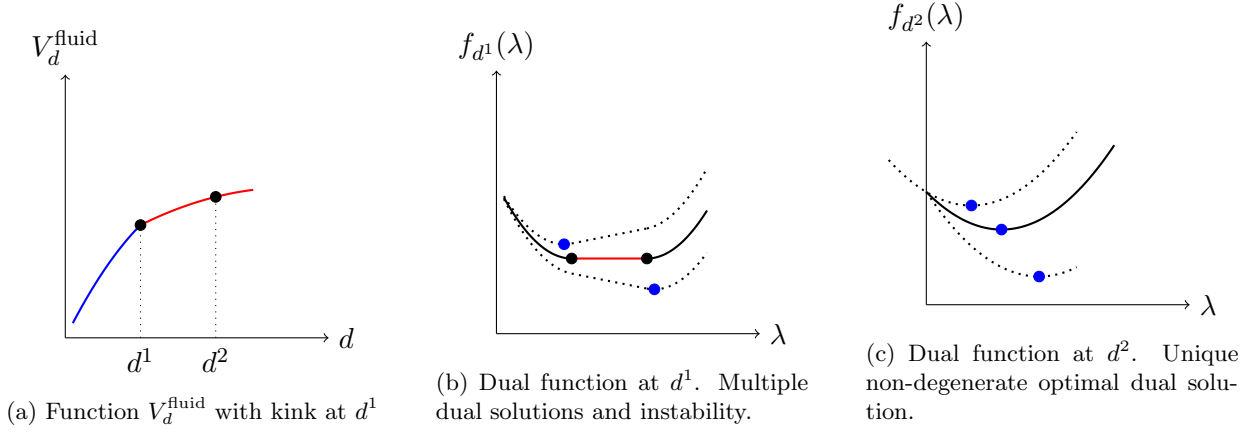
\begin{figure}[h!]
    \centering
        \begin{subfigure}[b]{0.3\textwidth}
        \centering
        \begin{tikzpicture}[scale=1]
            \draw[->] (0,0) -- (3.5,0) node[right] {$d$};
            \draw[->] (0,0) -- (0,3.5) node[above] {$V^{\rm fluid}_d$};
            
            \draw[blue, thick] plot[domain=0.1:1,samples=100] (\x,{-(\x - 2)^2/2 + 2});
            \draw[red, thick] plot[domain=1:2.5,samples=100] (\x,{-(\x - 3)^2/8 + 2});

            \filldraw[black] (1,1.5) circle (2pt);
            \filldraw[black] (2,2 - 0.125) circle (2pt);
            
            \draw[dotted] (1,0) -- (1,1.5);
            \draw[dotted] (2,0) -- (2,2 - 1/8);
            \node[below] at (1,0) {$d^1$};
            \node[below] at (2,0) {$d^2$};
        \end{tikzpicture}
        \caption{Function $V^{\rm fluid}_d$ with kink at $d^1$}
        \label{Fig: gap-dege-V}
    \end{subfigure}
    \hfill
    \begin{subfigure}[b]{0.3\textwidth}
        \centering
        \begin{tikzpicture}[scale=1]
            \draw[->] (0,0) -- (3.5,0) node[right] {$\lambda$};
            \draw[->] (0,0) -- (0,3.5) node[above] {$f_{d^1}(\lambda)$};

            \draw[thick, domain=0.1:1,samples=100] plot (\x,{(\x- 1)^2 +  1});
            \draw[thick,red, domain=1:2,samples=100] plot (\x,{1});
            \draw[thick,
            domain=2:2.8,samples=100] plot (\x,{(\x- 2)^2 + 1});

            \draw[thick, dotted, domain=0.1:1,samples=100] plot (\x,{(\x- 1)^2 +  1 + 0.2 *\x});
            \draw[thick,dotted, domain=1:2,samples=100] plot (\x,{1+ 0.2 *\x});
            \draw[thick, dotted,
            domain=2:2.8,samples=100] plot (\x,{(\x- 2)^2 +  1 + 0.2 *\x});

               \draw[thick, dotted, domain=0.1:1,samples=100] plot (\x,{(\x- 1)^2 +  1 - 0.2 *\x});
            \draw[thick,dotted, domain=1:2,samples=100] plot (\x,{1- 0.2 *\x});
            \draw[thick, dotted,
            domain=2:2.8,samples=100] plot (\x,{(\x- 2)^2 +  1  - 0.2 *\x});
            \filldraw[blue] (0.9 , .01 + 1 + 0.18) circle (2pt);  
            \filldraw[blue] (2.1, .01 + 1 -0.42) circle (2pt);
            \filldraw[black] (1,1) circle (2pt); 
            \filldraw[black] (2, 1) circle (2pt);
        \end{tikzpicture}
        \caption{Dual function at $d^1$. Multiple dual solutions and instability.}
        \label{Fig: gap-dege-f}
    \end{subfigure}
    \hfill
        \begin{subfigure}[b]{0.3\textwidth}
        \centering
        \begin{tikzpicture}[scale=1]
            \draw[->] (0,0) -- (3.5,0) node[right] {$\lambda$};
            \draw[->] (0,0) -- (0,3.5) node[above] {$f_{d^2}(\lambda)$};

            \draw[thick, domain=0.0:2.5,samples=100] plot (\x,{(\x)^2/2 + 1.5 - \x});
            \draw[thick, dotted, domain=-0.5:2,samples=100] plot (\x,{(\x)^2/2 + 1.5 + 0.4 * \x - \x});
            \draw[thick, dotted, domain=0:2,samples=100] plot (\x,{(\x)^2/2 + 1.5 - 0.5 * \x - \x});

            \filldraw[blue] (1,1) circle (2pt);  
            \filldraw[blue] (0.6,0.36/2 + 1.5 - 0.36) circle (2pt);  
            \filldraw[blue] (1.5 ,1.5^2/2 + 1.5 - 1.5^2) circle (2pt);  
        \end{tikzpicture}
        \caption{Dual function at $d^2$. Unique non-degenerate optimal dual solution.}
        \label{Fig: gap-nondege-f}
    \end{subfigure}
    \hfill
    \caption{Function $V^{\rm fluid}$ and dual function $f_{d}$ for multisecretary instances with gap.}
    \label{Fig: nondiscrete-gap}
\end{figure}

Proposition \ref{prop: necessity of dual-unique in multisec} highlights the significance of dual uniqueness (Assumption \ref{def: unique-dual}) on the performance of {\sf CE}. The failure of {\sf CE} in Proposition \ref{prop: necessity of dual-unique in multisec} shares similarities with how {\sf CE} suffers from degeneracy in discrete settings, as illustrated in Figure \ref{Fig: nondiscrete-gap}. For detailed proof, readers are referred to Proposition 1 of \cite{besbes2024dynamic} or Proposition 2 in \cite{bumpensanti2020re}. Novel algorithmic designs beyond {\sf CE} are required for such instances, such as the {\sf CwG} principle \citep{besbes2024dynamic} or the {\sf boundary-attracted algorithm} \citep{jiang2022degeneracy}, to achieve improved ($o(\sqrt{T})$) regret. In general, OLP instances violating Assumption \ref{def: unique-dual} can be significantly more complex than the examples mentioned, to the extent that even describing such distributions may be challenging. Developing a unified algorithm and analysis to systematically handle these instances is beyond the scope of the current work. However, we hope the methodology introduced in this paper provides a foundation for addressing such challenges in future research.

The above discussion allows us to justify one specific requirement in our Assumption \ref{assum: starting-from-zero}, that the conditional reward distributions are all supported on intervals starting from zero. In fact, without such a restriction, it is possible to construct simple OLP instances that violates Assumption \ref{def: unique-dual} for which {\sf CE} fails.
\begin{exmp}\label{exmp:violating unique-dual}
$\PP\left(\ba = 1 \right)= \PP\left(\ba = 4\right) = \frac{1}{2}$. $r|\ba \sim {\sf Unif}[1, 2]$ for both values of $\ba$. The initial inventory for the resource is $\bb = \bd T = \frac{1}{2} T$.    
\end{exmp}
The distribution in Example \ref{exmp:violating unique-dual} satisfies all the conditions in Assumption \ref{assum: starting-from-zero} with $\beta = 0$, except for the requirement \textit{(ii)}. A straightforward calculation yields multiple dual optimal solutions $\blambda^{\star} \in \left[\frac{1}{2}, 1\right]$ that violates Assumption \ref{def: unique-dual}, and leads to $\Omega(\sqrt{T})$ regret of {\sf CE}. Note that the additional regularity imposed on $F$ in Assumption \ref{assum: small-probability-starting-from-zero} \textit{(ii)} allows us to relax the condition that all conditional reward distribution should start from zero. However, we still require this condition to hold locally, to avoid tricky corner cases. We defer more discussion to Appendix \ref{appendix: second-order} (cf. Example \ref{exmp:violating 2-order}).

The previous discussion provides a fairly comprehensive picture regarding how fluid degeneracy affects the performance of {\sf CE}. Combining both the discrete and non-discrete perspectives, it seems more accurate to describe the structural challenge faced by {\sf CE} as the ``curse of dual non-uniqueness'' rather than the broader and less precise ``curse of degeneracy.''

\section{Proof Sketch}\label{sec:sketch}

We rely on the concentration analysis of the hindsight dual optimal solution in each time period $t$ to conduct our performance analysis. In period $t$, the primal hindsight relaxation takes the form of a multi-knapsack LP similar to Problem (\ref{program: offline primal}). We consider its dual, which can be thought of as the empirical version of Problem (\ref{program: to go fluid dual}). In particular, 
\begin{align}
    \blambda^{\star}_t &= \argmin_{\blambda \in \RR^{m}_{\geq 0}} \bb^{\top}_{t-1} \blambda + \sum_{j = t + 1}^T (r_j - \ba^{\top}_j \blambda)^+,\label{program: to-go-offline-dual-1}\\
    \bar \blambda^{\star}_t &= \argmin_{\blambda \in \RR^{m}_{\geq 0}} (\bb_{t - 1} - \ba_t)^{\top} \blambda + \sum_{j = t + 1}^T (r_j - \ba^{\top}_j \blambda)^+, \label{program: to-go-offline-dual-2}
\end{align}
where we note that $\dtt$ is constructed only when feasibility is satisfied at period $t$, namely, $\bb_{t - 1} \geq \ba_t$ element-wise.  Note that $\blambda^{\star}_t$ and ${\bar \blambda^{\star}_t}$ depend both on $\bb_{t-1}$ (and $\ba_t$)  and $\cI_t$. They are not non-anticipatory, and the introduction of them serves only for the purpose of analyzing the performance of algorithms. For notational simplicity, we write $\blambda^{\star}_t$ and ${\bar \blambda^{\star}_t}$ with the understanding that their dependence on $\bb_{t-1}$ ($\ba_t$) and $\cI_t$ is implicit but clear from the context.
\subsection{Regret decomposition}
\begin{lemma}[Regret Decomposition of {\sf CE}]\label{lem: ce-regret-decomp}
The expected regret of the {\sf CE} heuristic is bounded by
\begin{align}
    \textsc{reg}_T(\pi^{\sf CE}) \ &\leq \ \cC_0\log{T} + \sum_{t = 1}^T  \E\left[ \left(\ba^{\top}_t {\dtt} - r_t\right)\II_{\left\{\ba^{\top}_t \ft \leq r_t \leq \ba^{\top}_t {\dtt}\right\}}\right]\NNN\\
    &\quad\quad\quad\quad\quad\quad\quad\quad\quad\quad\quad +  \sum_{t = 1}^T  \E\left[ \left(r_t - \ba^{\top}_t {\dt} \right)\II_{\left\{\ba^{\top}_t {\dt} \leq r_t \leq \ba^{\top}_t \ft \right\}}\right],
\end{align}
where $\cC_0 = 2\left(1 + \frac{m\Ab}{\Al} \right)m {\bar r}$, and the expectation is taken over $\cI_0$.
\end{lemma}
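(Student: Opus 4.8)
The plan is to establish the inequality pathwise --- for every fixed realization of the request stream $\cI_0=\{(\ba_t,r_t)\}_{t=1}^T$ --- and then take expectations; no martingale argument is needed at this stage. For $t=1,\dots,T+1$ let $R_t$ denote the optimal value of the offline multi-knapsack LP \eqref{program: offline primal} restricted to the periods $t,\dots,T$ and run with the residual capacity $\bb^{\sf CE}_{t-1}$ that {\sf CE} produces on $\cI_0$ (so $R_{T+1}=0$, and since $\bb^{\sf CE}_0=\bb$ we have $\E[R_1]=V^{\textrm{hind}}_{\bb,T}$). Telescoping,
\begin{align*}
\textsc{reg}_T(\pi^{\sf CE})\;=\;\E[R_1]-\E\!\left[\sum_{t=1}^T r_t x^{\sf CE}_t\right]\;=\;\sum_{t=1}^T\E\!\left[\,R_t-R_{t+1}-r_t x^{\sf CE}_t\,\right],
\end{align*}
so it suffices to bound each increment $\Delta_t:=R_t-R_{t+1}-r_t x^{\sf CE}_t$.

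For the core of the argument I would compare $R_t$ and $R_{t+1}$ via LP duality. Note that $R_{t+1}$ is exactly the primal value of the to-go LP with residual capacity $\bb^{\sf CE}_t$, whose dual minimizer is $\dt$ when $x^{\sf CE}_t=0$ (so $\bb^{\sf CE}_t=\bb^{\sf CE}_{t-1}$) and $\dtt$ when $x^{\sf CE}_t=1$ (so $\bb^{\sf CE}_t=\bb^{\sf CE}_{t-1}-\ba_t$, which is well-defined precisely because feasibility holds at $t$). By strong duality $R_{t+1}$ equals the corresponding dual objective evaluated at $\dt$ (resp. $\dtt$); by weak duality, for any $\blambda\ge 0$, $R_t\le \blambda^{\top}\bb^{\sf CE}_{t-1}+\sum_{j=t}^T(r_j-\ba_j^{\top}\blambda)^+$, and peeling off the $j=t$ summand $(r_t-\ba_t^{\top}\blambda)^+$ lets the residual sum cancel against $R_{t+1}$. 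Taking $\blambda=\dtt$ in the accept case and $\blambda=\dt$ in the (feasible) reject case yields $\Delta_t\le(\ba_t^{\top}\dtt-r_t)^+$ and $\Delta_t\le(r_t-\ba_t^{\top}\dt)^+$ respectively; finally, invoking the {\sf CE} threshold rule ($r_t\ge\ba_t^{\top}\ft$ on $\{x^{\sf CE}_t=1\}$ and $r_t<\ba_t^{\top}\ft$ on $\{x^{\sf CE}_t=0,\ \text{feasible}\}$) converts these into exactly the two indicator-weighted summands in the statement.

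It then remains to account for the periods excluded from this clean dichotomy, namely those in which {\sf CE} is forced to reject by infeasibility, $\ba_t\not\le\bb^{\sf CE}_{t-1}$. Under the standing boundedness ${\rm supp}(F)\subseteq[\Al,\Ab]^m\times[0,\bar r]$, every request consumes at least $\Al$ of every resource, so infeasibility at $t$ forces some coordinate of $\bb^{\sf CE}_{t-1}$ strictly below $\Ab$. Let $\sigma$ be the first epoch at which some coordinate of $\bb^{\sf CE}$ drops below $\Ab$ (and $\sigma:=T$ if this never occurs, in which case the post-$\sigma$ sum below is empty). Then every period $t\le\sigma$ is feasible and is covered by the dichotomy above, while $\sum_{t>\sigma}\Delta_t=R_{\sigma+1}-\sum_{t>\sigma}r_t x^{\sf CE}_t\le R_{\sigma+1}$, and $R_{\sigma+1}$ is itself $O(1)$: once a resource is below $\Ab$, the total (even fractional) acceptance weight the to-go LP can support is less than $\Ab/\Al$, so $R_{\sigma+1}\le\bar r\,\Ab/\Al$. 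Collecting terms and taking expectations --- enlarging the feasible-period sums from $t\le\sigma$ to all of $1,\dots,T$, which only adds nonnegative terms --- yields the claimed bound, with the $O(1)$ boundary contribution absorbed into $\cC_0\log T$.

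The main obstacle, I expect, is not the telescoping or the duality bookkeeping (which are mechanical once the to-go values are set up) but the boundary argument: choosing the stopping time $\sigma$ correctly, verifying that every pre-$\sigma$ period is genuinely feasible so that the two-case analysis applies verbatim, and showing that the post-$\sigma$ residual collapses to a constant. One also has to be mildly careful with the edge cases --- $\sigma=0$ when some $b_i<\Ab$ already at the start, and the final period where $\bd^{\sf CE}_t$ is normalized by a vanishing horizon --- each of which only contributes an $O(1)$ term that is likewise subsumed by $\cC_0\log T$.
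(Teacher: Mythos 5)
Your proposal is correct, and its core mechanism is genuinely different from the paper's. The paper proves the lemma via the compensated-coupling template: it compares the policy's action with the hindsight-optimal action $x^*_t$ at each step, notes the increment vanishes when they agree, and then runs a three-case analysis; the $\cC_0\log T$ term arises from Case 1, where $x^*_t$ is fractional (an event of probability at most $m/(T-t+1)$ by exchangeability of the i.i.d.\ arrivals and the fact that a basic optimal solution has at most $m$ fractional coordinates), while Cases 2 and 3 ($x^\pi_t\neq x^*_t$ with both binary) produce the two indicator-weighted terms. You never invoke $x^*_t$: you sandwich the increment directly, bounding $R_t$ from above by weak duality at a hand-picked dual vector and expressing $R_{t+1}$ exactly by strong duality at its own minimizer ($\dtt$ on accept, $\dt$ on feasible reject), so the to-go sums cancel and leave $(\ba_t^{\top}\dtt-r_t)^+$ and $(r_t-\ba_t^{\top}\dt)^+$ unconditionally; intersecting with the {\sf CE} threshold events gives exactly the summands in the statement. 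This sidesteps the fractional case and the exchangeability argument entirely, replacing the $\cC_0\log T$ residual by an $O(1)$ boundary term (your stopping-time argument for forced-infeasible periods is sound: before $\sigma$ every coordinate of $\bb^{\sf CE}_{t-1}$ is at least $\Ab$ so feasibility holds, and after $\sigma$ the entire to-go value is at most $\bar r\Ab/\Al$ since each accepted unit consumes at least $\Al$ of the depleted resource). Your bound is therefore slightly sharper than, and implies, the stated one. The trade-off is that the paper's decomposition isolates the "wrong decision" events relative to the hindsight optimum, which is the interpretation it leans on elsewhere, whereas your argument is the more elementary and self-contained route to the same inequality.
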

Lemma \ref{lem: ce-regret-decomp} follows from a careful decomposition analysis similar to \cite{jiang2022degeneracy} within the \textit{compensated coupling} framework of \cite{vera2021bayesian}. Particularly, let $V^{\rm off}_{t} = V^{\rm off}_{t, \bb_t}(\cI_t)$ denote the optimal value of the multi-knapsack LP
\begin{align*}
    V^{{\rm off}}_{t} = \max &\quad  \sum_{j = t + 1}^T r_j x_j\\
    \mbox{s.t.} & \quad \sum_{j = t + 1}^T a_{i j} x_j \leq b_{i t}, \quad i = 1, \dots, m,
    \\& x_j \in [0, 1], \quad j = t + 1,\dots, T.
    \end{align*}
We consider the following decomposition of regret 
\begin{align}
    \textsc{reg}_T(\pi^{\sf CE}) &= \E\left[V^{\rm off}\right] - \E\left[V^{\pi^{\sf CE}}\right]\NNN\\
    &= \E\left[V^{\rm off}_0\right] - \E\left[\sum_{t = 1}^T r_t x^{\pi^{\sf CE}}_t\right] \NNN\\
    &= \E\left[\sum_{t = 1}^T \left(V^{\rm off}_{t-1} - V^{\rm off}_{t}\right)\right] - \E\left[\sum_{t = 1}^T r_t x^{\pi^{\sf CE}}_t\right]  \quad\quad\quad\quad(\textrm{since $V^{\rm off}_T = 0$})\NNN\\
    &= \sum_{t = 1}^T \E\left[ V^{\rm off}_{t-1} - V^{\rm off}_{t} - r_t x^{\pi^{\sf CE}}_t \right]. \label{eq: myopic regret}
\end{align}
Lemma~\ref{lem: ce-regret-decomp} follows from further carefully treating \eqref{eq: myopic regret}. In particular, we show that when $x^{\pi^{\sf CE}}_t = x^*_t$ for a hindsight optimal solution $\{x^*_j\}_{j = t}^T$, then $V^{\rm off}_{t-1} - V^{\rm off}_{t} - r_t x^{\pi^{\sf CE}}_t = 0$ and no regret will be incurred. Otherwise, we bound the myopic regret $V^{\rm off}_{t-1} - V^{\rm off}_{t} - r_t x^{\pi^{\sf CE}}_t$ in one of the three cases: \textit{(i)} $x^{\pi^{\sf CE}}_t \in (0, 1)$, \textit{(ii)} $x^{\pi^{\sf CE}}_t = 1, x^*_t = 0$, and  \textit{(iii)} $x^{\pi^{\sf CE}}_t = 0, x^*_t = 1$. We argue that \textit{(i)} occurs with $O(\frac{1}{T - t})$ probability, and leverage the optimality of $\dt$ and $\dtt$ to bound the myopic regret in \textit{(ii)} and \textit{(iii)}. We defer a detailed proof to \Cref{appendix: proof of decomposition lemma}.

\subsection{Concentration analysis}
Lemma~\ref{lem: ce-regret-decomp} implies that the key to bounding the regret under {\sf CE} is to control the deviation of $\ba_t^{\top} \dt$ (and $\ba_t^{\top} \dtt$) from $\ba_t^{\top} \ft$, In particular, the concentration of $\dt$ and $\dtt$. In the prior works, strong non-degeneracy conditions are typically imposed to regulate the concentration of $\dt$ and $\dtt$. Different from the standard approach, we avoid imposing regularity conditions directly on $\ft$. Rather, conditions on the problem primitives, i.e. Assumptions \ref{assum: starting-from-zero} and \ref{assum: small-probability-starting-from-zero} suffice to guarantee nice concentration of $\dt$ and $\dtt$ for us. 

\begin{lemma}\label{lem: concentration results of dual optimum}
Under either Assumption \ref{assum: starting-from-zero} or Assumption \ref{assum: small-probability-starting-from-zero}, we have
\begin{align*}
    & \E_{\ba \sim F_{\ba}}\bigg[\bigg(F_{\ba}(\ba^{\top}{\ft}) - F_{\ba}(\ba^{\top}\dt ) \bigg)\bigg(\ba^{\top}{\ft} - \ba^{\top}\dt\bigg)\bigg]
    \leq \  \cC_4\left(\frac{\log(T - t)}{T - t}\right)^{\frac{2 + \beta}{2 + 2 \beta}},\\
    & \E_{\ba \sim F_{\ba}}\bigg[\bigg(F_{\ba}(\ba^{\top}{\ft}) - F_{\ba}(\ba^{\top}\dtt ) \bigg)\bigg(\ba^{\top}{\ft} - \ba^{\top}\dtt\bigg)\bigg]    \leq \  \cC_4\left(\frac{\log(T - t)}{T - t}\right)^{\frac{2 + \beta}{2 + 2 \beta}},
\end{align*}
both with probability at least $1-\frac{9 C\log(T - t)}{(T - t)^2}$, for $T-t\geq e^{\frac{\tilde C_1}{32} + 1}$, where $\cC_4$ and $\tilde C_1$ are constants (whose forms can be found in Remark \ref{rem_constantsforms}) that depend on model primitives regarding $F$ but independent of $T - t$ and $\bb$, and $C$ is a universal constant (cf. Lemma \ref{lem:helper-concentration-geer}).
\end{lemma}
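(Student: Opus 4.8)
I would work conditionally on the history $\cH_{t-1}$, which freezes the remaining inventory $\bb^{\sf CE}_{t-1}$ (hence $\bc:=\bd^{\sf CE}_t$) while the future requests $\{(\ba_j,r_j)\}_{j=t+1}^{T}$ remain i.i.d.\ from $F$ and independent of $\cH_{t-1}$; write $n:=T-t$. Set $g(\blambda):=\E_{(\ba,r)\sim F}[(r-\ba^\top\blambda)^+]$ and $\widehat g(\blambda):=\frac1n\sum_{j=t+1}^{T}(r_j-\ba_j^\top\blambda)^+$, so $\ft$ minimizes $f_{\bc}(\blambda)=\bc^\top\blambda+g(\blambda)$ over $\RR^m_{\ge0}$, $\dt$ minimizes $\bc^\top\blambda+\widehat g(\blambda)$, and $\dtt$ the same with $\bc$ replaced by $\bc-\ba_t/n$; here $\bc$ is an \emph{arbitrary fixed} vector, and uniformity in $\bc$ (equivalently in $\bb$) is ultimately what must be proved. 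First I would record an a priori bound $\|\ft\|,\|\dt\|,\|\dtt\|\le B(\Al,\bar r,m)$, since beyond norm $\bar r/\Al$ the $(\cdot)^+$ term already vanishes and no minimizer can be larger. Adding the first-order optimality inequalities of $\ft$ and $\dt$, cancelling the common linear term $\bc^\top\blambda$, and using $\nabla g(\blambda)=-\E[\ba\,\mathbbm{1}(r>\ba^\top\blambda)]$ (legitimate since neither assumption allows point masses in $r\mid\ba$; cf.\ Lemma~\ref{lem: smoothness of dual objective under our assumption}), gives the key inequality
\begin{equation*}
Q_t:=\E_{\ba}\bigl[(F^r_{\ba}(\ba^\top\ft)-F^r_{\ba}(\ba^\top\dt))(\ba^\top\ft-\ba^\top\dt)\bigr]\ =\ \bigl(\nabla g(\dt)-\nabla g(\ft)\bigr)^{\top}(\dt-\ft)\ \le\ (\PP-\PP_n)\,\psi_{\dt},
\end{equation*}
where $\PP_n$ is the empirical measure of $\{(\ba_j,r_j)\}_{j>t}$ and $\psi_{\blambda}(\ba,r):=\ba^\top(\ft-\blambda)\,\mathbbm{1}(r>\ba^\top\blambda)$. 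The same computation for $\dtt$ produces the identical bound plus an additive $\tfrac{m\Ab}{n}\|\ft-\dtt\|=\mathcal{O}(1/n)$, of lower order than the target.

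\textbf{Step 2 (localized empirical-process control via peeling).} Since $\dt$ is data-dependent, I would bound the right side by a supremum over $\{\blambda\ge0:\|\blambda\|\le B\}$ and peel. Writing $\psi_{\blambda}=\chi_{\blambda}+\xi_{\blambda}$ with $\chi_{\blambda}(\ba,r):=\ba^\top(\ft-\blambda)\bigl(\mathbbm{1}(r>\ba^\top\blambda)-\mathbbm{1}(r>\ba^\top\ft)\bigr)$ and $\xi_{\blambda}(\ba,r):=(\ft-\blambda)^\top\ba\,\mathbbm{1}(r>\ba^\top\ft)$, the crucial margin bound is $\mathrm{Var}_{\PP}(\chi_{\blambda})\le 2m\Ab B\,\E_{\ba}\bigl[|\ba^\top(\ft-\blambda)|\,|F^r_{\ba}(\ba^\top\blambda)-F^r_{\ba}(\ba^\top\ft)|\bigr]\le 2m\Ab B\,Q(\blambda)$, while $\{\chi_{\blambda}\}$ is a bounded product of a finite-dimensional family of linear functionals of $\ba$ with the VC class of half-space indicators, hence has entropy $\log N(\epsilon,\cdot,\|\cdot\|_\infty)\lesssim m\log(CB/\epsilon)$. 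Partitioning into $\mathcal{O}(\log n)$ dyadic shells in the value of $Q(\blambda)$ and applying a Talagrand/Bousquet-type inequality (Lemma~\ref{lem:helper-concentration-geer}) per shell, with a union bound, yields $|(\PP_n-\PP)\chi_{\blambda}|\lesssim\sqrt{Q(\blambda)\log n/n}+\log n/n$ for all admissible $\blambda$ with probability $\ge 1-\tfrac{9C\log n}{n^2}$. For the linear remainder, $(\PP-\PP_n)\xi_{\dt}=(\ft-\dt)^\top\bw_n$ with $\bw_n:=(\PP-\PP_n)[\ba\,\mathbbm{1}(r>\ba^\top\ft)]$ and $\|\bw_n\|\lesssim\sqrt{m\log n/n}$ whp by coordinatewise Bernstein.

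\textbf{Step 3 (curvature lower bound and the rate).} Set $\theta_{\ba}(\blambda):=\min(\ba^\top\blambda,r_{\ba})$, the accept threshold clipped at the top of the conditional reward support (under Assumption~\ref{assum: small-probability-starting-from-zero}, read $r_{\ba}=r(\ba)$). Because $F^r_{\ba}$ is constant above $r_{\ba}$, $x\mapsto\min(x,r_{\ba})$ is $1$-Lipschitz, and $\ba^\top\blambda\ge0$ always (so clipping is needed only at the upper endpoint), the reverse-H\"older lower bound of Assumption~\ref{assum: starting-from-zero} \textit{(iii)} (resp.\ Assumption~\ref{assum: small-probability-starting-from-zero} \textit{(iv)}) gives pointwise $(F^r_{\ba}(\ba^\top\ft)-F^r_{\ba}(\ba^\top\dt))(\ba^\top\ft-\ba^\top\dt)\ge c_{\beta}|\theta_{\ba}(\dt)-\theta_{\ba}(\ft)|^{2+\beta}$, hence $Q_t\ge c_{\beta}D(\dt,\ft)^{2+\beta}$ with $D(\blambda,\ft):=\E_{\ba}[|\theta_{\ba}(\blambda)-\theta_{\ba}(\ft)|^{2+\beta}]^{1/(2+\beta)}$. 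The same clipping structure shows $\chi_{\dt}$ and $\bw_n$ depend on $\dt-\ft$ only through its component in the directions where $\theta_{\ba}$ varies, so the Step~2 bounds combine to $(\PP-\PP_n)\psi_{\dt}\lesssim D(\dt,\ft)\sqrt{\log n/n}+\log n/n$. Combining $c_{\beta}D(\dt,\ft)^{2+\beta}\le Q_t\lesssim D(\dt,\ft)\sqrt{\log n/n}$ forces $D(\dt,\ft)\lesssim(\log n/n)^{1/(2(1+\beta))}$ and then $Q_t\lesssim(\log n/n)^{(2+\beta)/(2+2\beta)}$, which is the claim; for $\beta=0$ this reads $Q_t\lesssim\log n/n$, and the $\dtt$ version follows after absorbing the $\mathcal{O}(1/n)$ term. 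Tracking constants through Lemma~\ref{lem:helper-concentration-geer}, the entropy bound, $B$, and $c_{\beta}$ gives $\cC_4$ and $\tilde C_1$ in terms of the assumptions' primitives only.

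\textbf{Main obstacle.} The genuine difficulty is matching the curvature lower bound to the empirical-process upper bound \emph{uniformly in $\bc=\bd^{\sf CE}_t$}. One must show that both $Q_t$ and the empirical process are simultaneously insensitive to perturbations of $\dt$ in ``clipped'' directions---those spanned by request types the fluid rejects---so that these can be projected out before invoking curvature; and that the residual curvature on the complementary subspace does not vanish as $\bc\to 0$, which it does since the unclipped set shrinks, so the small-$\bc$ regime (where the fluid value, hence $Q_t$, is anyway negligible) needs a separate argument. It is precisely here that the support-starting-from-zero condition, the density lower bound $l_f$, and the reverse-H\"older exponents enter; carrying out this uniform argument with no fluid-regularity hypothesis---in particular without a stable binding constraint set---is the technical core.
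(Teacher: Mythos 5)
Your Steps 1--2 are sound and are essentially the paper's own argument in a cleaner packaging: the paper's Lemma \ref{lem:concentration-proof-part-1} establishes exactly the bound $Q_t\le \tilde C_3\sqrt{\log n/n}\,(M_1+\sqrt{\log n/n})$ by combining the two first-order optimality relations with a peeled, bracketing-entropy-based uniform deviation bound (Lemmas \ref{lem:helper-concentration-geer}, \ref{lemma: concentration_gradient}, \ref{lemma:concentration_int}), and your variational inequality plus the $\chi/\xi$ split is the same mechanism. The gap is in Step 3, and it is not cosmetic. First, your pointwise curvature bound with one-sided clipping $\theta_{\ba}(\blambda)=\min(\ba^{\top}\blambda,r_{\ba})$ is false under Assumption \ref{assum: small-probability-starting-from-zero}: if $\ba^{\top}\ft$ and $\ba^{\top}\dt$ both fall below $l(\ba)>0$, the left-hand side is zero while $|\theta_{\ba}(\ft)-\theta_{\ba}(\dt)|$ can be of order one, so you need two-sided clipping, and the reverse-H\"older bound then only bites on the set of $\ba$ where at least one threshold lands inside $[l(\ba),r(\ba)]$ --- which may be empty or have small mass.

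Second, and more fundamentally, your self-bounding argument requires the empirical-process upper bound and the curvature lower bound to be expressed in the \emph{same} distance, and you do not supply that bridge. The linear term $(\ft-\dt)^{\top}\bw_n$ is controlled by $\|\ft-\dt\|\sqrt{\log n/n}$ (and the paper's $M_1$ by $\E_{\ba}[(\ba^{\top}(\ft-\dt))^2\max(\bar F^r_{\ba}(\cdot),\bar F^r_{\ba}(\cdot))]^{1/2}$), whereas your curvature bound only lower-bounds $Q_t$ by the clipped distance $D$; the assertion that ``$\bw_n$ depends on $\dt-\ft$ only through its component in the directions where $\theta_{\ba}$ varies'' is not a proof and is not even well defined as a subspace statement. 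Closing this loop is precisely the content of the paper's case analysis: under Assumption \ref{assum: starting-from-zero} the $\max(\bar F,\bar F)$ weight kills the upper-clipped directions and the remaining term is bounded by $Q_t^{1/(2+\beta)}$ via Jensen; under Assumption \ref{assum: small-probability-starting-from-zero} one needs the convexity of $\mathrm{supp}(F^{\ba})$, the density lower bound $l_f$, the uniform cone condition (Lemma \ref{lem: ball-beats-orthogonality}) to convert $\E_{\ba}[(\ba^{\top}(\ft-\dt))^2]$ on a small ball into $\|\ft-\dt\|^2$, and an intermediate-value argument ruling out mixed configurations of the thresholds relative to the supports. You correctly flag this as the ``main obstacle,'' but since it is left unresolved, the proposal identifies the right strategy without actually proving the lemma.
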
 

The expectation on the LHS in \Cref{lem: concentration results of dual optimum} is taken with respect to $\ba$. Recall that $\dt$ and $\dtt$ are both random variables, therefore the LHS in \Cref{lem: concentration results of dual optimum} are also random variables. \Cref{lem: concentration results of dual optimum} establishes a high-probability concentration bound of $ \dt, \dtt$ in a specific form. It is not hard to observe that the LHS of \Cref{lem: concentration results of dual optimum} serve as upper bounds on the terms in the regret decomposition of \Cref{lem: ce-regret-decomp}, explaining why we set up bounds in such a particular form. 

The proof of Lemma~\ref{lem: concentration results of dual optimum} needs an intermediate lemma of a similar form, that is 
\begin{lemma}\label{lem:concentration-proof-part-1}
Denote by
\begin{align*}
    M_1 &\ = \ \sqrt{\E_{\ba \sim F^{\ba}}\bigg[\big(\ba^{\top}\ft - \ba^{\top}\dt\big)^2\max\bigg(\bar F^r_{\ba}(\ba^\top \dt), \bar F^r_{\ba}(\ba^\top \ft)\bigg)\bigg]}, \\
    \bar M_1 &\ = \ \sqrt{\E_{\ba \sim F^{\ba}}\bigg[\big(\ba^{\top}\ft - \ba^{\top}\dt\big)^2\max\bigg(\bar F^r_{\ba}(\ba^\top \dt), \bar F^r_{\ba}(\ba^\top \ft)\bigg)\bigg]}, 
\end{align*}
where $\bar F^r_{\ba}(\cdot) = 1 - F^r_{\ba}(\cdot)$. Then under either Assumption \ref{assum: starting-from-zero} or Assumption \ref{assum: small-probability-starting-from-zero},
\begin{align}
    & \E_{\ba \sim F^{\ba}}\bigg[\bigg(F^r_{\ba}(\ba^{\top}{\ft}) - F^r_{\ba}(\ba^{\top}\dt ) \bigg)\bigg(\ba^{\top}{\ft} - \ba^{\top}\dt\bigg)\bigg]\leq \tilde C_3\sqrt{\frac{\log(T-t)}{T-t}}\bigg(M_1 +\sqrt{\frac{\log(T-t)}{T-t}}\bigg), \label{eq: concentration_lemma_1} \\
    & \E_{\ba \sim F^{\ba}}\bigg[\bigg(F^r_{\ba}(\ba^{\top}{\ft}) - F^r_{\ba}(\ba^{\top}\dtt ) \bigg)\bigg(\ba^{\top}{\ft} - \ba^{\top}\dtt\bigg)\bigg] \leq  \tilde C_3\sqrt{\frac{\log(T-t)}{T-t}}\bigg(\bar M_1 +\sqrt{\frac{\log(T-t)}{T-t}}\bigg),\label{eq: concentration_lemma_2}
\end{align}
both with probability at least $1-\frac{9C \log(T - t)}{(T - t)^2}$ for any $T-t\geq e^{\frac{\tilde C_1}{32} + 1}$, where  $\tilde C_1, \tilde C_3$ depend on model primitives regarding $F$ but independent of $T - t$ and $\bb$, whose particular form can be found in Remark \ref{rem_constantsforms}.
\end{lemma}
Lemma~\ref{lem:concentration-proof-part-1} is different from Lemma~\ref{lem: concentration results of dual optimum} in that, the RHS of the bound in Lemma~\ref{lem:concentration-proof-part-1} contains the terms $M_1$ and $\bar M_1$. When both $\ba^{\top}{\ft}, \ba^{\top}\dt$ (or $\ba^{\top}\dtt$) belong to the support of $F^r_{\ba}$, we have $|\ba^{\top}{\ft} - \ba^{\top}\dt| \propto \left(|F^r_{\ba}(\ba^{\top}{\ft}) - F^r_{\ba}(\ba^{\top}{\ft})|\right)^{\frac{1}{1 + \beta}}$ under both Assumption \ref{assum: starting-from-zero} and Assumption \ref{assum: small-probability-starting-from-zero}. This further allows us to bound the terms $M_1$ (and $\bar M_1$) using functions of the LHS in the bound, namely  $\E_{\ba \sim F^{\ba}}\bigg[\bigg(F^r_{\ba}(\ba^{\top}{\ft}) - F^r_{\ba}(\ba^{\top}\dt ) \bigg)\bigg(\ba^{\top}{\ft} - \ba^{\top}\dt\bigg)\bigg]$. Solving the recursive inquality then leads to the desired Lemma~\ref{lem: concentration results of dual optimum}. We formalize this argument and provide a proof of Lemma~\ref{lem: concentration results of dual optimum} under Assumption~\ref{assum: starting-from-zero}.

\begin{proof}[Proof of Lemma \ref{lem: concentration results of dual optimum} under Assumption \ref{assum: starting-from-zero}]
WLOG we only provide the proof regarding $\dt$, as the proof for $\dtt$ is nearly identical. 

Consider the partition  $[\Al, \Au]^m = \mathcal{E}_1 \cup \mathcal{E}_2$, where $\mathcal{E}_1 = \{\ba \in [\Al, \Au]^m: F^r_{\ba}(\ba^{\top} \ft) = 1 \ \textrm{or} \ F^r_{\ba}(\ba^{\top} \dt) = 1\},$ and $\mathcal{E}_2 = \{\ba \in [\Al, \Au]^m: F^r_{\ba}(\ba^{\top} \dt), F^r_{\ba}(\ba^{\top} \ft) < 1 \}.$
\\

\textbf{Case 1. $\mathcal{E}_1$ happens.\ } In this case, $F^r_{\ba}(\ba^{\top} \ft) = 1$ or $F^r_{\ba}(\ba^{\top} \dt) = 1$. WLOG we assume $F^r_{\ba}(\ba^{\top} \dt) = 1,$ then 
\begin{align*}
    & \big(\ba^{\top}\ft - \ba^{\top}\dt\big)^2\max\bigg(\bar F^r_{\ba}(\ba^\top \dt), \bar F^r_{\ba}(\ba^\top \ft)\bigg)  \nonumber\\
    & \ = \ \big(\ba^{\top}\ft - \ba^{\top}\dt\big)^2 \bar F^r_{\ba}(\ba^\top \ft)\nonumber\\
    & \ \leq \ \max\left(\ba^{\top}\ft,\ba^{\top}\dt\right) \left|\ba^{\top}\ft - \ba^{\top}\dt\right| \bar F^r_{\ba}(\ba^\top \ft)\nonumber\\
    & \ \leq \ m\frac{{\bar r } \Au}{\Al} \left|\ba^{\top}\ft - \ba^{\top}\dt\right| \bar F^r_{\ba}(\ba^\top \ft)\ \ \ \ \ \ \ \ \ \ \textrm{(by Corollary \ref{coro: helper-bounded-norm-lambda})}\nonumber\\
    &\ = \ m \frac{{\bar r} \Au}{\Al} \bigg(F^r_{\ba}(\ba^{\top}\ft) - F^r_{\ba}(\ba^{\top}\dt ) \bigg)\bigg(\ba^{\top}\ft - \ba^{\top}\dt\bigg),
\end{align*}
where the last equality follows from $F^r_{\ba}\left(\ba^{\top} \dt\right) = 1$ and that $\ba^{\top}\ft - \ba^{\top}\dt$ and $F^r_{\ba}(\ba^{\top}\ft) - F^r_{\ba}(\ba^{\top}\dt )$ share the same sign. If $F^r_{\ba}(\ba^{\top} \ft) = 1,$ the argument is nearly identical. We further bound the above RHS as follows
\begin{align}
    \label{eq_pfconrescase1}
    & m \frac{{\bar r} \Au}{\Al} \bigg(F^r_{\ba}(\ba^{\top}\ft) - F^r_{\ba}(\ba^{\top}\dt ) \bigg)\bigg(\ba^{\top}\ft - \ba^{\top}\dt\bigg)\NNN\\
    & \ \leq\ \left(2 m \frac{{\bar r} \Au}{\Al}\right)^{\frac{2 + 2\beta }{2 + \beta}}\left(\big(F^r_{\ba}(\ba^{\top}\ft) - F^r_{\ba}(\ba^{\top}\dt ) \big)\big(\ba^{\top}\ft - \ba^{\top}\dt\big)\right)^{\frac{2}{2 + \beta}}, 
\end{align}\\
where we use the fact that $\frac{2}{2 + \beta} < 1$, $|F^r_{\ba}(\ba^{\top}\ft) - F^r_{\ba}(\ba^{\top}\dt )| \leq 1$ and $|\ba^{\top}\ft - \ba^{\top}\dt| < 2 m \frac{{\bar r} \Au}{\Al}$ by Corollary \ref{coro: helper-bounded-norm-lambda}.

\textbf{Case 2. $\mathcal{E}_2$ happens.\ } In this case, both $F^r_{\ba}(\ba^{\top} \ft)$ and $F^r_{\ba}(\ba^{\top} \dt)$ are strictly bounded away from $1.$ WLOG, assume $F^r_{\ba}(\ba^{\top} \ft) \geq F^r_{\ba}(\ba^{\top} \dt)$, while the other case $F^r_{\ba}(\ba^{\top} \ft) < F^r_{\ba}(\ba^{\top} \dt)$ is almost identical. Hence, by the H\"older condition in Assumption \ref{assum: starting-from-zero}, we have
\begin{align*}
    c_\beta(\ba^{\top} \ft - \ba^{\top}\dt)^{ 1 + \beta}\leq \   F^r_{\ba}(\ba^{\top} \ft) - F^r_{\ba}(\ba^{\top} \dt),
\end{align*}
which leads to
\begin{align}\label{eq_pfconrescase2}
    &\big(\ba^{\top}\ft - \ba^{\top}\dt\big)^2\max\bigg(\bar F^r_{\ba}(\ba^\top \dt), \bar F^r_{\ba}(\ba^\top \ft)\bigg) \NNN\\
    &\leq \big(\ba^{\top}\ft - \ba^{\top}\dt\big)^2 \NNN\\
    &= c_\beta^{-\frac{2}{2 + \beta}}\big(\ba^{\top}\ft - \ba^{\top}\dt\big)^{\frac{2}{2 + \beta}} c_\beta^{\frac{2}{2 + \beta}}\big(\ba^{\top}\ft - \ba^{\top}\dt\big)^{\frac{2 + 2 \beta}{2 + \beta}},\NNN\\
    &\leq c_\beta^{-\frac{2}{2 + \beta}}\big(\ba^{\top}\ft - \ba^{\top}\dt\big)^{\frac{2}{2 + \beta}} \left(F^r_{\ba}(\ba^{\top} \ft) - F^r_{\ba}(\ba^{\top} \dt)\right)^{\frac{2}{2 + \beta}}.
\end{align}
Combining \eqref{eq_pfconrescase1} with \eqref{eq_pfconrescase2}, we conclude that 
\begin{align}\label{eq:finalM1}
    M_1 &\ = \ \sqrt{\E_{\ba \sim F^{\ba}}\bigg[\big(\ba^{\top}\ft - \ba^{\top}\dt\big)^2\max\bigg(\bar F^r_{\ba}(\ba^\top \dt), \bar F^r_{\ba}(\ba^\top \ft)\bigg)\bigg]} \nonumber\\
    & \ = \ \sqrt{\E_{\ba \sim F^{\ba}}\bigg[\big(\ba^{\top}\ft - \ba^{\top}\dt\big)^2\max\bigg(\bar F^r_{\ba}(\ba^\top \dt), \bar F^r_{\ba}(\ba^\top \ft)\bigg)\left(I_{\cE_1} + I_{\cE_2}\right)\bigg]}\nonumber\\
&\ \leq \ \max\left(\left(2 m \frac{{\bar r} \Au}{\Al}\right)^{\frac{1 + \beta }{2 + \beta}}, c_\beta^{-\frac{1}{2 + \beta}}\right)\sqrt{\E_{\ba \sim F^{\ba}}\bigg[\big(\ba^{\top}\ft - \ba^{\top}\dt\big)^{\frac{2}{2 + \beta}} \left(F^r_{\ba}(\ba^{\top} \ft) - F^r_{\ba}(\ba^{\top} \dt)\right)^{\frac{2}{2 + \beta}}\bigg]}\nonumber\\
&\ \leq \ \max\left(\left(2 m \frac{{\bar r} \Au}{\Al}\right)^{\frac{1 + \beta }{2 + \beta}}, c_\beta^{-\frac{1}{2 + \beta}}\right)\left(\E_{\ba \sim F^{\ba}}\bigg[\big(\ba^{\top}\ft - \ba^{\top}\dt\big) \left(F^r_{\ba}(\ba^{\top} \ft) - F^r_{\ba}(\ba^{\top} \dt)\right)\bigg]\right)^{\frac{1}{2 + \beta}}\nonumber\\
& \quad\quad\quad\quad (\textrm{by Jensen's inequality since $x^{\frac{2}{2 + \beta}}$ is concave} ).
\end{align}
Plugging \eqref{eq:finalM1} into Lemma \ref{lem:concentration-proof-part-1}, 
\begin{align*}
    & \E_{\ba \sim F^{\ba}}\bigg[\bigg(F^r_{\ba}(\ba^{\top}{\ft}) - F^r_{\ba}(\ba^{\top}\dt ) \bigg)\bigg(\ba^{\top}{\ft} - \ba^{\top}\dt\bigg)\bigg]\\
    \ \leq\  & \tilde C_3\sqrt{\frac{\log(T-t)}{T-t}}\bigg(M_1+\sqrt{\frac{\log(T-t)}{T-t}}\bigg)\\
    \ \leq\  &\tilde C_3\frac{\log(T-t)}{T-t} + \tilde C_3 \max\left(\left(2 m \frac{{\bar r} \Au}{\Al}\right)^{\frac{1 + \beta }{2 + \beta}}, c_\beta^{-\frac{1}{2 + \beta}}\right) \sqrt{\frac{\log(T-t)}{T-t}}\\ &\qquad\qquad\qquad\qquad \times \left(\E_{\ba \sim F^{\ba}}\bigg[\big(\ba^{\top}\ft - \ba^{\top}\dt\big) \left(F^r_{\ba}(\ba^{\top} \ft) - F^r_{\ba}(\ba^{\top} \dt)\right)\bigg]\right)^{\frac{1}{2 + \beta}}\nonumber,
\end{align*}
which, through straightforward algebra, implies that
\begin{align*}
    & \E_{\ba \sim F^{\ba}}\bigg[\bigg(F^r_{\ba}(\ba^{\top}{\ft}) - F^r_{\ba}(\ba^{\top}\dt ) \bigg)\bigg(\ba^{\top}{\ft} - \ba^{\top}\dt\bigg)\bigg]\nonumber\\
    \leq & \  \tilde C_3\left(1 + \max\left(\left(2 m \frac{{\bar r} \Au}{\Al}\right)^{\frac{1 + \beta }{2 + \beta}}, c_\beta^{-\frac{1}{2 + \beta}}\right)\right)^{\frac{2 + \beta}{1 + \beta}} \left(\frac{\log(T - t)}{T - t}\right)^{\frac{2 + \beta}{2 + 2 \beta}},
    \triangleq \tilde C_4 \left(\frac{\log(T - t)}{T - t}\right)^{\frac{2 + \beta}{2 + 2 \beta}},
\end{align*}
with probability at least  $1 - \frac{9 C \log (T - t)}{(T - t)^2}$ for any $T - t \geq e^{\frac{\tilde C_1}{32} + 1}$. Note that we have chosen
\[
 \tilde C_4 \triangleq \tilde C_3\left(1 + \max\left(\left(2 m \frac{{\bar r} \Au}{\Al}\right)^{\frac{1 + \beta }{2 + \beta}}, c_\beta^{-\frac{1}{2 + \beta}}\right)\right)^{\frac{2 + \beta}{1 + \beta}},
\]
thus concluding the proof of the lemma under Assumption \ref{assum: starting-from-zero}. 
\end{proof}

The proof under Assumption \ref{assum: small-probability-starting-from-zero} requires more effort but follows essentially the same high level idea, and we defer to Appendix \ref{appendix: proof of concentration lemma}.

\subsubsection{Proof sketch of Lemma~\ref{lem:concentration-proof-part-1} } 
Let
\begin{align*}
    h_{t,\bb}(\blambda, \ba, r) &= \frac{1}{T-t}\bb^\top \blambda + \left(r - \ba^\top \blambda\right)^+,\\
    \phi_{t,\bb}(\blambda, \ba, r) &= \frac{\partial h_{t,\bb}(\blambda, \ba)}{\partial \blambda} = \frac{1}{T-t}\bb - \ba\II_{\left\{r > \ba^\top \blambda\right\}}.
\end{align*}
Furthermore, let $\Omega \triangleq [0, \frac{\bar r}{\underline{A}}]^m.$

The proof of Lemma~\ref{lem:concentration-proof-part-1} relies on the following concentrations bounds. 

\begin{lemma}\label{lemma: concentration_gradient}
With probability at least $1 - \frac{3C\log(T - t)}{(T - t)^2}$,
\begin{align*}
    &\left|\E_{(\ba,r)\sim F}\phi_{t,\bb}(\lama, \ba, r)^\top(\lamb -\lama) - \frac{1}{T-t}\sum_{j=t+1}^T \phi_{t,\bb}(\lama, \ba_j, r_j)^\top(\lamb -\lama)\right|\nonumber\\
    \leq & \tilde C\sqrt{\frac{\log(T - t)}{T - t}}\left(\sqrt{\E_{\ba\sim F^{\ba}}\left\{(\ba^\top(\lamb -\lama))^2(1-F^r_{\ba}(\ba^\top \lama))\right\}}+\sqrt{\frac{\log(T - t)}{T - t}}\right)
\end{align*}
holds for any $\bb \in \RR^m_{\geq 0}$, $\lama, \lamb \in \Omega$, as long as $T - t \geq e^{\frac{\tilde C^2}{32} + 1}$, where $\tilde C_1$ is a constant independent of  $T-t$, $\bb$ and $\blambda_1, \blambda_2$, whose specific form can be found in Remark~\ref{rem_constantsforms}, and $C$ is a universal constant. (cf. Lemma~\ref{lem:helper-concentration-geer})
and $C$ is a universal constant.
\end{lemma}

\begin{lemma}\label{lemma:concentration_int}
Suppose $T - t \geq e^{\frac{\tilde C_1^2}{32} + 1}$.
We have with probability at least $1 - \frac{3C\log(T - t)}{(T - t)^2}$, for all $\lama, \lamb \in \Omega$,
\begin{align*}
    & \left|\E_{(\ba,r)\sim F}\int_{\ba^\top\lama}^{\ba^\top\lamb}(\II_{\left\{r > u\right\}} - \II_{\left\{r > \ba^\top  \lamb\right\}}){\rm d}u - \frac{1}{T-t}\sum_{j=t+1}^T \int_{\ba_j^\top\lama}^{\ba_j^\top\lamb}(\II_{\left\{r_j > u\right\}} - \II_{\left\{r_j > \ba^\top \lamb\right\}}){\rm d}u\right|\nonumber\\
    \leq & \tilde C_1 \sqrt{\frac{\log(T - t)}{T - t}}\left(\left(\E_{(\ba,r)\sim F}\left(\int_{\ba^\top\lama}^{\ba^\top\lamb}(\II_{\left\{r > u\right\}} - \II_{\left\{r > \ba^\top  \lamb\right\}}){\rm d}u\right)^2\right)^{1/2}+\sqrt{\frac{\log(T - t)}{T - t}}\right),
\end{align*}
where $\tilde C_1$ is a constant independent of  $T-t$, $\bb$ and $\blambda_1, \blambda_2$, whose specific form can be found in Remark~\ref{rem_constantsforms}, and $C$ is a universal constant. (cf. Lemma~\ref{lem:helper-concentration-geer})
\end{lemma}

Both Lemma~\ref{lemma: concentration_gradient} and Lemma~\ref{lemma:concentration_int} are proved via applying the powerful peeling device as well as classic concentration results in the empirical process theory (cf.  Lemma~\ref{lem:helper-concentration-geer} in \Cref{appendix:concentration-helper}). These techniques are key to the removal of non-degeneracy conditions. We defer a detailed proof of Lemma~\ref{lemma: concentration_gradient} and Lemma~\ref{lemma:concentration_int}  to \Cref{appendix: proof of concentration helper}. Lemma~\ref{lem:concentration-proof-part-1} follows from applying these two concentration bounds at the special values $\dt, \dtt$ and $\ft$ which are the optimal solutions to three closely related problems. We defer a detailed proof of Lemma~\ref{lem:concentration-proof-part-1} to \Cref{appendix: proof of concentration part 1}.



\subsection{Proof of Theorem~\ref{thm: regret of CE}}
With the help of Lemma \ref{lem: ce-regret-decomp} and Lemma \ref{lem: concentration results of dual optimum}, the proof of Theorem \ref{thm: regret of CE} is straightforward. 
\begin{proof}[Proof of Theorem \ref{thm: regret of CE}]
In fact, by Lemma \ref{lem: ce-regret-decomp}, 
\begin{align}
    \textsc{Reg}_{\bb, T}(\pi^{\sf CE}) \ &\leq \ \cC_0\log{T} + \sum_{t = 1}^T  \E\left[ \left(\ba^{\top}_t {\dtt} - r_t\right)\II_{\left\{\ba^{\top}_t \ft \leq r_t \leq \ba^{\top}_t {\dtt}\right\}}\right]\NNN\\
    &\quad\quad\quad\quad\quad +  \sum_{t = 1}^T  \E\left[ \left(r_t - \ba^{\top}_t {\dt} \right)\II_{\left\{\ba^{\top}_t {\dt} \leq r_t \leq \ba^{\top}_t \ft \right\}}\right]\nonumber\NNN\\
    &\leq \ \cC_0\log{T} + \sum_{t = 1}^T  \E\left[ \left(\ba^{\top}_t {\dtt} - \ba^{\top}_t \ft\right)\II_{\left\{\ba^{\top}_t \ft \leq r_t \leq \ba^{\top}_t {\dtt}\right\}}\right]\NNN\\
    &\quad\quad\quad\quad\quad+  \sum_{t = 1}^T  \E\left[ \left(\ba^{\top}_t \ft - \ba^{\top}_t {\dt} \right)\II_{\left\{\ba^{\top}_t {\dt} \leq r_t \leq \ba^{\top}_t \ft \right\}}\right]\nonumber.
    \end{align}
    Observe that the above RHS takes the form of the LHS in Lemma \ref{lem: concentration results of dual optimum}, namely, 
    \begin{align}\label{eq:reg-bound-ce}
    & \cC_0\log{T} + \sum_{t = 1}^T  \E\left[ \left(\ba^{\top}_t {\dtt} - \ba^{\top}_t \ft\right)\II_{\left\{\ba^{\top}_t \ft \leq r_t \leq \ba^{\top}_t {\dtt}\right\}}\right] + \sum_{t = 1}^T  \E\left[ \left(\ba^{\top}_t \ft - \ba^{\top}_t {\dt} \right)\II_{\left\{\ba^{\top}_t {\dt} \leq r_t \leq \ba^{\top}_t \ft \right\}}\right]\nonumber\\
    = & \ \cC_0\log{T} + \sum_{t = 1}^T  \E_{\ba \sim F^{\ba}}\bigg[\bigg(F^r_{\ba}(\ba^{\top}{\ft}) - F^r_{\ba}(\ba^{\top}\dt ) \bigg)\bigg(\ba^{\top}{\ft} - \ba^{\top}\dt\bigg)\bigg]\NNN\\
    &\quad\quad\quad\quad\quad+  \sum_{t = 1}^T  \E_{\ba \sim F^{\ba}}\bigg[\bigg(F^r_{\ba}(\ba^{\top}{\ft}) - F^r_{\ba}(\ba^{\top}\dtt ) \bigg)\bigg(\ba^{\top}{\ft} - \ba^{\top}\dtt\bigg)\bigg].
\end{align}
Note that
\begin{align}\label{eq:reg-bound-ce-2nd}
    & \sum_{t = 1}^T  \E_{\ba \sim F^{\ba}}\bigg[\bigg(F^r_{\ba}(\ba^{\top}{\ft}) - F^r_{\ba}(\ba^{\top}\dt ) \bigg)\bigg(\ba^{\top}{\ft} - \ba^{\top}\dt\bigg)\bigg] \nonumber\\
    \leq & \sum_{t = 1}^{(T - \cC_5)_+}  \E_{\ba \sim F^{\ba}}\bigg[\bigg(F^r_{\ba}(\ba^{\top}{\ft}) - F^r_{\ba}(\ba^{\top}\dt ) \bigg)\bigg(\ba^{\top}{\ft} - \ba^{\top}\dt\bigg)\II_{\cA_t}\bigg]\nonumber\\
    & + \sum_{t = 1}^{(T - \cC_5)_+}  \E_{\ba \sim F^{\ba}}\bigg[\bigg(F^r_{\ba}(\ba^{\top}{\ft}) - F^r_{\ba}(\ba^{\top}\dt ) \bigg)\bigg(\ba^{\top}{\ft} - \ba^{\top}\dt\bigg)\II_{\cA_t}^c\bigg] \nonumber\\
    & + \sum_{t = (T - \cC_5)_+ + 1}^T \E_{\ba \sim F^{\ba}}\bigg[\bigg(F^r_{\ba}(\ba^{\top}{\ft}) - F^r_{\ba}(\ba^{\top}\dt ) \bigg)\bigg(\ba^{\top}{\ft} - \ba^{\top}\dt\bigg)\II_{\left\{T>\cC_5\right\}}\bigg]\nonumber\\
    \leq & \tilde \cC_4\sum_{t = 1}^{(T - \cC_5)_+} \left(\frac{\log(T - t)}{T - t}\right)^{\frac{2 + \beta}{2 + 2 \beta}} + \sum_{t = 1}^{(T - \cC_5)_+}\PP(\cA_t^c) + \cC_5 \frac{\Au}{\Al}m {\bar r}\nonumber\\
    \leq & \tilde \cC_4\sum_{t = 1}^{(T - \cC_5)_+} \left(\frac{\log(T - t)}{T - t}\right)^{\frac{2 + \beta}{2 + 2 \beta}} + \sum_{t = 1}^{(T - \cC_5)_+} \frac{9 C\log(T - t) \frac{\Au}{\Al}m {\bar r}}{(T - t)^2} + \cC_5 \frac{\Au}{\Al}m {\bar r},
\end{align}
where $\cC_5 =e^{\frac{\tilde C_1}{32} + 1}$, and $\cA_t$ denotes the event that the first inequality in Lemma \ref{lem: concentration results of dual optimum} occurs. In \eqref{eq:reg-bound-ce-2nd}, the first term is by the high probability concentration bound of Lemma \ref{lem: concentration results of dual optimum}, the second term is by the small probability that $\cA_t^c$ occurs, and the last term is by $\|\dt\|, \|\ft\|$ are both upper bounded.
Similarly, 
\begin{align}\label{eq:reg-bound-ce-3rd}
    & \sum_{t = 1}^T  \E_{\ba \sim F^{\ba}}\bigg[\bigg(F^r_{\ba}(\ba^{\top}{\ft}) - F^r_{\ba}(\ba^{\top}\dtt ) \bigg)\bigg(\ba^{\top}{\ft} - \ba^{\top}\dtt\bigg)\bigg] \nonumber\\
    \leq & \tilde \cC_4\sum_{t = 1}^{(T - \cC_5)_+} \left(\frac{\log(T - t)}{T - t}\right)^{\frac{2 + \beta}{2 + 2 \beta}} + \sum_{t = 1}^{(T - \cC_5)_+} \frac{9 C\log(T - t) \frac{\Au}{\Al}m {\bar r}}{(T - t)^2} + \cC_5 \frac{\Au}{\Al}m {\bar r}.
\end{align}
Observe that 
\begin{align*}
    \int_1^T \left(\frac{\log t}{t}\right)^\alpha {\rm d } t  \leq (\log T)^{\alpha} \int_1^T t^{-\alpha} {\rm d } t \leq \begin{cases}
        (\log T)^2 \quad\quad & \alpha = 1, \\
         \frac{1}{1-\alpha}(\log T)^\alpha T^{1 - \alpha} \quad\quad & 0 
 < \alpha < 1. 
    \end{cases}
\end{align*}
Thus, by the basic inequality $\log x \leq x$, combining \eqref{eq:reg-bound-ce}, \eqref{eq:reg-bound-ce-2nd}, and \eqref{eq:reg-bound-ce-3rd} gives us
\begin{align*}
    \textsc{Reg}_{\bb, T}(\pi^{\sf CE}) \ \leq \begin{cases}
        \cC (\log T)^2 \quad\quad &\beta = 0,\\
        \tilde \cC T^{\frac{1}{2} - \frac{1}{2(1 + \beta)}}  (\log T)^{\frac{2 + \beta}{2 + 2 \beta}} \quad\quad &\beta > 0,
    \end{cases}
\end{align*}
where $\cC = \cC_0 + 4\tilde \cC_4 + 36C\frac{\Au}{\Al}m {\bar r} + 2 \cC_5 \frac{\Au}{\Al}m {\bar r}$, and $\tilde \cC = \cC_0 + \frac{8+8\beta}{\beta}\tilde \cC_4 + 36C\frac{\Au}{\Al}m {\bar r} + 2 \cC_5 \frac{\Au}{\Al}m {\bar r}$. This finishes the proof of Theorem \ref{thm: regret of CE}, where the explicit form of constants $\cC$ and $\tilde \cC$ are provided in remark \ref{rem_constantsforms} in Appendix \ref{appendix: proof of ce regret thm}.
\end{proof}

\section{Concluding Remarks}\label{sec:conclusion}
In this work, we provided near-optimal regret guarantee of the classical {\sf CE} algorithm for the general OLP problem, under mild assumptions on the underlying request distributions. Our result extends the state-of-the-art understanding of {\sf CE}'s range of effectiveness, revealing that the commonly imposed non-degeneracy conditions are overly restrictive and not necessary for {\sf CE} to achieve low regret. We developed new algorithmic analytical techniques based on empirical processes theory, potentially applicable to a broader range of dynamic optimization problems with non-discrete distributions. Our work leave open many interesting questions.

One direction is the design and analysis of a unified algorithm that achieves near optimal performance for all OLP instances. Example \ref{exmp:violating unique-dual} demonstrates that our performance guarantee of {\sf CE} does not extend beyond the distribution classes that we identify, and novel algorithmic innovations are needed. Algorithms with uniformly near-optimal performance have previously been designed in the discrete setting (cf. \cite{vera2021bayesian}) and in the multisecretary problem (cf. \cite{besbes2024dynamic}). The simulation-based {\sf RAMS} algorithm proposed in \cite{besbes2024dynamic} is a potential candidate that attains uniform near-optimal performance across discrete and non-discrete settings, though the existing regret analysis of this algorithm still relies on some other reference algorithms (cf. Theorem 3 in \cite{besbes2024dynamic}).

Another direction is to relax several assumptions made in the current work. For instance, one may consider scenarios where arriving requests are not \emph{i.i.d.}, but with certain inter-dependent structure. Also, one may attempt to establish similar results in more general settings of dynamic resource allocation beyond OLP, such as the unified framework of $DRC^2$ proposed in \cite{balseiro2023survey}.

\section*{Acknowledgements}
We thank Dave Goldberg and Pengyu Qian for a number of valuable comments that improved the paper.



\newpage
\appendix


\section{Analysis of Example \ref{exmp: linear}}\label{appendix: example and assumption}
We analyze Example \ref{exmp: linear} in this section. Recall that in this example, reward and resource consumption follow a generalized linear model:
$r = g\left(\ba^{\top} \bz \right) + \epsilon$.

\begin{lemma}\label{lem: linear-example-satisfy-our-assumption}
Assumption \ref{assum: small-probability-starting-from-zero} \textit{(i)}, \textit{(ii)} and \textit{(iv)} hold for Example \ref{exmp: linear} with $\beta = 0$ and $\eta = 1$. Furthermore, a relaxed version of \textit{(iii)} holds: there exist $\ba_o \in {\rm supp}\left(F^{\ba}\right)$ and $r_0 > 0$ s.t. $l(\ba)\leq 0$ for all $\ba \in \cB(\ba_o, r_0)\cap {\rm supp}\left(F^{\ba}\right)$. 
\end{lemma}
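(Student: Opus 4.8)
The plan is to verify each listed condition of Assumption~\ref{assum: small-probability-starting-from-zero} directly for the generalized linear model $r = g(\ba^\top \bz) + \epsilon$, treating the four items essentially independently, since they concern separate structural features of $F$. Throughout I will use that $\ba$ is supported in a convex compact set with density bounded below by $l_f$ (inherited from the hypothesis of Example~\ref{exmp: linear}), that $g$ is non-negative and Lipschitz (say with constant $c_g$), that $\bz \in \RR^m_{\geq 0}$ is fixed, and that $\epsilon$ has support $[-\cL, \cL]$ with density bounded above and below by positive constants, say $l_\epsilon \le h_\epsilon(\cdot) \le u_\epsilon$.

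\textbf{Items (i) and (ii).} Boundedness is immediate: $\ba$ lies in $[\Al, \Ab]^m$ by assumption on $F^\ba$, and since $g$ is continuous on the compact set $\{\ba^\top\bz : \ba \in \mathrm{supp}(F^\ba)\}$ it is bounded, so $r = g(\ba^\top\bz) + \epsilon \in [-\cL, \max g + \cL]$; the possible negative part is harmless (it only shifts the reward and, as noted in the paper's remark, negative rewards incur no regret), so after the standard truncation $r \in [0, \bar r]$ for $\bar r = \max g + \cL$. For (ii): $\mathrm{supp}(F^\ba)$ convex compact with density $\ge l_f$ is directly inherited; the conditional reward support is $\mathrm{supp}(F^r_\ba) = [g(\ba^\top\bz) - \cL,\ g(\ba^\top\bz) + \cL]$, so $l(\ba) = g(\ba^\top\bz) - \cL$ and $r(\ba) = g(\ba^\top\bz) + \cL$, each Lipschitz in $\ba$ with constant $c_g \|\bz\|$ since $g$ is Lipschitz and $\ba \mapsto \ba^\top\bz$ is linear — so $c_L = c_g\|\bz\|$ works.

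\textbf{Item (iv), the H\"older condition.} Conditional on $\ba$, $r$ is $\epsilon$ shifted by the constant $g(\ba^\top\bz)$, so $F^r_\ba$ is just the CDF of $\epsilon$ translated. Since $h_\epsilon$ is bounded in $[l_\epsilon, u_\epsilon]$ on its support, for any $l(\ba) \le z_1 < z_2 \le r(\ba)$ we get $l_\epsilon(z_2 - z_1) \le F^r_\ba(z_2) - F^r_\ba(z_1) \le u_\epsilon(z_2 - z_1)$, which is exactly the claimed inequality with $\beta = 0$, $\nu = 1$, $c_\beta = l_\epsilon$, $c_\nu = u_\epsilon$. (This is why $\beta=0$, $\nu=1$ — the bounded-density noise gives two-sided linear growth.)

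\textbf{Item (iii), the relaxed local condition — this is the main obstacle.} We must locate a point $\ba_o$ and radius $r_0$ with $l(\ba) = g(\ba^\top\bz) - \cL \le 0$, i.e. $g(\ba^\top\bz) \le \cL$, on a ball around $\ba_o$ intersected with the support. The hypothesis of Example~\ref{exmp: linear} gives some $\ba'$ with $g((\ba')^\top\bz) < \cL - \eta$; strict inequality with slack $\eta > 0$ plus continuity of $\ba \mapsto g(\ba^\top\bz)$ (Lipschitz, hence continuous) yields a neighborhood on which $g(\ba^\top\bz) < \cL$ still holds — concretely, if $|g(\ba^\top\bz) - g((\ba')^\top\bz)| \le c_g\|\bz\| \cdot \|\ba - \ba'\|$, then taking $r_0 = \eta / (c_g\|\bz\|)$ forces $g(\ba^\top\bz) < \cL$ throughout $\cB(\ba', r_0)$, so $l(\ba) < 0 \le 0$ there; set $\ba_o = \ba'$. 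One subtlety to handle carefully: the statement of Lemma~\ref{lem: linear-example-satisfy-our-assumption} only claims the \emph{relaxed} version $l(\ba) \le 0$ rather than the exact-zero version $l(\ba) = 0$ of Assumption~\ref{assum: small-probability-starting-from-zero}(iii). I expect the remainder of the paper's analysis (in this appendix) to argue that $l(\ba) \le 0$ is genuinely sufficient — intuitively, a conditional reward whose support extends below zero behaves, after truncating the irrelevant negative rewards, like one starting exactly at zero, so the local "probability mass reaching down to zero" property that Assumption~(iii) is really exploiting still holds. Making that reduction rigorous (and confirming it does not disturb the H\"older bounds near the truncation point $z = 0$) is the part requiring genuine care; the four verifications above are otherwise routine.
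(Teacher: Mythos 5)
Your proposal is correct and follows essentially the same route as the paper: items (i), (ii), (iv) are verified by boundedness/compactness, Lipschitz continuity of $\ba \mapsto g(\ba^\top\bz)$, and the two-sided density bounds on $\epsilon$ giving the H\"older condition with $\beta=0$, $\nu=1$; the relaxed (iii) follows from the slack $\eta$ at $\ba'$ plus Lipschitz continuity of $l(\ba)$, exactly as in the paper. Your closing caveat about whether $l(\ba)\le 0$ suffices is correctly identified as outside the lemma's scope — the paper handles that reduction in the paragraph immediately following the proof, by noting that requests with negative rewards are always optimally rejected so the distribution can be restricted to non-negative rewards.
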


\begin{proof}
That \textit{(i)} holds follows from the conditions on $F^{\ba}$ in Example \ref{exmp: linear}, and that both function $g$ and random variable $\epsilon$ is bounded. That \textit{(ii)} holds follows from the conditions on $F^{\ba}$ in Example \ref{exmp: linear}, and the Lipschitz continuity of $g$. \textit{(iv)} holds because $F^r_{\ba}(z_2) - F^r_{\ba}(z_1) = \PP\left(z_1 < r \leq z_2 | \ba \right) = \PP\left(z_1 - g\left(\ba^{\top}\bz \right) < \epsilon \leq z_2 - g\left(\ba^{\top}\bz \right) | \ba \right)$, which is bounded by $\underline{f_\epsilon}(z_2 - z_1)$ and ${\bar f_{\epsilon}}(z_2 - z_1)$, where $0 < \underline{f_\epsilon} \leq {\bar f_{\epsilon}} $ are the upper and lower bounds on the density of $\epsilon$, as is assumed in Example \ref{exmp: linear}. Finally, we argue that the relaxed version of \textit{(iii)} holds. Recall that $g\left((\ba')^{\top}\bz \right) \leq \cL - \eta.$ Thus the support of $r|\ba'$ is an interval $[-\eta, M]$, where $M < \infty$ since $g$ is Lipschitz, $\ba$ is bounded and $\eta$ is bounded. Namely, $l(\ba') = -\eta < 0.$ \textit{(iii)} thus holds with a ball centered at $\ba'$, where we use the Lipshcitzness of $l(\ba)$.
\end{proof}


The possibly negative rewards can be ignored without affecting the regret. Indeed, the hindsight optimal decision when facing a request with negative reward is to reject it. Thus $F$ can be WLOG restricted to the part with non-negative rewards. For that restricted probability distribution, Lemma \ref{lem: linear-example-satisfy-our-assumption} implies that Assumption \ref{assum: small-probability-starting-from-zero} holds.

\section{Explicit constants appearing in Theorem \ref{thm: regret of CE}}\label{appendix: proof of ce regret thm}
We specify the concrete constants appearing in the regret bound. 
\begin{rmk}\label{rem_constantsforms}
 The constants $\cC$ and $\tilde \cC$ in Theorem \ref{thm: regret of CE} have the specific form of 
    \begin{align*}
        \cC = & \cC_0 + 4 \cC_4 + 36C\frac{\Au}{\Al}m {\bar r} + 2 \cC_5 \frac{\Au}{\Al}m {\bar r}, \\
        \tilde \cC = & \cC_0 + \frac{8+8\beta}{\beta} \cC_4 + 36C\frac{\Au}{\Al}m {\bar r} + 2 \cC_5 \frac{\Au}{\Al}m {\bar r}.
    \end{align*}
    Recall from Lemma \ref{lem:helper-concentration-geer}, $C$ is an absolute constant. Furthermore,
    \begin{align*}
        \cC_0 = & 2\left(1 + \frac{m\Ab}{\Al} \right)m {\bar r}, \ \cC_5 = e^{\frac{\tilde C_1}{32} + 1},\\
        \tilde C_1 = & 2\left(\frac{4 m \bar r \Au}{\Al}+ 1\right) \left(10 C^2 \left(1 + 2\left(\frac{10 m}{\min\{\nu/2, 1\}}\right)^{1/2}\right)^2\left( 4c_{\nu}^{\frac{1}{2}}\frac{\bar r}{\Al}\left( 2 m \Au\right)^{\frac{\nu}{2}}+ 9\right)^2 + 2\right).
    \end{align*} 
 $\cC_4$ is specified from the proof of Lemma \ref{lem: concentration results of dual optimum}. Let 
 \[
 \tilde C_3 =  3\tilde C_1 + \frac{4 m \Au {\bar r}}{\Al} + \frac{2 m^2 {\bar A} {\bar r}}{\underline A}.
 \]
Then under Assumption \ref{assum: starting-from-zero}, 
    \[\cC_4 = \tilde C_3\left(1 + \max\left(\left(2 m \frac{{\bar r} \Au}{\Al}\right)^{\frac{1 + \beta }{2 + \beta}}, c_\beta^{-\frac{1}{2 + \beta}}\right)\right)^{\frac{2 + \beta}{1 + \beta}}.
    \]
   Under Assumption \ref{assum: small-probability-starting-from-zero}, 
    $\cC_4 = \max\left(c_1,c_2,c_3,c_4\right)$  where
\begin{align*}
    c_1 = & \tilde C_3 + \tilde C_3^2\Au \left(\min\left(c_\beta, 4^{- 1- \beta}c_\beta c_\nu^{\frac{1 + \beta}{\nu}} \left(2 \sqrt{m}\frac{\bar r \Au}{\Al}\right)^{-1 - \beta}\right) C_f^{\frac{2 + \beta}{2}}\left(\frac{r_1\wedge w}{6} \sin{(\theta)}\right)^{\frac{(2 + \beta)(m + 2)}{2}}\right)^{-1}  \max\left(1, \Au\right),\\
    c_2 = & \tilde C_3 + \tilde C_3\left(\frac{2\Au\sqrt{m} \bar  r}{\Al}\max\left(\tilde C_34^{1+\beta}c_\beta^{-1} c_\nu^{-\frac{1 + \beta}{\nu}}, \tilde C_3^2\frac{2\Au\sqrt{m} \bar  r}{\Al}4^{2+2\beta}c_\beta^{-2} c_\nu^{-\frac{2(1 + \beta)}{\nu}}\right)\right)^{1/2},\\
    c_3 = & \tilde C_3\left(1 + \max\left(\left(2 m \frac{{\bar r} \Au}{\Al}\right)^{\frac{1 + \beta }{2 + \beta}}, c_\beta^{-\frac{1}{2 + \beta}}\right)\right)^{\frac{2 + \beta}{1 + \beta}},\\
    c_4 = & \tilde C_3 + \tilde C_3^2\Au \left(c_\beta C_f^{\frac{2 + \beta}{2}}\left(\frac{r_0\wedge w}{6} \sin{(\theta)}\right)^{\frac{(2 + \beta)(m + 2)}{2}}\right)^{-1}  \max\left(1, \Au\right),
\end{align*}
and $\theta, w$ are the parameters specified by the uniform cone condition (cf. Lemma \ref{lem: ball-beats-orthogonality}), and
\begin{align*}
    C_f = l_f\frac{\pi^{m/2}}{\Gamma\left(\frac{m}{2}+1\right)}, \ \ r_1 = \frac{1}{8}\left(\left(c_L + \sqrt{m}\frac{\bar r}{\Al}\right)^{-1}\wedge 1\right) c_\nu^{\frac{1}{\nu}},
\end{align*}
with $\Gamma(\cdot)$ denoting the Gamma function.
\end{rmk}

\section{Proof in Section \ref{sec:implications}}\label{appendix: degeneracy}
\subsection{Proof of Lemma \ref{lem: non essential non degeneracy}}

The proof is divided into two parts. We first provide a proof of Lemma \ref{lem: smoothness of dual objective under our assumption}, then use this lemma to show the desired reuslts.
\begin{proof}[Proof of Lemma \ref{lem: smoothness of dual objective under our assumption}]
    Direct computation shows that
    \begin{align}\label{eq_lem_fbcsmooth}
        f_{\bc}(\blambda) = \bc^\top\blambda - \EE_{\ba \sim F^{\ba}} \ba^\top\blambda + \EE_{\ba \sim F^{\ba}}\int_0^{\ba^\top\blambda} F_{\ba}^r (v) {\rm d}v.
    \end{align}
    Both Assumptions \ref{assum: starting-from-zero} and Assumption \ref{assum: small-probability-starting-from-zero} imply that $F_{\ba}^r (v)$ is continuous with respect to $v$. Hence, by the Leibniz integral rule, $\nabla f_{\bc}(\blambda)$ exists. The statement about $V^{\rm fluid}_{\bc}$ is obvious by the expression \eqref{eq_lem_fbcsmooth} and the fact that $F_{\ba}^r (v)$ is continuous.
\end{proof}
With Lemma \ref{lem: smoothness of dual objective under our assumption}, we proceed with the proof of Lemma \ref{lem: non essential non degeneracy}.
\begin{proof}[Proof of Lemma \ref{lem: non essential non degeneracy}]
By Lemma \ref{lem: smoothness of dual objective under our assumption},  $\nabla f_{\bc}$ always exists for any $\bc$. We now show that for any nonzero
$\blambda^{o} \in \mathbb{R}^m_{\geq 0}$ satisfying $\lambda^{o}_i = 0$ for some $1 \leq i \leq m$, there exists $\bd$ such that $\blambda^{o}$ is the optimal solution to the dual problem $\min_{\blambda \in \RR^m_{\geq 0}} f_{\bd}(\blambda)$. Without loss of generality, let $i=1$, and set $\lambda_i^o > 0$ for $i=2,...,m$. Such a $\bd$ can be constructed via $\nabla f_{\bd}(\blambda^{o}) = \mathbf{0},$ or more precisely $\bd = \EE_{(\ba, r) \sim F}\left[\ba \mathbbm{1}\left(r > \ba^{\top} \blambda^{o}\right)\right]$. The resulting instance violates Assumption \ref{def: dual non-degeneracy} since $\lambda^{o}_1 = 0$ and $d_1 = \EE_{(\ba, r) \sim F}\left[a_1 \mathbbm{1}\left(r > \ba^{\top} \blambda^{o}\right)\right]$. Consider two perturbation $\bd' = (d_1+\epsilon,d_2,...,d_m)$ and $\bd'' = (d_1-\epsilon,d_2,...,d_m)$ of $\bd = (d_1,d_2,...,d_m)$, where $\epsilon>0$, and the optimal solutions for $\min_{\blambda \in \RR^m_{\geq 0}} f_{\bd'}(\blambda)$ and $\min_{\blambda \in \RR^m_{\geq 0}} f_{\bd''}(\blambda)$ are $\blambda'$ and $\blambda''$, respectively. It is clear that $\lambda_1'$ is zero while $d_1+\epsilon > \EE_{(\ba, r) \sim F}\left[a_1 \mathbbm{1}\left(r > \ba^{\top} \blambda'\right)\right]$, and $\lambda_1''>0$ while  $d_1-\epsilon = \EE_{(\ba, r) \sim F}\left[a_1 \mathbbm{1}\left(r > \ba^{\top} \blambda''\right)\right]$. Hence, it violates Assumption \ref{def: dual stability}. Notably, Example 2 in \cite{jiang2022degeneracy} is a specific instance within this degenerate class.
\end{proof}

\subsection{DLP and the proof of Lemma \ref{lem: non-degeneracy discrete} } 
We write down the dual fluid problem in its LP form:
\begin{align}\label{program: fluid dual-DLP}
    \min_{\blambda} &\quad \sum_{i = 1}^m d_i \lambda_i + \sum_{j = 1}^n \eta_j\\
    \mbox{s.t.} & \quad \sum_{i = 1}^m  a^j_i p^j \lambda_i + \eta_j \geq p^j r^j, \ \ j \in [1, n], \NNN\\
    & \quad \blambda, \bbbeta \geq 0. \nonumber
\end{align}

\begin{proof}[Proof of Lemma \ref{lem: non-degeneracy discrete}]
\textit{(i) Assumption \ref{def: unique-dual} $\Longleftrightarrow$ Assumption \ref{def: dual non-degeneracy}.} Since Assumption \ref{def: dual non-degeneracy} requires dual uniqueness, it suffices to show dual uniqueness implies Assumption \ref{def: dual non-degeneracy}, namely, strict complementary slackness. By standard LP theory (cf. Exercise 4.20 in \cite{bertsimas1997introduction}), there exists a pair of primal and dual optimal solution that the strict complementary slackness condition is satisfied. Premised on $\bx^{\star}$ is the unique solution to DLP (\ref{program: fluid primal-DLP}), both primal and dual have unique optimal solution and therefore they must satisfy strict complementary slackness.

\textit{(ii) Assumption \ref{def: unique-dual} $\Longleftrightarrow$ Assumption \ref{def: DLP non-degeneracy}.} We first show Assumption \ref{def: unique-dual} $\Longleftarrow$ Assumption \ref{def: DLP non-degeneracy}. Premised on primal uniqueness and under Assumption \ref{def: DLP non-degeneracy}, we have $\bx^{\star}$ is a unique, non-degenerate optimal solution to the DLP (\ref{program: fluid primal-DLP}). Standard LP theory thus implies the dual problem (\ref{program: fluid dual-DLP}) also has a non-degenerate and unique optimal solution $(\blambda^{\star}, \bbbeta^{\star})$, or equivalently, a unique solution $\blambda^{\star}$ to problem (\ref{eq: dual-formulation}) and Assumption \ref{def: unique-dual} is true. We now in turn prove Assumption \ref{def: unique-dual} $\Longrightarrow$ Assumption \ref{def: DLP non-degeneracy}. We prove by contradiction. Assume $\bx^{\star}$ is degenerate. Then by standard LP theory (cf. Theorem 4.5 in \cite{sierksma2001linear}), since both DLP (\ref{program: fluid primal-DLP}) and the dual problem (\ref{program: fluid dual-DLP}) are in standard (inequality) form, the dual must have multiple optimal solutions. Since $\bbbeta^{\star}$ is uniquely determined by $\eta_j = p^j\left(r^j - \sum_{i = 1}^m  a^j_i \lambda_i \right)^+$ for $j = 1, \dots, n$, the non-uniqueness of the dual solution thus must imply the non-uniqueness of optimal $\blambda^{\star}$ to the problem (\ref{eq: dual-formulation}), completing the proof.

\textit{(iii) Assumption \ref{def: DLP non-degeneracy}
$\Longrightarrow$ Assumption \ref{def: dual stability LP}.} In this case, $\bx^{\star}$ is the unique and non-degenerate optimal solution to DLP (\ref{program: fluid primal-DLP}). Since the DLP has a bounded feasible region, $\bx^{\star}$ must be a unique, non-degenerate optimal basic feasible solution (BFS). The primal stability in this case is immediate.

\textit{(iv) Assumption \ref{def: dual stability LP}
$\Longrightarrow$ Assumption \ref{def: unique-dual}.} We argue by contraction. If the dual has multiple optimal solutions, then by standard LP theory (cf. Theorem 4.5 in \cite{sierksma2001linear}), the primal DLP must have a degenerate solution. Since we further assume $\bx^{\star}$ is unique, it must be a BFS since the DLP has a bounded feasible region. Thus, $\bx^{\star}$ is a unique, degenerate optimal BFS. There must be more than $n$ binding constraints at $\bx^{\star}$, and $n$ of them are linearly independent. For any neighborhood of $\bd$, there always exists $\bd'$ in the neighborhood such that the binding constraints at $\bx^{\star}$ cannot be binding simultaneously. Therefore, any optimal solution to the DLP with RHS $\bd'$ can not share the same set of binding resources as $\bx^{\star}$, violating Assumption \ref{def: dual stability LP}, thus completing the proof.

Combining the above completes the proof.
\end{proof}

\section{Second-Order Growth Conditions}\label{appendix: second-order}




In addition to non-degeneracy, another class of fluid regularity conditions frequently imposed in prior literature is \textit{second-order growth} (on dual objectives). A fixed OLP instance specified by $(F, \bb, T)$ corresponds to a fluid instance $(F, \bd)$.  The second-order growth conditions characterize the curvature of the function $f_{\bd}(\blambda)$, as is determined by $F$ and $\bd$. Noticeably they are specific to the non-discrete settings, since in the discrete case, $f_{\bd}(\blambda)$ is piecewise linear, and the notion of curvature is trivial. Second-order growth conditions in the literature can be broadly classified into two categories: local and global conditions. Building on Theorem \ref{thm: regret of CE}, we demonstrate that local conditions capture key structural features of the problem that fundamentally determine the achievable regret scaling of {\sf CE} (and, in fact, any algorithm). By contrast, global conditions, often introduced for technical analytical purposes, are typically overly restrictive and not reflective of the actual determinants of algorithmic performance.

\subsection{Local conditions}
Various forms of (local) second-order growth conditions exist in the literature. Here, we present a typical one. For ease of connecting to our own assumptions, we state it in a more general form that incorporates high-order growth. Fix a fluid instance $(F, \bd)$ such that $\bd \neq \bm{0}$. 
\begin{assumption}[Local higher-order growth condition]\label{def: 2-order-growth}
There exist a constant $\gamma \geq 0$ and a neighborhood $\mathcal{N}$ of the dual optimal solution $\blambda^{\star}$ to $\min_{\blambda \in \RR^{m}_{\geq 0}} f_{\bd}(\blambda)$, and positive constant $\underline{\alpha}$, such that for any $\blambda \in \mathcal{N}$,  it holds that 
\[
 f_{\bd}(\blambda) - f_{\bd}(\blambda^\star) - \nabla f_{\bd}(\blambda^{\star})^{\top}\left(\blambda  - \blambda^{\star} \right)
\geq \underline{\alpha} \left|\blambda  - \blambda^{\star} \right|^{2 + \gamma}.
\]   
Typically, $\gamma = 0$ corresponds to the standard second-order growth condition.
\end{assumption}
Assumption \ref{def: 2-order-growth}, with $\gamma = 0$, appears in \cite{balseiro2023survey} (cf. SC 7). We note that other notions of (local) second-order growth conditions exist. For instance, \cite{li2022online} imposes a local Lipschitz continuity condition on the conditional reward CDF (Assumption 2(b) in \cite{li2022online}), \cite{bray2024logarithmic} imposes a positive definiteness condition on the Hessian matrix of $f_{\bd}(\blambda)$ at $\blambda^{\star}$ (cf. Assumption 6), while \cite{balseiro2023survey} also introduces a lower downward quadratic condition on $V^{\rm fluid}_{\bd}$ (cf. Assumption 2.1). For a comprehensive overview of these assumptions and their interrelations, we refer readers to Appendix A of \cite{jiang2022degeneracy}. The existing regret guarantee of an OLP instance $(F, \bb, T)$ typically requires the corresponding fluid instance $(F, \bd)$ to satisfy both the second-order growth condition, \textbf{and} the non-degenerate condition (cf. Assumption \ref{def: dual stability} or \ref{def: dual non-degeneracy}).

 We state here an observation that Assumption \ref{def: 2-order-growth} implies Assumption \ref{def: unique-dual}. In particular,
\begin{prop}\label{prop:2-order imply unique dual}
    Under the second-order growth condition of Assumption \ref{def: 2-order-growth} the dual solution is unique. Namely, Assumption \ref{def: unique-dual} holds.
\end{prop}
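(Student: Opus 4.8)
The plan is to argue by contradiction, combining the convexity of $f_{\bd}$ with the strict local growth supplied by Assumption \ref{def: 2-order-growth}. Suppose toward a contradiction that $\blambda^\star$ and $\blambda^{\star\star}$ are two distinct optimal solutions of $\min_{\blambda \in \RR^m_{\geq 0}} f_{\bd}(\blambda)$, where $\blambda^\star$ is the minimizer in whose neighborhood $\mathcal{N}$ the growth inequality holds. The first step is to note that, since $\RR^m_{\geq 0}$ is convex and $f_{\bd}$ is convex, the whole segment $\blambda_\epsilon := (1-\epsilon)\blambda^\star + \epsilon\,\blambda^{\star\star}$, $\epsilon \in [0,1]$, is feasible and consists of minimizers; hence $f_{\bd}(\blambda_\epsilon) = f_{\bd}(\blambda^\star)$ for every $\epsilon \in [0,1]$.

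The second step is to show that the directional derivative of $f_{\bd}$ at $\blambda^\star$ along $\bv := \blambda^{\star\star} - \blambda^\star$ vanishes, not merely that it is nonnegative. Indeed, the one-dimensional restriction $g(\epsilon) := f_{\bd}(\blambda_\epsilon)$ is convex and, by the first step, constant on $[0,1]$, so its right derivative at $0$ is zero; this right derivative equals $\nabla f_{\bd}(\blambda^\star)^\top \bv$, the gradient existing by the very formulation of Assumption \ref{def: 2-order-growth} (and, when $F$ satisfies Assumption \ref{assum: starting-from-zero} or \ref{assum: small-probability-starting-from-zero}, also by Lemma \ref{lem: smoothness of dual objective under our assumption}). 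Thus $\nabla f_{\bd}(\blambda^\star)^\top \bv = 0$.

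The final step is to choose $\epsilon > 0$ small enough that $\blambda_\epsilon \in \mathcal{N}$ and to evaluate the growth inequality at $\blambda = \blambda_\epsilon$. Its left-hand side is $f_{\bd}(\blambda_\epsilon) - f_{\bd}(\blambda^\star) - \nabla f_{\bd}(\blambda^\star)^\top(\blambda_\epsilon - \blambda^\star) = 0 - \epsilon\,\nabla f_{\bd}(\blambda^\star)^\top \bv = 0$, while its right-hand side equals $\underline{\alpha}\,|\blambda_\epsilon - \blambda^\star|^{2+\gamma} = \underline{\alpha}\,\epsilon^{2+\gamma}\,|\blambda^{\star\star} - \blambda^\star|^{2+\gamma} > 0$, using $\underline{\alpha} > 0$ and $\blambda^{\star\star} \neq \blambda^\star$. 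This contradiction forces $\blambda^{\star\star} = \blambda^\star$, so the dual solution is unique, i.e. Assumption \ref{def: unique-dual} holds.

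All ingredients are standard convex-analysis facts; the only step I expect to require genuine care is the second one — the assertion that $\nabla f_{\bd}(\blambda^\star)^\top(\blambda^{\star\star}-\blambda^\star)=0$ rather than merely $\ge 0$ — since it uses that \emph{both} endpoints are global minimizers (so $g$ is constant, not just nondecreasing), and not merely first-order stationarity of $\blambda^\star$. No substantive obstacle is anticipated beyond spelling this out carefully.
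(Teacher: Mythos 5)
Your argument is correct and complete; it proceeds by the same basic contradiction as the paper (convexity of $f_{\bd}$ makes the whole segment between two putative minimizers optimal, and the growth inequality then forces the segment to degenerate), but the way you kill the gradient term differs from the paper's. You show directly that the one-dimensional restriction $g(\epsilon)=f_{\bd}(\blambda_\epsilon)$ is constant, hence $\nabla f_{\bd}(\blambda^{\star})^{\top}(\blambda^{\star\star}-\blambda^{\star})=0$, and then apply the growth inequality \emph{once}, centered at $\blambda^{\star}$, to get $0\geq \underline{\alpha}\,\epsilon^{2+\gamma}|\blambda^{\star\star}-\blambda^{\star}|^{2+\gamma}>0$. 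The paper instead applies the growth inequality \emph{twice}, centered at the midpoint $\blambda_{\theta/2}$ of the segment and evaluated at the two endpoints $\blambda_2$ and $\blambda_\theta$; adding the two inequalities cancels the gradient terms without ever computing the directional derivative. The two routes buy slightly different things: the paper's symmetrization avoids your Step 2 entirely, but at the cost of invoking the growth condition at a minimizer ($\blambda_{\theta/2}$) other than the one at which Assumption \ref{def: 2-order-growth} is literally stated, which is a mild extrapolation of the hypothesis; your version stays centered at the designated $\blambda^{\star}$ and is in that sense more faithful to the assumption as written, at the cost of the (easy, and correctly flagged) observation that constancy of $g$ on $[0,1]$ gives a vanishing, not merely nonnegative, right derivative at $0$. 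Either way the conclusion follows; no gap.
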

\begin{proof}
    Suppose there are two dual solutions $\blambda_1$ and $\blambda_2$ with $\blambda_1\neq \blambda_2$. For any $\blambda_0 = \theta \blambda_1 + (1-\theta)\blambda_2$, by the convexity of $f_{\bd}(\blambda)$, we have 
    \begin{align*}
        f_{\bd}(\blambda_\theta)\leq \theta f_{\bd}(\blambda_1) + (1-\theta)f_{\bd}(\blambda_2),
    \end{align*}
    which implies $\blambda_\theta$ is also a dual solution for any $\theta \in [0,1]$. Hence, we can take $\blambda_\theta$ in the neighborhood of $\blambda_2$ stated in Assumption \ref{def: 2-order-growth} and $\blambda_\theta\neq \blambda_2$. 
    By Assumption \ref{def: 2-order-growth}, we have
    \begin{align*}
        f_{\bd}(\blambda_2) - f_{\bd}(\blambda_{\frac{\theta}{2}}) - \nabla f_{\bd}(\blambda_{\frac{\theta}{2}})^{\top}\left(\blambda_2 - \blambda_{\frac{\theta}{2}} \right)
\geq \underline{\alpha} \left|\blambda_2 - \blambda_{\frac{\theta}{2}}\right|^{2 + \gamma},
    \end{align*}
    which implies
    \begin{align*}
        -\frac{1}{2} \nabla f_{\bd}(\blambda_{\frac{\theta}{2}})^{\top}\left(\blambda_2 - \blambda_\theta \right)
\geq \underline{\alpha} \left|\frac{1}{2}\left(\blambda_2 - \blambda_\theta \right) \right|^{2 + \gamma}.
    \end{align*}
    Similarly, it can be seen that 
    \begin{align*}
        f_{\bd}(\blambda_\theta) - f_{\bd}(\blambda_{\frac{\theta}{2}}) - \nabla f_{\bd}(\blambda_{\frac{\theta}{2}})^{\top}\left(\blambda_\theta - \blambda_{\frac{\theta}{2}} \right)
\geq \underline{\alpha} \left|\blambda_\theta - \blambda_{\frac{\theta}{2}}\right|^{2 + \gamma},
    \end{align*}
    which implies
    \begin{align*}
        \frac{1}{2} \nabla f_{\bd}(\blambda_{\frac{\theta}{2}})^{\top}\left(\blambda_2 - \blambda_\theta \right)
\geq \underline{\alpha} \left|\frac{1}{2}\left(\blambda_\theta - \blambda_2 \right) \right|^{2 + \gamma}.
    \end{align*}
    Hence, we must have $\blambda_\theta = \blambda_2$, which contradicts the condition $\blambda_\theta\neq \blambda_2$, and finish the proof.
\end{proof}
Proposition \ref{prop:2-order imply unique dual} shows that Assumption \ref{def: unique-dual} is in fact implicitly assumed under the sufficient conditions typically made in the literature. In addition to forcing $\blambda^{\star}$ to be a unique solution to $\min_{\blambda \in \RR^{m}_{\geq 0}} f_{\bd}(\blambda)$, Assumption \ref{def: 2-order-growth} imposes curvature requirements of $f_{\bd}(\blambda)$ at $\blambda^{\star}$, captured by parameter $\gamma$. This parameter $\gamma$ appears in the fundamental regret lower bound and determines the best achievable regret scaling (cf. Proposition \ref{thm: fundamental lower bound} and Theorem 1 in \cite{besbes2024dynamic}). In that sense, Assumption \ref{def: 2-order-growth} is an essential condition. In our Assumptions \ref{assum: starting-from-zero} and \ref{assum: small-probability-starting-from-zero}, we impose a reverse H{\"o}lder condition on the distributions with parameter $\beta$, which effectively captures the $\gamma$ as appeared in Assumption \ref{def: 2-order-growth}.

Recall that our Assumptions \ref{assum: starting-from-zero} and \ref{assum: small-probability-starting-from-zero} rely purely on the properties of the underlying $F$ and are independent of $\bd$. This offers several advantages over directly imposing Assumption \ref{def: 2-order-growth}. Beyond being more intuitive and easy-to-check, one notable benefit of our assumptions is their ability to handle cases involving scarce resources. In fact, the subtle scaling regime of a sequence of OLP instances $(F, \bb_T, T)$ such that $\bb_T \propto T^\eta$ for $\eta \in (0, 1)$ cannot be simply described by its ``fluid'' instance, in which $\bd $ approaches $ \bm{0}$. In such cases, Assumption \ref{def: 2-order-growth} is no longer suitable. In contrast, our assumptions uniformly cover these regimes, for which the regret guarantee in Theorem \ref{thm: regret of CE} remains valid.



Through the lens of Assumption \ref{def: 2-order-growth}, we can justify the requirement in our Assumption \ref{assum: small-probability-starting-from-zero}, in particular, that there exists $\ba_o$ such that the conditional reward distributions in a neighborhood of $\ba_o$ are all supported on intervals starting from zero. In fact, without such a restriction, it is possible to construct simple OLP instances with $\beta = 0$, yet the second-order growth condition ( Assumption \ref{def: 2-order-growth} with $\gamma = 0$) fails to hold.

\begin{exmp}\label{exmp:violating 2-order}
$(\ba, r) \sim {\sf Unif}[1,2]^2.$ The initial inventory for the resource is $\bb = \bd T = 1.5 T$.    
\end{exmp}
The distribution in Example \ref{exmp:violating 2-order} satisfies all the conditions in Assumption \ref{assum: small-probability-starting-from-zero} with $\beta = 0$, except for the requirement (iii). A straightforward calculation reveals that $\blambda^{\star} = 0$, and Assumption \ref{def: 2-order-growth} is satisfied only with $\gamma = 1$, suggesting that instead of polylogarithmic in $T$ regret, only polynomial in $T$ regret is possible in this specific case. We note that the choice of $\bb$ is a boundary case. Generally, a precise characterization of the achievable regret scaling for OLP instances requires accurately tracking all such boundary cases, which can be complicated depending on the specific problem structure. In this work, we aim to strike a balance between generality and clarity. Hence we present our main results under the current set of assumptions that exclude tricky boundary cases. A further refined regret analysis  is left for future research.


\subsection{Global conditions}
Some prior work imposes stronger, global-version of Assumption \ref{def: 2-order-growth}, which essentially requires the dual objective $f_{\bd}(\blambda)$ to be quadratically lower bounded not only at the dual optimal point $\blambda^{\star}$, but at all points in a given set. We here provide a typical formulation of such global condition.
\begin{assumption}[Uniform second-order growth condition]\label{def: uniform-2-order-growth}
There exists a compact convex set $\Omega \subseteq \mathbb{R}_{\geq 0}^m$ such that for any $t \in [T]$, any $\bb'$, and any problem instance $\cI_{T - t + 1}$, the relaxed offline optimum $V^{\text{hind}}_{\bb', t}(\cI_{T - t + 1})$ (cf. problem (\ref{program: offline primal}) starting from time $t$) possesses one optimal dual solution $\tilde{\blambda}\in \Omega $. Moreover, there exist two positive constants $\underline{\alpha}, \bar{\alpha}$ such that for any $\blambda', \blambda'' \in \Omega$, it holds that
\begin{align*}
    \underline{\alpha} \mathbb{E}_{\ba \sim F^{\ba}} \left( \ba^\top \blambda' - \ba^\top \blambda'' \right)^2 &\leq \EE_{\ba \sim F^{\ba}} \left[ \left( F_{\ba}^r(\ba^\top \blambda') - F_{\ba}^r(\ba^\top \blambda'') \right) \left( \ba^\top \blambda' - \ba^\top \blambda'' \right) \right] \nonumber \\
    &\leq \bar{\alpha} \mathbb{E}_{\ba \sim F^{\ba}}\left( \ba^\top \blambda' - \ba^\top \blambda'' \right)^2.
\end{align*}
\end{assumption}
 Assumption \ref{def: uniform-2-order-growth} appears as Assumption 2 in \cite{jiang2022degeneracy}. Other examples of global second-order conditions include Assumption 3(b) in \cite{li2022online}. We note that \cite{jiang2022degeneracy} establishes an $\mathcal{O}(\log T)$ regret guarantee of {\sf CE} under Assumption \ref{def: uniform-2-order-growth}, also without imposing non-degeneracy conditions (i.e. Assumption \ref{def: dual stability} or \ref{def: dual non-degeneracy}). We here provide a comparison between their results and ours.

First, the $\mathcal{O}(\log T)$ regret guarantee in \cite{jiang2022degeneracy} is established in their ``semi-discrete'' setting, where the number of different types of resource consumption vectors is finite. We note that this discreteness is essential to their analysis, as the inverse of the probability mass of each resource consumption type enters their regret bound. In contrast, our Assumption \ref{assum: starting-from-zero} allows the distribution of $\ba$ to be arbitrary, capturing discrete, continuous and mixed-type distributions.

Second, Assumption \ref{def: uniform-2-order-growth} is hard to check. In fact, to check whether Assumption \ref{def: uniform-2-order-growth} holds, we need to first determine $\Omega$, then verify the second-order growth property for every pair of vectors in $\Omega$. We note that the uniformity of Assumption \ref{def: uniform-2-order-growth} also makes it more restrictive than the standard local second-order conditions such as Assumption \ref{def: 2-order-growth}. In contrast, our Assumptions \ref{assum: starting-from-zero} and \ref{assum: small-probability-starting-from-zero} are distributional assumptions that are straightforward to verify.

\section{Proof of Lemma \ref{lem: ce-regret-decomp}}\label{appendix: proof of decomposition lemma}

\begin{proof}[Proof of Lemma \ref{lem: ce-regret-decomp}]
Recall that 
\begin{align}
    \textsc{reg}_T(\pi) = \sum_{t = 1}^T \E\left[ V^{\rm off}_{t-1} - V^{\rm off}_{t} - r_t x^{\pi}_t \right], \label{eq: myopic regret-app}
\end{align}
where 
\begin{align*}
    V^{{\rm off}}_{t}= V^{\rm off}_{t, \bb_t}(\cI_t) = \max &\quad  \sum_{j = t + 1}^T r_j x_j\\
    \mbox{s.t.} & \quad \sum_{j = t + 1}^T a_{i j} x_j \leq b_{i t}, \quad i = 1, \dots, m,
    \\& x_j \in [0, 1], \quad j = t + 1,\dots, T.
\end{align*}
By LP duality theory (Lemma \ref{lem: helper-dual LP simplified}), we also have
\begin{equation*}
    V^{{\rm off}}_{t} = \min_{\blambda \in \RR^{m}_{\geq 0}} \bb^{\top}_t \blambda + \sum_{j = t + 1}^T(r_j - \ba^{\top}_j \blambda)^+.
\end{equation*}
By Lemma \ref{lem: helper-offline-opt-induction}, we have the equality
\[
V^{\rm off}_{t-1, \bb_{t - 1}}(\cI_{t-1}) = r_t x^*_t +  V^{\rm off}_{t, \bb_{t - 1} - \ba_t x^*_t}(\cI_t),
\]
where $\{x^*_j\}_{j = t}^T$ is any primal optimal solution. The equality implies that $V^{\rm off}_{t-1} - V^{\rm off}_{t} - r_t x^{\pi}_t = 0$ if $x^{\pi}_t = x^*_t$. Indeed, if $x^{\pi}_t = x^*_t = 1$, then $\bb_t = \bb_{t-1} - \ba_t x^{\pi}_t = \bb_{t - 1} - \ba_t$, and 
\[
V^{\rm off}_{t-1} - V^{\rm off}_{t} - r_t x^{\pi}_t = V^{\rm off}_{t-1, \bb_{t - 1}} - V^{\rm off}_{t, \bb_t} - r_t = V^{\rm off}_{t, \bb_{t - 1} - \ba_t} -  V^{\rm off}_{t, \bb_t} = 0.  
\]
The case $x^{\pi}_t = x^*_t = 0$ is similar. In other words, if the decision under policy $\pi$ coincides with the optimal offline solution, then we have zero regret. Thus it suffices to consider cases where $x^*_t \neq x^{\pi}_t$. To proceed, we suppose $\bx_t^*$ is basic feasible optimal solution for each $\cI_{t-1}$, and consider the following three cases.

\noindent $\bullet\quad$\textbf{Case 1. $x^*_t \in (0, 1).$\ } In this case $x^*_t$ is fractional. By Lemma \ref{lem: helper-non-integer}, $\bx_t^*$ has at most $m$ fractional variables. Since $\{(\ba_j, r_j)\}_{j = t}^T$ are \emph{i.i.d.}, the chance that $x^*_t$ is fractional is at most $\frac{m}{T - t + 1}.$  In such cases that $x^*_t$ is indeed fractional, we bound the per period regret as follows
    \[
    V^{\rm off}_{t-1} - V^{\rm off}_{t} - r_t x^{\pi}_t  \leq V^{\rm off}_{t-1} - V^{\rm off}_{t}   \leq V^{\rm off}_{t-1, \bb_{t - 1}} - V^{\rm off}_{t, \bb_{t - 1} - \ba_t} \leq V^{\rm off}_{t, \bb_{t - 1}} - V^{\rm off}_{t, \bb_{t - 1} - \ba_t} + {\bar r},
    \]
where the first inequality follows from the non-negativity of $x^{\pi}_t$, the second follows from the monotonicity of $V^{\rm off}_{t, \bc}$ in $\bc$, and the last inequality follows from Lemma \ref{lem: helper-offline-opt-induction} and the monotonicity of $V^{\rm off}_{t, \bc}$ in $\bc$. By the boundedness of $\ba_t$, we further have
\begin{align*}
    V^{\rm off}_{t, \bb_{t - 1}} - V^{\rm off}_{t, \bb_{t - 1} - \ba_t} + {\bar r} = & (T-t)\left(g_{t,\bb_{t-1}}(\dt) - g_{t,\bb_{t-1}-\ba_t}(\dtt)\right) + {\bar r}\\
    \leq & (T-t)\left(g_{t,\bb_{t-1}}(\dtt) - g_{t,\bb_{t-1}-\ba_t}(\dtt)\right) + {\bar r}\\
    = & \ba_t^\top \dtt + \bar r \leq \frac{m\bar A\bar r}{\underline A } + \bar r,
\end{align*}
where the last inequality follows from Lemma \ref{lem: helper-boundedness-lambda}. We then combine the above arguments and conclude that 
\begin{equation}\label{eq: decomposition-non-integer-case}
    \E\left[ \left(V^{\rm off}_{t-1} - V^{\rm off}_{t} - r_t x^{\pi}_t \right) \II_{\left\{x^*_t \textrm{\ is fractional}\right\}}\right] \ \leq \  \frac{\left(1 + \frac{m\Ab}{\Al} \right)m {\bar r}}{T - t + 1}.
\end{equation}

\noindent$\bullet\quad$\textbf{Case 2. $x^{\pi}_t = 1, x^*_t = 0.$\ } By definition of $\pi$, $x^{\pi}_t = 1$ implies $r_t \geq \ba_t^{\top} \ft$. 
By Lemma \ref{lem: helper-offline-opt-induction}, $x^*_t = 0$ implies that
$V^{\rm off}_{t - 1, \bb_{t - 1}} = V^{\rm off}_{t, \bb_{t - 1}} \geq V^{\rm off}_{t, \bb_{t - 1} - \ba_t} + r_t$, more specifically
\[
\bb^{\top}_{t - 1} \dt + \sum_{j = t + 1 }^T(r_j - \ba^{\top}_j \dt)^+ \geq (\bb^{\top}_{t - 1} - \ba_t) {\dtt} + \sum_{j = t + 1 }^T(r_j - \ba^{\top}_j {\dtt})^+ + r_t,
\]
where we recall that $\dt$ and ${\dtt}$ are optimal dual solutions to Problem (\ref{program: to-go-offline-dual-1}) and Problem (\ref{program: to-go-offline-dual-2}), respectively. Since $ \dt$ minimizes the above left hand side, the above further implies 
\begin{align*}
    & \bb^{\top}_{t - 1} {\dtt} + \sum_{j = t + 1 }^T(r_j - \ba^{\top}_j {\dtt})^+ \geq (\bb^{\top}_{t - 1} - \ba_t) {\dtt} + \sum_{j = t + 1 }^T(r_j - \ba^{\top}_j {\dtt})^+ + r_t,\\
    & \Leftrightarrow \ \ba^{\top}_t {\dtt} \geq r_t.
\end{align*}
 Hence $x^{\pi}_t = 1, x^*_t = 0$ implies $\ba_t^{\top} \ft \leq r_t \leq \ba^{\top}_t {\dtt}.$

Since $x^{\pi}_t = 1$, we have $\bb_t = \bb_{t - 1} - \ba_t x^{\pi}_t = \bb_{t - 1} - \ba_t.$  Therefore we may bound the per period regret as follows
\begin{align*}
    &V^{\rm off}_{t-1} - V^{\rm off}_{t} - r_t x^{\pi}_t  \\&\quad\quad= \quad V^{\rm off}_{t-1,\bb_{t - 1}} - V^{\rm off}_{t, \bb_{t - 1} - \ba_t } - r_t 
    \\&\quad\quad =\quad V^{\rm off}_{t, \bb_{t - 1}} - V^{\rm off}_{t, \bb_{t - 1} - \ba_t } - r_t, 
    \\&\quad\quad =\quad \bb^{\top}_{t - 1} \dt + \sum_{j = t + 1 }^T(r_j - \ba^{\top}_j \dt)^+ - \bigg((\bb_{t - 1} - \ba_t)^{\top} {\dtt} + \sum_{j = t + 1 }^T(r_j - \ba^{\top}_j {\dtt})^+\bigg) - r_t,
     \\&\quad\quad \leq\quad \bb^{\top}_{t - 1}  {\dtt} + \sum_{j = t + 1 }^T(r_j - \ba^{\top}_j  {\dtt})^+ - \bigg((\bb_{t - 1}- \ba_t)^{\top} {\dtt} + \sum_{j = t + 1 }^T(r_j - \ba^{\top}_j {\dtt})^+\bigg) - r_t,
     \\&\quad\quad =\quad \ba^{\top}_t {\dtt} - r_t,
\end{align*}
where the inequality follows from the fact that $\dt$ is the minimizer of Problem (\ref{program: to-go-offline-dual-1}) and that ${\dtt}$ is feasible (non-negative) and hence must achieve a larger objective value when plugging into the objective function of Problem (\ref{program: to-go-offline-dual-1}). Combining the above, we have  
\begin{equation}\label{eq: decomposition-first-type-mistake}
    \E\left[ \left(V^{\rm off}_{t-1} - V^{\rm off}_{t} - r_t x^{\pi}_t \right) \II_{\left\{x^*_t = 0, x^{\pi}_t = 1\right\}}\right] \ \leq \  \E\left[ \left(\ba^{\top}_t {\dtt} - r_t\right)\II_{\left\{\ba_t^{\top} \ft \leq r_t \leq \ba^{\top}_t {\dtt}\right\}}\right],
\end{equation}
where the expectation is taken over $\cI_{t-1}.$ 

\noindent $\bullet\quad$\textbf{Case 3. $x^{\pi}_t = 0, x^*_t = 1.$\ } This case is very similar to Case 2. We omit the detailed arguments. The conclusion can be summarized as
\begin{equation}\label{eq: decomposition-second-type-mistake}
   \E\left[ \left(V^{\rm off}_{t-1} - V^{\rm off}_{t} - r_t x^{\pi}_t \right) \II_{\left\{x^*_t = 1, x^{\pi}_t = 0\right\}}\right] \ \leq \  \E\left[ \left(r_t - \ba^{\top}_t {\dt}\right)\II_{\left\{  \ba^{\top}_t {\dt} \leq r_t \leq \ba_t^{\top} \ft\right\}}\right].
\end{equation}

Now plugging (\ref{eq: decomposition-non-integer-case}), (\ref{eq: decomposition-first-type-mistake}) and (\ref{eq: decomposition-second-type-mistake}) into (\ref{eq: myopic regret-app}), we conclude that 
\begin{align*}
    \textsc{reg}_T(\pi) &= \sum_{t = 1}^T \E\left[ V^{\rm off}_{t-1} - V^{\rm off}_{t} - r_t x^{\pi}_t \right], \\
    &= \sum_{t = 1}^T \E\left[ \left(V^{\rm off}_{t-1} - V^{\rm off}_{t} - r_t x^{\pi}_t\right)\II_{\left\{ x^*_t \neq x^{\pi}_t \right\}} \right],
     \\&= \ \ \sum_{t = 1}^T \E\left[ \left(V^{\rm off}_{t-1} - V^{\rm off}_{t} - r_t x^{\pi}_t \right) \II_{\left\{x^*_t \textrm{\ is fractional}\right\}}\right] \ \ \ \ \  (\textrm{Case 1})
     \\& \quad\quad\quad + \ \ \sum_{t = 1}^T \E\left[ \left(V^{\rm off}_{t-1} - V^{\rm off}_{t} - r_t x^{\pi}_t \right) \II_{\left\{x^*_t = 0, x^{\pi}_t = 1\right\}}\right]\ \ \ \ \ (\textrm{Case 2})
     \\& \quad\quad\quad\quad\quad + \ \ \sum_{t = 1}^T \E\left[ \left(V^{\rm off}_{t-1} - V^{\rm off}_{t} - r_t x^{\pi}_t \right) \II_{\left\{x^*_t = 1, x^{\pi}_t = 0\right\}}\right]\ \ \ \ \ (\textrm{Case 3})
     \\& \leq \ \  \sum_{t = 1}^T \frac{\left(1 + \frac{m\Ab}{\Al} \right)m {\bar r}}{T - t + 1} + \sum_{t = 1}^T  \E\left[ \left(\ba^{\top}_t {\dtt} - r_t\right)\II_{\left\{\ba_t^{\top} \ft \leq r_t \leq \ba^{\top}_t {\dtt}\right\}}\right] \\&\quad\quad\quad\quad\quad\quad\quad\quad\quad\quad\quad  + \ \sum_{t = 1}^T  \E\left[ \left(r_t - \ba^{\top}_t {\dt}\right)\II_{\left\{  \ba^{\top}_t {\dt} \leq r_t \leq \ba_t^{\top} \ft\right\}}\right]
     \\& \leq \ \ \ \cC\log{T} + \sum_{t = 1}^T  \E\left[ \left(\ba^{\top}_t {\dtt} - r_t\right)\II_{\left\{\ba_t^{\top} \ft \leq r_t \leq \ba^{\top}_t {\dtt}\right\}}\right]\NNN\\
    &\quad\quad\quad\quad\quad\quad\quad\quad\quad\quad\quad +  \ \sum_{t = 1}^T  \E\left[ \left(r_t - \ba^{\top}_t {\dt} \right)\II_{\left\{\ba^{\top}_t {\dt} \leq r_t \leq \ba_t^{\top} \ft \right\}}\right],
\end{align*}
where $\cC \triangleq 2\left(1 + \frac{m\Ab}{\Al} \right)m {\bar r} $.
\end{proof}

\section{Complete proof of Lemma \ref{lem: concentration results of dual optimum}}\label{appendix: proof of concentration lemma}

In this section, we complete the proof of Lemma \ref{lem: concentration results of dual optimum} under Assumption \ref{assum: small-probability-starting-from-zero}.

\begin{proof}[Proof of Lemma \ref{lem: concentration results of dual optimum} under Assumption~\ref{assum: small-probability-starting-from-zero}]
Observe that under Assumption \ref{assum: small-probability-starting-from-zero}, for any $\ba \in \textrm{supp}(F^{\ba})$, $c_\beta (r_{\ba} - l_{\ba})^{1 + \beta} \leq F^r_{\ba}(r_{\ba}) - F^r_{\ba}(l_{\ba}) = 1 \leq c_\nu(r_{\ba} - l_{\ba})^\nu.$ Thus $c_\nu^{\frac{1}{\nu}} \leq r_{\ba} - l_{\ba} \leq c_\beta^{-\frac{1}{1 + \beta}}$ for any $\ba$. 


We divide the proof of Lemma \ref{lem: concentration results of dual optimum} under Assumption \ref{assum: small-probability-starting-from-zero} into several cases.

\noindent\textbf{Case 1.\ } There exists $\ba \in {\rm supp}(F^{\ba})$ such that either one of the following is satisfied
\begin{align}
    &\frac{l_{\ba} + r_{\ba}}{2} - \frac{1}{8}c_\nu^{\frac{1}{\nu}} \leq \ba^{\top} \dt \leq \frac{l_{\ba} + r_{\ba}}{2} + \frac{1}{8}c_\nu^{\frac{1}{\nu}},
    \label{Lemc_53_as22_c11}\\
    &\frac{l_{\ba} + r_{\ba}}{2} - \frac{1}{8}c_\nu^{\frac{1}{\nu}} \leq \ba^{\top} \ft \leq \frac{l_{\ba} + r_{\ba}}{2} + \frac{1}{8}c_\nu^{\frac{1}{\nu}},\label{Lemc_53_as22_c12}
\end{align} 
where without loss of generality, we assume that \eqref{Lemc_53_as22_c11} holds, and note that the other case, i.e., \eqref{Lemc_53_as22_c12} holds, is nearly identical. Denote by $r_1 \triangleq \frac{1}{8}\left(\left(c_L + \sqrt{m}\frac{\bar r}{\Al}\right)^{-1}\wedge 1\right) c_\nu^{\frac{1}{\nu}}.$ Consider a ball $\cB(\ba, r_1)$ centered at $\ba$ with radius $r_1$, where we recall from Assumption \ref{assum: small-probability-starting-from-zero} that $c_L$ is the Lipschitz constant of $l_\ba$ and $r_\ba$.
By Lemma \ref{lem: helper-continuity-under-assumption-2-3}, for all $\ba' \in \cB(\ba, r_1)$, 
\begin{align}\label{eq: small-probability-non-zero-proof-case-1}
    \frac{l_{\ba'} + r_{\ba'}}{2} - \frac{1}{4}c_\nu^{\frac{1}{\nu}} \leq \ba'^{\top} \dt \leq \frac{l_{\ba'} + r_{\ba'}}{2} + \frac{1}{4}c_\nu^{\frac{1}{\nu}}.
\end{align}
Because $r_{\ba} - l_{\ba}\geq c_\nu^{\frac{1}{\nu}}$, for all $\ba' \in \cB(\ba, r_1)$, $\ba'^{\top}\dt\in [l_{\ba'}, r_{\ba'}]$. If $\ba'^{\top}\ft \in [l_{\ba'}, r_{\ba'}]$, then since both $\ba'^{\top}\ft$ and $\ba'^{\top}\dt$ belong to the interval $[l_{\ba'}, r_{\ba'}]$, we have 
\begin{align}\label{eq: small-probability-non-zero-proof-case-1-ff1}
     \left|F^r_{\ba'}(\ba'^{\top}\dt) - F^r_{\ba'}(\ba'^{\top}\ft)\right| \geq c_\beta \left(\ba'^{\top}\dt - \ba'^{\top}\ft\right)^{1 + \beta}.
\end{align}
If $\ba'^{\top}\ft \notin [l_{\ba'}, r_{\ba'}]$, then by \eqref{eq: small-probability-non-zero-proof-case-1}, $\ba'^{\top} \dt$ is at least $\frac{1}{4}c_\nu^{\frac{1}{\nu}}$ far away from $l_{\ba'}$ and $r_{\ba'}$, which implies 
\begin{align}\label{eq: small-probability-non-zero-proof-case-1-ff2}
     \left|F^r_{\ba'}(\ba'^{\top}\dt) - F^r_{\ba'}(\ba'^{\top}\ft)\right| \geq & \min\left(\left|F^r_{\ba'}(\ba'^{\top}\dt) - F^r_{\ba'}(l_{\ba'})\right|,\left|F^r_{\ba'}(\ba'^{\top}\dt) - F^r_{\ba'}(r_{\ba'})\right|\right)\nonumber\\
     & \geq c_\beta \left(\frac{1}{4}c_\nu^{\frac{1}{\nu}}\right)^{1 + \beta} = 4^{- 1- \beta}c_\beta c_\nu^{\frac{1 + \beta}{\nu}}.
\end{align}
Applying Lemma \ref{lem: ball-beats-orthogonality}, we have
\begin{align}\label{eq: ball-beats-orthogonality-case-1}
    \E_{\ba' \sim F^{\ba}, \ba' \in \cB(\ba, r)} (\ba'^\top \blambda_1 - \ba'^{\top} \blambda_2)^2 \geq C_f\left(\frac{r_1\wedge w}{6} \sin{(\theta)}\right)^{m + 2}\|\blambda_1 - \blambda_2\|^2,
\end{align}
where $C_f = l_f\frac{\pi^{m/2}}{\Gamma\left(\frac{m}{2}+1\right)}$. We thus have 
\begin{align}\label{eq: ball-beats-orthogonality-case-1-eb1}
    &\E_{\ba' \sim F^{\ba}}\bigg[\bigg(F^r_{\ba'}(\ba'^{\top}{\ft}) - F^r_{\ba'}(\ba'^{\top}\dt ) \bigg)\bigg(\ba'^{\top}{\ft} - \ba'^{\top}\dt\bigg)\bigg]\nonumber\\
    \geq  & \ \E_{\ba' \sim F^{\ba}, \ba' \in \cB(\ba, r_1)}\bigg[\bigg(F^r_{\ba'}(\ba'^{\top}{\ft}) - F^r_{\ba'}(\ba'^{\top}\dt ) \bigg)\bigg(\ba'^{\top}{\ft} - \ba'^{\top}\dt\bigg)\bigg]\nonumber\\
    \geq & \   \E_{\ba' \sim F^{\ba}, \ba' \in \cB(\ba, r_1)}\bigg[\min\left(c_\beta \left|\ba'^{\top}\dt - \ba'^{\top}\ft\right|^{1 + \beta}, 4^{- 1- \beta}c_\beta c_\nu^{\frac{1 + \beta}{\nu}}\right)\left|\ba'^{\top}{\ft} - \ba'^{\top}\dt\right|\bigg]\quad \textrm{(by \eqref{eq: small-probability-non-zero-proof-case-1-ff1} and \eqref{eq: small-probability-non-zero-proof-case-1-ff2})}\nonumber\\
    = & \   \E_{\ba' \sim F^{\ba}, \ba' \in \cB(\ba, r_1)}\bigg[\left|\ba'^{\top}\dt - \ba'^{\top}\ft\right|^{2 + \beta}  \min\left(c_\beta, 4^{- 1- \beta}c_\beta c_\nu^{\frac{1 + \beta}{\nu}}\left|\ba'^{\top}{\ft} - \ba'^{\top}\dt\right|^{-1 - \beta}\right)\bigg]\nonumber\\
    \geq & \    \min\left(c_\beta, 4^{- 1- \beta}c_\beta c_\nu^{\frac{1 + \beta}{\nu}} \left(2 \sqrt{m}\frac{\bar r \Au}{\Al}\right)^{-1 - \beta}\right) \E_{\ba' \sim F^{\ba}, \ba' \in \cB(\ba, r_1)}\bigg[\left|\ba'^{\top}\dt - \ba'^{\top}\ft\right|^{2 + \beta} \bigg]\quad \textrm{(by Lemma \ref{lem: helper-boundedness-lambda})}\nonumber\\
    \geq & \  c_1 \left(\E_{\ba' \sim F^{\ba}, \ba' \in \cB(\ba, r_1)}\left[\left(\ba'^{\top}\dt - \ba'^{\top}\ft\right)^{2} \right]\right)^{\frac{2 + \beta}{2}}\quad\textrm{(by Jensen's inequality)}\nonumber\\
    \geq & \  c_1 C_f^{\frac{2 + \beta}{2}}\left(\frac{r_1\wedge w}{6} \sin{(\theta)}\right)^{\frac{(2 + \beta)(m + 2)}{2}}\|\dt - \ft\|^{2 + \beta} \quad \textrm{(by \eqref{eq: ball-beats-orthogonality-case-1})},
\end{align}
where $c_1 = \min\left(c_\beta, 4^{- 1- \beta}c_\beta c_\nu^{\frac{1 + \beta}{\nu}} \left(2 \sqrt{m}\frac{\bar r \Au}{\Al}\right)^{-1 - \beta}\right)$. We set $c_2 = c_1C_f^{\frac{2 + \beta}{2}}\left(\frac{r_1\wedge w}{6} \sin{(\theta)}\right)^{\frac{(2 + \beta)(m + 2)}{2}}$. Note that 
\begin{align*}
    M_1 &\ = \ \sqrt{\E_{\ba \sim F^{\ba}}\bigg[\big(\ba^{\top}\ft - \ba^{\top}\dt\big)^2\max\bigg(\bar F^r_{\ba}(\ba^\top \dt), \bar F^r_{\ba}(\ba^\top \ft)\bigg)\bigg]} \\
    & \leq \sqrt{\E_{\ba \sim F^{\ba}}\bigg[\big(\ba^{\top}\ft - \ba^{\top}\dt\big)^2\bigg]} \leq \Au \|\dt - \ft\|,
\end{align*}
which, together with \eqref{eq: ball-beats-orthogonality-case-1-eb1} and Lemma \ref{lem:concentration-proof-part-1}, implies 
\begin{align}\label{eq: small-probability-zero-proof-case-1-regret-decompose-bound-1}
    & c_2\|\dt - \ft\|^{2 + \beta} \leq \E_{\ba' \sim F^{\ba}}\bigg[\bigg(F^r_{\ba'}(\ba'^{\top}{\ft}) - F^r_{\ba'}(\ba'^{\top}\dt ) \bigg)\bigg(\ba'^{\top}{\ft} - \ba'^{\top}\dt\bigg)\bigg]\NNN\\
    & \leq \tilde C_3\sqrt{\frac{\log(T-t)}{T-t}}\bigg(M_1+\sqrt{\frac{\log(T-t)}{T-t}}\bigg) \leq \tilde C_3 \frac{\log(T-t)}{T-t} + \tilde C_3 \Au \sqrt{\frac{\log(T-t)}{T-t}}  \|\dt - \ft\|.
\end{align}
Solving \eqref{eq: small-probability-zero-proof-case-1-regret-decompose-bound-1} yields
\begin{align}\label{eq: small-probability-zero-proof-case-1-regret-decompose-bound-finalb1}
    \|\dt - \ft\| \leq & \max\left(\frac{\tilde C_3}{c_2}\left(\frac{\log(T-t)}{T-t}\right)^{\frac{1}{1 + \beta}}, \frac{\tilde C_3\Au}{c_2}\left(\frac{\log(T-t)}{T-t}\right)^{\frac{1}{2(1 + \beta)}}\right)\leq c_3\left(\frac{\log(T-t)}{T-t}\right)^{\frac{1}{2(1 + \beta)}},
\end{align}
where $c_3 = \max\left(\frac{\tilde C_3}{c_2}, \frac{\tilde C_3\Au}{c_2}\right)$, and we use the fact that $\frac{\log(T-t)}{T-t} \leq 1$ and $\frac{1}{2 + \beta} \geq \frac{1}{2(1 + \beta)}$. By \eqref{eq: small-probability-zero-proof-case-1-regret-decompose-bound-1} and \eqref{eq: small-probability-zero-proof-case-1-regret-decompose-bound-finalb1}, we obtain 
\begin{align}\label{eq: small-probability-starting-from-zero-case-1-proof}
    &\E_{\ba' \sim F^{\ba}}\bigg[\bigg(F^r_{\ba'}(\ba'^{\top}{\ft}) - F^r_{\ba'}(\ba'^{\top}\dt ) \bigg)\bigg(\ba'^{\top}{\ft} - \ba'^{\top}\dt\bigg)\bigg] \NNN\\
    & \leq \tilde C_3 \frac{\log(T-t)}{T-t} + \tilde C_3 \Au \sqrt{\frac{\log(T-t)}{T-t}}  \|\dt - \ft\| \leq c_4 \left(\frac{\log(T-t)}{T-t}\right)^{\frac{2 + \beta}{2(1 + \beta)}},
\end{align}
with probability at least $1 - \frac{9 C\log(T - t)}{(T - t)^2}$ and for $T - t \geq e^{\frac{\tilde C_1}{32} + 1}$, with $c_4 = \tilde C_3 + \tilde C_3\Au c_3$.

\noindent\textbf{Case 2.\ } For any $\ba \in \textrm{supp}(F^{\ba})$, the following holds
\begin{align*}
    &\ba^{\top} \dt  \notin \left[\frac{l_{\ba} + r_{\ba}}{2} - \frac{1}{8}c_\nu^{\frac{1}{\nu}} , \frac{l_{\ba} + r_{\ba}}{2} + \frac{1}{8}c_\nu^{\frac{1}{\nu}}\right],\\
    &\ba^{\top} \ft  \notin \left[\frac{l_{\ba} + r_{\ba}}{2} - \frac{1}{8}c_\nu^{\frac{1}{\nu}} , \frac{l_{\ba} + r_{\ba}}{2} + \frac{1}{8}c_\nu^{\frac{1}{\nu}}\right].
\end{align*}
We first show that there does not exist $\ba', \ba'' \in \textrm{supp}(F^{\ba})$ such that 
\begin{align*}
    \ba'^{\top} \dt  \leq \frac{l_{\ba'} + r_{\ba'}}{2} - \frac{1}{8}c_\nu^{\frac{1}{\nu}}, \ \  \ba''^{\top} \dt  \geq \frac{l_{\ba''} + r_{\ba''}}{2} + \frac{1}{8}c_\nu^{\frac{1}{\nu}}
\end{align*}
hold. For otherwise, consider $\ba(p) \triangleq p \ba' + (1 - p)\ba''$ for $p \in [0 , 1]$, and  function
\begin{align*}
    g_{\textrm{gap}}(p) \triangleq \frac{l_{\ba(p)} + r_{\ba(p)}}{2} - \ba(p)^{\top} \dt.
\end{align*}
Then $g_{\textrm{gap}}(0) \leq - \frac{1}{8}c_\nu^{\frac{1}{\nu}}$ and $g_{\textrm{gap}}(1) \geq  \frac{1}{8}c_\nu^{\frac{1}{\nu}}$. By the continuity of $l_{\ba}$ and $r_{\ba}$ in $\ba$, $g_{\textrm{gap}}(p)$ is a continuous function in $p$. By the intermediate value theorem, there must exist $p' \in (0, 1)$ such that $g_{\textrm{gap}}(p') = 0. $ Such $p'$ corresponds to $p' \ba' + (1 - p')\ba''$, which belongs to $\textrm{supp}(F^{\ba})$ by its own convexity. However, the existence of $p' \ba' + (1 - p')\ba''$ violates the condition of Case 2. Therefore by contradiction, it must be that either 
\begin{align*}
    \ba^{\top} \dt  < \frac{l_{\ba} + r_{\ba}}{2} - \frac{1}{8}c_\nu^{\frac{1}{\nu}},
\end{align*}
for all $\ba \in \textrm{supp}(F^{\ba})$, or 
\begin{align*}
    \ba^{\top} \dt  > \frac{l_{\ba} + r_{\ba}}{2} + \frac{1}{8}c_\nu^{\frac{1}{\nu}},
\end{align*}
for all $\ba \in \textrm{supp}(F^{\ba})$. We note that the same should be true with respect to $\ft$. We thus consider three sub-cases.

\noindent \textbf{Sub-case 1.\ } For all $\ba, \ba^{\top} \dt  < \frac{l_{\ba} + r_{\ba}}{2} - \frac{1}{8}c_\nu^{\frac{1}{\nu}},$ yet $\ba^{\top} \ft  > \frac{l_{\ba} + r_{\ba}}{2} + \frac{1}{8}c_\nu^{\frac{1}{\nu}}$, or the other way round. In this case, for all $\ba' \in \textrm{supp}(F^\ba)$, 
\begin{align*}
    \left|F^r_{\ba'}(\ba'^{\top}{\ft}) - F^r_{\ba'}(\ba'^{\top}\dt ) \right|  &\geq  F^r_{\ba'}\left(\frac{l_{\ba} + r_{\ba}}{2} + \frac{1}{8}c_\nu^{\frac{1}{\nu}}\right) - F^r_{\ba'}\left(\frac{l_{\ba} + r_{\ba}}{2} - \frac{1}{8}c_\nu^{\frac{1}{\nu}}\right) \geq c_\beta \left(\frac{1}{4}c_\nu^{\frac{1}{\nu}}\right)^{1 + \beta},
\end{align*}
where the second inequality follows from the H{\"o}lder condition in Assumption \ref{assum: small-probability-starting-from-zero} and the fact that both $\frac{l_{\ba} + r_{\ba}}{2} - \frac{1}{8}c_\nu^{\frac{1}{\nu}}$ and $\frac{l_{\ba} + r_{\ba}}{2} + \frac{1}{8}c_\nu^{\frac{1}{\nu}}$ belong to the support, i.e. $[l_{\ba}, r_{\ba}]$. Thus we have 
\begin{align}\label{eq: small-probability-zero-proof-case-2-regret-decompose-bound-21111}
    &\E_{\ba' \sim F^{\ba}}\bigg[\bigg(F^r_{\ba'}(\ba'^{\top}{\ft}) - F^r_{\ba'}(\ba'^{\top}\dt ) \bigg)\bigg(\ba'^{\top}{\ft} - \ba'^{\top}\dt\bigg)\bigg] \ \geq 4^{- 1- \beta}c_\beta c_\nu^{\frac{1 + \beta}{\nu}} \E_{\ba' \sim F^{\ba}}\bigg|\ba'^{\top}{\ft} - \ba'^{\top}\dt\bigg|.
\end{align}
Note that 
\begin{align*}
    M_1 &\ = \ \sqrt{\E_{\ba \sim F^{\ba}}\bigg[\big(\ba^{\top}\ft - \ba^{\top}\dt\big)^2\max\bigg(\bar F^r_{\ba}(\ba^\top \dt), \bar F^r_{\ba}(\ba^\top \ft)\bigg)\bigg]} \\
    & \leq \sqrt{\E_{\ba \sim F^{\ba}}\bigg[\big(\ba^{\top}\ft - \ba^{\top}\dt\big)^2\bigg]} \leq \sqrt{\frac{2\Au\sqrt{m} \bar  r}{\Al}\E_{\ba \sim F^{\ba}}\bigg|\ba^{\top}\ft - \ba^{\top}\dt\bigg|}, \quad (\textrm{by Lemma \ref{lem: helper-boundedness-lambda}})
\end{align*}
which, together with \eqref{eq: small-probability-zero-proof-case-2-regret-decompose-bound-21111} and Lemma \ref{lem:concentration-proof-part-1}, implies
\begin{align}\label{eq: small-probability-zero-proof-case-2-1-regret-decompose-bound-1}
    & 4^{- 1- \beta}c_\beta c_\nu^{\frac{1 + \beta}{\nu}} \E_{\ba' \sim F^{\ba}}\bigg|\ba'^{\top}{\ft} - \ba'^{\top}\dt\bigg| \NNN\\
    & \leq \E_{\ba' \sim F^{\ba}}\bigg[\bigg(F^r_{\ba'}(\ba'^{\top}{\ft}) - F^r_{\ba'}(\ba'^{\top}\dt ) \bigg)\bigg(\ba'^{\top}{\ft} - \ba'^{\top}\dt\bigg)\bigg]\NNN\\
    & \leq \tilde C_3\sqrt{\frac{\log(T-t)}{T-t}}\bigg(M_1+\sqrt{\frac{\log(T-t)}{T-t}}\bigg)\NNN\\
    & \leq \tilde C_3 \frac{\log(T-t)}{T-t} + \tilde C_3 \sqrt{\frac{\log(T-t)}{T-t}} \sqrt{\frac{2\Au\sqrt{m} \bar  r}{\Al}\E_{\ba \sim F^{\ba}}\bigg|\ba^{\top}\ft - \ba^{\top}\dt\bigg|}.
\end{align}
Solving \eqref{eq: small-probability-zero-proof-case-2-1-regret-decompose-bound-1}, we obtain 
\begin{align}
     &\E_{\ba' \sim F^{\ba}}\bigg|\ba'^{\top}{\ft} - \ba'^{\top}\dt\bigg| \leq \max\left(\tilde C_34^{1+\beta}c_\beta^{-1} c_\nu^{-\frac{1 + \beta}{\nu}}, \tilde C_3^2\frac{2\Au\sqrt{m} \bar  r}{\Al}4^{2+2\beta}c_\beta^{-2} c_\nu^{-\frac{2(1 + \beta)}{\nu}}\right)\frac{\log(T-t)}{T-t},
\end{align}
Plugging back into \cref{eq: small-probability-zero-proof-case-2-1-regret-decompose-bound-1} yields
\begin{align}\label{eq: small-probability-starting-from-zero-case-2-1-proof}
    &\E_{\ba' \sim F^{\ba}}\bigg[\bigg(F^r_{\ba'}(\ba'^{\top}{\ft}) - F^r_{\ba'}(\ba'^{\top}\dt ) \bigg)\bigg(\ba'^{\top}{\ft} - \ba'^{\top}\dt\bigg)\bigg]  \leq c_5 \frac{\log(T-t)}{T-t},
\end{align}
where $c_5 = \tilde C_3 + \tilde C_3\left(\frac{2\Au\sqrt{m} \bar  r}{\Al}\max\left(\tilde C_34^{1+\beta}c_\beta^{-1} c_\nu^{-\frac{1 + \beta}{\nu}}, \tilde C_3^2\frac{2\Au\sqrt{m} \bar  r}{\Al}4^{2+2\beta}c_\beta^{-2} c_\nu^{-\frac{2(1 + \beta)}{\nu}}\right)\right)^{1/2}$. Recall that $\frac{2 + \beta}{2(1 + \beta)} < 1$ and $\frac{\log(T - t)}{T - t} \leq 1$, we have $\frac{\log(T - t)}{T - t} \leq \left(\frac{\log(T - t)}{T - t}\right)^{\frac{2 + \beta}{2(1 + \beta)}}$. Further we note that $c_5 \leq \tilde C_4$. We thus complete the proof of Lemma \ref{lem: concentration results of dual optimum} under Assumption \ref{assum: small-probability-starting-from-zero} in the Sub-case 1 of Case 2. 

\noindent \textbf{Sub-case 2.\ } For all $\ba, \ba^{\top} \dt  >  \frac{l_{\ba} + r_{\ba}}{2} + \frac{1}{8}c_\nu^{\frac{1}{\nu}},$ and $\ba^{\top} \ft  > \frac{l_{\ba} + r_{\ba}}{2} + \frac{1}{8}c_\nu^{\frac{1}{\nu}}$. The proof in this case is identical to the proof under Assumption \ref{assum: starting-from-zero}, because both $\ba^\top \dt$ and $\ba^\top \ft$ never fall below the lower bound of the support of $F^r_{\ba}$ for all $\ba \in \textrm{supp}(F^{\ba})$, and thus Lemma \ref{lem: concentration results of dual optimum} is true in this sub-case with the same constant as chosen in the previous proof under Assumption \ref{assum: starting-from-zero}. 

\noindent \textbf{Sub-case 3.\ }For all $\ba, \ba^{\top} \dt  <  \frac{l_{\ba} + r_{\ba}}{2} - \frac{1}{8}c_\nu^{\frac{1}{\nu}},$ and $\ba^{\top} \ft  < \frac{l_{\ba} + r_{\ba}}{2} - \frac{1}{8}c_\nu^{\frac{1}{\nu}}$. In this case, we focus on $\cB(\ba_o, r_0) \cap \textrm{supp}(F^{\ba})$, where we recall that by Assumption \ref{assum: small-probability-starting-from-zero}, for $\ba \in \cB(\ba_o, r_0) \cap \textrm{supp}(F^{\ba})$, $l_{\ba} = 0.$ Hence for all such $\ba$, we have by the H{\"o}lder condition of Assumption \ref{assum: small-probability-starting-from-zero} that 
\begin{align*}
    \left|F^r_{\ba}(\ba^{\top}\dt) - F^r_{\ba}(\ba^{\top}\ft) \right| \geq c_\beta \left|\ba^{\top}\dt - \ba^{\top}\ft\right|^{1+\beta}.
\end{align*}
Therefore, we have
\begin{align*}
    &\E_{\ba' \sim F^{\ba}}\bigg[\bigg(F^r_{\ba'}(\ba'^{\top}{\ft}) - F^r_{\ba'}(\ba'^{\top}\dt ) \bigg)\bigg(\ba'^{\top}{\ft} - \ba'^{\top}\dt\bigg)\bigg]\\
    & \ \geq  \E_{\ba' \sim F^{\ba}, \ba' \in \cB(\ba_o, r_0)}\bigg[\bigg(F^r_{\ba'}(\ba'^{\top}{\ft}) - F^r_{\ba'}(\ba'^{\top}\dt ) \bigg)\bigg(\ba'^{\top}{\ft} - \ba'^{\top}\dt\bigg)\bigg]\\
     & \ \geq  \E_{\ba' \sim F^{\ba}, \ba' \in \cB(\ba_o, r_0)}\bigg[c_\beta \left|\ba'^{\top}\dt - \ba'^{\top}\ft\right|^{2 + \beta}\bigg]\\
     & \ \geq  c_\beta \left(\E_{\ba' \sim F^{\ba}, \ba' \in \cB(\ba_o, r_0)}\left[\left(\ba'^{\top}\dt - \ba'^{\top}\ft\right)^{2} \right]\right)^{\frac{2 + \beta}{2}}\quad \textrm{(by Jensen's inequality)}\\
    & \ \geq  c_\beta
    C_f^{\frac{2 + \beta}{2}}\left(\frac{r_0\wedge w}{6} \sin{(\theta)}\right)^{\frac{(2 + \beta)(m + 2)}{2}}\|\dt - \ft\|^{2 + \beta}\quad \textrm{(by Lemma \ref{lem: ball-beats-orthogonality}).}
\end{align*}
The rest of the proof is almost identical to Case 1, and we omit the details. This gives us 
\begin{align}\label{eq: small-probability-starting-from-zero-case-23-proof}
    &\E_{\ba' \sim F^{\ba}}\bigg[\bigg(F^r_{\ba'}(\ba'^{\top}{\ft}) - F^r_{\ba'}(\ba'^{\top}\dt ) \bigg)\bigg(\ba'^{\top}{\ft} - \ba'^{\top}\dt\bigg)\bigg] \leq c_6 \left(\frac{\log(T-t)}{T-t}\right)^{\frac{2 + \beta}{2(1 + \beta)}},
\end{align}
where $c_6 = \tilde C_3 + \tilde C_3^2\Au \left(c_\beta C_f^{\frac{2 + \beta}{2}}\left(\frac{r_0\wedge w}{6} \sin{(\theta)}\right)^{\frac{(2 + \beta)(m + 2)}{2}}\right)^{-1}  \max\left(1, \Au\right).$

Combining the above completes the proof of  Lemma \ref{lem: concentration results of dual optimum}, where we summarize the concrete constants appearing in the bound in Remark \ref{rem_constantsforms}.
\end{proof}

\section{Proof of Lemma \ref{lem:concentration-proof-part-1}}\label{appendix: proof of concentration part 1}

We introduce some additional notation. Recall that
\begin{align*}
    h_{t,\bb}(\blambda, \ba, r) &= \frac{1}{T-t}\bb^\top \blambda + \left(r - \ba^\top \blambda\right)^+,\\
    \phi_{t,\bb}(\blambda, \ba, r) &= \frac{\partial h_{t,\bb}(\blambda, \ba)}{\partial \blambda} = \frac{1}{T-t}\bb - \ba\II_{\left\{r > \ba^\top \blambda\right\}}.
\end{align*}
We further introduce functions $f_{t, \bb}(\cdot)$ and $g_{t, \bb}(\cdot)$ that are defined accordingly:
\begin{align*}
    & f_{t, \bb}(\blambda) = \E_{(\ba, r) \sim F}\left[h_{t, \bb}(\blambda, \ba, r)\right] = \frac{1}{T-t}\bb^\top \blambda +  \E_{(\ba, r) \sim F}\left[\left(r - \ba^\top \blambda\right)^+\right],\\
    & g_{t, \bb}(\blambda) = \frac{1}{T-t} \sum_{j = t + 1}^T h_{t, \bb}(\blambda, \ba_j, r_j) = \frac{1}{T-t}\bb^\top \blambda +  \frac{1}{T-t} \sum_{j = t + 1}^T \left(r_j - \ba^\top_j \blambda\right)^+,
\end{align*}
and by our definition, $\ft, \dt, \dtt $ are optimal to problems $\min_{\blambda \in \RR^{m}_{\geq 0}} f_{t, \bb_{t-1}}(\blambda), \min_{\blambda \in \RR^{m}_{\geq 0}} g_{t, \bb_{t-1}}(\blambda)$ and $\min_{\blambda \in \RR^{m}_{\geq 0}} g_{t, \bb_{t-1}-\ba_t}(\blambda)$, respectively. (We generalize the definition of function $f_{\bc}(\cdot)$ in Section \ref{sec:algo} to $f_{t, \bb}(\cdot)$ to be consistent with function $g_{t, \bb}(\cdot)$.)  
We now prove Lemma \ref{lem:concentration-proof-part-1}.
\begin{proof}[Proof of Lemma \ref{lem:concentration-proof-part-1}]
By Lemma \ref{lem: helper-line-integration}, for any $\blambda_1,\blambda_2 \in \RR^{m}_{\geq 0}$,
\begin{align*}
    h_{t,\bb_{t - 1}}(\blambda_1, \ba, r) - h_{t,\bb_{t - 1}}(\blambda_2, \ba, r) = & \phi_{t,\bb_{t - 1}}(\blambda_2, \ba, r)^\top(\blambda_1-\blambda_2) + \int_{\ba^\top\blambda_1}^{\ba^\top\blambda_2}\left(\II_{\left\{ r > v \right\}} - \II_{\left\{ r > \ba^\top \blambda_2 \right\}} \right){\rm d}v,
\end{align*}
applying to functions $f_{t, \bb_{t - 1}}$ and $g_{t, \bb_{t - 1}}$ evaluated at $\ft, \dt$,
\begin{align}
    &f_{t,\bb_{t - 1}}(\dt) - f_{t,\bb_{t - 1}}(\ft)\nonumber\\
    = & \E_{(\ba,r)\sim F}\phi_{t,\bb_{t - 1}}(\ft, \ba, r)^\top(\dt-\ft) + \E_{(\ba,r)\sim F}\int_{\ba^\top\dt}^{\ba^\top\ft}\left(\II_{\left\{ r > v \right\}} - \II_{\left\{ r > \ba^\top \ft \right\}} \right){\rm d}v,\label{eq_pflemfnear_ffd1}\\
    & f_{t,\bb_{t - 1}}(\ft) - f_{t,\bb_{t - 1}}(\dt)\nonumber\\
    = & \E_{(\ba,r)\sim F}\phi_{t,\bb_{t - 1}}(\dt, \ba, r)^\top(\ft-\dt) + \E_{(\ba,r)\sim F}\int_{\ba^\top\ft}^{\ba^\top\dt}\left(\II_{\left\{ r > v \right\}} - \II_{\left\{ r > \ba^\top \dt \right\}} \right) {\rm d}v,\label{eq_pflemfnear_ffd2}
\end{align}
and 
\begin{align}
    &\ \ \  g_{t,\bb_{t - 1}}(\dt) -  g_{t,\bb_{t - 1}}(\ft)\nonumber\\
    = &\ \ \frac{1}{T-t}\sum_{j=t+1}^T \phi_{t,\bb_{t - 1}}(\ft, \ba_j, r_j)^\top(\dt-\ft) + 
    \frac{1}{T-t}\sum_{j=t+1}^T \int_{\ba_j^\top\dt}^{\ba_j^\top\ft}\left(\II_{\left\{ r > v \right\}} - \II_{\left\{ r > \ba_j^\top \ft \right\}} \right){\rm d}v,\label{eq_pflemfnear_ggd1}\\
    &\ \ \  g_{t,\bb_{t - 1}}(\ft) - g_{t,\bb_{t - 1}}(\dt)\nonumber\\
    = &\ \  \frac{1}{T-t}\sum_{j=t+1}^T \phi_{t,\bb_{t - 1}}(\dt, \ba_j, r_j)^\top(\ft-\dt) + 
    \frac{1}{T-t}\sum_{j=t+1}^T \int_{\ba_j^\top\ft}^{\ba_j^\top\dt}\left(\II_{\left\{ r > v \right\}} - \II_{\left\{ r > \ba^\top \dt \right\}} \right){\rm d}v.\label{eq_pflemfnear_ggd2}
\end{align}
We first prove the first bound. To this end, we observe that
\begin{align*}
    &\E_{\ba \sim F^{\ba}}\bigg[\bigg(F^r_{\ba}(\ba^{\top}\ft) - F^r_{\ba}(\ba^{\top}\dt ) \bigg)\bigg(\ba^{\top}\ft - \ba^{\top}\dt\bigg)\bigg]\nonumber\\
    & = \ \E_{\ba\sim F^{\ba}}\int_{\ba^\top\dt}^{\ba^\top\ft}\left(F^r_{\ba}\left(\ba^{\top}\ft \right) - F^r_{\ba}(v ) \right){\rm d}v 
  + \E_{\ba\sim F^{\ba}}\int_{\ba^\top\dt}^{\ba^\top\ft}\left( F^r_{\ba}(v ) - F^r_{\ba}\left(\ba^{\top}\dt \right) \right){\rm d}v.
\end{align*}
We bound the two integrals respectively. Because $\ft$ minimizes $f_{t,\bb_{t - 1}}(\cdot)$ and $\dt$ minimizes $g_{t,\bb_{t - 1}}(\cdot)$, by \eqref{eq_pflemfnear_ffd1} and  \eqref{eq_pflemfnear_ggd1}, we have 
\begin{align}\label{eq_pflembf1}
    0 \ \leq\ & f_{t,\bb_{t - 1}}(\dt) - f_{t,\bb_{t - 1}}(\ft) \nonumber\\
    \leq &\ \ f_{t,\bb_{t - 1}}(\dt) - f_{t,\bb_{t - 1}}(\ft) - (g_{t,\bb_{t - 1}}(\dt) -  g_{t,\bb_{t - 1}}(\ft))\NNN\\
    = &\ \ \E_{(\ba,r)\sim F}\phi_{t,\bb_{t - 1}}(\ft, \ba, r) ^\top(\dt-\ft) -\frac{1}{T-t}\sum_{j=t+1}^T \phi_{t,\bb_{t - 1}}(\ft, \ba_j, r_j)^\top(\dt-\ft)\nonumber\\
    & +\E_{(\ba,r)\sim F}\int_{\ba^\top\dt}^{\ba^\top\ft}\left(\II_{\left\{ r > v \right\}} - \II_{\left\{ r > \ba^\top \ft \right\}} \right){\rm d}v -  
    \frac{1}{T-t}\sum_{j=t+1}^T \int_{\ba_j^\top\dt}^{\ba_j^\top\ft}\left(\II_{\left\{ r > v \right\}} - \II_{\left\{ r > \ba_j^\top \ft \right\}} \right){\rm d}v.
\end{align}
Applying Lemma \ref{lemma: concentration_gradient} and Lemma \ref{lemma:concentration_int} to \eqref{eq_pflembf1}, together with Lemma \ref{lem: helper-cauchy-schwarz}, we have as long as $T-t\geq e^{\frac{\tilde C_1}{32} + 1}$, with probability at least $1-6C \log(T - t)(T-t)^{-2}$ with $C$ a universal constant,
\begin{align}\label{eq_pflembf1X}
    0 \leq & f_{t,\bb_{t - 1}}(\dt) - f_{t,\bb_{t - 1}}(\ft) \nonumber\\
    \leq &  \tilde C_1\sqrt{\frac{\log(T-t)}{T-t}}\bigg(\sqrt{\E_{\ba\sim F^{\ba}}(\ba^\top(\ft-\dt))^2(1-F^r_{\ba}(\ba^\top \ft))}+2\sqrt{\frac{\log(T-t)}{T-t}}\nonumber\\
    & +\sqrt{\E_{\ba \sim F^{\ba}}(\ba^\top\ft - \ba^\top\dt)^2\left|F^r_{\ba}(\ba^\top \ft) - F^r_{\ba}(\ba^\top  \dt)\right|}\bigg)\nonumber\\
    \leq & 2\tilde C_1\sqrt{\frac{\log(T-t)}{T-t}}\bigg(M_1 +\sqrt{\frac{\log(T-t)}{T-t}}\bigg),
\end{align}
where the first inequality is also because $\tilde C_1 \geq \tilde C$, where $\tilde C$ and $\tilde C_1$ are as in Lemma \ref{lemma: concentration_gradient} and Lemma \ref{lemma:concentration_int}, respectively. In \eqref{eq_pflembf1X},
\begin{align*}
    M_1 = \max\left\{\sqrt{\E_{\ba\sim F^{\ba}}(\ba^\top(\ft-\dt))^2\left(1-F^r_{\ba}(\ba^\top \dt)\right)},\sqrt{\E_{\ba\sim F^{\ba}}(\ba^\top(\ft-\dt))^2\left(1-F^r_{\ba}(\ba^\top \ft)\right)}\right\}.
\end{align*}
By Lemma \ref{lem: helper-gradient-non-negativity} with $\blambda = \dt$, we have 
\begin{align}
    &\E_{(\ba,r)\sim F}\phi_{t,\bb_{t - 1}}(\ft, \ba, r)^\top(\dt -\ft)\geq 0.\label{eq_pflemfnear_flarger01}
\end{align}
Plugging \eqref{eq_pflemfnear_flarger01} into \eqref{eq_pflemfnear_ffd1}, we obtain 
\begin{align*}
f_{t,\bb_{t - 1}}(\dt) - f_{t,\bb_{t - 1}}(\ft)&\ \geq\  \E_{(\ba,r)\sim F}\int_{\ba^\top\dt}^{\ba^\top\ft}\left(\II_{\left\{ r > v \right\}} - \II_{\left\{ r > \ba^\top \ft \right\}} \right){\rm d}v\\
& \ = \ \E_{\ba\sim F^{\ba}}\int_{\ba^\top\dt}^{\ba^\top\ft}\left(F^r_{\ba}\left(\ba^{\top}\ft \right) - F^r_{\ba}(v ) \right){\rm d}v,
\end{align*}
which, together with \eqref{eq_pflembf1X}, gives us that with probability at least $1 - \frac{6C \log(T - t)}{(T  - t)^2}$,
\begin{align}\label{eq_pflemfnear_fubfinal1}
    & \E_{\ba\sim F^{\ba}}\int_{\ba^\top\dt}^{\ba^\top\ft}\left(F^r_{\ba}\left(\ba^{\top}\ft \right) - F^r_{\ba}(v ) \right){\rm d}v \leq 2\tilde C_1\sqrt{\frac{\log(T-t)}{T-t}}\bigg(M_1+\sqrt{\frac{\log(T-t)}{T-t}}\bigg).
\end{align}

We next obtain a bound on $\E_{\ba\sim F^{\ba}}\int_{\ba^\top\dt}^{\ba^\top\ft}\left( F^r_{\ba}(v ) - F^r_{\ba}\left(\ba^{\top}\dt \right) \right){\rm d}v.$
Note that for any $\blambda_1,\blambda_2\geq \mathbf{0}$, we have
\begin{align*}
    \int_{\ba^\top\blambda_1}^{\ba^\top\blambda_2}\left(\II_{\left\{r > v\right\}} - \II_{\left\{r > \ba^\top \blambda_2\right\}}\right){\rm d}v \ \geq\  0,
\end{align*}
which, by \eqref{eq_pflemfnear_ffd2}, yields
\begin{align}\label{eq_lem2gradfup1}
& \E_{(\ba,r)\sim F}\phi_{t,\bb_{t - 1}}(\dt, \ba, r)^\top(\ft-\dt)\nonumber\\
\leq &  \E_{(\ba,r)\sim F}\phi_{t,\bb_{t - 1}}(\dt, \ba, r)^\top(\ft-\dt) + \E_{(\ba,r)\sim F}\int_{\ba^\top\ft}^{\ba^\top\dt}\left(\II_{\left\{r > v\right\}} - \II_{\left\{r > \ba^\top \dt\right\}}\right){\rm d}v \nonumber\\
= & f_{t,\bb_{t - 1}}(\ft) - f_{t,\bb_{t - 1}}(\dt) \leq 0.
\end{align}
Lemma \ref{lemma: concentration_gradient} implies that with probability at least $1- \frac{3C\log (T - t)}{(T - t)^2}$, 
\begin{align}
    & \E_{(\ba,r)\sim F}\phi_{t,\bb_{t - 1}}(\dt, \ba, r)^\top(\dt - \ft) - \frac{1}{T-t}\sum_{j=t+1}^T \phi_{t,\bb_{t - 1}}(\dt, \ba_j, r_j)^\top(\dt - \ft)  \nonumber\\
    & \ \ \ \ \ \quad\quad\quad + \frac{1}{T-t}\sum_{j=t+1}^T \phi_{t,\bb_{t - 1}}(\dt, \ba_j, r_j)^\top(\dt - \ft)\nonumber\\
   \leq  & \tilde C_1 \sqrt{\frac{\log(T-t)}{T-t}}\bigg(M_1+\sqrt{\frac{\log(T-t)}{T-t}}\bigg) + \frac{1}{T-t}\sum_{j=t+1}^T \phi_{t,\bb_{t - 1}}(\dt, \ba_j, r_j)^\top(\dt - \ft)\nonumber\\
    \leq & \tilde C_1 \sqrt{\frac{\log(T-t)}{T-t}}\bigg(M_1+\sqrt{\frac{\log(T-t)}{T-t}}\bigg) + \frac{2 m^2 {\bar A} {\bar r}}{(T-t)\underline A}, \label{eq_pflemfnear_granear1}
\end{align}
where the last inequality is by the second inequality in Lemma \ref{lem: helper-gradient-non-negativity} with $\blambda = \ft$. 
 
Therefore, with probability at least $1- \frac{3C\log (T - t)}{(T - t)^2}$,
\begin{align}
    0 \ \leq \  \E_{(\ba,r)\sim F}\phi_{t,\bb_{t - 1}}(\dt, \ba, r)^\top(\dt - \ft)\ \leq \ C_3\sqrt{\frac{\log(T-t)}{T-t}}\bigg(M_1 +\sqrt{\frac{\log(T-t)}{T-t}}\bigg),\label{eq_pflemfnear_graubb1}
\end{align}
where $C_3 = \tilde C_1 + \frac{2 m^2 {\bar A} {\bar r}}{\underline A}$.
By \eqref{eq_lem2gradfup1} and \eqref{eq_pflemfnear_graubb1}, we have with probability at least $1- \frac{3C\log (T - t)}{(T - t)^2}$,
\begin{align}\label{eq_pflemfnear_fubbef21}
    \E_{\ba\sim F^{\ba}}\int_{\ba^\top\dt}^{\ba^\top\ft}\left( F^r_{\ba}(v ) - F^r_{\ba}\left(\ba^{\top}\dt \right) \right){\rm d}v = & \  \E_{(\ba,r)\sim F}\int_{\ba^\top\ft}^{\ba^\top\dt}\left(\II_{\left\{r > v\right\}} - \II_{\left\{r > \ba^\top \dt\right\}}\right){\rm d}v\nonumber\\
    &\leq \ \E_{(\ba,r)\sim F}\phi_{t,\bb_{t - 1}}(\dt, \ba, r)^\top(\dt - \ft)\nonumber\\
    &\leq \  C_3\sqrt{\frac{\log(T-t)}{T-t}}\bigg(M_1+\sqrt{\frac{\log(T-t)}{T-t}}\bigg).
\end{align}
Combining \eqref{eq_pflemfnear_fubfinal1} and \eqref{eq_pflemfnear_fubbef21}, we obtain with probability at least $1- \frac{9C \log (T - t)}{(T - t)^2}$,
\begin{align}\label{eq_pflemfnear_fubfinal}
    & \E_{\ba \sim F^{\ba}}\bigg[\bigg(F^r_{\ba}(\ba^{\top}\ft) - F^r_{\ba}(\ba^{\top}\dt ) \bigg)\bigg(\ba^{\top}\ft - \ba^{\top}\dt\bigg)\bigg]\nonumber\\
    & = \ \E_{\ba\sim F^{\ba}}\int_{\ba^\top\dt}^{\ba^\top\ft}\left(F^r_{\ba}\left(\ba^{\top}\ft \right) - F^r_{\ba}(v ) \right){\rm d}v 
  + \E_{\ba\sim F^{\ba}}\int_{\ba^\top\dt}^{\ba^\top\ft}\left( F^r_{\ba}(v ) - F^r_{\ba}\left(\ba^{\top}\dt \right) \right){\rm d}v\nonumber\\
   & \leq \ (2\tilde C_1+C_3)\sqrt{\frac{\log(T-t)}{T-t}}\bigg(M_1 +\sqrt{\frac{\log(T-t)}{T-t}}\bigg).
\end{align}
Note that 
\begin{align*}
    M_1 \ = \ \sqrt{\E_{\ba \sim F^{\ba}}\bigg[\big(\ba^{\top}\ft - \ba^{\top}\dt\big)^2\max\bigg(\bar F^r_{\ba}(\ba^\top \dt), \bar F^r_{\ba}(\ba^\top \ft)\bigg)\bigg]}.
\end{align*}
We have proven \eqref{eq: concentration_lemma_1} regarding $\dt.$ 

We now turn to  \eqref{eq: concentration_lemma_2}  that involves $\dtt$. The proof is very similar. Similar to \eqref{eq_pflembf1}, we have 
\begin{align}\label{eq_pflembf1_dtt}
    0 \ \leq\ & f_{t,\bb_{t - 1}}(\dtt) - f_{t,\bb_{t - 1}}(\ft) \nonumber\\
    \leq &\ \ f_{t,\bb_{t - 1}}(\dtt) - f_{t,\bb_{t - 1}}(\ft) - (g_{t,\bb_{t - 1}-\ba_t}(\dtt) -  g_{t,\bb_{t - 1}-\ba_t}(\ft))\NNN\\
     &= \ \ \E_{(\ba,r)\sim F}\phi_{t,\bb_{t - 1}}(\ft, \ba, r) ^\top(\dtt-\ft) -\frac{1}{T-t}\sum_{j=t+1}^T \phi_{t,\bb_{t - 1}-\ba_t}(\ft, \ba_j, r_j)^\top(\dtt-\ft)\nonumber\\
    & +\E_{(\ba,r)\sim F}\int_{\ba^\top\dtt}^{\ba^\top\ft}\left(\II_{\left\{ r > v \right\}} - \II_{\left\{ r > \ba^\top \ft \right\}} \right){\rm d}v -  
    \frac{1}{T-t}\sum_{j=t+1}^T \int_{\ba_j^\top\dtt}^{\ba_j^\top\ft}\left(\II_{\left\{ r > v \right\}} - \II_{\left\{ r > \ba_j^\top \ft \right\}} \right){\rm d}v \NNN\\
     & =\ \ \E_{(\ba,r)\sim F}\phi_{t,\bb_{t - 1}}(\ft, \ba, r) ^\top(\dtt-\ft) -\frac{1}{T-t}\sum_{j=t+1}^T \phi_{t,\bb_{t - 1}}(\ft, \ba_j, r_j)^\top(\dtt-\ft)\nonumber\\
    & +\ \frac{1}{T-t}\sum_{j=t+1}^T \phi_{t,\bb_{t - 1}}(\ft, \ba_j, r_j)^\top(\dtt-\ft) - \frac{1}{T-t}\sum_{j=t+1}^T \phi_{t,\bb_{t - 1}-\ba_t}(\ft, \ba_j, r_j)^\top(\dtt-\ft)\nonumber\\
    & +\ \E_{(\ba,r)\sim F}\int_{\ba^\top\dtt}^{\ba^\top\ft}\left(\II_{\left\{ r > v \right\}} - \II_{\left\{ r > \ba^\top \ft \right\}} \right){\rm d}v -  
    \frac{1}{T-t}\sum_{j=t+1}^T \int_{\ba_j^\top\dtt}^{\ba_j^\top\ft}\left(\II_{\left\{ r > v \right\}} - \II_{\left\{ r > \ba_j^\top \ft \right\}} \right){\rm d}v \NNN\\
    &= \ \ \E_{(\ba,r)\sim F}\phi_{t,\bb_{t - 1}}(\ft, \ba, r) ^\top(\dtt-\ft) -\frac{1}{T-t}\sum_{j=t+1}^T \phi_{t,\bb_{t - 1}}(\ft, \ba_j, r_j)^\top(\dtt-\ft)\nonumber\\
    & +\ \E_{(\ba,r)\sim F}\int_{\ba^\top\dtt}^{\ba^\top\ft}\left(\II_{\left\{ r > v \right\}} - \II_{\left\{ r > \ba^\top \ft \right\}} \right){\rm d}v -  
    \frac{1}{T-t}\sum_{j=t+1}^T \int_{\ba_j^\top\dtt}^{\ba_j^\top\ft}\left(\II_{\left\{ r > v \right\}} - \II_{\left\{ r > \ba_j^\top \ft \right\}} \right){\rm d}v \NNN\\
    & \quad\quad\quad\quad + \frac{\ba_t^{\top}(\dtt- \ft)}{T - t}\NNN\\
    & \leq \ \ 2\tilde C_1\bigg(\bar M_1 +\sqrt{\frac{\log(T-t)}{T-t}}\bigg) + \frac{2 m \Au {\bar r}}{\Al(T - t)}\NNN\\
    & \leq\ \  C_4\sqrt{\frac{\log(T-t)}{T-t}}\bigg(\bar M_1 +\sqrt{\frac{\log(T-t)}{T-t}}\bigg),
\end{align}
where the third inequality follows from Lemma \ref{lemma: concentration_gradient} and Lemma \ref{lemma:concentration_int} similar to \eqref{eq_pflembf1X}, and that $\frac{\ba_t^{\top}(\dtt- \ft)}{T - t} \leq \frac{2 m \Au {\bar r}}{\Al(T - t)}$ since $\|\ba_t\|_{\infty} \leq \Au$ and $\|\dtt\|_{\infty}, \|\ft\|_{\infty} \leq \frac{\bar r}{\Al}$ by Corollary \ref{coro: helper-bounded-norm-lambda}. Here $C_4 = 2\tilde C_1 + \frac{2 m \Au {\bar r}}{\Al}$, and 
\begin{align*}
    \bar M_1 = \max\left\{\sqrt{\E_{\ba\sim F^{\ba}}(\ba^\top(\ft-\dtt))^2\left(1-F^r_{\ba}(\ba^\top \dtt)\right)},\sqrt{\E_{\ba\sim F^{\ba}}(\ba^\top(\ft-\dtt))^2\left(1-F^r_{\ba}(\ba^\top \ft)\right)}\right\}.
\end{align*}

Now following nearly identical arguments as in \eqref{eq_pflemfnear_flarger01}, we derive a similar bound as \eqref{eq_pflemfnear_fubfinal1}, that 
\begin{align}\label{eq_pflemfnear_fubfinal1_dtt}
    & \E_{\ba\sim F^{\ba}}\int_{\ba^\top\dtt}^{\ba^\top\ft}\left(F^r_{\ba}\left(\ba^{\top}\ft \right) - F^r_{\ba}(v ) \right){\rm d}v \leq C_4\sqrt{\frac{\log(T-t)}{T-t}}\bigg(\bar M_1+\sqrt{\frac{\log(T-t)}{T-t}}\bigg)
\end{align}
with probability at least $1 - \frac{6C \log(T - t)}{(T  - t)^2}.$

To obtain a bound on $\E_{\ba\sim F^{\ba}}\int_{\ba^\top\dtt}^{\ba^\top\ft}\left( F^r_{\ba}(v ) - F^r_{\ba}\left(\ba^{\top}\dtt \right) \right){\rm d}v$, we follow the argument similar to \eqref{eq_lem2gradfup1}, which yields 
\begin{align}\label{eq_lem2gradfup1_dtt}
&  \E_{(\ba,r)\sim F}\phi_{t,\bb_{t - 1}}(\dtt, \ba, r)^\top(\ft-\dtt)\nonumber\\
& \leq\    \E_{(\ba,r)\sim F}\phi_{t,\bb_{t - 1}}(\dtt, \ba, r)^\top(\ft-\dtt) + \E_{(\ba,r)\sim F}\int_{\ba^\top\ft}^{\ba^\top\dtt}\left(\II_{\left\{r > v\right\}} - \II_{\left\{r > \ba^\top \dtt\right\}}\right){\rm d}v \nonumber\\
&= \ f_{t,\bb_{t - 1}}(\ft) - f_{t,\bb_{t - 1}}(\dtt) \leq 0,
\end{align}
where we used the fact that  $\ft$ is the minimizer of $f_{t, \bb_{t - 1}}(\cdot)$.
Lemma \ref{lemma: concentration_gradient} implies that with probability at least $1- \frac{3C\log (T - t)}{(T - t)^2}$, 
\begin{align}
    & \E_{(\ba,r)\sim F}\phi_{t,\bb_{t - 1}}(\dtt, \ba, r)^\top(\dtt - \ft) - \frac{1}{T-t}\sum_{j=t+1}^T \phi_{t,\bb_{t - 1} - \ba_t}(\dtt, \ba_j, r_j)^\top(\dtt - \ft)  \nonumber\\
    & \ \ \ \  + \frac{1}{T-t}\sum_{j=t+1}^T \phi_{t,\bb_{t - 1} - \ba_t}(\dtt, \ba_j, r_j)^\top(\dtt - \ft)\nonumber\\
    & = \ \  \E_{(\ba,r)\sim F}\phi_{t,\bb_{t - 1}}(\dtt, \ba, r)^\top(\dtt - \ft) - \frac{1}{T-t}\sum_{j=t+1}^T \phi_{t,\bb_{t - 1}}(\dtt, \ba_j, r_j)^\top(\dtt - \ft)  \nonumber\\
    & \ \ \ + \ \ \frac{1}{T-t}\sum_{j=t+1}^T \phi_{t,\bb_{t - 1} }(\dtt, \ba_j, r_j)^\top(\dtt - \ft)  - \frac{1}{T-t}\sum_{j=t+1}^T \phi_{t,\bb_{t - 1} - \ba_t}(\dtt, \ba_j, r_j)^\top(\dtt - \ft) \nonumber\\
    & \ \ \ \  + \frac{1}{T-t}\sum_{j=t+1}^T \phi_{t,\bb_{t - 1} - \ba_t}(\dtt, \ba_j, r_j)^\top(\dtt - \ft)\nonumber\\
    & \leq \ \  \E_{(\ba,r)\sim F}\phi_{t,\bb_{t - 1}}(\dtt, \ba, r)^\top(\dtt - \ft) - \frac{1}{T-t}\sum_{j=t+1}^T \phi_{t,\bb_{t - 1}}(\dtt, \ba_j, r_j)^\top(\dtt - \ft)\nonumber\\
    & \ \ \ \   + \frac{\ba_t^{\top}(\dtt - \ft)}{T - t} + \frac{2 m^2 {\bar A} {\bar r}}{\underline A (T-t)}\nonumber\\ 
    & \ \ \ \quad\quad \leq \ \left(\tilde C_1 + \frac{2 m \Au {\bar r}}{\Al} + \frac{2 m^2 {\bar A} {\bar r}}{\underline A}\right)\sqrt{\frac{\log(T-t)}{T-t}}\bigg(\bar M_1+\sqrt{\frac{\log(T-t)}{T-t}}\bigg),\label{eq_pflemfnear_granear1_dtt}
\end{align}
where the first inequality is by the third inequality in Lemma \ref{lem: helper-gradient-non-negativity} with $\blambda = \ft$, and the last inequality is because $\frac{\ba_t^{\top}(\dtt- \ft)}{T - t} \leq \frac{2 m \Au {\bar r}}{\Al(T - t)}$. 

Combining \eqref{eq_lem2gradfup1_dtt} with \eqref{eq_pflemfnear_granear1_dtt}, we have with probability at least $1 - \frac{3 \log(T - t)}{(T - t)^2}$,
\begin{align}\label{eq_pflemfnear_graubb1_dtt}
    0 \ \leq \ \E_{(\ba,r)\sim F}\phi_{t,\bb_{t - 1}}(\dtt, \ba, r)^\top(\dtt-\ft) \ \leq \ C_5\sqrt{\frac{\log(T-t)}{T-t}}\bigg(\bar M_1+\sqrt{\frac{\log(T-t)}{T-t}}\bigg),
\end{align}
where $C_5 = \tilde C_1 + \frac{2 m \Au {\bar r}}{\Al} + \frac{2 m^2 {\bar A} {\bar r}}{\underline A}$.
By \eqref{eq_lem2gradfup1_dtt} and \eqref{eq_pflemfnear_graubb1_dtt}, we have with probability at least $1- \frac{3C\log (T - t)}{(T - t)^2}$,
\begin{align}\label{eq_pflemfnear_fubbef21_dtt}
    \E_{\ba\sim F^{\ba}}\int_{\ba^\top\dtt}^{\ba^\top\ft}\left( F^r_{\ba}(v ) - F^r_{\ba}\left(\ba^{\top}\dtt \right) \right){\rm d}v & = \  \E_{(\ba,r)\sim F}\int_{\ba^\top\ft}^{\ba^\top\dtt}\left(\II_{\left\{r > v\right\}} - \II_{\left\{r > \ba^\top \dtt\right\}}\right){\rm d}v\nonumber\\
    &\leq \ \E_{(\ba,r)\sim F}\phi_{t,\bb_{t - 1}}(\dtt, \ba, r)^\top(\dtt - \ft)\nonumber\\
    &\leq \  C_5\sqrt{\frac{\log(T-t)}{T-t}}\bigg(\bar M_1+\sqrt{\frac{\log(T-t)}{T-t}}\bigg).
\end{align}
Combining \eqref{eq_pflemfnear_fubfinal1_dtt} and \eqref{eq_pflemfnear_fubbef21_dtt}, we obtain with probability at least $1- \frac{9C \log (T - t)}{(T - t)^2}$,
\begin{align}\label{eq_pflemfnear_fubfinal_dtt}
    & \E_{\ba \sim F^{\ba}}\bigg[\bigg(F^r_{\ba}(\ba^{\top}\ft) - F^r_{\ba}(\ba^{\top}\dtt ) \bigg)\bigg(\ba^{\top}\ft - \ba^{\top}\dtt\bigg)\bigg]\nonumber\\
    & = \ \E_{\ba\sim F^{\ba}}\int_{\ba^\top\dtt}^{\ba^\top\ft}\left(F^r_{\ba}\left(\ba^{\top}\ft \right) - F^r_{\ba}(v ) \right){\rm d}v 
  + \E_{\ba\sim F^{\ba}}\int_{\ba^\top\dtt}^{\ba^\top\ft}\left( F^r_{\ba}(v ) - F^r_{\ba}\left(\ba^{\top}\dtt \right) \right){\rm d}v\nonumber\\
   & \leq \ (C_4+C_5)\sqrt{\frac{\log(T-t)}{T-t}}\bigg(\bar M_1 +\sqrt{\frac{\log(T-t)}{T-t}}\bigg),
\end{align}
where we note
\begin{align*}
    \bar M_1 \ = \ \sqrt{\E_{\ba \sim F^{\ba}}\bigg[\big(\ba^{\top}\ft - \ba^{\top}\dtt\big)^2\max\bigg(\bar F^r_{\ba}(\ba^\top \dtt), \bar F^r_{\ba}(\ba^\top \ft)\bigg)\bigg]},
\end{align*}
$C_4 = 2\tilde C_1 + \frac{2 m \Au {\bar r}}{\Al}$, and $C_5 = \tilde C_1 + \frac{2 m \Au {\bar r}}{\Al} + \frac{2 m^2 {\bar A} {\bar r}}{\underline A}$. We have proven  \eqref{eq: concentration_lemma_2}  regarding $\dtt.$ 

Now combining \eqref{eq: concentration_lemma_1} and \eqref{eq: concentration_lemma_2}, we complete the proof of Lemma \ref{lem:concentration-proof-part-1}. 
\end{proof}

\section{Proof of Lemma~\ref{lemma: concentration_gradient} and Lemma~\ref{lemma:concentration_int}}\label{appendix: proof of concentration helper}
To begin with, we introduce additional notation. For any function $g$ defined on set $\cX$, let $\|g\|_{L_\infty(\cX)} \triangleq \sup_{x \in \cX} |g(x)|$ denote its supremum norm. In cases where $\cX$ is clear from the context, we use the shorthand notation $\|g\|_{\infty}$ instead. For any measure $\cQ$ supported on $\cX$, let $\|g\|_{\cQ} \triangleq \left(\int |g|^2 d\cQ\right)^{1/2}$ denote its $L_2(\cQ)$ norm. 
Following the convention in the theory of empirical processes, we define the \textit{entropy with bracketing} for $L_2(\cQ)$ for a function class $\cG$ as follows (cf. Definition 2.2 of \cite{geer2000empirical}). Let $N_{B}(\delta, \cG, \cQ)$ be the smallest value of $N$ such that there exits pairs of functions $\{(g^L_j, g^U_j)\}_{j = 1}^N$ for which $\|g^U_j - g^L_j\|_{\cQ} \leq \delta$ for all $j = 1, \dots, N$, and such that for any $g \in \cG$ there exits a $j = j(g)$ such that $g^L_j \leq g \leq g^U_j.$ Then $H_{B}(\delta, \cG, \cQ) = \log N_{B}(\delta, \cG, \cQ)$ is called the $\delta$-entropy with bracketing for $\cG$ (for $L_2(\cQ)$-metric). The proof of both Lemma~\ref{lemma: concentration_gradient} and Lemma~\ref{lemma:concentration_int} uses a classic result in the empirical process theory, which we state as Lemma~\ref{lem:helper-concentration-geer} in \Cref{appendix:concentration-helper}.

\begin{proof}[Proof of Lemma \ref{lemma: concentration_gradient}]
    Recall that 
\begin{align*}
    \phi_{t,\bb}(\blambda, \ba, r) = \frac{\partial h_{t,\bb}(\blambda, \ba)}{\partial \blambda} = \frac{1}{T-t}\bb - \ba\II_{\left\{r > \ba^\top \blambda\right\}}.
\end{align*}
Therefore, we have
\begin{align*}
    & \E_{(\ba,r)\sim F}\phi_{t,\bb}(\lama, \ba, r)^\top(\lamb-\lama) - \frac{1}{T-t}\sum_{j=t+1}^T \phi_{t,\bb}(\lama, \ba_j, r_j)^\top(\lamb-\lama)\nonumber\\
    = & \E_{(\ba,r)\sim F}\ba^\top(\lamb-\lama)\II_{\left\{ r > \ba^\top \lama\right\}} - \frac{1}{T-t}\sum_{j=t+1}^T \ba_j^\top(\lamb-\lama)\II_{\left\{r_j > \ba_j^\top \lama\right\}}.
\end{align*}
Define the function class $\cG$ parameterized by $\bp_1, \bp_2, \bp_3, \bp_4$:
\begin{align*}
    \cG = \bigg\{g: g(\ba,r) = \frac{\Al}{4m\bar r \Au} & \bigg(\ba^\top\bp_4~\II_{\left\{r > \ba^\top \bp_2\right\}} -\ba^\top\bp_1\II_{\left\{r > \ba^\top \bp_3\right\}}\bigg), \bp_j\in \Omega, j=1,2,3,4\bigg\}.
\end{align*}
For all $g \in \cG$ and $(\ba, r) \in \cS$, we have 
\begin{align*}
    \left|g(\ba, r)\right| & = \left|\frac{\Al}{4m\bar r \Au}  \bigg(\ba^\top\bp_4~\II_{\left\{r > \ba^\top \bp_2\right\}} -\ba^\top\bp_1\II_{\left\{r > \ba^\top \bp_3\right\}}\bigg)\right|
    \\& \leq \frac{\Al}{4m\bar r \Au}  \bigg(\left|\ba^\top\bp_4~\II_{\left\{r > \ba^\top \bp_2\right\}}\right| + \left|\ba^\top\bp_1\II_{\left\{r > \ba^\top \bp_3\right\}}\right|\bigg)
     \\& \leq \frac{\Al}{4 m\bar r \Au}  \left(\left|\ba^\top\bp_4\right| + \left|\ba^\top\bp_1\right|\right)
     \\& \leq \frac{\Al}{4 m\bar r \Au}  \times 2 m \Au \frac{\bar r}{\Al} \ = \ \frac{1}{2} < 1,
\end{align*}
where in the final inequality we use $\|\blambda\|_{\infty} \leq \frac{\bar r}{\Al}$ for any $\blambda \in \Omega$ and $\|\ba\|_{\infty} \leq \Au, r \leq \bar{r}$ for any $(\ba, r) \in \mathcal{S}.$ In other words, $\|g\|_{\infty} = \|g\|_{L_\infty(\mathcal{S})} \leq \frac{1}{2} < 1$ for any $\bp_j \in \Omega, j = 1,2, 3, 4.$ Our plan is to approximate $\cG$ by its $\delta$-net. For any $\delta > 0$, consider the grid points set $\Omega_0 \subseteq [0,\frac{\bar r}{\Al}]^{4 m}$, where each element of each point $\bq = (\bq^{(1)},\bq^{(2)},\bq^{(3)}, \bq^{(4)})\in \Omega_0$ has the form $k\delta$, with $k = 0,1,...,\lfloor\frac{\bar r}{\delta \Al}\rfloor + 1$, and $\bq^{(1)},\bq^{(2)},\bq^{(3)}, \bq^{(4)}\in \Omega$. Thus, we have $\mathbf{card}(\Omega_0) \leq (2 + \frac{\bar r}{\delta \Al})^{4m}$. Let $N_0 = (2 + \frac{\bar r}{\delta \Al})^{4m}$. 
Let 
\begin{align*}
    \cG_0 = \bigg\{g: g(\ba,r) = \frac{\Al}{4m \bar r \Au} \bigg(\ba^\top\bq^{(4)}~\II_{\left\{r > \ba^\top \bq^{(2)}\right\}}
    -\ba^\top\bq^{(1)}\II_{\left\{r > \ba^\top \bq^{(3)}\right\}}\bigg), \bq^{(j)}\in \Omega_0, j=1,2,3,4\bigg\}.
\end{align*}
For each $g\in \cG$, we are going to find functions $g_L, g_U\in \cG_0$ such that $g_L\leq g \leq g_U$ with $\|g_L-g_U\|_{F}$ bounded by a function of $\delta$ to be determined later, and we recall that $\|\cdot\|_F$ is the $L_2(F)$ norm with respect to distribution $F$ (of $(\ba, r)$). 

Since $g\in \cG$, it has the form 
\begin{align*}
    g(\ba,r) = \frac{\Al}{4m\bar r \Au} & \bigg(\ba^\top\bp_4~\II_{\left\{r > \ba^\top \bp_2\right\}} -\ba^\top\bp_1\II_{\left\{r > \ba^\top \bp_3\right\}}\bigg).
\end{align*}
We choose $\bq_U = (\bq_U^{(1)},\bq_U^{(2)},\bq_U^{(3)}, \bq_U^{(4)})$ such that $\bq_U^{(j)} \leq \bp_j < \bq_U^{(j)} + \delta \mathbf{1}$, for $j=1,2$, and $\max\{\bq_U^{(i)} - \delta \mathbf{1},\mathbf{0}\} < \bp_i \leq \bq_U^{(i)}$, for $ i = 3, 4,$ where the maximum is taken elementwisely. We also choose $\bq_L = (\bq_L^{(1)},\bq_L^{(2)},\bq_L^{(3)}, \bq_L^{(4)})$ such that $\bq_L^{(i)} \leq \bp_i < \bq_L^{(i)} + \delta \mathbf{1}$, and $\max\{\bq_L^{(j)} - \delta \mathbf{1},\mathbf{0}\} < \bp_j\leq \bq_L^{(j)}$, for $j=1,2, i = 3, 4$.

Let 
\begin{align*}
    g_U(\ba,r) &= \frac{\Al}{4m\bar r \Au} \bigg(\ba^\top\bq_U^{(4)}~\II_{\left\{r > \ba^\top \bq_U^{(2)}\right\}}
    -\ba^\top\bq_U^{(1)}\II_{\left\{r > \ba^\top \bq_U^{(3)}\right\}}\bigg),\\
    g_L(\ba,r) &= \frac{\Al}{4m\bar r \Au} \bigg(\ba^\top\bq_L^{(4)}~\II_{\left\{r > \ba^\top \bq_L^{(2)}\right\}}
    -\ba^\top\bq_L^{(1)}\II_{\left\{r > \ba^\top \bq_L^{(3)}\right\}}\bigg).
\end{align*}
By monotonicity and non-negativity of $\ba$, $g_L(\ba,r)\leq g(\ba,r)\leq g_U(\ba,r)$ for all $(\ba, r) \in \cS$ and any $\bp_j \in \Omega, j = 1,2, 3, 4$. We next bound $\|g_U - g_L\|_{F}.$ By definition
\begin{align*}
    \|g_U - g_L\|^2_{F} & = \left(\frac{\Al}{4m\bar r \Au}\right)^2\E_{(\ba, r) \sim F} \left\{ \left(J_1(\ba, r) - J_2(\ba, r)\right)^2\right\}\\
    & \leq 2 \left(\frac{\Al}{4m\bar r \Au}\right)^2 \left(\E_{(\ba, r) \sim F}J_1(\ba, r)^2 + \E_{(\ba, r) \sim F}J_2(\ba, r)^2 \right),
\end{align*}
where 
\begin{align*}
    J_1(\ba,r) = & \ba^\top\bq_U^{(4)} \II_{\left\{r > \ba^\top \bq_U^{(2)}\right\}} - \ba^\top\bq_L^{(4)} \II_{\left\{r > \ba^\top \bq_L^{(2)}\right\}},\\
    J_2(\ba,r) = & \ba^\top\bq_U^{(1)}\II_{\left\{r > \ba^\top \bq_U^{(3)}\right\}} - \ba^\top\bq_L^{(1)}\II_{\left\{r > \ba^\top \bq_L^{(3)}\right\}}.
\end{align*}
For $J_1(\ba, r)$ we have 
\begin{align}
    & \E_{(\ba, r)\sim F} J_1(\ba, r)^2\nonumber\\
   = & \ \E_{(\ba, r) \sim F}\bigg(\ba^\top\bq_U^{(4)}\II_{\left\{r > \ba^\top \bq_U^{(2)}\right\}} - \ba^\top\bq_U^{(4)}\II_{\left\{r > \ba^\top \bq_L^{(2)}\right\}} + \ba^\top\bq_U^{(4)}\II_{\left\{r > \ba^\top \bq_L^{(2)}\right\}} - \ba^\top\bq_L^{(4)}\II_{\left\{r > \ba^\top \bq_L^{(2)}\right\}}\bigg)^2\nonumber\\
    \leq & 2\E_{(\ba, r) \sim F}\bigg(\bigg(\ba^\top\bq_U^{(4)}\II_{\left\{r > \ba^\top \bq_U^{(2)}\right\}} - \ba^\top\bq_U^{(4)}\II_{\left\{r > \ba^\top \bq_L^{(2)}\right\}}\bigg)^2 + \bigg(\ba^\top\bq_U^{(4)}\II_{\left\{r > \ba^\top \bq_L^{(2)}\right\}} - \ba^\top\bq_L^{(4)}\II_{\left\{r > \ba^\top \bq_L^{(2)}\right\}}\bigg)^2\bigg)\nonumber\\
    = & 2\E_{(\ba, r) \sim F} (J_{11}(\ba,r)^2 + J_{12}(\ba,r)^2)\NNN.
\end{align}
By our choice of $\bq_L$ and $\bq_U$, $\|\bq_L - \bq_U\|_{\infty} \leq \|\bq_L - (\bp_1, \bp_2, \bp_3, \bp_4)\|_{\infty} + \|\bq_U - (\bp_1, \bp_2, \bp_3, \bp_4)\|_{\infty} \leq 2 \delta.$ For $J_{11}(\ba, r)$, we have 
\begin{align}
    \E_{(\ba, r)\sim F} J_{11}(\ba, r)^2 & \leq \  \left(m\Au \frac{\bar r}{\Al}\right)^2\E_{(\ba, r) \sim F}\left(\II_{\left\{r > \ba^\top \bq_U^{(2)}\right\}} - \II_{\left\{r > \ba^\top \bq_L^{(2)}\right\}}\right)^2\nonumber\\
    &= \  \left(m\Au \frac{\bar r}{\Al}\right)^2\E_{(\ba, r) \sim F}\left|\II_{\left\{r > \ba^\top \bq_U^{(2)}\right\}} - \II_{\left\{r > \ba^\top \bq_L^{(2)}\right\}}\right|\nonumber\\
    &= \  \left(m\Au \frac{\bar r}{\Al}\right)^2 \E_{\ba\sim F^{\ba}}\left|F^r_{\ba}(\ba^\top \bq_L^{(2)}) - F^r_{\ba}(\ba^\top \bq_U^{(2)})\right|\nonumber\\
    &\leq c_{\nu} \left(m\Au \frac{\bar r}{\Al}\right)^2 \E_{\ba\sim F^{\ba}}|\ba^\top \bq_L^{(2)} - \ba^\top \bq_U^{(2)}|^\nu \quad\quad\quad\quad \textrm{(under either Assumptions)}\NNN\\
    &\leq \ c_{\nu} \left(m\Au \frac{\bar r}{\Al}\right)^2 (m\Au)^\nu 2^\nu \delta^\nu\NNN,
\end{align}
where in the last equality we use $\ba \in [\Al, \Au]^m $ and $\|\bq_U - \bq_L\|_{\infty} \leq 2 \delta.$ Now consider the second term $J_{12}(\ba, r)$, we have 
\[
\E_{(\ba, r) \sim F} J_{12}(\ba, r)^2 \ \leq\ \E_{\ba \sim F^{\ba}}\left(\ba^{\top} \bq_U^{(4)} - \ba^{\top} \bq_L^{(4)}\right)^2 \ \leq \ 4 m^2 \Au^2 \delta^2.
\]
Therefore, the term $\E_{(\ba, r) \sim F}J_1(a, r)^2$ can be bounded by
\[
2\left(c_{\nu}\left(m\Au \frac{\bar r}{\Al}\right)^2 (m\Au)^\nu 2^\nu \delta^\nu + 4 m^2 \Au^2 \delta^2\right).
\]
The term $\E_{(\ba, r) \sim F}J_2(a, r)^2$ can be bounded nearly identically. Combining the above, we have 
\begin{align*}
    \|g_U- g_L\|^2_F & \leq 2 \left(\frac{\Al}{4m\bar r \Au}\right)^2 \left(\E_{(\ba, r) \sim F}J_1(\ba, r)^2 + \E_{(\ba, r) \sim F}J_2(\ba, r)^2 \right)\\
    & \leq 4 \left(\frac{\Al}{4m\bar r \Au}\right)^2  \left( 2c_{\nu}\left(m\Au \frac{\bar r}{\Al}\right)^2 (m\Au)^\nu 2^\nu \delta^\nu +  8 m^2\Au^2\delta^2 \right)\\
    & \leq c_{\nu} (2 m \Au)^{\nu} \delta^{\nu} +   \frac{2 \Al^2}{{\bar r}^2} \delta^2  \  \leq \ \left(c_{\nu}^{\frac{1}{2}}(2 m \Au)^{\frac{\nu}{2}} +  \frac{2 \Al}{\bar r}\right)^2 \delta^{\min\{\nu, 2\}}.
\end{align*}
As a result, $\|g_U- g_L\|_F \leq  \left(c_{\nu}^{\frac{1}{2}}(2 m \Au)^{\frac{\nu}{2}} + \frac{ 2\Al}{\bar r}\right)\delta^{\min\{\nu/2, 1\}}.$  Consider the set of function pairs 
\[
\{(g_L(\bq_1), g_U(\bq_2)), \bq_1, \bq_2 \in \Omega_0\}.
\]
The cardinality of this set is $\mathbf{card}(\Omega_0)^2 \leq N_0^2.$ By the previous argument, this set of function pairs is enough to $\delta'$-cover $\cG$ where $\delta' = \left(c_{\nu}^{\frac{1}{2}}(2 m \Au)^{\frac{\nu}{2}} + \frac{2 \Al}{\bar r}\right)\delta^{\min\{\nu/2, 1\}}$ in the following precise sense. For any $g \in \cG$, we can find a pair of $\bq_L$ and $\bq_U$ in $\Omega_0$ (given precisely earlier), such that the associated $g_L$ and $g_U$ satisfy $g_L \leq g \leq g_U$, and furthermore $\|g_U- g_L\|_F \leq  \left(c_{\nu}^{\frac{1}{2}}(2 m \Au)^{\frac{\nu}{2}} + \frac{2 \Al}{\bar r}\right)\delta^{\min\{\nu/2, 1\}}.$ Denote by $C_7 \triangleq c_{\nu}^{\frac{1}{2}} (2 m \Au)^{\frac{\nu}{2}} + \frac{2 \Al}{\bar r}.$  By definition of the $\delta$-entropy with bracketing, we must have 
\begin{align}
    H_B(C_7\delta^{\min\{\nu/2,1\}}, \cG ,F) \leq  \log(N_0^2) =  8 m \log\left(2 + \frac{\bar r}{\delta \Al}\right),
\end{align}
which implies
\begin{align*}
    H_B(u, \cG ,F) &\leq 8 m \log\left(2 + \frac{\bar r C_7^{\frac{1}{\min\{\nu/2,1\}}}}{u^{\frac{1}{\min\{\nu/2,1\}}}\Al}\right) \leq 8 m \log\left(\left(2 + \frac{{\bar r} C_7}{u \Al}\right)^{\frac{1}{\min\{\nu/2,1\}}}\right) \\
    & = \frac{8 m}{\min\{\nu/2,1\}} \log\left(2 + \frac{\bar{r} C_7}{\Al u}\right).
\end{align*}
Thus we have, when $\delta\leq \frac{\bar{r} C_7}{\Al}$,
\begin{align*}
    \int_{0}^\delta H_B(u,\cG,F)^{1/2}{\rm d}u \leq & \left(\frac{8 m}{\min\{\nu/2,1\}}\right)^{1/2}\int_0^{\delta}\sqrt{\log\left(2 + \frac{\bar{r} C_7}{\Al  u}\right)} {\rm d} u
    \leq  C_3\delta\sqrt{\log \left(1+\frac{C_4}{\delta}\right)},
\end{align*}
where $C_3 = 2\left(\frac{8 m}{\min\{\nu/2,1\}}\right)^{1/2}$ and $C_4 = \frac{\bar{r} C_7}{\Al} = c_{\nu}^{\frac{1}{2}} \frac{\bar{r}}{\Al}(2 m \Au)^{\frac{\nu}{2}} + 2 .$
Here the integral is bounded by
\begin{align*}
    \int_0^{\delta}\sqrt{\log\left(2 + \frac{\bar{r} C_7}{\Al u}\right)} {\rm d} u &= \ \sum_{s = 1}^{\infty} \int_{2^{-s}\delta}^{2^{-(s-1)}\delta }\sqrt{\log\left(2 + \frac{\bar{r} C_7}{\Al u}\right)} {\rm d} u\\
    &\leq  \sum_{s = 1}^{\infty} \int_{2^{-s}\delta}^{2^{-(s-1)}\delta} \sqrt{\log\left(2 + \frac{\bar{r} C_7}{\Al 2^{- s}\delta}\right)} {\rm d} u\\
    &= \  \sum_{s = 1}^{\infty} 2^{-(s-1)}\delta \sqrt{\log\left(2 + \frac{\bar{r} C_7}{\Al 2^{- s}\delta}\right)}\\
    &\leq \  \delta \sum_{s = 1}^{\infty} 2^{-(s-1)}\sqrt{ s\log{2} + \log\left(1 + \frac{\bar{r} C_7}{\Al\delta}\right)}\\
 &\leq \ 2 \delta \sqrt{\log\left(1 + \frac{\bar{r} C_7}{\Al\delta}\right)} 
 \sum_{s = 1}^{\infty} s 2^{-(s-1)},
\end{align*}
where the infinite summation converges to $1$, and the last inequality holds when $\delta\leq \frac{\bar{r} C_7}{\Al}$.

Now we are ready to apply Lemma \ref{lem:helper-concentration-geer}. Recall that $\sup_{g \in \cG} \|g\|_{\infty} \leq \frac{1}{2},$ which implies $\sup_{g \in \cG} \|g\|_{F} \leq \frac{1}{2} < 1.$ Let $\delta_{T - t} \triangleq \sqrt{\frac{\log(T - t)}{T - t}}.$ Consider the following partition of $\cG$:
\begin{align*}
    \cG_s = & \bigg\{g\in \cG: 2^s \delta_{T-t} < \|g\|_F \leq 2^{s+1} \delta_{T-t}\bigg\},\nonumber\\
    \cG_c = & \bigg\{g\in \cG: \|g\|_F \leq \delta_{T-t}\bigg\},
\end{align*}
with $s=1,...,S$ and $S = \lfloor\log(\delta_{T-t}^{-1})/\log 2\rfloor + 1$. Clearly $\cG = (\cup_{s=1}^S \cG_s)\cup \cG_c$. We apply the peeling device. 

Consider $\cG_c$ first. Take $n = T - t, K = 1, R = \delta_{T - t} = \sqrt{\frac{\log(T - t)}{T - t}}$. Denote constant $v = 10 C^2 (1 + C_3)^2(1 + C_4)^2 + 2$ where $C_3, C_4$ are specified above and $C$ is the universal constant as in Lemma \ref{lem:helper-concentration-geer}. We further take $a = v\frac{\log (T - t)}{\sqrt{T - t}}, C_0 = \frac{v}{2(1+ C_3)(1 + C_4)}$ and $ C_1 = 2 v.$ We first check the conditions and then apply Lemma \ref{lem:helper-concentration-geer}. Within the function class $\cG_c$, we have $\sup_{g \in \cG_c} \|g\|_{\infty}\leq \sup_{g \in \cG} \|g\|_{\infty} \leq K = 1$ and $\sup_{g \in \cG_c} \|g\|_{F} \leq \delta_{T - t} = R.$ Next we check the conditions \eqref{eq: Thm511eq1} - \eqref{eq: Thm511eq4}. Condition \eqref{eq: Thm511eq1} is $a \leq C_1 \sqrt{n} R^2/(2 K).$ We have that 
\[
C_1 \frac{\sqrt{T - t} R^2 }{2 K} = v\frac{\log(T - t) }{ \sqrt{T -t}}  = a,
\]
hence condition \eqref{eq: Thm511eq1} holds true.  For condition \eqref{eq: Thm511eq2}, we need $a \leq 8 \sqrt{2(T - t)} R$. We have
\begin{align*}
    8 \sqrt{2(T - t)} R = 8 \sqrt{2\log(T - t)} \geq v \frac{\log(T - t)}{\sqrt{T - t}} = a,
\end{align*}
for all $T - t \geq e^{\frac{ v^2}{32} + 1}.$ Hence condition \eqref{eq: Thm511eq2} holds true. Condition \eqref{eq: Thm511eq3} is 
\[
a \geq C_0 \max\left\{\int_{0}^{\sqrt{2}R} H_B(u/\sqrt{2}, \cG_c, F)^{1/2} {\rm d} u, \sqrt{2} R\right\}.
\]
By our previous calculation,
\begin{align}\label{eq_pf_concen_entropybound}
 C_0\max \left\{\int_0^{\sqrt{2}R}  H_B(u/\sqrt{2},\cG_c, F)^{1/2}{\rm d}u,\sqrt{2}R\right\} & \leq C_0\max\left\{\int_0^{\sqrt{2}R} H_B(u/\sqrt{2},\cG, F)^{1/2}{\rm d}u,\sqrt{2}R\right\}    \nonumber \\
&= \sqrt{2} C_0\max\left\{\int_0^{R} H_B(\delta,\cG, F)^{1/2}{\rm d}\delta, R\right\}\nonumber \\
&\leq  \sqrt{2} C_0 \max\left\{C_3 R \sqrt{\log\left(1 + \frac{C_4}{R}\right)}, R\right\},
\end{align}
when $\delta_{T_t}\leq \frac{\bar{r} C_7}{\Al}$, which automatically holds since $\frac{\bar{r} C_7}{\Al} \geq 2\geq K \geq R$.
For the first term, we have 
\begin{align*}
    \sqrt{2}C_0  C_3 R \sqrt{\log\left(1 + \frac{C_4}{R}\right)} &=\sqrt{2}C_0  C_3 \sqrt{\frac{\log(T - t)}{T - t}} \sqrt{\log\left(1 + C_4\sqrt{\frac{T - t}{\log(T - t)}}\right)},\\
    & \leq \sqrt{2}C_0  C_3 \sqrt{\frac{\log(T - t)}{T - t}} \sqrt{\log\left((1 + C_4)(T - t)\right)},\\
    & \leq \sqrt{2}C_0  C_3(1 + C_4) \frac{\log(T - t)}{\sqrt{T - t}}\\
    & = \frac{\sqrt{2} v}{2(1 + C_3)(1 + C_4)}C_3(1 + C_4)\frac{\log(T - t)}{\sqrt{T - t}}\\
    & \leq  v \frac{\log(T - t)}{\sqrt{T - t}} = a.
\end{align*}
For the second term, we have
\begin{align*}
    \sqrt{2} C_0 R & = \sqrt{2} C_0 \sqrt{\frac{\log(T - t)}{T- t}}  \leq v \sqrt{\frac{\log(T - t)}{T- t}} \leq  v \frac{\log(T - t)}{\sqrt{T - t}} = a,
\end{align*}
for $ T - t > 3.$ Thus condition \eqref{eq: Thm511eq3} holds true. Finally, for condition \eqref{eq: Thm511eq4} we need to verify that $C_0^2 \geq C^2(C_1 + 1).$ We have 
\begin{align}\label{lem_con_Con511eq41X}
C^2(C_1 + 1) = C^2(2 v + 1) &= C^2(20 C^2 (1 + C_3)^2(1 + C_4)^2 + 5) \nonumber\\
&\leq\ 25 C^4 (1 + C_3)^2(1 + C_4)^2 \nonumber\\
&\leq\ \left(\frac{10C^2(1+C_3)^2(1 + C_4)^2 + 2}{2(1+C_3)(1 + C_4)}\right)^2= C_0^2.
\end{align}

With all four conditions checked, Lemma \ref{lem:helper-concentration-geer} implies
\begin{align*}
     &\P\left(\sup_{g\in \cG_c}\sqrt{T-t}\left|\frac{1}{T-t}\sum_{j=t+1}^{T} g(\ba_j,r_j) - \E_{(\ba, r)\sim F}g(\ba,r)\right|\geq a\right)\nonumber\\
    &\leq \quad\quad C\exp\left(-\frac{a^2}{2 C^2(C_1 + 1) R^2}\right),\\
    &= \quad\quad C\exp\left(- \frac{\left(10 C^2 (1 + C_3)^2(1 + C_4)^2 + 2\right)^2}{2 C^2 (20 C^2 (1 + C_3)^2(1 + C_4)^2 + 5)}\log(T - t)\right)\\
    &\leq \quad\quad C\exp\left(-2 \log(T - t)\right) = \frac{C}{(T - t)^2}.
\end{align*}
Combining the above, we may conclude that 
\begin{align}\label{eq: gradient-peeling-Gc}
\P\left(\sup_{g\in \cG_c}\left|\frac{1}{T-t}\sum_{j=t+1}^{T} g(\ba_j,r_j) - \E_{(\ba, r)\sim F}g(\ba,r)\right|\geq v \frac{\log(T - t)}{T - t}\right) \ \leq  \frac{C}{(T - t)^2},    
\end{align}
as long as $T - t \geq \max\left\{e^{\frac{v^2}{32} +1}, 3\right\} = e^{\frac{v^2}{32} +1},$ where $v$ is an absolute constant depending only on problem primitives and independent of $T - t$:
\begin{align}\label{eq_pf_concentra_vdef}    
v = 10 C^2 (1 + C_3)^2(1 + C_4)^2 + 2 = 10 C^2 \left(1 + 2\left(\frac{8 m}{\min\{\nu/2,1\}}\right)^{1/2}\right)^2\left( c_{\nu}^{\frac{1}{2}}\frac{\bar{r}}{\Al}(2 m \Au)^{\frac{\nu}{2}} + 3\right)^2 + 2,
\end{align}
and $e^{\frac{v^2}{32} +1} \geq e^{\frac{4}{32} +1} >3$.

Next, we consider $\cup_{s=1}^S \cG_s$. For any $c>0$, the union bound implies that
\begin{align}\label{eq: union-Gs1}
    & \P\left(\sup_{g\in \cup_{s=1}^S \cG_s}\frac{\left|\frac{1}{T-t}\sum_{j=t+1}^{T} g(\ba_j,r_j) - \E_{(\ba, r)\sim F}g(\ba,r)\right|}{\|g\|_F}\geq c\right)\nonumber\\
    \leq & \sum_{s=1}^S \P\left(\sup_{g\in \cG_s}\frac{\left|\frac{1}{T-t}\sum_{j=t+1}^{T} g(\ba_j,r_j) - \E_{(\ba, r)\sim F}g(\ba,r)\right|}{\|g\|_F}\geq c\right)\nonumber\\
    \leq & \sum_{s=1}^S \P\left(\sup_{g\in \cG_s}\left|\frac{1}{T-t}\sum_{j=t+1}^{T} g(\ba_j,r_j) - \E_{(\ba, r)\sim F}g(\ba,r)\right|\geq 2^s\delta_{T-t}c\right),
\end{align}
where the second inequality is because for any $g\in \cG_s$, $\|g\|_F\geq 2^s\delta_{T-t}$, $s=1,\ldots,S$. In the following, we bound the tail probability 
\begin{align*}
    \P\left(\sup_{g\in \cG_s}\left|\frac{1}{T-t}\sum_{j=t+1}^{T} g(\ba_j,r_j) - \E_{(\ba, r)\sim F}g(\ba,r)\right|\geq 2^s\delta_{T-t}c\right)
\end{align*}
for each $\cG_s$, $s=1,\ldots,S$.

Before applying Lemma \ref{lem:helper-concentration-geer}, we first check the conditions \eqref{eq: Thm511eq1} - \eqref{eq: Thm511eq4} hold. Take $n = T- t, K_s = 1, R_s = 2^{s + 1} \delta_{T- t} = 2^{s+ 1} \sqrt{\frac{\log(T - t)}{T - t}}.$ Recall we have set constant $v$ as in \eqref{eq_pf_concentra_vdef}, and we take $a = v 2^{s + 1} \frac{\log(T - t)}{\sqrt{T - t}}$, $C_0 = \frac{v}{2(1 + C_3)(1 + C_4)}$ and $C_1 = 2 v.$ Within the function class $\cG_s$, we have $\sup_{g \in \cG_s} \|g\|_{\infty}\leq \sup_{g \in \cG} \|g\|_{\infty} \leq K_s = 1$ and $\sup_{g \in \cG_s} \|g\|_{F} \leq R_s.$ Next we check conditions \eqref{eq: Thm511eq1}-\eqref{eq: Thm511eq4}. Condition \eqref{eq: Thm511eq1} is $a \leq C_1 \sqrt{n} R_s^2/(2 K_s).$ We have that 
\[
C_1 \frac{\sqrt{T - t} R_s^2 }{2 K_s} = v 2^{2(s + 1)} \frac{\log(T - t) }{ \sqrt{T -t}} \geq v 2^{s + 1} \frac{\log(T - t) }{ \sqrt{T -t}} = a,
\]
hence condition \eqref{eq: Thm511eq1} holds true.  For condition \eqref{eq: Thm511eq2}, we need $a \leq 8 \sqrt{2(T - t)} R_s$. We have
\begin{align*}
    8 \sqrt{2(T - t)} R_s =  2^{s + 4}\sqrt{2\log(T - t)} \geq v 2^{s + 1} \frac{\log(T - t)}{\sqrt{T - t}} = a,
\end{align*}
for all $T - t \geq e^{\frac{ v^2}{32} + 1}.$ Hence condition \eqref{eq: Thm511eq2} holds true. Condition \eqref{eq: Thm511eq3} is 
\[
a \geq C_0 \max\left\{\int_{0}^{\sqrt{2}R_s} H_B(u/\sqrt{2}, \cG_s, F)^{1/2} {\rm d} u, \sqrt{2} R_s\right\}.
\]
By our previous calculation as in \eqref{eq_pf_concen_entropybound},
\begin{align*}
 C_0\max& \left\{\int_0^{\sqrt{2}R_s}  H_B(u/\sqrt{2},\cG_s, F)^{1/2}{\rm d}u,\sqrt{2}R_s\right\} 
\leq  \sqrt{2} C_0 \max\left\{C_3 R_s \sqrt{\log\left(1 + \frac{C_4}{R_s}\right)}, R_s\right\},
\end{align*}
when $\delta_{T_t}\leq \frac{\bar{r} C_7}{\Al}$, which automatically holds since $\frac{\bar{r} C_7}{\Al} \geq 2\geq K_s \geq R_s$. For the first term, we have 
\begin{align*}
    \sqrt{2}C_0  C_3 R_s \sqrt{\log\left(1 + \frac{C_4}{R_s}\right)} &=\sqrt{2}C_0  C_3 2^{s + 1}\sqrt{\frac{\log(T - t)}{T - t}} \sqrt{\log\left(1 + C_42^{-s - 1}\sqrt{\frac{T - t}{\log(T - t)}}\right)},\\
    & \leq \sqrt{2}C_0  C_3 2^{s + 1}\sqrt{\frac{\log(T - t)}{T - t}} \sqrt{\log\left((1 + C_4)(T - t)\right)},\\
    & \leq \sqrt{2}C_0  C_3(1 + C_4)2^{s + 1} \frac{\log(T - t)}{\sqrt{T - t}}\\
    & = \frac{\sqrt{2} v}{2(1 + C_3)(1 + C_4)}C_3(1 + C_4)2^{s  +1}\frac{\log(T - t)}{\sqrt{T - t}}\\
    & \leq  v 2^{s  +1}\frac{\log(T - t)}{\sqrt{T - t}} = a.
\end{align*}
For the second term, we have
\begin{align*}
    \sqrt{2} C_0 R_s & = \sqrt{2} C_02^{s + 1} \sqrt{\frac{\log(T - t)}{T- t}}  \leq v 2^{s + 1}\sqrt{\frac{\log(T - t)}{T- t}} \leq  v 2^{s + 1} \frac{\log(T - t)}{\sqrt{T - t}} = a,
\end{align*}
for $ T - t > 3.$ Thus condition \eqref{eq: Thm511eq3} holds true. Finally, for condition \eqref{eq: Thm511eq4} we need to verify that $C_0^2 \geq C^2(C_1 + 1)$, which can be done identically as in \eqref{lem_con_Con511eq41X}. 


With all four conditions checked, Lemma \ref{lem:helper-concentration-geer} implies
\begin{align*}
     &\P\left(\sup_{g\in \cG_s}\sqrt{T-t}\left|\frac{1}{T-t}\sum_{j=t+1}^{T} g(\ba_j,r_j) - \E_{(\ba, r)\sim F}g(\ba,r)\right|\geq a\right)\nonumber\\
    &\leq \quad\quad C\exp\left(-\frac{a^2}{2 C^2(C_1 + 1) R_s^2}\right),\\
    &= \quad\quad C\exp\left(- \frac{\left(10 C^2 (1 + C_3)^2(1 + C_4)^2 + 2\right)^2}{2 C^2 (20 C^2 (1 + C_3)^2(1 + C_4)^2 + 5)}\log(T - t)\right)\\
    &\leq \quad\quad C\exp\left(-2 \log(T - t)\right) = \frac{C}{(T - t)^2}.
\end{align*}
Recall $a = v 2^{s+ 1} \frac{\log(T - t)}{\sqrt{T - t}} = v \sqrt{\log(T - t)} 2^{s + 1} \delta_{T - t}.$ The tail bound can be equivalently written as
\begin{align*}
\P\left(\sup_{g\in \cG_s}\left|\frac{1}{T-t}\sum_{j=t+1}^{T} g(\ba_j,r_j) - \E_{(\ba, r)\sim F}g(\ba,r)\right|\geq v\sqrt{\frac{\log(T  - t)}{T - t}} 2^{s + 1} \delta_{T - t}\right) \ \leq  \frac{C}{(T - t)^2},    
\end{align*}
as long as $T - t \geq \max\left\{e^{\frac{v^2}{32} +1}, 3\right\} = e^{\frac{v^2}{32} +1},$ where $v$ is as in \eqref{eq_pf_concentra_vdef}.
Now we plug the above back into \eqref{eq: union-Gs1} with $c = 2v\sqrt{\frac{\log(T - t)}{T- t}}$, and conclude that for $T - t \geq e^{\frac{v^2}{32} +1},$ it holds true that
\begin{align}\label{eq: gradient-peeling-Gs}
& \P\left(\sup_{g\in \cup_{s=1}^S \cG_s} \frac{\left|\frac{1}{T-t}\sum_{j=t+1}^{T} g(\ba_j,r_j) - \E_{(\ba, r)\sim F}g(\ba,r)\right|}{\|g\|_F}\geq 2v\sqrt{\frac{\log(T  - t)}{T - t}} \right) \NNN\\
&\quad\quad\quad\ \leq  \ \frac{CS}{(T - t)^2}\ \leq \ \frac{2C \log(T - t)}{(T  - t)^2},
\end{align}
where the second inequality is because $S = \lfloor\log(\delta_{T-t}^{-1})/\log 2\rfloor + 1 \leq 2 \log(T - t)$ for $T - t \geq e^{\frac{v^2}{32} +1}.$ 

Combining tail bounds \eqref{eq: gradient-peeling-Gc} and \eqref{eq: gradient-peeling-Gs} with another union bound, we conclude that
\begin{align*}
   & \P\left(\left|\frac{1}{T-t}\sum_{j=t+1}^{T} g(\ba_j,r_j) - \E_{(\ba, r)\sim F}g(\ba,r)\right|\geq 2v\sqrt{\frac{\log(T  - t)}{T - t}}{\|g\|_F}  +  v \frac{\log(T - t)}{T - t}, \forall g\in \cG \right) \NNN\\ 
   = & \ \P\left( {\left|\frac{1}{T-t}\sum_{j=t+1}^{T} g(\ba_j,r_j) - \E_{(\ba, r)\sim F}g(\ba,r)\right|}\geq 2v\sqrt{\frac{\log(T  - t)}{T - t}}{\|g\|_F}  +  v \frac{\log(T - t)}{T - t}, \forall g\in (\cup_{s=1}^S \cG_s)\cup \cG_c \right) \NNN\\ 
   \leq & \ \P\left({\left|\frac{1}{T-t}\sum_{j=t+1}^{T} g(\ba_j,r_j) - \E_{(\ba, r)\sim F}g(\ba,r)\right|}\geq 2v\sqrt{\frac{\log(T  - t)}{T - t}}{\|g\|_F}  +  v \frac{\log(T - t)}{T - t}, \forall g\in \cG_c\right) \NNN\\
   & \quad \quad \quad \quad + \ P\left({\left|\frac{1}{T-t}\sum_{j=t+1}^{T} g(\ba_j,r_j) - \E_{(\ba, r)\sim F}g(\ba,r)\right|}\geq 2v\sqrt{\frac{\log(T  - t)}{T - t}}{\|g\|_F}  +  v \frac{\log(T - t)}{T - t}, \forall g\in \cup_{s=1}^S \cG_s \right) \NNN\\
    & \ \leq \ \P\left(\sup_{g \in  \cG_c } \left|\frac{1}{T-t}\sum_{j=t+1}^{T} g(\ba_j,r_j) - \E_{(\ba, r)\sim F}g(\ba,r)\right| \geq v\sqrt{ \frac{\log(T - t)}{T - t}} \right)\\
    & \quad \quad \quad \quad \quad \quad \quad \quad + P\left(\sup_{g\in \cup_{s=1}^S \cG_s} \frac{\left|\frac{1}{T-t}\sum_{j=t+1}^{T} g(\ba_j,r_j) - \E_{(\ba, r)\sim F}g(\ba,r)\right|}{\|g\|_F}\geq 2v\sqrt{\frac{\log(T  - t)}{T - t}} \right) \\
    & \ \leq \   \frac{C}{(T - t)^2} +  \frac{2C\log(T - t)}{(T - t)^2} \ = \ \frac{3C\log(T - t)}{(T - t)^2},
\end{align*}
for $T - t \geq e^{\frac{v^2}{32} +1}$. Consequently, we conclude that
\begin{align}\label{eq: concetration-gradient-ultimate}
    {\left|\frac{1}{T-t}\sum_{j=t+1}^{T} g(\ba_j,r_j) - \E_{(\ba, r)\sim F}g(\ba,r)\right|} \leq \ 2v\sqrt{\frac{\log(T  - t)}{T - t}} \left(\|g\|_F  +  \sqrt{\frac{\log(T  - t)}{T - t}}\right), \forall g\in \cG
\end{align}
with probability at least $1 - \frac{3 C \log(T - t)}{(T -t)^2}$ for $T - t \geq e^{\frac{v^2}{32} +1}$. Note that 
\begin{align*}
g(\ba, r) = \frac{\Al}{4 m \bar r \Au}\bigg(\ba^\top(\lamb-\lama) \II_{\left\{r > \ba^\top \lama\right\}}\bigg) \in \cG
\end{align*}
with $\bp_4 = \lamb$ and $\bp_j = \lama$ for $j = 1,2,3.$ Also, 
\begin{align*}
&\|\ba^{\top} \left(\lamb - \lama\right)\II_{\{r > \ba^{\top} \lama\}}\|_F = \left(\E_{(\ba, r)\sim F}\left(\ba^{\top} \left(\lamb - \lama\right)\II_{\{r > \ba^{\top} \lama\}}\right)^2\right)^{\frac{1}{2}}\\
& \ \ \ \ \ \ \ = \ \sqrt{\E_{\ba \sim F^{\ba}}\left\{(\ba^{\top}(\lamb- \lama))^2(1 - F^r_{\ba}(\ba^{\top}\lama))\right\}}.   
\end{align*}
The desired result follows from \eqref{eq: concetration-gradient-ultimate} with
\[
\tilde C = 2\left(\frac{4 m \bar r \Au}{\Al}+ 1\right) \left(10 C^2 \left(1 + 2\left(\frac{8 m}{\min\{\nu/2,1\}}\right)^{1/2}\right)^2\left( c_{\nu}^{\frac{1}{2}}\frac{\bar{r}}{\Al}(2 m \Au)^{\frac{\nu}{2}} + 3\right)^2 + 2\right).
\]
\end{proof}

\begin{proof}[Proof of Lemma \ref{lemma:concentration_int}]
 The proof of Lemma \ref{lemma:concentration_int} is similar to the proof of Lemma \ref{lemma: concentration_gradient}. To start with, define the function class parameterized by $\bp = \bp_j \in \Omega,\  j = 1, \dots, 5$:
\begin{align*}
    \cG = \bigg\{g: g(\ba,r) = \frac{\Al}{4m\Au\bar r}\left( \int_{\ba^\top\bp_1}^{\ba^\top\bp_2}\II_{\{r > u\}} {\rm d}u - \left(\ba^\top\bp_3 - \ba^\top\bp_4\right)\II_{\left\{r > \ba^\top  \bp_5\right\}}\right),\ \bp_j \in \Omega, j = 1,\dots,5 \bigg\}.
\end{align*}
One can verify that $\|g\|_{\infty} = \|g\|_{L_{\infty}(\cS)} \leq 1$. Again we approximate $\cG$ by its $\delta$-net. More precisely, for any $\delta > 0$, we consider the grid points set $\Omega_0 \subset [0,\frac{\bar r}{\Al}]^{5 m}$, where each element of each point $\bq = (\bq^{(j)})_{j = 1,\dots,5} \in \Omega_0$ has the form $k\delta$, with $k = 0,1,...,\lfloor\frac{\Al}{\delta\bar r}\rfloor + 1$, and $\bq^{(1)}, \bq^{(2)},..., \bq^{(5)} \in \Omega$. Thus, we have $\mathbf{card}(\Omega_0) \leq (2 + \frac{\bar r}{\delta \Al })^{5 m}$. Let $N_0 = (2 + \frac{\bar r}{\delta\Al})^{5 m}$. Let 
\begin{align*}
     \cG = \bigg\{g: g(\ba,r) = \frac{\Al}{4m\Au\bar r}\left( \int_{\ba^\top\bq^{(1)}}^{\ba^\top\bq^{(2)}}\II_{\{r > u\}} {\rm d}u - \left(\ba^\top\bq^{(3)} - \ba^\top\bq^{(4)}\right)\II_{\left\{r > \ba^\top  \bq^{(5)}\right\}}\right),\ \bq^{(j)} \in \Omega_0, j = 1,\dots,5 \bigg\}.
\end{align*}
For a fixed $g \in \cG$ defined by $\bp$, let's denote by $\bq_U$ and $\bq_L$ its upper and lower $\delta$-approximation in $\cG_0$, namely, $\max\{\bq_U^{(j)} - \delta \mathbf{1},\mathbf{0}\} \leq \bp_j < \bq_U^{(j)}$ and $\bq_L^{(j)} < \bp_j\leq \bq_L^{(j)} +\delta \mathbf{1}$ for $j = 1, \dots, 5$ where the maximum is taken elementwisely. Then by definition $\|\bq_U - \bq_L\|_{\infty} \leq 2 \delta.$
Let 
\begin{align*}
    g_U(\ba,r) &= \frac{\Al}{4m\Au\bar r}\left( \int_{\ba^\top\bq_L^{(1)}}^{\ba^\top\bq_U^{(2)}}\II_{\{r > u\}} {\rm d}u - \left(\ba^\top\bq_L^{(3)} - \ba^\top\bq_U^{(4)}\right)\II_{\left\{r > \ba^\top  \bq_U^{(5)}\right\}}\right),\\
    g_L(\ba,r) &= \frac{\Al}{4m\Au\bar r}\left( \int_{\ba^\top\bq_U^{(1)}}^{\ba^\top\bq_L^{(2)}}\II_{\{r > u\}} {\rm d}u - \left(\ba^\top\bq_U^{(3)} - \ba^\top\bq_L^{(4)}\right)\II_{\left\{r > \ba^\top  \bq_L^{(5)}\right\}}\right).
\end{align*}
Then one can check that $g_L(\ba,r)\leq g(\ba,r)\leq g_U(\ba,r)$ for all $(\ba, r) \in \cS$, since $\ba$ is non-negative, and $\bq_L$ and $\bq_U$ satisfy the element-wise bound. We next bound $\|g_U - g_L\|_F$ by
\begin{align*} 
    \|g_U - g_L\|^2_F &= \ \E_{(\ba, r) \sim F} (g_U(\ba,r) - g_L(\ba,r))^2 \nonumber\\
    \leq & \left(\frac{\Al}{4m\Au\bar r}\right)^2 \E_{(\ba, r) \sim F} \bigg(\int_{\ba^\top\bq_L^{(1)}}^{\ba^\top\bq_U^{(2)}}\II_{\{r > u\}} {\rm d}u - \int_{\ba^\top\bq_U^{(1)}}^{\ba^\top\bq_L^{(2)}}\II_{\{r > u\}} {\rm d}u \\
    & \ \ \ \ \ \ \ \ \ \  \ \ \ \ \ \ \ \ \ \ \ \ \ \ \  +\ \left(\ba^\top\bq_U^{(3)} - \ba^\top\bq_L^{(4)}\right)\II_{\left\{r > \ba^\top  \bq_L^{(5)}\right\}} - \left(\ba^\top\bq_L^{(3)} - \ba^\top\bq_U^{(4)}\right)\II_{\left\{r > \ba^\top  \bq_U^{(5)}\right\}}\bigg)^2\\
    & \leq \ \left(\frac{\Al}{4m\Au\bar r}\right)^2 \E_{(\ba, r) \sim F} \bigg( \sum_{j = 1}^4\left(\ba^\top\bq_U^{(j)} - \ba^\top\bq_L^{(j)}\right) \\
    & \ \ \ \ \ \ \ \ \ \  \ \ \ \ \ \ \ \ \ \ \ \ \ \ \  +  \ \max\left(\ba^\top\bq_U^{(3)}, \ba^\top\bq_L^{(4)}\right)\left|\II_{\left\{r > \ba^\top  \bq_L^{(5)}\right\}} - \II_{\left\{r > \ba^\top  \bq_U^{(5)}\right\}}\right| \bigg)^2\\
     & \leq \ \left(\frac{\Al}{4m\Au\bar r}\right)^2 \E_{(\ba, r) \sim F} \bigg( 8 m \Au  \delta +  \frac{m \Au \bar r}{\Al}\left|\II_{\left\{r > \ba^\top  \bq_L^{(5)}\right\}} - \II_{\left\{r > \ba^\top  \bq_U^{(5)}\right\}}\right|\bigg)^2\\
     & \leq \ 2 \left(\frac{\Al}{4m\Au\bar r}\right)^2 \E_{(\ba, r) \sim F} \bigg( 64 m^2 {\Au}^2  {\delta}^2 +  \left(\frac{m \Au \bar r}{\Al}\right)^2\left|\II_{\left\{r > \ba^\top  \bq_L^{(5)}\right\}} - \II_{\left\{r > \ba^\top  \bq_U^{(5)}\right\}}\right|\bigg)\\
     & \leq \ 2\left(\frac{\Al}{4m\Au\bar r}\right)^2 \bigg( 64 m^2 {\Au}^2  {\delta}^2 +  \left(\frac{m \Au \bar r}{\Al}\right)^2\E_{\ba \sim F^{\ba}}\left|F^r_{\ba}(\ba^\top  \bq_L^{(5)}) - F^r_{\ba}(\ba^\top  \bq_U^{(5)}) \right|\bigg)\\
     & \leq \ 2\left(\frac{\Al}{4m\Au\bar r}\right)^2 \bigg( 64 m^2 {\Au}^2  {\delta}^2 +  \left(\frac{m \Au \bar r}{\Al}\right)^2c_{\nu}\E_{\ba \sim F^{\ba}}\left|\ba^\top  \bq_L^{(5)} - \ba^\top  \bq_U^{(5)} \right|^{\nu}\bigg)\\&\ \ \ \ \ \ \ \ \ \ \ \ \ \ \ \ \ \ \ \ \ \ \ \ \ \ \ \ \ \ \ \ \ \ \ \ \ \ \ \ \ \ \ \ \ \ \ \ \ \ \ \ \ \ \ \ \ \ \ \ \ \ \ \ \ \ \ \ \textrm{(under either assumptions)}\\
      & \leq \ 8  \bigg( \left(\frac{\Al \delta}{\bar r} \right)^2 +  c_{\nu}\left( 2 m \Au \delta \right)^{\nu }\bigg) \ \leq \ 8 \left(\frac{2\Al}{\bar r} +c_{\nu}^{\frac{1}{2}} \left( 2 m \Au\right)^{\frac{\nu}{2}}\right)^2\delta^{\min\{2, \nu \}}.
\end{align*}
As a result we have $\|g_U - g_L\|_F \leq 4 \left(\frac{2\Al}{\bar r} + c_{\nu}^{\frac{1}{2}}\left( 2 m \Au\right)^{\frac{\nu}{2}}\right)\delta^{\min\{1, \frac{\nu}{2} \}}$. Now similar to the proof of Lemma \ref{lemma: concentration_gradient}, we have 
\begin{align*}
    \int_0^{\delta} H_B(u, \cG, F)^{1/2} {\rm d} u \ \leq \ C'_3 \delta \sqrt{\log\left(1 + \frac{C'_4}{\delta}\right)}, 
\end{align*}
where $C'_3 = 2\left(\frac{10 m}{\min\{\nu/2, 1\}}\right)^{1/2}$ and $C'_4 = 4 \left(2 + c_{\nu}^{\frac{1}{2}}\frac{\bar r}{\Al}\left( 2 m \Au\right)^{\frac{\nu}{2}}\right)$. 

We notice that both functions
\begin{align*}
    &\int_{\ba^\top\lama}^{\ba^\top\lamb}(\II_{\left\{r > u\right\}} - \II_{\left\{r > \ba^\top  \lamb\right\}}){\rm d}u, \qquad \int_{\ba^\top\lamb}^{\ba^\top\lama}(\II_{\left\{r > u\right\}} - \II_{\left\{r > \ba^\top  \lamb\right\}}){\rm d}u
\end{align*}
are in $\cG$. The rest of the proof is identical to that of Lemma \ref{lemma: concentration_gradient}, and we omit the details.
\end{proof}

\section{Auxiliary Lemmas and Their Proof}\label{appendix:helper and auxiliary}
This section contains all auxiliary lemmas that will be used in constructing our ultimate proof of the {\sf CE} regret analysis. In \Cref{appendix:mklp-helper} we state and prove several results in LP theory. In \Cref{appendix:concentration-helper}, we state and prove classic concentration results from the empirical processes theory which is used to construct our uniform concentration bounds via peeling device. \Cref{appendix: other-helper} contains all other auxiliary lemmas that are used throughout the paper. 
\subsection{Properties of the Multi-Knapsack LP}\label{appendix:mklp-helper}
Recall that the multi-knapsack LP takes the following form
\begin{align}
    \max &\quad  \sum_{j = t + 1}^T r_j x_j \label{program: appendix primal LP} \tag{$\cP$}\\
    \mbox{s.t.} & \quad \sum_{j = t + 1}^T a_{i j} x_j \leq c_{i}, \quad i = 1, \dots, m,\NNN
    \\&\quad x_j \in [0, 1], \quad j = t + 1,\dots, T,\NNN
\end{align}
and
\begin{align}
    \min &\quad \sum_{i = 1}^m c_i \lambda_i + \sum_{j = t + 1}^T \xi_j \label{program: appendix dual LP original} \tag{$\cD$-1}\\
    \mbox{s.t.} & \quad \sum_{i = 1}^m a_{i j} \lambda_i + \xi_j \geq r_{j}, \quad j = t + 1, \dots, T,\NNN
    \\&\quad \lambda_i, \xi_j \geq 0, \quad j = t + 1,\dots, T;\quad i = 1, \dots, m, \NNN
\end{align}

We call Problem (\ref{program: appendix primal LP}) the primal multi-knapsack LP and Problem (\ref{program: appendix dual LP original}) the dual multi-knapsack LP that is specified by coefficients $t, \bc$ and $\cI_t = \{(\ba_j, r_j)\}_{j = t + 1}^T$. Denote by $V^{{\rm off}}_{t, \bc}(\cI_t)$ their optimal value, which by strong duality coincides. \cite{li2022online} observes that the dual problem has the following alternative, simple form.
\begin{lemma}\label{lem: helper-dual LP simplified}
    The dual problem (\ref{program: appendix dual LP original}) has the alternative, simple form
    \begin{align}\label{program: appendix dual LP simplified}
        \min_{\blambda \in \RR^m_{\geq 0}} \bc^{\top} \blambda + \sum_{j = t + 1}^T \left(r_j - \ba^{\top}_j \blambda \right)^+. \tag{$\cD$-2}
    \end{align}
\end{lemma}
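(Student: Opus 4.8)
The plan is to obtain \eqref{program: appendix dual LP simplified} directly from linear programming duality applied to the primal multi-knapsack LP \eqref{program: appendix primal LP}, followed by an explicit partial minimization over the slack variables $\xi_j$. This is a standard observation (due to \cite{li2022online}), so the proof is short; I would only be careful about sign conventions when forming the dual.

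First I would put \eqref{program: appendix primal LP} in standard form: it has decision variables $x_{t+1},\dots,x_T$, the $m$ resource constraints $\sum_{j=t+1}^T a_{ij}x_j \le c_i$, and the box constraints $0 \le x_j \le 1$. Assigning a dual multiplier $\lambda_i \ge 0$ to each resource constraint and a dual multiplier $\xi_j \ge 0$ to each upper-bound constraint $x_j \le 1$ (the constraints $x_j \ge 0$ only enforce sign conditions on the reduced costs and add no new dual variables), the LP dual of \eqref{program: appendix primal LP} is exactly the problem \eqref{program: appendix dual LP original}:
\[
\min_{\blambda,\bxi \ge \mathbf 0}\ \sum_{i=1}^m c_i\lambda_i + \sum_{j=t+1}^T \xi_j \quad \text{s.t.}\quad \sum_{i=1}^m a_{ij}\lambda_i + \xi_j \ge r_j,\ \ j = t+1,\dots,T.
\]
Since $\bx = \mathbf 0$ is feasible for \eqref{program: appendix primal LP} and its objective is bounded above (each $x_j \le 1$ and each $r_j$ is finite), strong duality holds with attainment on both sides, and $V^{\rm off}_{t,\bc}(\cI_t)$ equals the common optimal value.

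Next I would eliminate the variables $\xi_j$. For any fixed $\blambda \ge \mathbf 0$, the constraints on $\bxi$ decouple across $j$: each $\xi_j$ must satisfy $\xi_j \ge 0$ and $\xi_j \ge r_j - \ba_j^\top\blambda$, and it enters the objective with coefficient $1$, so the minimum over $\xi_j$ is attained at $\xi_j^\star = \max\{0,\ r_j - \ba_j^\top\blambda\} = (r_j - \ba_j^\top\blambda)^+$, contributing $(r_j - \ba_j^\top\blambda)^+$. Substituting $\xi_j^\star$ back and using $\min_{\blambda,\bxi} = \min_{\blambda}\min_{\bxi}$, the dual \eqref{program: appendix dual LP original} reduces to
\[
\min_{\blambda \in \RR^m_{\ge 0}}\ \bc^\top\blambda + \sum_{j=t+1}^T (r_j - \ba_j^\top\blambda)^+,
\]
which is precisely \eqref{program: appendix dual LP simplified}. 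There is essentially no obstacle: the only steps needing a line of justification are the correct identification of the LP dual of \eqref{program: appendix primal LP} (routine once it is in standard form) and the interchange of the joint minimization over $(\blambda,\bxi)$ with the inner minimization over $\bxi$ (immediate, since it is a single minimization). I would also note for later use that the argmin over $\blambda$ is unaffected by whether one writes the dual as \eqref{program: appendix dual LP original} or \eqref{program: appendix dual LP simplified}.
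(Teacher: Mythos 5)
Your proof is correct and uses the same argument as the paper: for fixed $\blambda$ the optimal choice is $\xi_j^\star = (r_j - \ba_j^\top\blambda)^+$, and substituting this back eliminates the $\xi_j$ and yields \eqref{program: appendix dual LP simplified}. The extra material on deriving \eqref{program: appendix dual LP original} as the LP dual of \eqref{program: appendix primal LP} is fine but not needed for this lemma, which only asserts the equivalence of the two dual formulations.
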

\begin{proof}
    Observe that to reach optimality, we should set $\xi_j = 0$ if $\sum_{i = 1}^m a_{i j} \lambda_i \geq r_{j}$ and $\xi_j = r_j - \sum_{i = 1}^m a_{i j} \lambda_i$ otherwise, which is the same as $\xi_j = (r_j - \sum_{i = 1}^m a_{i j} \lambda_i)^+.$ Plugging into the objective function in Problem (\ref{program: appendix dual LP original}) and eliminating the variables $\xi_j$ completes the proof.
\end{proof}
\begin{lemma}[Complementary Slackness]\label{lem: helper-complementary slackness}
    Suppose $\bx^*$ and $\blambda^*$ are optimal solutions to Problem (\ref{program: appendix primal LP}) and Problem (\ref{program: appendix dual LP simplified}), respectively. Then they satisfy the complementary slackness property 
    \begin{align*}
        r_j >  \ba^\top_j \blambda\quad &\Rightarrow \quad x^*_j = 1, \\
        r_j <  \ba^\top_j \blambda \quad &\Rightarrow \quad x^*_j = 0, \mbox{ for all $j = t+1, \dots, T.$}
    \end{align*}
\end{lemma}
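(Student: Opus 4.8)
The plan is to reduce the claim to the textbook complementary slackness theorem for a primal–dual pair of linear programs, applied to Problem (\ref{program: appendix primal LP}) together with its dual in the \emph{un-simplified} form (\ref{program: appendix dual LP original}), and then to translate the resulting conditions through the substitution $\xi_j = (r_j - \ba_j^\top\blambda)^+$ that was used in the proof of Lemma \ref{lem: helper-dual LP simplified}.

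First I would record that (\ref{program: appendix dual LP original}) is literally the LP dual of (\ref{program: appendix primal LP}): assigning a multiplier $\lambda_i \ge 0$ to the $i$-th resource constraint $\sum_{j} a_{ij} x_j \le c_i$ and a multiplier $\xi_j \ge 0$ to the box constraint $x_j \le 1$ yields exactly (\ref{program: appendix dual LP original}). Next, given an optimal $\blambda^*$ of the simplified dual (\ref{program: appendix dual LP simplified}), I set $\xi_j^* := (r_j - \ba_j^\top\blambda^*)^+$ for $j = t+1,\dots,T$ and verify, exactly as in the proof of Lemma \ref{lem: helper-dual LP simplified}, that $(\blambda^*, \bxi^*)$ is feasible for (\ref{program: appendix dual LP original}) with objective value $\bc^\top\blambda^* + \sum_{j=t+1}^T (r_j - \ba_j^\top\blambda^*)^+ = V^{{\rm off}}_{t,\bc}(\cI_t)$; by strong duality it is therefore an optimal solution of (\ref{program: appendix dual LP original}). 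Consequently $(\bx^*, (\blambda^*, \bxi^*))$ is a primal–dual optimal pair, so all complementary slackness conditions hold termwise; in particular $\xi_j^* > 0$ forces $x_j^* = 1$, and $x_j^* > 0$ forces $\sum_{i} a_{ij}\lambda_i^* + \xi_j^* = r_j$.

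From these two implications the statement follows at once. If $r_j > \ba_j^\top\blambda^*$, then $\xi_j^* = r_j - \ba_j^\top\blambda^* > 0$, hence $x_j^* = 1$. If instead $r_j < \ba_j^\top\blambda^*$ but $x_j^* > 0$ held, then complementarity of the $j$-th dual constraint would give $\ba_j^\top\blambda^* + \xi_j^* = r_j$, i.e.\ $\xi_j^* = r_j - \ba_j^\top\blambda^* < 0$, contradicting $\xi_j^* \ge 0$; hence $x_j^* = 0$. This yields both claimed implications.

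There is no genuine obstacle here; the argument is a routine invocation of LP duality. The only point that warrants a little care is the bookkeeping around the box-constraint multiplier $\xi_j$ associated with $x_j \le 1$, which disappears in passing to the simplified dual (\ref{program: appendix dual LP simplified}): one must confirm that reinstating it via $\xi_j^* = (r_j - \ba_j^\top\blambda^*)^+$ produces an \emph{optimal} — not merely feasible — dual point of (\ref{program: appendix dual LP original}), which is precisely what the proof of Lemma \ref{lem: helper-dual LP simplified} already establishes.
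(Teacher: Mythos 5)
Your proposal is correct and follows essentially the same route as the paper's proof: pass to the un-simplified dual (\ref{program: appendix dual LP original}), recover a dual-optimal pair $(\blambda^*,\bxi^*)$ with $\xi_j^*=(r_j-\ba_j^\top\blambda^*)^+$, and read off the two implications from the standard termwise complementary slackness conditions. The only cosmetic difference is that you make explicit the verification that this lifted point is dual-optimal, which the paper leaves implicit.
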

\begin{proof}
    Suppose $(\blambda^*, \bxi^*)$ is an optimal solution to Problem \eqref{program: appendix dual LP original}. By LP complementary slackness, we have for $i = 1, \dots, m$ and $j = t+1, \dots, T$,
    \begin{align*}
        & (1 - x^*_j)\xi^*_j = 0,\\
        &  \left(r_j - \sum_{i = 1}^m a_{i j} \lambda^*_i - \xi^*_j\right)x^*_j = 0,\\
        & \left(c_i - \sum_{j = t + 1}^T a_{i j} x_j \right)\lambda^*_i = 0.
    \end{align*}
If for some $j$ we have $r_j >  \ba^\top_j \blambda$, then by the dual constraint of Problem (\ref{program: appendix dual LP original}), we have $\xi^*_j \geq r_j -  \ba^\top_j \blambda > 0.$ Then by the first complementary slackness equality, we must have $1 - x^*_j = 0$, completing the proof of the first statement. 

Now if on the other hand, if we have $r_j <  \ba^\top_j \blambda$, then since $\xi^*_j \geq 0$, we have $r_j - \sum_{i = 1}^m a_{i j} \lambda^*_i - \xi^*_j \leq r_j - \sum_{i = 1}^m a_{i j} \lambda^*_i  < 0$. By the second complementary slackness equality, we must have $x^*_j = 0$, again completing the proof of the second statement. 
\end{proof}
\begin{lemma}\label{lem: helper-non-integer}
Suppose $\bx^*$ is an optimal basic feasible solution to Problem (\ref{program: appendix primal LP}), then $\bx^*$ has at most $m$ basic variables that are fractional.
\end{lemma}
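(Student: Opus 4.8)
The plan is to invoke the standard structure theory of basic feasible solutions for an LP in inequality form. Problem (\ref{program: appendix primal LP}) has $n \triangleq T - t$ decision variables $x_{t+1},\dots,x_T$ and is subject to $m$ knapsack constraints $\sum_j a_{ij}x_j \le c_i$ together with the $2n$ box constraints $x_j \ge 0$ and $x_j \le 1$. First I would recall that at any basic feasible solution $\bx^\star$, the number of linearly independent \emph{active} (tight) constraints must be at least $n$ (in fact, after selecting a basis, exactly $n$ of them form a nonsingular system determining $\bx^\star$). Each coordinate $j$ for which $x_j^\star \in \{0,1\}$ contributes one active box constraint; call the set of such coordinates $B_0 \subseteq \{t+1,\dots,T\}$, with $|B_0| = n - k$ where $k$ is the number of fractional coordinates. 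The remaining $k$ coordinates have $0 < x_j^\star < 1$, so neither of their box constraints is active, and the only other constraints available to be active are the $m$ resource constraints.

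The key step is the counting argument: the active constraints at $\bx^\star$ come only from (i) the $n-k$ active box constraints indexed by $B_0$, and (ii) at most $m$ of the resource constraints. Hence the total number of active constraints is at most $(n-k) + m$. Since a basic feasible solution requires at least $n$ linearly independent active constraints, we need $(n-k) + m \ge n$, i.e. $k \le m$. I would phrase this cleanly by noting that the $n-k$ active box constraints are automatically linearly independent of each other (each involves a distinct coordinate), and then any set of $n$ linearly independent active constraints must include at least $n - (n-k) = k$ resource constraints, forcing $k \le m$.

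The only mild subtlety — and the step I would be most careful about — is making the notion of ``basic feasible solution'' for this particular formulation precise, since the LP is written with inequality constraints and bounded variables rather than in standard equality form. I would either (a) convert to standard form by adding slack variables $s_i$ for the resource rows and $u_j, v_j$ for the two box constraints, observe that a BFS of the standard form has at most (number of rows) $= m + n$ many nonzero variables out of $m + 3n$ total — wait, this over-counts — so more cleanly (b) appeal directly to the characterization that $\bx^\star$ is a vertex of the feasible polyhedron iff the active constraint gradients span $\RR^n$, which is the version used implicitly throughout the LP literature cited (\cite{bertsimas1997introduction}, \cite{sierksma2001linear}). Route (b) makes the argument above immediate and avoids bookkeeping with slacks. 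I expect no genuine obstacle here; the whole proof is a two-line dimension count once the vertex characterization is invoked, and I would keep it correspondingly short.
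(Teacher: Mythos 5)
Your proposal is correct. Note, though, that the paper does not actually prove this lemma---it simply cites Lemma 2.1 of \cite{meanti1990probabilistic}---so your argument is a genuine addition rather than a reproduction: you supply the standard self-contained dimension count that underlies the cited result. The count itself is airtight: a vertex of the feasible polytope in $\RR^{n}$ (with $n = T-t$) requires $n$ linearly independent active constraints; each of the $n-k$ non-fractional coordinates contributes exactly one active box constraint (never two, since $x_j=0$ and $x_j=1$ cannot both hold), each fractional coordinate contributes none, and only $m$ resource constraints exist, so $(n-k)+m \ge n$ forces $k \le m$. Your choice of route (b)---the active-constraint characterization of vertices from \cite{bertsimas1997introduction} rather than conversion to standard equality form---is the right call; it avoids exactly the slack-variable bookkeeping you flagged, and the observation that the active box constraints are automatically linearly independent (distinct coordinates) closes the only point where linear independence could be questioned. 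No gap.
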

\begin{proof}
    Refer to \cite{meanti1990probabilistic} Lemma 2.1.
\end{proof}
\begin{lemma}\label{lem: helper-offline-opt-induction}
Suppose $\{x^*_j\}_{j = t}^T$ is an optimal solution to the primal multi-knapsack LP specified by $t - 1, \bb$ and $\cI_{t-1}$, then
\[
V^{\rm off}_{t-1, \bb}(\cI_{t-1}) = r_t x^*_t +  V^{\rm off}_{t, \bb - \ba_t x^*_t}(\cI_{t}) = \max_{x \in [0, 1]} \left(r_t x +  V^{\rm off}_{t, \bb - \ba_t x}(\cI_{t})\right).
\]
\end{lemma}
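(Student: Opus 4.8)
The plan is to establish the identity by the standard principle-of-optimality argument for linear programs: prove the inequality $V^{\rm off}_{t-1,\bb}(\cI_{t-1}) \le \max_{x\in[0,1]}\big(r_t x + V^{\rm off}_{t,\bb-\ba_t x}(\cI_t)\big)$ and its reverse separately, and then use the optimality of the given $\{x^*_j\}_{j=t}^T$ to identify the value of $x$ attaining the maximum. Throughout I adopt the convention that $V^{\rm off}_{t,\bc}(\cI_t) = -\infty$ whenever some component of $\bc$ is negative (the feasible region of the corresponding multi-knapsack LP being empty), so that the maximum over $x\in[0,1]$ is well posed; the point is that the values of $x$ for which $\bb-\ba_t x\not\ge\bm{0}$ never contribute to the maximum and are automatically excluded by feasibility of the original LP.

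First I would prove the ``$\le$'' direction. Let $\{x_j\}_{j=t}^T$ be any feasible solution of the primal multi-knapsack LP over periods $t,\dots,T$ with budget $\bb$. Since every $a_{ij}x_j\ge 0$, the $i$-th resource constraint forces $a_{it}x_t\le b_i$, hence $\bb-\ba_t x_t\ge\bm{0}$; moreover the tail $\{x_j\}_{j=t+1}^T$ satisfies $\sum_{j=t+1}^T a_{ij}x_j\le b_i-a_{it}x_t$ and is therefore feasible for the LP over periods $t+1,\dots,T$ with budget $\bb-\ba_t x_t$. Consequently $\sum_{j=t}^T r_j x_j = r_t x_t + \sum_{j=t+1}^T r_j x_j \le r_t x_t + V^{\rm off}_{t,\bb-\ba_t x_t}(\cI_t) \le \max_{x\in[0,1]}\big(r_t x + V^{\rm off}_{t,\bb-\ba_t x}(\cI_t)\big)$, and taking the supremum over feasible $\{x_j\}$ gives the inequality. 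For the ``$\ge$'' direction, fix any $x\in[0,1]$ with $\ba_t x\le\bb$ element-wise and let $\{x_j\}_{j=t+1}^T$ be optimal for the LP over periods $t+1,\dots,T$ with budget $\bb-\ba_t x\ge\bm{0}$; then $(x,x_{t+1},\dots,x_T)$ is feasible for the LP over periods $t,\dots,T$ with budget $\bb$ because $a_{it}x+\sum_{j=t+1}^T a_{ij}x_j\le a_{it}x+(b_i-a_{it}x)=b_i$. Hence $V^{\rm off}_{t-1,\bb}(\cI_{t-1})\ge r_t x + V^{\rm off}_{t,\bb-\ba_t x}(\cI_t)$, and maximizing over $x$ yields $V^{\rm off}_{t-1,\bb}(\cI_{t-1})=\max_{x\in[0,1]}\big(r_t x + V^{\rm off}_{t,\bb-\ba_t x}(\cI_t)\big)$.

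Finally I would pin down the maximizer using the optimal solution $\{x^*_j\}_{j=t}^T$. As in the ``$\le$'' direction, feasibility of $\bx^*$ forces $\ba_t x^*_t\le\bb$ and makes $\{x^*_j\}_{j=t+1}^T$ feasible for the period-$(t+1)$ LP with budget $\bb-\ba_t x^*_t$, so that
\begin{align*}
V^{\rm off}_{t-1,\bb}(\cI_{t-1}) &= \sum_{j=t}^T r_j x^*_j = r_t x^*_t + \sum_{j=t+1}^T r_j x^*_j \le r_t x^*_t + V^{\rm off}_{t,\bb-\ba_t x^*_t}(\cI_t) \\
&\le \max_{x\in[0,1]}\big(r_t x + V^{\rm off}_{t,\bb-\ba_t x}(\cI_t)\big) = V^{\rm off}_{t-1,\bb}(\cI_{t-1}).
\end{align*}
All inequalities are therefore tight, which gives $V^{\rm off}_{t-1,\bb}(\cI_{t-1}) = r_t x^*_t + V^{\rm off}_{t,\bb-\ba_t x^*_t}(\cI_t)$ (and, as a byproduct, that the tail $\{x^*_j\}_{j=t+1}^T$ is optimal for the period-$(t+1)$ LP). The argument is entirely elementary; the only point demanding care is the convention for $V^{\rm off}$ at budgets with negative components so that the $\max_{x\in[0,1]}$ is meaningful, which is handled by noting that every feasible primal solution automatically satisfies $\ba_t x_t\le\bb$, so nothing is lost by restricting attention to such $x$.
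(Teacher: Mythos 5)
Your proof is correct and follows essentially the same route as the paper's: decompose any feasible solution to bound $V^{\rm off}_{t-1,\bb}$ from above by the max, stitch a period-$t$ decision onto an optimal tail to bound it from below, and sandwich using the given optimal solution $\bx^*$. Your explicit convention for budgets with negative components is a small refinement of care the paper leaves implicit, but the substance is identical.
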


\begin{proof}
 By definition,   $V^{\rm off}_{t-1, \bb}(\cI_{t-1}) = \sum_{j = t}^T r_j x^*_j = r_t x^*_t + \sum_{j = t+ 1}^T r_j x^*_j.$ On the one hand, $\{x^*_j\}_{j = t}^T$ is feasible, concretely, $\sum_{j = t}^T a_{i j} x^*_j \leq c_i, i = 1, \dots, m$, and $x^*_j \in [0, 1], j = t \dots, T.$ From the above we derive  $\sum_{j = t+1}^T a_{i j} x_j^* \leq c_i - a_{i t} x^*_t, i = 1, \dots, m$ and $x^*_j \in [0, 1], j = t + 1 \dots, T.$ In other words, $\{x^*_j\}_{j = t + 1}^T$ is feasible to the LP specified by $t - 1, \bb - \ba_t x^*_t$ and $\cI_{t}$. Thus $\sum_{j = t+ 1}^T r_j x^*_j \leq V^{\rm off}_{t, \bb -\ba_t x^*_t}$ and we conclude that 
 \[
 V^{\rm off}_{t-1, \bb}(\cI_{t-1})  \leq  r_t x^*_t +  V^{\rm off}_{t, \bb - \ba_t x^*_t}(\cI_{t}) \leq \max_{x \in [0, 1]} \left(r_t x +  V^{\rm off}_{t, \bb - \ba_t x}(\cI_{t})\right).
 \]
 On the other hand, consider an arbitrary optimal solution $\{\tilde x^*_j\}_{j = t +1}^T$ to the multi-knapsack LP specified by $t, \bb - \ba_t \tilde x^*_t$ and $\cI_{t}$, where $\tilde x^*_t = \argmax_{x \in [0, 1]} \left(r_t x +  V^{\rm off}_{t, \bb - \ba_t x}(\cI_{t})\right)$. We have $V^{\rm off}_{t, \bb - \ba_t \tilde x^*_t}(\cI_{t}) = \sum_{j = t + 1}^T r_j \tilde x^*_j$. Notice that $(\tilde x^*_t, \tilde x^*_{t + 1}, \dots, \tilde x^*_{T})$ is a feasible solution to the original LP specified by $t -1, \bb$ and $\cI_{t -1}$. We thus have
 \[
 V^{\rm off}_{t-1, \bb}(\cI_{t-1})  \geq  r_t \tilde x^*_t +  V^{\rm off}_{t, \bb - \ba_t \tilde x^*_t}(\cI_{t}) = \max_{x \in [0, 1]} \left(r_t x +  V^{\rm off}_{t, \bb - \ba_t x}(\cI_{t})\right).
 \]
 Combining the inequalities completes the proof.
\end{proof}

\subsection{Concentration and Empirical Processes}\label{appendix:concentration-helper}
We borrow techniques from the empirical processes theory to establish concentration results that will in turn be used in proving our main theorems. 

\begin{lemma}\label{lem:helper-concentration-geer}
    Suppose that $X$ is a random vector following distribution $P$ supported on $\cX$. $\cG$ is a function class such that $\sup_{g\in \cG}\|g\|_{L_\infty(\cX)}\leq K$, $\sup_{g\in \cG}\|g\|^2_{P}\leq R^2$. Take $a,C_0,C_1$ satisfying
    \begin{align}
        a\leq & C_1\sqrt{n}R^2/(2K),\label{eq: Thm511eq1}\\
        a\leq & 8\sqrt{2n}R,\label{eq: Thm511eq2}\\
        a\geq & C_0\max\left\{\int_0^{\sqrt{2}R} H_B(u/\sqrt{2},\cG,P)^{1/2}du,\sqrt{2}R\right\},\label{eq: Thm511eq3}\\
        C_0^2 \geq & C^2(C_1+1),\label{eq: Thm511eq4}
    \end{align}
    where $C$ is a universal constant. Then 
    \begin{align*}
        \P\left(\sup_{g\in \cG}\sqrt{n}\left|\frac{1}{n}\sum_{j=1}^n g(X_i) - \E_{X\sim P}g(X)\right|\geq a\right)\leq C\exp\left(-\frac{a^2}{2C^2(C_1+1)R^2}\right).
    \end{align*}
\end{lemma}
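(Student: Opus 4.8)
The plan is to obtain this as a direct consequence of the bracketing-entropy concentration machinery of \cite{geer2000empirical}: the statement is essentially Theorem~5.11 there, so the cleanest route is to verify that our hypotheses $\sup_{g\in\cG}\|g\|_{L_\infty(\cX)}\leq K$, $\sup_{g\in\cG}\|g\|_P^2\leq R^2$ and the four constraints \eqref{eq: Thm511eq1}--\eqref{eq: Thm511eq4} match its hypotheses, then invoke it with the same universal constant $C$. If instead one wants a self-contained argument, I would proceed by the standard combination of symmetrization, a chaining step built on brackets rather than a covering, and a Bernstein-type tail bound applied at each scale, followed by a union bound over scales and brackets.

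Concretely, I would fix a geometric sequence of resolution levels $\delta_k=2^{-k}\sqrt{2}R$ for $k\geq 0$ and, for each $k$, take a minimal family of $\delta_k/\sqrt{2}$-brackets $\{(g^L_{k,j},g^U_{k,j})\}_j$ covering $\cG$, of cardinality $\exp\!\big(H_B(\delta_k/\sqrt{2},\cG,P)\big)$. For each $g\in\cG$ I would form a telescoping expansion $g=\pi_0 g+\sum_{k\geq 1}(\pi_k g-\pi_{k-1}g)+(g-\pi_K g)$, where $\pi_k g$ is the lower endpoint of a chosen level-$k$ bracket containing $g$; since the brackets sandwich $g$, the $L_2(P)$-norm of each increment $\pi_k g-\pi_{k-1}g$ is at most $\delta_{k-1}+\delta_k\leq 2\delta_{k-1}$, and the residual $g-\pi_K g$ is nonnegative with $L_2(P)$-norm at most $\delta_K$, which can be made negligible. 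Applying Bernstein's inequality to the centered empirical mean of each increment function (bounded by $2K$ in sup-norm, variance $\lesssim\delta_{k-1}^2$) and taking a union bound over the $\exp\!\big(H_B(\delta_{k-1}/\sqrt{2},\cG,P)\big)$ increments at level $k$, one allocates a deviation budget $a_k\propto \delta_{k-1}H_B(\delta_{k-1}/\sqrt{2},\cG,P)^{1/2}$ per level; summing and using $\sum_k\delta_{k-1}H_B(\delta_{k-1}/\sqrt{2},\cG,P)^{1/2}\lesssim \int_0^{\sqrt{2}R}H_B(u/\sqrt{2},\cG,P)^{1/2}\,du$, the hypothesis \eqref{eq: Thm511eq3} guarantees the total chaining budget is at most $a$.

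The crucial step — and the one I expect to need the most care — is keeping every level's Bernstein bound inside its sub-Gaussian regime, so that the aggregate tail is Gaussian of the form $C\exp\!\big(-a^2/(2C^2(C_1+1)R^2)\big)$ rather than a weaker exponential tail. This is exactly what \eqref{eq: Thm511eq1} and \eqref{eq: Thm511eq2} provide: $a\leq C_1\sqrt{n}R^2/(2K)$ forces the linear ``$Kt/3$'' term in the Bernstein denominator to be dominated by the variance term $R^2$ uniformly across scales, while $a\leq 8\sqrt{2n}R$ controls the regime boundary so the per-level bounds can be summed. Condition \eqref{eq: Thm511eq4}, $C_0^2\geq C^2(C_1+1)$, is the bookkeeping identity reconciling the chaining constant $C_0$ in \eqref{eq: Thm511eq3} with the universal Bernstein constant $C$ and the regime parameter $C_1$. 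Assembling these, a final union bound over the finitely many chaining scales (finite because of \eqref{eq: Thm511eq3}) together with the symmetrization step to pass from the centered process to the bracket-indexed process yields the claimed inequality, with the exponent's constant being precisely the universal constant carried through from Bernstein's inequality.
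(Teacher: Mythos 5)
Your primary route is exactly the paper's: the lemma is proved by invoking Theorem~5.11 of \cite{geer2000empirical}. The one substantive point you gloss over is that the ``verification that the hypotheses match'' is not a tautology: Theorem~5.11 is stated in terms of the Bernstein-type norm $\rho_K(g)$ and the \emph{generalized} entropy with bracketing $\mathscr{H}_{B,K}$, not the $L_2(P)$ norm and the standard $H_B$ appearing in the lemma. The paper's proof consists precisely of this translation, via Lemmas~5.8 and~5.10 of \cite{geer2000empirical}: taking $K_1=4K$ so that $\sup_g\|g\|_\infty\leq K_1/4$ gives $\rho_{K_1}(g)^2\leq 2\|g\|_P^2$ (hence $R_1=\sqrt{2}R$) and $\mathscr{H}_{B,K_1}(\delta,\cG,P)\leq H_B(\delta/\sqrt{2},\cG,P)$; this is where the various $\sqrt{2}$'s in \eqref{eq: Thm511eq2}--\eqref{eq: Thm511eq3} and the factor $2$ in the exponent $a^2/(2C^2(C_1+1)R^2)$ come from, and one also replaces the lower limit $a/(2^6\sqrt{n})$ of the entropy integral in Theorem~5.11 by $0$, which only strengthens condition \eqref{eq: Thm511eq3}. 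Your second, self-contained sketch (bracketing chaining plus Bernstein at each scale) is the standard proof of Theorem~5.11 itself and is directionally sound, but as written it would not reproduce the specific constants of the statement without the same careful bookkeeping; since the lemma is only used as a black box, the citation route is the right one, provided the norm/entropy translation above is made explicit.
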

\begin{proof}[Proof of Lemma \ref{lem:helper-concentration-geer}] It suffices to show that the conditions of Theorem 5.11 of \cite{geer2000empirical} are satisfied. Following the notation in \cite{geer2000empirical}, we use 
\begin{align*}
    \rho_K(g)^2 = 2K^2\int \left(e^{|g|/K} - 1 - |g|/K\right){\rm d}P,
\end{align*}
and let $\mathscr{H}_{B,K}(\delta,\cG,P)$ be the generalized entropy with bracketing (see Definition 5.1 of \cite{geer2000empirical}). Lemmas 5.8 and 5.10 of \cite{geer2000empirical} imply that if $\sup_{g\in \cG}\|g\|_{L_\infty}\leq K_1/4<K_1/2$, then $\rho_{K_1}(g)^2\leq 2R^2 = R_1^2$ with $R_1 = \sqrt{2}R$, and $\mathscr{H}_{B,K_1}(\delta,\cG,P)\leq H_B(\delta/\sqrt{2},\cG,P)$ for all $\delta>0$. Let $K_1=4K$. Therefore, conditions of Theorem 5.11 of \cite{geer2000empirical} becomes
\begin{align*}
        a\leq & C_1\sqrt{n}R_1^2/K_1 = C_1\sqrt{n}R^2/(2K),\\
        a\leq & 8\sqrt{n}R_1 = 8\sqrt{2n}R,\\
        a\geq & C_0\max\left\{\int_{a/(2^6\sqrt{n})}^{R_1} \mathscr{H}_{B,K_1}(u,\cG,P)^{1/2}du,R_1\right\},\\
        C_0^2 \geq & C^2(C_1+1),
\end{align*}
where the third inequality is implied by
\begin{align*}
    a\geq &C_0\max\left\{\int_0^{\sqrt{2}R} H_B(u/\sqrt{2},\cG,P)^{1/2}du,\sqrt{2}R\right\}
    \geq C_0\max\left\{\int_{a/(2^6\sqrt{n})}^{\sqrt{2}R} H_B(u/\sqrt{2},\cG,P)^{1/2}du,\sqrt{2}R\right\}.
\end{align*}
The results of Theorem 5.11 of \cite{geer2000empirical} becomes
\begin{align*}
        \P\left(\sup_{g\in \cG}\sqrt{n}\left|\frac{1}{n}\sum_{j=1}^n g(X_i) - \E_{X\sim P}g(X)\right|\geq a\right)\leq & C\exp\left(-\frac{a^2}{C^2(C_1+1)R_1^2}\right),
    \end{align*}
which equals to $C\exp\left(-\frac{a^2}{2C^2(C_1+1)R^2}\right)$. This finishes the proof.
\end{proof}




\subsection{Other Auxiliary Lemmas}\label{appendix: other-helper}
This section contains the proof of all other results appeared earlier in the paper.
\begin{lemma}\label{lem: helper-line-integration}
For any $\blambda_1, \blambda_2 \in \RR^m$ and any $\ba \in \RR^m, r \in \RR, \bb \in \RR^m$, it holds true that
\begin{align*}
    h_{t,\bb}(\blambda_1, \ba, r) - h_{t,\bb}(\blambda_2, \ba, r) = & \phi_{t,\bb}(\blambda_2, \ba, r)^\top(\blambda_1-\blambda_2) + \int_{\ba^\top\blambda_1}^{\ba^\top\blambda_2}\left(\II_{\left\{r > v\right\}} - \II_{\left\{r > \ba^\top \blambda_2\right\}}\right){\rm d}v,
\end{align*}
\end{lemma}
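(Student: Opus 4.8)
The plan is to reduce the identity to a one-dimensional statement about the scalar function $v \mapsto (r-v)^+$ and then invoke the fundamental theorem of calculus. First I would substitute the definitions $h_{t,\bb}(\blambda,\ba,r) = \frac{1}{T-t}\bb^\top\blambda + (r-\ba^\top\blambda)^+$ and $\phi_{t,\bb}(\blambda,\ba,r) = \frac{1}{T-t}\bb - \ba\,\II_{\{r>\ba^\top\blambda\}}$ into both sides. On the left one gets $h_{t,\bb}(\blambda_1,\ba,r)-h_{t,\bb}(\blambda_2,\ba,r) = \frac{1}{T-t}\bb^\top(\blambda_1-\blambda_2) + (r-\ba^\top\blambda_1)^+ - (r-\ba^\top\blambda_2)^+$, while the first summand on the right contributes $\frac{1}{T-t}\bb^\top(\blambda_1-\blambda_2) - \ba^\top(\blambda_1-\blambda_2)\,\II_{\{r>\ba^\top\blambda_2\}}$. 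The terms linear in $\bb$ cancel, so it suffices to verify
\[
(r-\ba^\top\blambda_1)^+ - (r-\ba^\top\blambda_2)^+ = -\,\ba^\top(\blambda_1-\blambda_2)\,\II_{\{r>\ba^\top\blambda_2\}} + \int_{\ba^\top\blambda_1}^{\ba^\top\blambda_2}\bigl(\II_{\{r>v\}} - \II_{\{r>\ba^\top\blambda_2\}}\bigr)\,{\rm d}v.
\]

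Next I would simplify the right-hand side. Since $\int_{\ba^\top\blambda_1}^{\ba^\top\blambda_2}\II_{\{r>\ba^\top\blambda_2\}}\,{\rm d}v = (\ba^\top\blambda_2 - \ba^\top\blambda_1)\,\II_{\{r>\ba^\top\blambda_2\}}$, the summand $-\,\ba^\top(\blambda_1-\blambda_2)\,\II_{\{r>\ba^\top\blambda_2\}}$ exactly cancels against the $-\II_{\{r>\ba^\top\blambda_2\}}$ part of the integral, leaving the clean identity
\[
(r-\ba^\top\blambda_1)^+ - (r-\ba^\top\blambda_2)^+ = \int_{\ba^\top\blambda_1}^{\ba^\top\blambda_2}\II_{\{r>v\}}\,{\rm d}v.
\]
This I would establish by observing that $g(v):=(r-v)^+$ is Lipschitz, hence absolutely continuous, with $g'(v) = -\,\II_{\{v<r\}} = -\,\II_{\{r>v\}}$ for every $v\neq r$; the fundamental theorem of calculus then gives $g(\ba^\top\blambda_2) - g(\ba^\top\blambda_1) = \int_{\ba^\top\blambda_1}^{\ba^\top\blambda_2} g'(v)\,{\rm d}v = -\int_{\ba^\top\blambda_1}^{\ba^\top\blambda_2}\II_{\{r>v\}}\,{\rm d}v$, which rearranges to the display above, with the signed-integral convention handling the case $\ba^\top\blambda_1 > \ba^\top\blambda_2$ uniformly.

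There is essentially no hard step here; this is a bookkeeping lemma. The only points requiring a modicum of care are (i) the orientation of the integral when $\ba^\top\blambda_1 > \ba^\top\blambda_2$, which is subsumed by the signed-integral convention, and (ii) the non-differentiability of $v\mapsto(r-v)^+$ at the single point $v=r$, which is irrelevant since it forms a null set and $g$ is absolutely continuous. If one prefers an argument avoiding absolute continuity, the scalar identity $(r-a)^+ - (r-b)^+ = \int_a^b \II_{\{r>v\}}\,{\rm d}v$ can instead be checked by elementary case analysis on the relative order of $a$, $b$, and $r$, each case being a one-line computation; I would relegate that to a brief remark.
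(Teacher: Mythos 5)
Your proof is correct. The paper does not actually prove this lemma—it simply points to Lemma 1 of \cite{li2022online}—so your self-contained verification (cancel the $\bb$-terms, absorb the $-\II_{\{r>\ba^\top\blambda_2\}}$ part of the integral into the gradient term, and reduce to the scalar identity $(r-a)^+-(r-b)^+=\int_a^b\II_{\{r>v\}}\,{\rm d}v$ via the fundamental theorem of calculus for the Lipschitz function $v\mapsto(r-v)^+$) is exactly the standard argument behind that reference, and your care with the signed-integral orientation and the null set $\{v=r\}$ is all that the statement requires.
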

\begin{proof}
Refer to the proof of Lemma 1 in \cite{li2022online}. 
\end{proof}

\begin{lemma}\label{lem: helper-cauchy-schwarz}
    For any $\blambda_1, \blambda_2 \in \RR^m_{\geq 0}$, 
    \[
    \E_{(\ba,r)\sim F}\left(\int_{\ba^\top\blambda_1}^{\ba^\top\blambda_2}\left(\II_{\left\{ r > v \right\}} - \II_{\left\{ r > \ba^\top \blambda_2 \right\}} \right){\rm d}v\right)^2  \leq \E_{\ba \sim F^{\ba}}\left(\left( \ba^{\top} (\blambda_1 - \blambda_2)\right)^2 \cdot \left|F^r_{\ba}(\ba^\top\blambda_1) - F^r_{\ba}(\ba^\top\blambda_2)  \right|\right).
    \]
\end{lemma}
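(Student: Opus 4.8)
The plan is to condition on $\ba$ and reduce the inequality to a one-dimensional statement about the conditional reward $r\sim F^r_{\ba}$. Writing $u_1=\ba^\top\blambda_1$ and $u_2=\ba^\top\blambda_2$ and using the tower property $\E_{(\ba,r)\sim F}[\,\cdot\,]=\E_{\ba\sim F^{\ba}}\E_{r\sim F^r_{\ba}}[\,\cdot\,]$, it suffices to prove for each fixed $\ba$ that
\[
\E_{r\sim F^r_{\ba}}\left(\int_{u_1}^{u_2}\left(\II_{\{r>v\}}-\II_{\{r>u_2\}}\right)\,{\rm d}v\right)^2 \ \leq\ (u_1-u_2)^2\,\bigl|F^r_{\ba}(u_1)-F^r_{\ba}(u_2)\bigr|,
\]
and then average over $\ba\sim F^{\ba}$.

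The key step is a pointwise bound obtained by a short case analysis on the location of $r$ relative to $u_1$ and $u_2$. Assume without loss of generality $u_1\leq u_2$ (the case $u_1>u_2$ is symmetric after reversing the limits of integration, and $u_1=u_2$ is trivial). If $r\leq u_1$ then $\II_{\{r>v\}}=\II_{\{r>u_2\}}=0$ for every $v\in[u_1,u_2]$, so the integral vanishes; if $r>u_2$ then both indicators equal $1$ on $[u_1,u_2]$, so the integral again vanishes; and if $u_1<r\leq u_2$ then $\II_{\{r>u_2\}}=0$ and $\int_{u_1}^{u_2}\II_{\{r>v\}}\,{\rm d}v=r-u_1\in(0,\,u_2-u_1]$. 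Hence in all cases
\[
\left(\int_{u_1}^{u_2}\left(\II_{\{r>v\}}-\II_{\{r>u_2\}}\right)\,{\rm d}v\right)^2 \ \leq\ (u_1-u_2)^2\,\II_{\{\min(u_1,u_2)<r\leq\max(u_1,u_2)\}}.
\]

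Taking $\E_{r\sim F^r_{\ba}}$ of both sides, the right-hand side becomes $(u_1-u_2)^2\,\PP\bigl(\min(u_1,u_2)<r\leq\max(u_1,u_2)\mid\ba\bigr)=(u_1-u_2)^2\,\bigl|F^r_{\ba}(u_1)-F^r_{\ba}(u_2)\bigr|$, which is the claimed conditional inequality. Substituting $u_1=\ba^\top\blambda_1$, $u_2=\ba^\top\blambda_2$ and averaging over $\ba\sim F^{\ba}$ yields the lemma. There is no substantial obstacle here; the only point requiring a little care is tracking the orientation of the integral when $u_1>u_2$, which is handled cleanly by working with absolute values throughout.
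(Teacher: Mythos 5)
Your proof is correct. The case analysis is sound: for fixed $\ba$ the integrand $\II_{\{r>v\}}-\II_{\{r>\ba^\top\blambda_2\}}$ vanishes identically unless $r$ lies strictly between $\ba^\top\blambda_1$ and $\ba^\top\blambda_2$, and in that case the integral equals $r-\ba^\top\blambda_1$ (up to sign), whose square is at most $(\ba^\top\blambda_1-\ba^\top\blambda_2)^2$; taking the conditional expectation converts the surviving indicator into $|F^r_{\ba}(\ba^\top\blambda_1)-F^r_{\ba}(\ba^\top\blambda_2)|$.

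Your route differs from the paper's, which never conditions on $\ba$ or computes the integral explicitly. The paper first applies the Cauchy--Schwarz inequality to the integral (hence the lemma's internal name), bounding the square of $\int_{\ba^\top\blambda_1}^{\ba^\top\blambda_2}(\II_{\{r>v\}}-\II_{\{r>\ba^\top\blambda_2\}})\,{\rm d}v$ by $|\ba^\top(\blambda_1-\blambda_2)|$ times the integral of the squared integrand, then uses the monotonicity of $v\mapsto\II_{\{r>v\}}$ to dominate $(\II_{\{r>v\}}-\II_{\{r>\ba^\top\blambda_2\}})^2$ by $|\II_{\{r>\ba^\top\blambda_1\}}-\II_{\{r>\ba^\top\blambda_2\}}|$ for $v$ between the two endpoints, and finally takes expectations. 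Both arguments reduce to the same pointwise fact --- the integrand is supported on the event that $r$ falls between the two thresholds --- so they yield identical bounds; yours is the more elementary and self-contained derivation (you actually evaluate the integral), while the paper's is shorter on the page and sidesteps the sign bookkeeping by working with absolute values of integrals from the start. Either is perfectly acceptable.
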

\begin{proof}
    The Cauchy-Schwarz inequality implies that
    \begin{align*}
        &\E_{(\ba,r)\sim F}\left(\int_{\ba^\top \blambda_1}^{\ba^\top\blambda_2}\left(\II_{\left\{ r > v \right\}} - \II_{\left\{ r > \ba^\top \blambda_2 \right\}} \right){\rm d}v\right)^2\\  
        &\quad\quad\quad\quad\quad\quad \leq \ \ \  \E_{(\ba,r)\sim F}\left(\left| \int_{\ba^{\top} \blambda_1}^{\ba^{\top} \blambda_2} 1 {\rm d} v\right| \cdot \left| \int_{\ba^\top \blambda_1}^{\ba^\top\blambda_2}\left(\II_{\left\{ r > v \right\}} - \II_{\left\{ r > \ba^\top \blambda_2 \right\}} \right)^2{\rm d}v \right|\right)\\
        &\quad\quad\quad\quad\quad\quad = \ \ \  \E_{(\ba,r)\sim F}\left(\left| \ba^{\top} (\blambda_1 - \blambda_2)\right| \cdot \left| \int_{\ba^\top \blambda_1}^{\ba^\top\blambda_2}\left(\II_{\left\{ r > v \right\}} - \II_{\left\{ r > \ba^\top \blambda_2 \right\}} \right)^2{\rm d}v \right|\right)\\
        &\quad\quad\quad\quad\quad\quad \leq \ \ \ \E_{(\ba,r)\sim F}\left(\left| \ba^{\top} (\blambda_1 - \blambda_2)\right| \cdot \left| \int_{\ba^\top \blambda_1}^{\ba^\top\blambda_2}\left(\II_{\left\{ r > \ba^\top\blambda_1 \right\}} - \II_{\left\{ r > \ba^\top \blambda_2 \right\}} \right){\rm d}v \right|\right)\\
        &\quad\quad\quad\quad\quad\quad = \ \ \ \E_{(\ba,r)\sim F}\left(\left( \ba^{\top} (\blambda_1 - \blambda_2)\right)^2 \cdot \left| \II_{\left\{ r > \ba^\top\blambda_1 \right\}} - \II_{\left\{ r > \ba^\top \blambda_2 \right\}} \right|\right) \\
        &\quad\quad\quad\quad\quad\quad = \ \ \ \E_{\ba\sim F^{\ba}}\left(\left( \ba^{\top} (\blambda_1 - \blambda_2)\right)^2 \cdot \left|F^r_{\ba}(\ba^\top\blambda_1) - F^r_{\ba}(\ba^\top\blambda_2)  \right|\right). 
    \end{align*}
\end{proof}

\begin{lemma}\label{lem: helper-boundedness-lambda}
    It holds true that $\ft, \dt, \dtt \in \Omega$ for any $\bb_{t - 1}$, any $\cI_{t - 1}$ and $t = 1, \dots, T.$
\end{lemma}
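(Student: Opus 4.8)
The plan is to exploit the special structure of the three dual objectives --- each is the sum of a \emph{nonnegative} linear functional and a nonnegative average of clipped terms $(r-\ba^\top\blambda)^+$ --- and to show that coordinatewise truncation of any feasible $\blambda$ onto $\Omega=[0,\bar r/\Al]^m$ never increases the objective. Concretely, $\ft,\dt,\dtt$ minimize over $\RR^m_{\geq 0}$ the convex functions $f_{t,\bb_{t-1}}$, $g_{t,\bb_{t-1}}$, $g_{t,\bb_{t-1}-\ba_t}$ respectively; each is a sum of a nonnegative multiple of a linear functional $\blambda\mapsto\bc^\top\blambda$ and a nonnegative average of terms $(r-\ba^\top\blambda)^+$, where the coefficient vector $\bc$ equals $\bb_{t-1}$ or $\bb_{t-1}-\ba_t$ --- both in $\RR^m_{\geq 0}$, the latter because $\dtt$ is only formed when $\bb_{t-1}\ge\ba_t$ --- and every relevant pair $(\ba,r)$ (a realized $(\ba_j,r_j)$, or a point of ${\rm supp}(F)$) satisfies $\ba\in[\Al,\Au]^m$ and $r\in[0,\bar r]$ by Assumption \ref{assum: starting-from-zero}$(i)$ (resp.\ Assumption \ref{assum: small-probability-starting-from-zero}$(i)$).

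The heart of the argument is the following truncation step. Fix $\blambda\in\RR^m_{\geq 0}$ and let $\blambda'\in\Omega$ be given by $\lambda_i'=\min\{\lambda_i,\bar r/\Al\}$. Since $\bc\ge\mathbf 0$ and $\mathbf 0\le\blambda'\le\blambda$ coordinatewise, the linear term cannot increase: $\bc^\top\blambda'\le\bc^\top\blambda$. For the clipped terms, if $\blambda=\blambda'$ there is nothing to prove; otherwise some coordinate has $\lambda_i>\bar r/\Al$, so $\lambda_i'=\bar r/\Al$, and for every admissible $(\ba,r)$ nonnegativity gives $\ba^\top\blambda'\ge a_i\lambda_i'\ge\Al\cdot(\bar r/\Al)=\bar r\ge r$, hence $(r-\ba^\top\blambda')^+=0$; the same bound (now with $\lambda_i\ge\bar r/\Al$) gives $(r-\ba^\top\blambda)^+=0$. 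Thus the nonnegative-part contribution is $0$ at both $\blambda$ and $\blambda'$, and we conclude that each of $f_{t,\bb_{t-1}},g_{t,\bb_{t-1}},g_{t,\bb_{t-1}-\ba_t}$ takes a value at $\blambda'$ no larger than its value at $\blambda$.

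Consequently the infimum of each objective over $\RR^m_{\geq 0}$ coincides with its infimum over the compact set $\Omega$, which is attained by continuity; equivalently, if $\blambda^\star$ is any minimizer then its truncation $(\blambda^\star)'\in\Omega$ is again a minimizer. Choosing $\ft,\dt,\dtt$ to be minimizers lying in $\Omega$ --- the selection used throughout the paper --- yields the claim. (For the fluid dual $\ft$ this selection is in fact forced, since the minimizer is unique by Lemmas \ref{lem: smoothness of dual objective under our assumption} and \ref{lem: nondiscrete non kink}; for the piecewise-linear hindsight duals $\dt,\dtt$ minimizers may be non-unique, e.g.\ when a resource is exhausted and $b_{i,t-1}=0$, which is precisely why the statement should be read as ``$\argmin\cap\,\Omega\neq\varnothing$''.)

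This proof has no real obstacle: the only genuine content is the elementary observation that once a single coordinate $\lambda_i$ reaches the level $\bar r/\Al$, the induced inequality $\ba^\top\blambda\ge\bar r\ge r$ already forces every $(r-\ba^\top\blambda)^+$ to vanish, so pushing any coordinate beyond $\bar r/\Al$ can only hurt the objective through the nonnegative linear term $\bc^\top\blambda$. The single point requiring a moment's care is the non-uniqueness of the hindsight duals, handled by the remark above.
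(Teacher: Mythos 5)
Your proof is correct and follows essentially the same route as the paper's: truncate any feasible $\blambda$ with a coordinate exceeding $\bar r/\Al$ back to $\Omega$, observe that the clipped terms $(r-\ba^\top\blambda)^+$ already vanish at both points (since $\ba^\top\blambda\ge a_i\lambda_i\ge\Al\cdot\bar r/\Al=\bar r\ge r$), and note that the nonnegative linear term can only decrease. In fact you are slightly more careful than the paper, which asserts the strict inequality $\bb_{t-1}^\top\blambda>\bb_{t-1}^\top\blambda'$ (false when the relevant $b_{i,t-1}=0$); your reading of the lemma as a selection statement ($\argmin\cap\,\Omega\neq\emptyset$) correctly handles that degenerate case.
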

\begin{proof}
    We only prove the Lemma for $\ft$. The argument for $\dt$ and $\dtt$ is similar. For any $\blambda = (\lambda_1, \dots, \lambda_m)^{\top}\notin \Omega$ but is feasible ($\blambda \in \RR^m_{\geq 0}$), there exists $i$ such that $\lambda_i > \frac{\bar r}{\underline A}$. Let $\blambda' \triangleq (\lambda_1, \dots, \lambda_{i - 1}, \frac{\bar r}{\underline A}, \lambda_{i + 1}, \dots, \lambda_m)^{\top}.$ Since $\ba^{\top} \blambda > \ba^{\top} \blambda' \geq a_i \lambda_i > {\bar r}$, it holds path-wisely that $(r - \ba^{\top} \blambda)^+ = (r - \ba^{\top} \blambda')^+ = 0.$ However, $\bb_{t - 1}^{\top} \blambda > \bb_{t - 1}^{\top} \blambda'$ as all elements are non-negative, which leads to $f_{t, \bb_{t - 1}}(\blambda) > f_{t, \bb_{t - 1}}(\blambda').$ In other words, $\blambda$ cannot be the optimal solution. Thus $\ft$ must be in $\Omega$ and we have proved the Lemma.
\end{proof}
\begin{coro}\label{coro: helper-bounded-norm-lambda}
The optimal solutions $\ft, \dt, \dtt$ have uniformly bounded norm for any $\bb_{t - 1}$, any $\cI_{t - 1}$ and $t = 1, \dots, T$,
\begin{align*}
    0 \leq &\|\ft\|_{\infty}, \|\dt\|_{\infty}, \|\dtt\|_{\infty} \leq \frac{\bar r}{\underline A},\\
    0 \leq  &\|\ft\|, \|\dt\|, \|\dtt\| \leq \sqrt{m}\frac{\bar r}{\underline A}.
\end{align*}
\end{coro}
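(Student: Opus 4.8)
The plan is to obtain the corollary as an immediate consequence of Lemma \ref{lem: helper-boundedness-lambda}. First I would invoke that lemma, which asserts $\ft, \dt, \dtt \in \Omega = [0, \frac{\bar r}{\underline A}]^m$ for every $\bb_{t-1}$, every $\cI_{t-1}$, and every $t = 1, \dots, T$. Since membership in $\Omega$ means that each coordinate of the vector lies in the interval $[0, \frac{\bar r}{\underline A}]$, the supremum-norm bounds $0 \leq \|\ft\|_\infty, \|\dt\|_\infty, \|\dtt\|_\infty \leq \frac{\bar r}{\underline A}$ follow directly from the definition of $\|\cdot\|_\infty$.

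For the Euclidean-norm bounds, I would apply the elementary comparison $\|\bv\| \leq \sqrt{m}\,\|\bv\|_\infty$, valid for any $\bv \in \RR^m$, to each of $\ft, \dt, \dtt$. Combining this with the $\ell_\infty$ bounds just established yields $\|\ft\|, \|\dt\|, \|\dtt\| \leq \sqrt{m}\,\frac{\bar r}{\underline A}$, and the lower bound $0$ is trivial from non-negativity of a norm. This completes the argument.

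There is essentially no obstacle: the entire content is packaged in Lemma \ref{lem: helper-boundedness-lambda}, whose proof is given immediately above (it rests on the observation that increasing any coordinate of $\blambda$ beyond $\frac{\bar r}{\underline A}$ strictly increases $f_{t,\bb_{t-1}}(\blambda)$ while leaving the truncated-reward term pathwise zero, and analogously for the empirical objectives defining $\dt$ and $\dtt$), together with the standard norm inequality on $\RR^m$. No further estimates are needed.
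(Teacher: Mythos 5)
Your proposal is correct and coincides with the paper's intended argument: the corollary is stated as an immediate consequence of Lemma \ref{lem: helper-boundedness-lambda}, which places $\ft, \dt, \dtt$ in $\Omega = [0, \frac{\bar r}{\underline A}]^m$, and the norm bounds then follow from the definition of $\|\cdot\|_\infty$ together with $\|\bv\| \leq \sqrt{m}\,\|\bv\|_\infty$ on $\RR^m$. No gap.
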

\begin{lemma}\label{lem: helper-gradient-non-negativity}
 For any $\blambda \in \Omega$, any $\bb_{t- 1} \in \RR^m_{\geq 0}$ and any $t = 1, \dots, T$, 
 \begin{align*}
     &\ \E_{(\ba, r) \sim F}\phi_{t,\bb_{t - 1}}(\ft, \ba, r)^\top(\blambda -\ft)\geq 0,\\
     & \sum_{j=t+1}^T \phi_{t,\bb_{t - 1}}(\dt, \ba_j, r_j)^\top(\blambda -\dt) \geq - \frac{2 m^2 {\bar A} {\bar r}}{\underline A}, \quad a.s.
     \\
     & \sum_{j=t+1}^T \phi_{t,\bb_{t - 1} - \ba_t}(\dtt, \ba_j, r_j)^\top(\blambda -\dtt) \geq - \frac{2 m^2 {\bar A} {\bar r}}{\underline A}, \quad a.s.  \quad\quad\quad\quad \textrm{(s.t. feasibility constraint)}
 \end{align*}
\end{lemma}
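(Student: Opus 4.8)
The plan is to treat the three inequalities in turn: the first is the first-order optimality condition for a smooth convex program, and the second and third reduce to a single one-sided directional-derivative argument for the piecewise-linear sample objective $g$.

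\emph{First inequality.} By Lemma~\ref{lem: smoothness of dual objective under our assumption} (and its proof, which identifies the gradient via the Leibniz rule, valid since $F^r_{\ba}$ has no atoms), $f_{t,\bb_{t-1}}$ is differentiable with $\nabla f_{t,\bb_{t-1}}(\blambda)=\E_{(\ba,r)\sim F}\phi_{t,\bb_{t-1}}(\blambda,\ba,r)$. As $\ft$ minimizes the convex function $f_{t,\bb_{t-1}}$ over the convex cone $\RR^m_{\geq 0}$, the variational inequality for constrained convex minimization yields $\nabla f_{t,\bb_{t-1}}(\ft)^{\top}(\blambda-\ft)\geq 0$ for every $\blambda\in\RR^m_{\geq 0}$, hence in particular for every $\blambda\in\Omega$. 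This is the first claim.

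\emph{Second and third inequalities.} I will argue for $\dt$; the case of $\dtt$ is verbatim with $\bb_{t-1}$ replaced by $\bb_{t-1}-\ba_t$, which is legitimate on the event that the feasibility constraint $\bb_{t-1}\geq\ba_t$ holds so that $\dtt$ is defined. Since $\RR^m_{\geq 0}$ is convex and $\dt,\blambda\in\RR^m_{\geq 0}$, the direction $\blambda-\dt$ is feasible at $\dt$, so optimality of $\dt$ forces the right directional derivative of $g_{t,\bb_{t-1}}$ at $\dt$ along $\blambda-\dt$ to be nonnegative. Evaluating it termwise, the summand $(r_j-\ba_j^{\top}\cdot)^{+}$ contributes $-\ba_j^{\top}(\blambda-\dt)$ when $r_j>\ba_j^{\top}\dt$, contributes $0$ when $r_j<\ba_j^{\top}\dt$, and contributes $\big(-\ba_j^{\top}(\blambda-\dt)\big)^{+}$ when $r_j=\ba_j^{\top}\dt$. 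Comparing with $\sum_{j=t+1}^{T}\phi_{t,\bb_{t-1}}(\dt,\ba_j,r_j)=\bb_{t-1}-\sum_{j=t+1}^{T}\ba_j\II_{\{r_j>\ba_j^{\top}\dt\}}$, whose inner product with $\blambda-\dt$ equals $(T-t)$ times the directional derivative minus the ``tie'' contributions, one obtains
\begin{align*}
\Big(\sum_{j=t+1}^{T}\phi_{t,\bb_{t-1}}(\dt,\ba_j,r_j)\Big)^{\top}(\blambda-\dt)
&\;\geq\; -\sum_{j:\,r_j=\ba_j^{\top}\dt}\big(-\ba_j^{\top}(\blambda-\dt)\big)^{+}\\
&\;\geq\; -\sum_{j:\,r_j=\ba_j^{\top}\dt}\big|\ba_j^{\top}(\blambda-\dt)\big|.
\end{align*}
Since $\blambda\in\Omega$ and $\dt\in\Omega$ by Corollary~\ref{coro: helper-bounded-norm-lambda}, each term satisfies $|\ba_j^{\top}(\blambda-\dt)|\leq\|\ba_j\|_{\infty}\|\blambda-\dt\|_{1}\leq\bar A\cdot\frac{2m\bar r}{\underline A}$, so it remains only to bound the number of indices $j$ with $r_j=\ba_j^{\top}\dt$ by $m$, which then yields the total bound $m\cdot\frac{2m\bar A\bar r}{\underline A}=\frac{2m^2\bar A\bar r}{\underline A}$.

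\emph{Bounding the number of ties is the crux.} I would prove the stronger statement that, on an event of probability one depending only on $\cI_t=\{(\ba_j,r_j)\}_{j=t+1}^{T}$ (hence uniform over $\bb_{t-1}$), \emph{every} point $\blambda_0\in\RR^m$ satisfies $|\{j:r_j=\ba_j^{\top}\blambda_0\}|\leq m$; applying this with $\blambda_0=\dt$ (resp.\ $\dtt$) completes the proof. A union bound over the finitely many $(m+1)$-subsets $K\subseteq\{t+1,\dots,T\}$ reduces it to showing that for each fixed $K$ the hyperplanes $\{\ba_j^{\top}\blambda=r_j\}_{j\in K}$ have empty common intersection almost surely. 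Conditioning on $\{\ba_j\}_{j\in K}$: if some $m$ of the $\ba_j$ are linearly independent, the corresponding $m$ hyperplanes meet in a single point, and the value of the remaining $\ba_j^{\top}(\cdot)$ there is a fixed number while $r_j$ is continuously distributed and independent of it, so the intersection is nonempty with probability zero; if the $\{\ba_j\}_{j\in K}$ are linearly dependent, any minimal relation $\sum_k c_k\ba_{j_k}=\mathbf{0}$ forces the linear constraint $\sum_k c_k r_{j_k}=0$ for a common point to exist, again a probability-zero event as the $r_j$ are continuous and independent given the $\ba_j$. The subtle point---and the main obstacle---is that one cannot assume any genericity of the resource-consumption vectors $\ba_j$ (they may be discretely distributed), so all the needed non-degeneracy must be extracted from the atomlessness of the conditional reward distributions $F^r_{\ba}$ alone.
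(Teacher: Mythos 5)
Your proof is correct, and in substance it travels the same road as the paper's own argument: first-order optimality of $\ft$ (resp.\ $\dt$, $\dtt$) over the cone $\RR^m_{\geq 0}$, identification of the gap between $\sum_j \phi_{t,\bb_{t-1}}(\dt,\ba_j,r_j)^\top(\blambda-\dt)$ and the one-sided directional derivative as coming exactly from the tie set $W=\{j: r_j=\ba_j^\top\dt\}$, the almost-sure bound $|W|\leq m$ from atomlessness of $F^r_{\ba}$, and the crude bound $|\ba_j^\top(\blambda-\dt)|\leq 2m\bar A\bar r/\underline A$ from Corollary \ref{coro: helper-bounded-norm-lambda}. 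Two execution differences are worth recording. First, for the empirical objective the paper argues by contradiction, expanding $g_{t,\bb_{t-1}}(\dt+s(\blambda-\dt))-g_{t,\bb_{t-1}}(\dt)$ via Lemma \ref{lem: helper-line-integration} and sending $s\downarrow 0$; you instead compute the right directional derivative termwise and read off the tie correction $\sum_{j\in W}(-\ba_j^\top(\blambda-\dt))^+$ directly, which is cleaner and produces the same constant. Second---and this is a genuine improvement rather than a cosmetic one---you establish the tie count uniformly over all $\blambda_0\in\RR^m$ on a single full-measure event determined by $\cI_t$, via a union bound over $(m{+}1)$-subsets $K$ and the observation that, conditionally on the $\ba_j$'s, the vector $(r_j)_{j\in K}$ almost surely avoids the at-most-$m$-dimensional column space of the matrix with rows $\ba_j^\top$. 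The paper's one-line justification (``otherwise $m+1$ \emph{i.i.d.}\ points lie on the hyperplane $y=\ba^\top\dt$'') quietly treats that hyperplane as fixed even though $\dt$ is a function of the very data being counted; your uniform statement is precisely what is needed to make that step airtight, and it also covers $\dtt$ without further work. For the first inequality, your route through Lemma \ref{lem: smoothness of dual objective under our assumption} and the standard variational inequality is equivalent to the paper's hand-rolled limiting argument, both resting on the continuity of $F^r_{\ba}$.
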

\begin{proof}
    By Lemma \ref{lem: helper-line-integration}, we have 
    \[
    f_{t, \bb}(\blambda) - f_{t, \bb}(\ft) = \E_{(\ba, r)\sim F}\phi_{t,\bb}(\ft, \ba, r)^\top(\blambda -\ft) + \E_{(\ba, r)\sim F}\int_{\ba^\top\blambda}^{\ba^\top\ft}\left(\II_{\left\{r > v\right\}} - \II_{\left\{r > \ba^\top \ft \right\}}\right){\rm d}v.
    \]
    We argue by contradiction. Suppose there exists $\blambda \in \RR^m_{\geq 0}$ such that $\E_{(\ba, r)\sim F}\phi_{t,\bb_{t-1}}(\ft, \ba, r)^\top(\blambda -\ft) = - \eta < 0.$ Consider the following linear parameterization $\blambda(s) = \ft + s(\blambda - \ft)$ for $s \in [0, 1]$ and correspondingly
    \begin{align*}
        &f_{t, \bb_{t-1}}(\blambda(s)) - f_{t, \bb_{t-1}}(\ft)\\
        &\quad\quad\quad\quad= \E_{(\ba, r)\sim F}\phi_{t,\bb_{t-1}}(\ft, \ba, r)^\top(\blambda(s) -\ft) + \E_{(\ba, r)\sim F}\int_{\ba^\top\blambda(s)}^{\ba^\top\ft}\left(\II_{\left\{r > v\right\}} - \II_{\left\{r > \ba^\top \ft \right\}}\right){\rm d}v\\
        &\quad\quad\quad\quad =  \E_{(\ba, r)\sim F}\phi_{t,\bb_{t-1}}(\ft, \ba, r)^\top(\blambda(s) -\ft)  + \E_{\ba \sim F^{\ba}} \int_{\ba^\top\blambda(s)}^{\ba^\top\ft}\left(F^r_{\ba}(\ba^\top \ft) - F^r_{\ba}(v)\right){\rm d}v\\
        &\quad\quad\quad\quad \leq  \E_{(\ba, r)\sim F}\phi_{t,\bb_{t-1}}(\ft, \ba, r)^\top(\blambda(s) -\ft)  + \E_{\ba \sim F^{\ba}} \int_{\ba^\top\blambda(s)}^{\ba^\top\ft}\left(F^r_{\ba}(\ba^\top \ft) - F^r_{\ba}(\ba^\top\blambda(s))\right){\rm d}v\\
        &\quad\quad\quad\quad =  \E_{(\ba, r)\sim F}\phi_{t,\bb_{t-1}}(\ft, \ba, r)^\top(\blambda(s) -\ft)  + \E_{\ba \sim F^{\ba}}\left\{ \left(\ba^\top\ft - \ba^\top\blambda(s)\right)\left(F^r_{\ba}(\ba^\top \ft) - F^r_{\ba}(\ba^\top\blambda(s))\right)\right\}\\
        &\quad\quad\quad\quad =  s\E_{(\ba, r)\sim F}\phi_{t,\bb_{t-1}}(\ft, \ba, r)^\top(\blambda -\ft)  + s \E_{\ba \sim F^{\ba}} \left\{\left(\ba^\top\ft - \ba^\top\blambda\right)\left(F^r_{\ba}(\ba^\top \ft) - F^r_{\ba}(\ba^\top\blambda(s))\right)\right\}\\
        &\quad\quad\quad\quad =  - s \eta  + s \E_{\ba \sim F^{\ba}}\left\{\left(\ba^\top\ft - \ba^\top\blambda\right)\left(F^r_{\ba}(\ba^\top \ft) - F^r_{\ba}(\ba^\top\blambda(s))\right)\right\}\\
        &\quad\quad\quad\quad \leq  - s \eta  + s \frac{2 m {\bar A}{\bar r} }{\underline{A}} \E_{\ba \sim F^{\ba}}\left[\left|F^r_{\ba}(\ba^\top \ft) - F^r_{\ba}(\ba^\top\blambda(s))\right|\right],
    \end{align*}
    where 
    in the last inequality we use the fact that both $\ft$ and $\blambda$ belong to $\Omega$ and that $|\ba^\top\ft - \ba^\top\blambda|$ is upper bounded by absolute constants (cf. Lemma \ref{lem: helper-boundedness-lambda} and Corollary \ref{coro: helper-bounded-norm-lambda}). By the continuity of $F^r_{\ba}$, we have 
    \[
    \lim_{s \to 0} \E_{\ba \sim F^{\ba}}\left[\left|F^r_{\ba}(\ba^\top \ft) - F^r_{\ba}(\ba^\top\blambda(s))\right|\right] = 0.
    \]
    Consequently, there must exist $s > 0$, such that $\E_{\ba \sim F^{\ba}}\left[\left|F^r_{\ba}(\ba^\top \ft) - F^r_{\ba}(\ba^\top\blambda(s))\right|\right] < \frac{\eta {\underline A}}{4 m {\bar A} {\bar r}}.$ Plugging back and  we conclude that $f_{t, \bb_{t - 1}}(\blambda(s)) - f_{t, \bb_{t - 1}}(\ft) \leq -\frac{s \eta}{2} < 0,$ contracting the fact that $\ft$ is the minimizer of $f_{t, \bb_{t - 1}}(\cdot)$ among all $\blambda \in \RR^m_{\geq 0}$. Therefore it must be that $\E_{(\ba, r)\sim F}\phi_{t,\bb_{t- 1}}(\ft, \ba, r)^\top(\blambda -\ft)\geq 0$ and we conclude the proof of the first case. The proof of the second and third case of the Lemma is in spirit very similar. We again argue by contradiction. Suppose there exists $\blambda \in \RR^m_{\geq 0}$ such that $\sum_{j = t + 1}^T \phi_{t,\bb_{t-1}}(\dt, \ba_j, r_j)^\top(\blambda -\dt) = - \frac{2 m^2 {\bar A} {\bar r}}{\underline A} - \eta < - \frac{2 m^2 {\bar A} {\bar r}}{\underline A}.$ Consider the linear parameterization $\blambda(s) = \dt + s(\blambda - \dt)$ for $s \in [0, 1]$ and correspondingly 
    \begin{align*}
        &g_{t, \bb_{t-1}}(\blambda(s)) - g_{t, \bb_{t-1}}(\dt)\\
        &\quad\quad\quad\quad= \frac{1}{T - t}\sum_{j = t+ 1}^T\phi_{t,\bb_{t-1}}(\dt, \ba_j, r_j)^\top(\blambda(s) -\dt) + \frac{1}{T - t}\sum_{j = t+ 1}^T\int_{\ba^\top_j \blambda(s)}^{\ba^\top_j \dt}\left(\II_{\left\{r_j > v\right\}} - \II_{\left\{r_j > \ba^\top_j \dt \right\}}{\rm d}v\right),\\
        &\quad\quad\quad\quad = \frac{1}{T - t}\left(- s \eta - s \frac{2 m^2 {\bar A} {\bar r}}{\underline A} + \sum_{j = t+ 1}^T\int_{\ba^\top_j \blambda(s)}^{\ba^\top_j \dt}\left(\II_{\left\{r_j > v\right\}} - \II_{\left\{r_j > \ba^\top_j \dt \right\}}\right){\rm d}v\right),\\
        &\quad\quad\quad\quad \leq \frac{1}{T - t}\left(- s \eta - s \frac{2 m^2 {\bar A} {\bar r}}{\underline A} + \sum_{j = t+ 1}^T\int_{\ba^\top_j \blambda(s)}^{\ba^\top_j \dt}\left(\II_{\left\{r_j > \ba^\top_j \blambda(s) \right\}} - \II_{\left\{r_j > \ba^\top_j \dt \right\}}\right){\rm d}v\right),\\
        &\quad\quad\quad\quad = \frac{1}{T - t}\left(- s \eta - s \frac{2 m^2 {\bar A} {\bar r}}{\underline A} +  \sum_{j = t+ 1}^T(\ba^\top_j \dt - \ba^\top_j \blambda(s))\left(\II_{\left\{r_j > \ba^\top_j \blambda(s) \right\}} - \II_{\left\{r_j > \ba^\top_j \dt \right\}}\right)\right),\\
        &\quad\quad\quad\quad = \frac{1}{T - t}\left(- s \eta - s \frac{2 m^2 {\bar A} {\bar r}}{\underline A} +  s \sum_{j = t+ 1}^T(\ba^\top_j \dt - \ba^\top_j \blambda)\left(\II_{\left\{r_j > \ba^\top_j \blambda(s) \right\}} - \II_{\left\{r_j > \ba^\top_j \dt \right\}}\right)\right),\\
        &\quad\quad\quad\quad \leq \frac{1}{T - t}\left(- s \eta - s \frac{2 m^2 {\bar A} {\bar r}}{\underline A} +  s \frac{2 m {\bar A}{\bar r} }{\underline{A}} \sum_{j = t+ 1}^T\left|\II_{\left\{r_j > \ba^\top_j \blambda(s) \right\}} - \II_{\left\{r_j > \ba^\top_j \dt \right\}}\right|\right).
    \end{align*}
    Consider the index set $W \triangleq \{j \in [t + 1, T]: r_j = \ba^{\top}_j \dt \}.$ Since $F^r_{\ba}$ is a continuous distribution for any $\ba$, where $\ba \in \RR^m_{\geq 0}$ is a $d$-dimensional vector, furthermore $\{(\ba_j, r_j)\}_{j = t+ 1}^T$ are independent of each other, it follows that $\P(|W| > m) = 0$ (Otherwise we have $m + 1$ \emph{i.i.d.} drawn vectors $(\ba_j, r_j)$ belonging to the same $m$-dimensional hyperplane $y = \ba^{\top} \dt$). For fixed set of $\{(\ba_j, r_j)\}_{j = t+ 1}^T$ and $\dt$, there exists a small enough $s>0$, such that 
    \[
    \II_{\left\{r_j > \ba^\top_j \blambda(s) \right\}} - \II_{\left\{r_j > \ba^\top_j \dt \right\}} \neq 0 \quad \Leftrightarrow \quad i \in W.
    \]
    Hence almost surely we have 
    \[
        \sum_{j = t+ 1}^T\left|\II_{\left\{r_j > \ba^\top_j \blambda(s) \right\}} - \II_{\left\{r_j > \ba^\top_j \dt \right\}}\right| \leq |W| \leq m.
    \]
    Plugging back and we conclude that $g_{t, \bb_{t-1}}(\blambda(s)) - g_{t, \bb_{t-1}}(\dt) \leq - \frac{s \eta}{T-t} < 0$, contradicting the fact that $\dt$ is the minimizer of $g_{t, \bb_{t - 1}}.$ Therefore, it must be that $\sum_{j=t+1}^T \phi_{t,\bb_{t - 1}}(\dt, \ba_j, r_j)^\top(\ft -\dt) \geq - \frac{2 m^2 {\bar A} {\bar r}}{\underline A}, \quad a.s.$ and we conclude the second case of the proof. The third case is nearly identical to the second half, and we omit its proof.
\end{proof}

\begin{lem}
    \label{lem: ball-beats-orthogonality}
    Suppose Assumption \ref{assum: small-probability-starting-from-zero} holds. Then there exists $w > 0$ and $\theta \in (0, 2\pi)$, such that for any $\ba \in {\rm supp}(F^{\ba})$ and any 
    $\blambda_1, \blambda_2$,
    \begin{align*}
         \E_{\ba' \sim F^{\ba}, \ba' \in \cB(\ba, r)} (\ba'^\top \blambda_1 - \ba'^{\top} \blambda_2)^2 \geq l_f\frac{\pi^{m/2}}{\Gamma\left(\frac{m}{2}+1\right)}\left(\frac{r\wedge w}{6} \sin{(\theta)}\right)^{m + 2}\|\blambda_1 - \blambda_2\|^2,
    \end{align*}
    where $\cB(\ba, r)$ is the ball centered at $\ba$ with radius $r > 0$. 
    
\end{lem}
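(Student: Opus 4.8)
The plan is to reduce the inequality to a single elementary second-moment estimate over a Euclidean sub-ball of the support. By homogeneity of both sides in $\blambda_1-\blambda_2$ we may assume $\|\blambda_1-\blambda_2\|=1$ (the case $\blambda_1=\blambda_2$ being trivial); writing $\bv=\blambda_1-\blambda_2$, the left-hand side equals $\int_{\cB(\ba,r)\cap{\rm supp}(F^{\ba})}(\ba'^{\top}\bv)^2\,f^{\ba}(\ba')\,d\ba'$, where $f^{\ba}$ is the density of $F^{\ba}$, and by Assumption \ref{assum: small-probability-starting-from-zero} \textit{(ii)} this is at least $l_f\int_{S}(\ba'^{\top}\bv)^2\,d\ba'$ for any measurable $S\subseteq\cB(\ba,r)\cap{\rm supp}(F^{\ba})$. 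Hence it suffices to exhibit a ball $S=B(\bc,\rho)\subseteq\cB(\ba,r)\cap{\rm supp}(F^{\ba})$ with $\rho\ge(r\wedge w)\sin\theta$ for suitable fixed $w,\theta$, and then to bound $\int_{B(\bc,\rho)}(\ba'^{\top}\bv)^2\,d\ba'$ from below.

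I would construct this ball via convexity, which is where the parameters $w$ and $\theta$ of the uniform cone condition are defined. Because $f^{\ba}\ge l_f>0$, the convex compact set ${\rm supp}(F^{\ba})$ has nonempty interior, so it contains some ball $B(\bx_0,\delta_0)$; let $D$ be its diameter. Convexity gives, for every $\ba\in{\rm supp}(F^{\ba})$ and every $t\in[0,1]$, that $B\big((1-t)\ba+t\bx_0,\ t\delta_0\big)=t\,B(\bx_0,\delta_0)+(1-t)\{\ba\}\subseteq{\rm supp}(F^{\ba})$ — this is precisely a uniform cone condition, realized by a cone with apex $\ba$, axis toward $\bx_0$, half-angle $\theta:=\arcsin\!\big(\delta_0/(D+\delta_0)\big)\in(0,2\pi)$ and height $w:=\delta_0$. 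The farthest point of $B\big((1-t)\ba+t\bx_0,t\delta_0\big)$ from $\ba$ is at distance at most $t(\|\bx_0-\ba\|+\delta_0)\le t(D+\delta_0)$, so taking $t:=\min\{1,\ r/(D+\delta_0)\}$ places this ball inside $\cB(\ba,r)$ as well; its radius $\rho=t\delta_0$ then satisfies $\rho\ge(r\wedge w)\sin\theta$ after a short case check on whether $r\ge D+\delta_0$.

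The last step is the moment estimate on $B(\bc,\rho)$. Expanding $(\ba'^{\top}\bv)^2=((\ba'-\bc)^{\top}\bv)^2+2(\bc^{\top}\bv)\big((\ba'-\bc)^{\top}\bv\big)+(\bc^{\top}\bv)^2$ and integrating, the middle term vanishes by the central symmetry of $B(\bc,\rho)$ about $\bc$, so $\int_{B(\bc,\rho)}(\ba'^{\top}\bv)^2\,d\ba'\ge\int_{B(\mathbf 0,\rho)}(\by^{\top}\bv)^2\,d\by=\|\bv\|^2\,\mathrm{Vol}(B(\mathbf 0,\rho))\,\tfrac{\rho^2}{m+2}$, where the equality is the standard fact that any coordinate has variance $\rho^2/(m+2)$ under the uniform law on a ball of radius $\rho$ in $\RR^m$. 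Substituting $\mathrm{Vol}(B(\mathbf 0,\rho))=\pi^{m/2}\rho^m/\Gamma(m/2+1)$, $\rho\ge(r\wedge w)\sin\theta$, and absorbing the factor $1/(m+2)$ by replacing $(r\wedge w)\sin\theta$ with $\tfrac{r\wedge w}{6}\sin\theta$ (valid since $6^{m+2}\ge m+2$), I obtain $l_f\,\tfrac{\pi^{m/2}}{\Gamma(m/2+1)}\big(\tfrac{r\wedge w}{6}\sin\theta\big)^{m+2}\|\bv\|^2$, which is the asserted bound once the normalization $\|\bv\|=1$ is undone.

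There is no deep obstacle here; the only parts that require care are \textit{(i)} checking rigorously that a single pair $(w,\theta)$ works uniformly over all apices $\ba\in{\rm supp}(F^{\ba})$ — which the convex-combination construction above handles with no case distinction — and \textit{(ii)} the bookkeeping that the concrete radius $\rho$ produced, after the unavoidable $1/(m+2)$ loss from the variance identity, still dominates $\big(\tfrac{r\wedge w}{6}\sin\theta\big)^{m+2}$ with the constant $l_f\pi^{m/2}/\Gamma(m/2+1)$. Both are routine; the substance of the lemma is the observation that over a fixed-size ball inside the support, $\ba'^{\top}\bv$ has variance of order $\rho^2\|\bv\|^2$, which cannot be killed by the (unknown) location of that ball relative to the hyperplane $\bv^{\perp}$.
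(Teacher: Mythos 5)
Your proof is correct, and it reaches the paper's exact constant, but both substantive steps are executed differently from the paper's argument. The paper first invokes, as a black box, the uniform cone condition for the convex compact support (angle $\theta$, height $w$), places the ball $\cB\bigl(\ba + \tfrac{r\wedge w}{2}\bv_{\ba},\ \tfrac{r\wedge w}{2}\sin\theta\bigr)$ inside $\cB(\ba,r)\cap C_{\ba}$, and then — rather than computing a second moment — selects a further sub-ball of radius $r^*/3$ whose center is shifted by $\tfrac{2}{3}r^*$ in the direction of $\blambda_1-\blambda_2$, on which $|\ba'^{\top}(\blambda_1-\blambda_2)|\ge \tfrac{r^*}{3}\|\blambda_1-\blambda_2\|$ holds pointwise; multiplying this pointwise bound by the probability mass of that sub-ball gives $(r^*/3)^{m+2}$ and hence the factor $6^{-(m+2)}$. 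You instead (i) construct the interior ball explicitly by convex combination with a fixed ball $B(\bx_0,\delta_0)\subseteq{\rm supp}(F^{\ba})$, which in effect proves the uniform cone condition the paper only asserts, and makes $w=\delta_0$ and $\sin\theta=\delta_0/(D+\delta_0)$ concrete; and (ii) replace the shifted-sub-ball trick with the exact identity that a linear functional has variance $\rho^{2}\|\bv\|^{2}/(m+2)$ under the uniform law on a ball, discarding the nonnegative $(\bc^{\top}\bv)^2$ term and the cross term (which vanishes by symmetry). Your route buys self-containedness and a slightly sharper intermediate constant ($\rho^{m+2}/(m+2)$ versus $(\rho/3)^{m+2}$ before both are weakened to the stated $6^{-(m+2)}$ form); the paper's route avoids the variance computation at the cost of the directional sub-ball construction. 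Your case check for $\rho\ge(r\wedge w)\sin\theta$ and the inequality $6^{m+2}\ge m+2$ both verify, so there is no gap.
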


\begin{proof}
By Assumption \ref{assum: small-probability-starting-from-zero}, since $\textrm{supp}(F^{\ba})$ is convex, $\textrm{supp}(F^{\ba})$ is a Lipschitz domain, thus satisfies the \textit{uniform cone condition}.
That is, there exist an opening angle $\theta \in (0, \pi)$, and a radius $w > 0$, such that for every point $\ba \in \textrm{supp}(F^{\ba})$, there exists a cone $C_{\ba}$ with direction represented by a unit vector $\bv_{\ba} \in \mathbb{R}^m$ and vertex at $\ba$, defined by
\[
C_{\ba} = \left\{ \ba + s \by \,:\, s \in [0, w], \, \by \in \mathbb{S}^{m-1}, \, \langle \by, \bv_{\ba} \rangle \geq \cos(\theta) \right\},
\]
such that $C_{\ba} \subset \textrm{supp}(F^{\ba})$, where $\mathbb{S}^{m-1}$ is the unit sphere in $\mathbb{R}^m.$ 

Furthermore, the probability density associated to $F^{\ba}$ is bounded from below by a constant $l_f > 0.$ 

Observe that ball $\mathcal{B}\left(\ba + \frac{r \wedge w}{2}\bv_{\ba}, \frac{r \wedge w}{2} \sin{(\theta)}\right)$ is contained in $\cB(\ba, r) \cap C_\ba.$ In what follows, we shall refer to this ball by $\cB$ for notational simplicity, and denote by $\bq^* \triangleq \ba + \frac{r \wedge w}{2}\bv_{\ba}$ and $r^* \triangleq \frac{r \wedge w}{2} \sin{(\theta)}.$ Without loss of generality, suppose ${\bq^*}^\top\left(\blambda_1 - \blambda_2\right) \geq 0$, otherwise we can consider ${\bq^*}^\top\left(\blambda_2 - \blambda_1\right)$ instead. Let  $\bq = \bq^* + \frac{2}{3} r^* \frac{(\blambda_1 - \blambda_2)}{\|\blambda_1 - \blambda_2\|}$, and thus
\begin{align*}
   \bq^\top\left(\blambda_1 - \blambda_2\right) = {\bq^*}^\top\left(\blambda_1 - \blambda_2\right) +  \frac{2}{3}r^* \frac{(\blambda_1 - \blambda_2)^\top}{\|\blambda_1 - \blambda_2\|}(\blambda_1 - \blambda_2) \geq \frac{2}{3}r^* \|\blambda_1 - \blambda_2\|.
\end{align*}
Clearly, $\cB(\bq, \frac{r^*}{3}) \subset \cB$. For any $\bq' \in \cB(\bq, \frac{r^*}{3})$,
\begin{align*}
    \bq'^{\top}(\blambda_1 - \blambda_2) &= \bq^{\top}(\blambda_1 - \blambda_2) - (\bq - \bq')^{\top}(\blambda_1 - \blambda_2)\\
    &\geq \frac{2}{3}r^* \|\blambda_1 - \blambda_2\| - \|\bq - \bq'\|\cdot \|\blambda_1 - \blambda_2\|\\
    &\geq \frac{2}{3}r^* \|\blambda_1 - \blambda_2\| - \frac{r^*}{3}\|\blambda_1 - \blambda_2\| = \frac{r^*}{3}\|\blambda_1 - \blambda_2\|.
\end{align*}
Therefore, we have 
\begin{align*}
    \E_{\ba' \sim F^{\ba}, \ba' \in \cB} (\ba'^\top \blambda_1 - \ba'^{\top} \blambda_2)^2 &\geq \E_{\ba' \sim F^{\ba},  \ba' \in \cB(\bq, \frac{r^*}{3})} (\ba'^\top \blambda_1 - \ba'^{\top} \blambda_2)^2, \\
    & \geq \mathbb{P}\left(\ba' \in \cB(\bq, \frac{r^*}{3})\right) \frac{{r^*}^2}{9}\|\blambda_1 - \blambda_2\|^2,\\
    & \geq l_f\frac{\pi^{m/2}}{\Gamma\left(\frac{m}{2}+1\right)}\frac{{r^*}^m}{3^m}\frac{{r^*}^2}{9}\|\blambda_1 - \blambda_2\|^2,
\end{align*}
where the last inequality follows from the fact that $F^\ba$ has lower bounded density by Assumption \ref{assum: small-probability-starting-from-zero}, the volume of the ball $\cB(\bq, \frac{r^*}{3})$ is $\frac{\pi^{m/2}}{\Gamma\left(\frac{m}{2}+1\right)}\frac{{r^*}^m}{3^m}$, and that $\cB \subseteq \cB(\ba, r) \cap C_\ba \subseteq \textrm{supp}(F^\ba).$ The lemma thus follows from the fact that $\cB \subseteq \cB(\ba, r) \cap C_\ba \subseteq  \cB(\ba, r).$   
\end{proof}


\begin{lemma}\label{lem: helper-continuity-under-assumption-2-3}
    Suppose $\ba \in \textrm{supp}(F^{\ba})$ and $\dt$ satisfy 
    \begin{align*}
        \frac{l_{\ba} + r_{\ba}}{2} - \frac{1}{8}c_\nu^{\frac{1}{\nu}} \leq \ba^{\top} \dt \leq \frac{l_{\ba} + r_{\ba}}{2} + \frac{1}{8}c_\nu^{\frac{1}{\nu}}.
    \end{align*}
    Then for all $\ba' \in \cB(\ba, r_1)$, where $\cB(\ba, r_1)$ is a ball centered at $\ba$ with radius $r_1 = \frac{1}{8}\left(\left(c_L + \sqrt{m}\frac{\bar r}{\Al}\right)^{-1}\wedge 1\right) c_\nu^{\frac{1}{\nu}}$, it holds that
 \begin{align*}
     \frac{l_{\ba'} + r_{\ba'}}{2} - \frac{1}{4}c_\nu^{\frac{1}{\nu}}\leq \ba'^{\top} \dt \leq \frac{l_{\ba'} + r_{\ba'}}{2} + \frac{1}{4}c_\nu^{\frac{1}{\nu}}.
 \end{align*}
\end{lemma}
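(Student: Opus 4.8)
The plan is to bound the deviation $\bigl|\ba'^{\top}\dt - \tfrac{l_{\ba'}+r_{\ba'}}{2}\bigr|$ by a triangle inequality that separates the contribution already controlled at $\ba$ from the two perturbation terms. Concretely, I would write
\[
\Bigl|\ba'^{\top}\dt - \tfrac{l_{\ba'}+r_{\ba'}}{2}\Bigr|
\ \leq\ \Bigl|\ba^{\top}\dt - \tfrac{l_{\ba}+r_{\ba}}{2}\Bigr|
+ \bigl|(\ba'-\ba)^{\top}\dt\bigr|
+ \Bigl|\tfrac{l_{\ba'}+r_{\ba'}}{2} - \tfrac{l_{\ba}+r_{\ba}}{2}\Bigr|,
\]
and bound the three terms separately. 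The first is at most $\tfrac18 c_\nu^{1/\nu}$ by the hypothesis of the lemma.

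For the second term, the plan is to apply Cauchy--Schwarz together with the uniform norm bound on the dual optimizer from Corollary \ref{coro: helper-bounded-norm-lambda}, namely $\|\dt\|\leq \sqrt{m}\,\bar r/\Al$, and $\|\ba'-\ba\|\leq r_1$ since $\ba'\in\cB(\ba,r_1)$, giving $|(\ba'-\ba)^{\top}\dt|\leq r_1\sqrt{m}\,\bar r/\Al$. For the third term I would invoke the Lipschitz continuity of $l(\cdot)$ and $r(\cdot)$ from Assumption \ref{assum: small-probability-starting-from-zero}\,(ii): each of $|l_{\ba'}-l_{\ba}|$ and $|r_{\ba'}-r_{\ba}|$ is at most $c_L\|\ba'-\ba\|\leq c_Lr_1$, so the difference of midpoints is at most $c_Lr_1$. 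Summing, the deviation is at most $\tfrac18 c_\nu^{1/\nu} + r_1\bigl(c_L+\sqrt{m}\,\bar r/\Al\bigr)$, and plugging in $r_1=\tfrac18\bigl((c_L+\sqrt{m}\,\bar r/\Al)^{-1}\wedge 1\bigr)c_\nu^{1/\nu}$ makes the second summand at most $\tfrac18 c_\nu^{1/\nu}$, so the total is at most $\tfrac14 c_\nu^{1/\nu}$, which is exactly the claimed two-sided bound.

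There is no real obstacle here: this is a routine Lipschitz/Cauchy--Schwarz bookkeeping argument. The only points requiring a little care are (i) that the norm bound used for $\dt$ is the uniform one from Corollary \ref{coro: helper-bounded-norm-lambda} (valid for the actual random dual optimizer, uniformly in $\bb_{t-1}$, $\cI_{t-1}$ and $t$), and (ii) that the constant $\sqrt{m}\,\bar r/\Al$ appearing there matches the one hard-coded into the definition of $r_1$, so that the two $\tfrac18 c_\nu^{1/\nu}$ pieces add up to precisely $\tfrac14 c_\nu^{1/\nu}$. One may optionally remark that $l(\cdot),r(\cdot)$ extend as Lipschitz functions beyond $\textrm{supp}(F^{\ba})$ with the same constant, so the stated bound is meaningful for every $\ba'\in\cB(\ba,r_1)$.
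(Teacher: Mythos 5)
Your proposal is correct and follows essentially the same route as the paper's proof: the hypothesis at $\ba$, Cauchy--Schwarz with the uniform bound $\|\dt\|\leq\sqrt{m}\,\bar r/\Al$ from Corollary \ref{coro: helper-bounded-norm-lambda}, Lipschitz continuity of $l(\cdot)$ and $r(\cdot)$, and the choice of $r_1$ so that the perturbation terms sum to at most $\tfrac18 c_\nu^{1/\nu}$. The only cosmetic difference is that you package both directions at once via a single triangle inequality on $\bigl|\ba'^{\top}\dt - \tfrac{l_{\ba'}+r_{\ba'}}{2}\bigr|$, while the paper proves the two one-sided bounds separately.
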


\begin{proof}
    We only prove one direction of this inequality as the other direction is nearly identical. For any $\ba' \in \cB(\ba, r_1)$, we have that 
    \begin{align*}
        \ba'^{\top} \dt & = \ba^{\top} \dt + \left(\ba - \ba'\right)^{\top} \dt\\
        &\leq  \ba^{\top} \dt + \|\ba - \ba'\|\|\dt\| \qquad (\textrm{by Cauchy-Schwarz}),\\
        & \leq \frac{l_{\ba} + r_{\ba}}{2} + \frac{1}{8}c_\nu^{\frac{1}{\nu}} + \ba^{\top} \dt + \|\ba - \ba'\|\|\dt\|\\
        &\leq \frac{l_{\ba'} + r_{\ba'}}{2} + c_L\|\ba' - \ba\| + \frac{1}{8}c_\nu^{\frac{1}{\nu}} +   \|\ba - \ba'\|\|\dt\| \qquad\qquad\qquad (\textrm{by Lipschitzness of $l_\ba$ and $r_\ba$})\\
        & \leq \frac{l_{\ba'} + r_{\ba'}}{2} + \frac{1}{8}c_\nu^{\frac{1}{\nu}} +  \frac{1}{8}\left(c_L + \sqrt{m}\frac{\bar r}{\Al}\right)^{-1} c_\nu^{\frac{1}{\nu}} \times \left(c_L + \sqrt{m}\frac{\bar r}{\Al}\right) \\
        &\qquad\qquad\qquad\qquad\qquad\qquad (\textrm{by definition of $r_1$ and Lemma \ref{lem: helper-boundedness-lambda}})\\
        & = \frac{l_{\ba'} + r_{\ba'}}{2} + \frac{1}{4}c_\nu^{\frac{1}{\nu}},
    \end{align*}
    completing the proof.
\end{proof}

\end{document}